\let\old@tocline\@tocline
\let\section@tocline\@tocline
\newcommand{\subsection@dotsep}{4.5}
\newcommand{\subsubsection@dotsep}{4.5}
     \leaders\hbox{$\m@th
        \mkern \subsection@dotsep mu\hbox{.}\mkern \subsection@dotsep mu$}\hfill
\let\subsection@tocline\@tocline
\let\@tocline\old@tocline
     \leaders\hbox{$\m@th
        \mkern \subsubsection@dotsep mu\hbox{.}\mkern \subsubsection@dotsep mu$}\hfill
\let\subsubsection@tocline\@tocline
\let\@tocline\old@tocline
\let\old@l@subsection\l@subsection
\let\old@l@subsubsection\l@subsubsection
\def\@tocwriteb#1#2#3{%
  \begingroup
    \@xp\def\csname #2@tocline\endcsname##1##2##3##4##5##6{%
      \ifnum##1>\c@tocdepth
      \else \sbox\z@{##5\let\indentlabel\@tochangmeasure##6}\fi}%
    \csname l@#2\endcsname{#1{\csname#2name\endcsname}{\@secnumber}{}}%
  \endgroup
  \addcontentsline{toc}{#2}%
    {\protect#1{\csname#2name\endcsname}{\@secnumber}{#3}}}%
\newlength{\@tocsectionindent}
\newlength{\@tocsubsectionindent}
\newlength{\@tocsubsubsectionindent}
\newlength{\@tocsectionnumwidth}
\newlength{\@tocsubsectionnumwidth}
\newlength{\@tocsubsubsectionnumwidth}
\newcommand{\settocsectionnumwidth}[1]{\setlength{\@tocsectionnumwidth}{#1}}
\newcommand{\settocsubsectionnumwidth}[1]{\setlength{\@tocsubsectionnumwidth}{#1}}
\newcommand{\settocsubsubsectionnumwidth}[1]{\setlength{\@tocsubsubsectionnumwidth}{#1}}
\newcommand{\settocsectionindent}[1]{\setlength{\@tocsectionindent}{#1}}
\newcommand{\settocsubsectionindent}[1]{\setlength{\@tocsubsectionindent}{#1}}
\newcommand{\settocsubsubsectionindent}[1]{\setlength{\@tocsubsubsectionindent}{#1}}
\renewcommand{\l@section}{\section@tocline{1}{\@tocsectionvskip}{\@tocsectionindent}{\@tocsectionnumwidth}{\@tocsectionformat}}%
\renewcommand{\l@subsection}{\subsection@tocline{1}{\@tocsubsectionvskip}{\@tocsubsectionindent}{\@tocsubsectionnumwidth}{\@tocsubsectionformat}}%
\renewcommand{\l@subsubsection}{\subsubsection@tocline{1}{\@tocsubsubsectionvskip}{\@tocsubsubsectionindent}{\@tocsubsubsectionnumwidth}{\@tocsubsubsectionformat}}%
\newcommand{\@tocsectionformat}{}
\newcommand{\@tocsubsectionformat}{}
\newcommand{\@tocsubsubsectionformat}{}
\def\csname toc@1format\endcsname{\@tocsectionformat}
\def\csname toc@2format\endcsname{\@tocsubsectionformat}
\def\csname toc@3format\endcsname{\@tocsubsubsectionformat}
\newcommand{\settocsectionformat}[1]{\renewcommand{\@tocsectionformat}{#1}}
\newcommand{\settocsubsectionformat}[1]{\renewcommand{\@tocsubsectionformat}{#1}}
\newcommand{\settocsubsubsectionformat}[1]{\renewcommand{\@tocsubsubsectionformat}{#1}}
\newlength{\@tocsectionvskip}
\newcommand{\settocsectionvskip}[1]{\setlength{\@tocsectionvskip}{#1}}
\newlength{\@tocsubsectionvskip}
\newcommand{\settocsubsectionvskip}[1]{\setlength{\@tocsubsectionvskip}{#1}}
\newlength{\@tocsubsubsectionvskip}
\newcommand{\settocsubsubsectionvskip}[1]{\setlength{\@tocsubsubsectionvskip}{#1}}
\patchcmd{\tocsection}{\indentlabel}{\makebox[\@tocsectionnumwidth][l]}{}{}
\patchcmd{\tocsubsection}{\indentlabel}{\makebox[\@tocsubsectionnumwidth][l]}{}{}
\patchcmd{\tocsubsubsection}{\indentlabel}{\makebox[\@tocsubsubsectionnumwidth][l]}{}{}
\newcommand{\@sectypepnumformat}{}
\renewcommand{\contentsline}[1]{%
  \expandafter\let\expandafter\@sectypepnumformat\csname @toc#1pnumformat\endcsname%
  \csname l@#1\endcsname}
\newcommand{\@tocsectionpnumformat}{}
\newcommand{\@tocsubsectionpnumformat}{}
\newcommand{\@tocsubsubsectionpnumformat}{}
\newcommand{\setsectionpnumformat}[1]{\renewcommand{\@tocsectionpnumformat}{#1}}
\newcommand{\setsubsectionpnumformat}[1]{\renewcommand{\@tocsubsectionpnumformat}{#1}}
\newcommand{\setsubsubsectionpnumformat}[1]{\renewcommand{\@tocsubsubsectionpnumformat}{#1}}
\renewcommand{\@tocpagenum}[1]{%
  \hfill {\mdseries\@sectypepnumformat #1}}
\let\oldappendix\appendix
\renewcommand{\appendix}{%
  \leavevmode\oldappendix%
  \addtocontents{toc}{%
    \protect\settowidth{\protect\@tocsectionnumwidth}{\protect\@tocsectionformat\sectionname\space}%
    \protect\addtolength{\protect\@tocsectionnumwidth}{2em}}%
}
\let\oldtableofcontents\tableofcontents
\renewcommand{\tableofcontents}{%
  \vspace*{-5\linespacing}
  \oldtableofcontents}
\let\origsection=\section \def\section{\@ifstar{\origsection*}{\mysection}} 
\def\mysection{\@startsection{section}{1}\z@{.7\linespacing\@plus\linespacing}{.5\linespacing}{\normalfont\scshape\centering\S}}
\colorlet{darkishRed}{red!60!black}
\colorlet{darkishBlue}{blue!60!black}
\colorlet{darkishGreen}{green!50!black}
\colorlet{darkerishGreen}{green!30!black}
\colorlet{lightishGreen}{green!70!black}
\crefname{mainresult}{Theorem}{Theorems}
\let\setminus=\smallsetminus
\newcommand{\COMMENT}[1]{{}}
\let\setminus=\smallsetminus
\renewcommand{\leq}{\leqslant}
\renewcommand{\geq}{\geqslant}
\let\eps=\varepsilon
\let\rho=\varrho
\let\phi=\varphi
\newcommand{\id}{\normalfont\text{id}}
\DeclareMathOperator{\Aut}{Aut}
\renewcommand{\subset}{\subseteq}
\newcommand{ \N } { \mathbb{N} }
\newcommand{\defn}[1]{{\color{darkishGreen}{\emph{#1}}}}
\newcommand{\mathdefn}[1]{{\color{darkishGreen}{{#1}}}}
\def\calCommandfactory#1{%
   \expandafter\def\csname c#1\endcsname{\mathcal{#1}}}
\def\frakCommandfactory#1{%
   \expandafter\def\csname frak#1\endcsname{\mathfrak{#1}}}
\newcounter{ctr}
  \edef\X{\@Alph\c@ctr}
  \edef\Y{\@alph\c@ctr}
\newcommand{\bK}{\mathbf{K}}
\newcommand{\bT}{\mathbf{T}}
\newcommand{\bH}{\mathbf{H}}
\newtheorem{theorem}{Theorem}[section] 
\newtheorem{proposition}[theorem]{Proposition}
\newtheorem{corollary}[theorem]{Corollary}
\newtheorem{lemma}[theorem]{Lemma}
\newtheorem{conjecture}[theorem]{Conjecture}
\newtheorem{mainresult}{Theorem}
\newtheorem{mainalgorithm}[mainresult]{Algorithm}
\newtheorem{claim}{Claim}
\crefname{claim}{Claim}{Claims}
\theoremstyle{definition}
\newtheorem{example}[theorem]{Example}
\newtheorem*{definition*}{Definition}
\newtheorem{algorithm}[theorem]{Algorithm}
\newtheorem{construction}[theorem]{Construction}
\newtheorem{question}[theorem]{Question}
\theoremstyle{remark}
\newcommand{\pdfOrNot}[2]{\ifbool{pdfBool}{{#1}}{{#2}}}
\def\lqedsymbol{\ifmmode$\lrcorner$\else{\unskip\nobreak\hfil
		\penalty50\hskip1em\null\nobreak\hfil$\rule{1.2ex}{1.2ex}$
		\parfillskip=0pt\finalhyphendemerits=0\endgraf}\fi}
\newenvironment{claimproof}[1][\proofname]
{%
	\proof[#1]%
}
{%
	\endproof%
}
 \DeclareMathOperator{\tw}{\mathrm{tw}}
\def\aw{\hbox{\rm aw}_r}
\def\caw{\hbox{\rm caw}_r}
\def\lsw{\hbox{\rm lsw}_r}
\def\ltw{\hbox{\rm ltw}_r}
\DeclareMathOperator{\pre}{pre}
\newcommand{\td}{tree-decom\-pos\-ition}
\newcommand{\gd}{graph-decom\-pos\-ition}
\begin{document}

\title{The global structure of locally chordal graphs}

\author[T.\ Abrishami \and P.\ Knappe]{Tara Abrishami, Paul Knappe}

\address{Stanford University, Department of Mathematics}
\email{tara.abrishami@stanford.edu}

\address{Universität Hamburg, Department of Mathematics}
\email{paul.knappe@uni-hamburg.de}

\thanks{T.A. was supported by the National Science Foundation Award Number DMS-2303251 and the Alexander von Humboldt Foundation. P.K. was supported by a doctoral scholarship of the Studienstiftung des deutschen Volkes.}

\keywords{Locally chordal, chordal, graph-decomposition, local separator, clique, local covering}

\begin{abstract}
    A graph is \emph{locally chordal} if each of its small-radius balls is chordal. In an earlier work \cite{LocallyChordal}, the authors and Kobler proved that locally chordal graphs can be characterized by having chordal \emph{local covers}, by forbidding short cycles and wheels as induced subgraphs, and by the property that each of their \emph{minimal local separators} is a clique. In this paper, we address the global structure of locally chordal graphs. The global structure of chordal graphs is given by the following characterizations: a graph is chordal if and only if it is the intersection graph of subtrees of a tree, if and only if it admits a tree-decomposition into cliques. We prove a local analog of this characterization, which essentially says that a graph is locally chordal if and only if it is the intersection graph of special subtrees of a high-girth graph, if and only if it admits a special graph-decomposition over a high-girth graph into cliques. We also prove that these global representations of locally chordal graphs can be efficiently computed. 

    This paper has two major contributions. The first is to exhibit for locally chordal graphs an ideal ``local to global'' analysis: given a graph class defined by restricted local structure, we fully describe the global structure of graphs in the class. The second is to develop the theory of \emph{graph-decompositions}. Much of the work in this paper is devoted to properties of graph-decompositions that represent the global structure of graphs. This theory will be useful to find global decompositions for graph classes beyond locally chordal graphs. 
    
\end{abstract}

\maketitle

\section{Introduction}

Consider a graph $G$ which is obtained by gluing cliques in the shape of a high-girth graph $H$ in such a way that, locally at each vertex, the graph $G$ looks as if it were composed of cliques glued together along a tree.
An illustration is given in \cref{fig:chordallocchordalGlobToLoc}.
Graphs that are composed of cliques glued together along a tree are the well-known \emph{chordal}\footnote{Chordal graphs are also referred to as \emph{rigid circuit} \cite{dirac1961rigid} or \emph{triangulated} \cite{Rose} graphs.} graphs.
The global composition of the graph $G$ along $H$ described above ensures that $G$ exhibits such a tree-shape locally: that $G$ is locally chordal.

\begin{figure}[ht]
    \centering
    \includegraphics[width=0.3\linewidth]{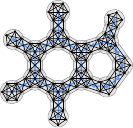}
    \caption{A locally chordal graph $G$, depicted in black, obtained by gluing cliques in the shape of $H$, depicted in blue.}
    \label{fig:chordallocchordalGlobToLoc}
\end{figure}

One way to phrase the above observation is that a global decomposition of $G$ into cliques along a high-girth graph witnesses that $G$ is locally chordal, so knowledge of the global structure allows us to deduce the local structure. One of the great hopes of graph theory research is to be able to do the converse: given knowledge of \emph{local} structure, deduce (something about) the \emph{global} structure. We aim for something even stronger: to \emph{characterize} locally chordal graphs by their global structure. Specifically, we want to show that every locally chordal graph $G$ admits a global decomposition which certifies that $G$ is locally, around each vertex, chordal.

By assumption, a locally chordal graph $G$ looks in the small-radius ball around each vertex like it is composed of cliques glued together along a tree. However, it is not at all obvious that these trees at each vertex can be combined into a single graph $H$ that simultaneously witnesses the local tree-likeness everywhere. Surprisingly, we prove that this is indeed the case: every locally chordal graph $G$ admits a global decomposition that witnesses that $G$ is locally chordal. 

To state our results more precisely, we give some definitions. A graph is \defn{chordal} if each of its cycles of length at least four has a chord. 
For a vertex $v$ of a graph $G$, we define the \defn{ball of radius $r/2$ around $v$}, denoted \defn{$B_{G}(v,r/2)$} or, for short, \defn{$B_{r/2}(v)$}, as the subgraph of $G$ given by the set of all vertices of distance at most $r/2$ from $v$ and the set of all edges of $G$ for which the sum of the endpoints' distances to $v$ is strictly less than $r$.
For an integer $r \geq 0$, the given \emph{degree of locality}, a graph is 
\defn{$r$-locally chordal} if each of its balls of radius $r/2$ is chordal. 
Our main result shows that $r$-locally chordal graphs $G$ admit a global decomposition which simultaneously witnesses that $G$ is $r$-locally chordal:

\begin{restatable}{mainresult}{Characterization}
\label{intro:mainTHM}
    Let $G$ be a locally finite\footnote{Recall that a graph is \defn{locally finite} if each vertex has finite degree. In particular, finite graphs are locally finite.} graph and $r \geq 3$ an integer. The following are equivalent:
    \begin{enumerate}
        \item $G$ is $r$-locally chordal.        \item\label{intro:item:racyclicRIG} $G$ is an $r$-acyclic region intersection graph (\cref{mainresult:racycliccliquegraphs-iff-r-locally-chrodal-etc}).
        \item\label{item:racyclicGD} $G$ admits an $r$-acyclic \gd\ into cliques (\cref{mainresult:racycliccliquegraphs-iff-r-locally-chrodal-etc}).
        \item\label{item:LocAcylicGDs} $G$ admits a \gd\ into cliques which $r$-locally induces \td s (\cref{main:InducingTDsonBalls}).
        \item\label{intro:item:GDrlocSep} $G$ admits a graph-decomposition into cliques which   induces $r$-local separations of $G$ (\cref{mainthm:gd}).
    \end{enumerate}
\end{restatable}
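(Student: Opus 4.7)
The plan is to establish the five-way equivalence by assembling the three constituent main theorems cited in the statement. The overarching strategy is that the easy direction in each equivalence -- deducing $r$-local chordality from a global decomposition -- is quite direct, while the hard direction -- producing a global decomposition from only local data -- is the technical heart. For the easy directions, if $G$ admits a graph-decomposition over a host graph $H$ into cliques which restricts to a tree-decomposition on each ball $B_{r/2}(v)$, then within each such ball $G$ is realized as the intersection graph of subtrees of a tree, and the classical chordal characterization yields chordality; hence (3), (4), and (5) each directly imply (1).

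The equivalence (2)$\Leftrightarrow$(3) mirrors the standard duality between tree-decompositions of chordal graphs and their representations as intersection graphs of subtrees: from a region intersection representation one reads off a graph-decomposition by letting the host be the representation's host graph and taking each part at a node $t$ to be the set of vertices whose region contains $t$, and $r$-acyclicity of the representation translates precisely to $r$-acyclicity of the decomposition. For the hard direction (1)$\Rightarrow$(3), my plan is to invoke the prior characterization from \cite{LocallyChordal} stating that in a locally chordal graph the minimal local separators are cliques -- this provides the analog of the role minimal separators play in producing clique trees for chordal graphs. The construction would first form a clique tree inside each ball $B_{r/2}(v)$, then identify nodes across overlapping local clique trees whenever they correspond to the same maximal local clique, producing a single host graph $H$. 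The parts are these maximal local cliques, and for each vertex $v$ one checks that $\{t \in V(H) : v \in V_t\}$ is connected and, within any ball, behaves as a subtree of $H$.

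I expect the main obstacle to be verifying consistency of the gluing across the whole graph: that the node identifications are well-defined, that the resulting $H$ has high girth (equivalently, that the decomposition is $r$-acyclic), and that the vertex subgraphs really are locally tree-like. The high-girth property is where one must use in full strength the hypothesis that \emph{every} ball is chordal -- a short cycle in $H$ would translate, via the clique decomposition, into a forbidden short induced cycle or wheel in $G$, contradicting the forbidden-subgraph characterization from \cite{LocallyChordal}. Once $H$ is built with the correct properties, conditions (3), (4), and (5) follow from the same construction, with the refinements for (4) and (5) coming from a direct check that the local subtree structure of each vertex produces tree-decompositions on balls and that the separators induced by $H$ along the decomposition are exactly the $r$-local separators of $G$.
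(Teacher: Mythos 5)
Your overall architecture is right (the easy directions, the duality (2)$\Leftrightarrow$(3), and treating (4) and (5) as refinements of (3)), but your plan for the hard direction (1)$\Rightarrow$(3) has a genuine gap at exactly the point you flag as ``the main obstacle'': the consistency of the gluing. If you independently choose a clique tree of each ball $B_{3/2}(v)$ and then take the union inside $\bK(G)$, there is no reason the clique trees of two overlapping balls $B_{3/2}(u)$ and $B_{3/2}(v)$ agree on the shared node set $K_G(u)\cap K_G(v)$; if they disagree, the union already contains a cycle inside some $H_w=H[K_G(w)]$, so the result is not even $1$-acyclic, let alone a clique graph. Identifying this as an obstacle is not the same as overcoming it, and no mechanism for overcoming it appears in your proposal. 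The paper resolves it in two different ways: structurally, by passing to the $r$-local cover $G_r$, which is chordal exactly when $G$ is $r$-locally chordal (\cref{BasicCharacterization}), taking a single $\Gamma(p_r)$-canonical clique tree of $G_r$ (\cref{thm:chordalTDmaximal}) and folding it via $p_r$ (\cref{const:TDFoldingToGD}, \cref{clm:3-acyclic}) --- so there is only one global object and consistency is automatic; and algorithmically (\cref{THEalgorithm}), by running Kruskal's algorithm on each $\bK(B_{3/2}(v))$ with a globally consistent edge ordering and tie-breaking rule, where the proof of \cref{thm:correctalgo} devotes a dedicated claim to showing that $H^u$ and $H^v$ agree on $K_G(u)\cap K_G(v)$, using the maximum-weight-spanning-tree characterization of clique trees. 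Your appeal to the minimal-local-separator characterization does not substitute for either mechanism; the paper explicitly notes it does not use that equivalence.

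Two smaller points. First, once a $3$-acyclic region representation exists, upgrading it to $r$-acyclicity for $r>3$ is not just a matter of excluding short cycles in $H$: one needs that the union of the regions of any $\le r$ vertices is acyclic, which is strictly stronger than $H$ having girth $>r$. The paper gets this from \cref{lem:char-chordal-via-3-acyclic-region-rep} (an induction on the number of maximal cliques using Dirac's minimal-separator theorem) together with \cref{lem:inducedcycles-vs-cyclesinHX}; your sketch of ``a short cycle in $H$ translates into a short induced cycle or wheel in $G$'' is the right intuition but is the statement to be proved, not an argument. Second, for (4) one must be careful with even $r$: the $r$-locally induced decompositions need not even satisfy \cref{axiomH2} in general, which is why the paper works with the notion of connected-$r$-acyclicity and \cref{lemma:CharacterisingLocallyInducingTDs} rather than a direct restriction-to-balls argument.
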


In what follows, we introduce the relevant terms and intuitions.
Our results here draw on our first paper about locally chordal graphs \cite{LocallyChordal}. In that paper, we prove that locally chordal graphs can be characterized by forbidden induced subgraphs, by \emph{minimal local separators} (a local analog of minimal separators), and by their \emph{local covers}. We now state the main theorem of \cite{LocallyChordal}. A graph is \defn{$r$-chordal} if each of its cycles of length at least four and at most $r$ has a chord. A \defn{wheel $W_n$} for $n \geq 4$ consists of a cycle of length $n$, its \emph{rim}, and a vertex $v$ complete to the rim, its \emph{hub}. 

\begin{theorem}[\cite{LocallyChordal}, Theorem 1]\label{BasicCharacterization}
    Let $G$ be a (possibly infinite) graph and let $r \geq 3$ be an integer. The following are equivalent:
    \begin{enumerate}
        \item \label{basic:i} $G$ is $r$-locally chordal.
        \item\label{basic:ii} $G$ is $r$-chordal and wheel-free.
        \item \label{basic:iv} Every minimal $r$-local separator of $G$ is a clique. 
        \item \label{basic:iii} The $r$-local cover $G_r$ of $G$ is chordal.
    \end{enumerate}
\end{theorem}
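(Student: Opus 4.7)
I would establish the cycle of implications (i) $\Rightarrow$ (ii) $\Rightarrow$ (iv) $\Rightarrow$ (i), together with the equivalence (i) $\Leftrightarrow$ (iii) via a separator argument in the style of Dirac's classical theorem. The hard work will be concentrated in (ii) $\Rightarrow$ (iv); the remaining implications are either localizations of classical facts about chordal graphs or follow from the universal property of the $r$-local cover.

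\textbf{Easy directions.} For (i) $\Rightarrow$ (ii): any cycle $C$ of length $4\leq \ell\leq r$ has diameter $\lfloor \ell/2\rfloor\leq r/2$, so picking any $v\in V(C)$, every edge of $C$ has endpoints whose distances from $v$ sum to at most $\ell-1<r$, hence $C$ is contained in $B_{r/2}(v)$; chordality of this ball then yields a chord of $C$. An induced wheel $W_n$ embeds in the ball of radius $r/2\geq 3/2$ around its hub (rim edges have distance-sum $2<r$), and its induced rim again contradicts chordality of the ball. For (iv) $\Rightarrow$ (i): the projection $p\colon G_r\to G$ restricts to an isomorphism $B_{r/2}(\tilde v)\xrightarrow{\sim} B_{r/2}(v)$ for every lift $\tilde v$ of $v$; since balls in chordal graphs are chordal, the hypothesis on $G_r$ transports to $G$. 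For (i) $\Leftrightarrow$ (iii): by Dirac's theorem, a graph is chordal iff every minimal separator is a clique, and the minimal $r$-local separators of $G$ are exactly the minimal separators occurring within balls $B_{r/2}(v)$, so (i) and (iii) are tied together ball by ball.

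\textbf{The hard direction (ii) $\Rightarrow$ (iv).} Suppose for contradiction that $G_r$ contains an induced cycle of length at least $4$, and choose such a cycle $C$ of minimum length. If $|C|\leq r$, the local-isometry property of $p$ implies that $C$ projects injectively to an induced cycle of the same length in $G$, directly contradicting $r$-chordality. If $|C|>r$, fix $u\in V(C)$ and examine how $C$ meets $B_{r/2}(u)$. The two $C$-arcs leaving $u$ remain in $B_{r/2}(u)$ up to first exit vertices $x$ and $y$; the $C$-arc from $x$ to $y$ passing through $u$ has length about $r$, while $d_{G_r}(x,y)$ inside the ball is at most this quantity. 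I would exploit this slack: a shortest $x$--$y$ path inside $B_{r/2}(u)$ that avoids the rest of $C$ yields, together with the complementary $C$-arc, a strictly shorter closed walk, hence a shorter induced cycle in $G_r$, contradicting minimality of $C$. If this shortcut instead meets $C$ in chords, one analyses the resulting chord structure to extract an induced fan on a vertex of $B_{r/2}(u)$ complete to a long induced cycle, i.e.\ an induced wheel, which descends via $p$ to an induced wheel of $G$, contradicting wheel-freeness.

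\textbf{Main obstacle.} The principal difficulty lies in the long-cycle case $|C|>r$: controlling the chord structure between the shortcut in $B_{r/2}(u)$ and the cycle $C$ carefully enough to always force either a strictly shorter induced cycle (contradicting minimality) or an induced wheel (contradicting (ii)). This step is precisely where wheel-freeness cannot be dropped; $r$-chordality alone does not obstruct arbitrarily long induced cycles in $G_r$, and only the combined forbidden configuration of wheels plus short chordless cycles is strong enough to rule them out.
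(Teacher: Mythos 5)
First, note that the paper does not prove this statement at all: it is imported verbatim from \cite{LocallyChordal} (Theorem~1) and used as a black box, so there is no in-paper proof to compare your argument against. Judged on its own terms, your proposal has a decisive gap in the hard implication (ii)~$\Rightarrow$~(iv). Every step of your long-cycle analysis uses only the fact that $p_r$ is $r/2$-ball-preserving, but that property alone cannot yield the conclusion: for $G=C_{100}$ and $r=5$, the identity map is an $r/2$-ball-preserving covering of an $r$-chordal, wheel-free graph by a non-chordal graph, so any argument relying solely on ball-preservation proves a false statement. Concretely, your shortcut step fails because a long induced cycle can be isometric: for such a $C$ the shortest $x$--$y$ path inside $B_{r/2}(u)$ is the arc of $C$ through $u$ itself, so you obtain neither a strictly shorter closed walk nor any chords to analyse, and no wheel appears. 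A correct proof must invoke the defining property of the $r$-local cover --- that $\pi_1(G_r)$ is the $r$-local subgroup of $\pi_1(G)$, equivalently that the cycle space of $G_r$ is generated by lifts of closed walks of length at most $r$ --- to exclude long induced cycles in $G_r$ in the first place; your sketch never touches this, and it is precisely the universality of $G_r$ that makes the double ray, rather than $C_{100}$ itself, the $5$-local cover of $C_{100}$.

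Two further points. In (iv)~$\Rightarrow$~(i) you assert that balls of chordal graphs are chordal; for even $r$ the ball $B_{r/2}(v)$ is not an induced subgraph (it omits edges between two vertices both at distance exactly $r/2$), so a chord guaranteed by chordality need not lie in the ball, and this requires a separate argument in the spirit of \cref{lem:edge-on-pre-last-level}. Your (i)~$\Leftrightarrow$~(iii) rests on the unproven identification of minimal $r$-local separators with minimal separators occurring within balls $B_{r/2}(v)$; $r$-local separators are defined in \cite{computelocalSeps} via the ball around the separator itself and its local components, not via balls around vertices, and the asserted identification is not correct as stated. (The present paper explicitly declines to use that equivalence, so nothing in this document helps you there.) By contrast, your (i)~$\Rightarrow$~(ii) direction is essentially fine, and the short-cycle case of (ii)~$\Rightarrow$~(iv) is recoverable with a little care about where a putative chord of the projected cycle lives (center the ball at an endpoint of the chord so that it lifts).
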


The equivalence between \cref{BasicCharacterization}~\ref{basic:i} and \ref{basic:iii} is used heavily in this paper. (We also use the equivalence of \ref{basic:i} and \ref{basic:ii}, but we don't use the equivalence to \cref{basic:iv} in this paper.) The local cover is introduced in \cite{canonicalGD} as a graph that represents the local structure of $G$. The local cover is part of a new general theory designed to facilitate local-to-global analysis by constructing global decompositions from local structure. Our results are the first to apply this framework to characterize a locally-defined graph class. We therefore provide a possible roadmap to use this framework for local-to-global analysis more generally.

Next, we describe our results in more detail. 

\subsection{Characterization via region representations}
First, let us discuss \cref{intro:mainTHM}~\cref{intro:item:racyclicRIG}. A \defn{region} of a graph $H$ is a connected subgraph of $H$.
A map $v \mapsto H_v$ assigning to the vertices $v$ of a graph $G$ regions $H_v$ of a graph $H$ \defn{represents} $G$ if an edge $uv$ is present in $G$ if and only if the regions $H_u$ and $H_v$ intersect in at least one vertex.
A region representation $v \mapsto H_v$ is \defn{$r$-acyclic} if for every set $X \subseteq V(G)$ of at most $r$ vertices, the union $\bigcup_{x \in X} H_x$ is an acyclic subgraph of $H$.
If $G$ has such a representation, we then call it an \defn{$r$-acyclic region intersection graph}.

Every graph is a 2-acyclic region intersection graph; see \cref{ex:every-graph-2acyclic} for a proof. But it is already interesting to ask which graphs are 3-acyclic region intersection graphs. 
Wheels, and all graphs containing wheels as induced subgraphs, are not 3-acyclic region intersection graphs (\cref{lem:wheels-not-3acyclic-RIG}), so 3-acyclic region intersection graphs must exclude wheels. In fact, Gavril \cite{nbrhood-chordal} characterized the graphs which are 3-acyclic region intersection graphs as precisely the wheel-free graphs (see \cref{subsec:ComparisonGavril}).
\defn{Wheel-free} graphs, those graphs with no wheel as an induced subgraph, are also precisely the 3-locally chordal graphs. At the other extreme, $\infty$-acyclic region intersection graphs are precisely the intersection graphs of subtrees of forest, which are well known to be exactly the chordal graphs. \cref{intro:mainTHM}~\cref{intro:item:racyclicRIG} now fills in the spectrum for all other values of $r$, giving a characterization of $r$-acyclic region intersection graphs for all $r \in \N$ as exactly the $r$-locally chordal graphs. 

\subsection{Characterization via graph-decompositions}
Let us now turn to  \cref{intro:mainTHM}~\cref{item:racyclicGD}.
A \defn{graph-decomposition} of a graph $G$, as introduced in \cites{canonicalGD}, consists of a model graph~$H$ and an assignment $h \mapsto V_h$ of a vertex-subset $V_h$ of $G$ to each node $h$ of $H$ such that the $V_h$ decompose $G$, i.e.\ (H1) the union of the subgraphs $G[V_h]$ is $G$, and this arranges $G$ in the shape of $H$, i.e.\ (H2) for each vertex $v$ of $G$ the subgraph of $H$ induced by the set $W_v$ of nodes $h$ whose $V_h$ contains $v$ is connected (\cref{subsec:gd-and-rigs}).

A graph-decomposition is \defn{into cliques} if every set $V_h$ is a clique of $G$.
It turns out that \gd s into cliques correspond to region representations. 
More specifically, a \gd\ $(H, \cV)$ of $G$ into cliques induces a region representation of $G$ by mapping each vertex $v$ of $G$ to a connected, spanning subgraph of $H[W_v]$, the induced subgraph from (H2). Conversely, a region representation $v \mapsto H_v$ induces a \gd\ by assigning to each node $h$ of $H$ the vertex-subset $V_h \coloneqq \{v \in V(G) \mid h \in V(H_v)\}$.
Notice that these two constructions are inverse to each other (see \cref{sec:gd-and-rig} for details).

\subsection{Locally inducing tree-decompositions}
Recall that we started with the idea of gluing cliques together in the shape of a high-girth graph so that locally at each vertex the graph looks like being composed of cliques glued together along a tree. 
Graph-decompositions from \cref{intro:mainTHM}~\cref{item:LocAcylicGDs} formalize this idea. Indeed, let $G$ be an $r$-locally chordal graph and $(H, \cV)$ be an $r$-acyclic graph-decomposition of $G$ as in \cref{intro:mainTHM}~\cref{item:LocAcylicGDs}. Now, roughly speaking, if we restrict the decomposition $(H, \cV)$ to just a ball of radius $r/2$ in $G$, we obtain a tree-decomposition into cliques (see \cref{sec:CharViaGDLocDerTD} for more details). Essentially, this means that there is a collection of $|V(G)|$ tree-decompositions into cliques, one for each ball of radius $r/2$ of $G$, such that one can find a single graph-decomposition of $G$ which ``puts them together'' -- in other words, which simultaneously witnesses each local tree-decomposition in a single structure. This property holds for every $r$-acyclic graph-decomposition of locally chordal graphs into cliques (\cref{main:InducingTDsonBalls}).

\subsection{Inducing local separations}
The $r$-acyclic region representations (equivalently: graph-decompositions into cliques), from \cref{intro:mainTHM}~\cref{intro:item:racyclicRIG}, can be characterized as those \gd s which \emph{induce} $r$-local separations (see \cref{sec:GDofLocChord} for details).
\emph{Local separations} were introduced in \cite{computelocalSeps} together with local separators as a local analog to \emph{separations}, which, when $r$ goes to infinity, are precisely the usual separations. Tree-decompositions are well-known to have a special relationship with the usual separations. 
Thus, when $r$ goes to infinity, \cref{intro:mainTHM}~\cref{intro:item:GDrlocSep} gives that connected, chordal graphs are exactly the graph that admit tree-decompositions into cliques.

\subsection{Further results: Computing region representations}

Our proofs of \cref{intro:mainTHM} are not computable; indeed, they rely on the local cover, which is often an infinite graph. Nevertheless, we give an efficient and conceptually simple algorithm (\cref{THEalgorithm}) which, when given an $r$-locally chordal graph $G$, computes an $r$-acyclic region representation of $G$. The algorithm in fact gives an independent proof of the main decomposition theorem \cref{mainresult:racycliccliquegraphs-iff-r-locally-chrodal-etc}, except the existence of a canonical decomposition, for finite locally chordal graphs. Our algorithm is essentially a parallelization of the algorithm given by Gavril \cite{gavrilAlgorithm} to compute 3-acyclic region representations of 3-locally-chordal graphs. 
The algorithm does not depend on $r$. 
The (parallel) running time is at most $O(\Delta^2 \log \Delta)$ on $n = |V(G)|$ processors, where $\Delta$ is the maximum degree of~$G$. Furthermore, the algorithm does not require $r$ as an input (\cref{thm:correctalgo}):
given a wheel-free graph $G$, it returns an $r$-acyclic region representation of $G$ where $r$ is the maximum integer such that $G$ is $r$-locally chordal. 
In the extreme case that the graph $G$ is chordal, the computed region representation is $\infty$-acyclic and thus the graph $H$ is a forest, recovering the familiar notion of {\em intersection graphs of subtrees of a tree} (if $G$ is additionally connected).

More generally,
a graph-decomposition $(H,\cV)$ of a graph $G$ into the maximal cliques of $G$ corresponds to a spanning subgraph of the intersection graph $\bK(G)$ of the maximal cliques in $G$, by identifying the model graph $H$ with the one obtained from the map $h \mapsto V_h$.
Note that, for any fixed vertex $v$ of $G$, the subgraph of $H$ induced on the maximal cliques of $G$ containing $v$ is precisely $H[W_v]$, and thus connected.
Conversely, any given spanning subgraph $H$ of $\bK(G)$ whose subgraphs $H_v$ induced on the maximal cliques containing any fixed vertex $v$ of $G$ are connected yields a region representation of $G$ over $H$ by mapping a vertex $v$ to $H_v$, which then corresponds to a graph-decomposition of $G$ into the maximal cliques of $G$.
We call such an $H$ a \defn{clique graph} of $G$, and it is \defn{$r$-acylic} if its corresponding region representation is.
If $H$ is a tree, this recovers the notion of {\em clique trees}. The algorithm above does indeed return a clique graph of $G$.

Clique trees of a chordal graph $G$ are known to be precisely the maximum-weight spanning trees of $\bK(G)$, where we weight each edge by the size of the corresponding intersection of maximal cliques. We generalize this by proving that $r$-acyclic clique graphs of $r$-locally chordal graphs $G$ are precisely the maximum-weight $r$-acyclic spanning subgraphs of $\bK(G)$ (\cref{thm:maximum-weight-acyclic-spanning}).

Since $r$-acyclic clique graphs can be efficiently computed, they can be used in algorithms.  
Clique trees are powerful algorithmic tools (see e.g.\ \cite{MAL}), and our algorithm to compute $r$-acyclic region representations of locally chordal graphs is an important step toward extending the algorithmic power of chordal graphs to locally chordal graphs. 
We outline a specific potential application, to the maximum independent set problem, in \cref{subsec:maxIndSet}.

\subsection{Acknowledgement}

We thank Nathan Bowler for inspiring input which led to splitting the proof of \cref{mainresult:racycliccliquegraphs-iff-r-locally-chrodal-etc} into \cref{mainthm:wheelfree} and \cref{lem:char-locallychordal-via-3-acyclic-region-rep} and a better overall structure of the paper. We thank Piotr Micek for suggesting \cref{q:k-Helly}. We thank Sandra Albrechtsen and Reinhard Diestel for their feedback on the introduction, which led to its complete overhaul. We thank Raphael W. Jacobs for his input to the paper and especially to the proof of \cref{thm:maximum-weight-acyclic-spanning}. We thank Mai Trinh for useful comments on the paper. We thank Jonas Kobler for interesting discussions and his involvement in the early stages of this work.

\subsection{Notation \texorpdfstring{\&}{and} conventions}

We follow standard graph theory notation and definitions from \cite{bibel}. We also refer to 
\cite{Hatcher} for terminology on topology. 

In this paper we deal with simple graphs, i.e.\ graphs with neither loops nor parallel edges. Often, we will restrict our view to locally finite graphs; a graph is \defn{locally finite} if each vertex has finite degree. We specify when we deal with finite graphs, with locally finite, or with general (possibly infinite) graphs.

Let $G$ be a graph, and let $X$ and $Y$ be two vertex-subsets of $G$. The \defn{induced subgraph} of $G$ on vertex-set $X$ is denoted \defn{$G[X]$}.
The vertex-subset $X$ is \defn{connected} in $G$ if its induced subgraph $G[X]$ is.
An \defn{$X$--$Y$ path} is a path whose first vertex is in $X$, last vertex is in $Y$, and which is otherwise disjoint from $X \cup Y$.
An \defn{$X$-path} is a non-trivial path whose first and last vertex is in $X$ and which is otherwise disjoint from $X$.
A path $P$ is \defn{through $X$} if all internal vertices of $P$ are in $X$ and $P$ has at least one internal vertex. 

Let $d \geq 0$ be a real number.
The \defn{depth-$d$ closed neighborhood $N_G^d[X]$} is the set of all vertices of $G$ that have distance at most $d$ to $X$, and the \defn{depth-$d$ (open) neighborhood $N_G^d(X)$} is the set of all vertices of $G$ that have distance precisely $d$ to $X$. 
If the ambient graph $G$ is clear from context, we also write \defn{$N^d[X]$} and \defn{$N^d(X)$}.
Note that the depth-$d$ open neighborhoods are always empty when $d$ is not an integer.
For any vertex $v$ of $G$, we use \defn{$N_G^d[v]$} and \defn{$N_G^d(v)$} as a shorthand for $N_G^d[\{v\}]$ and $N_G^d(\{v\})$. 
Also we use \defn{$N_G[X]$} as a shorthand for $N^1_G[X]$.

Finally, we will need the following lemma. 

\begin{lemma}[\cite{LocallyChordal}, Lemma 2.4]\label{lem:edge-on-pre-last-level}
    Let $r \geq 4$ be an integer and set $d: = \lfloor r/2 \rfloor$.
    Let $X$ be a connected vertex-subset of a (possibly infinite) chordal graph $G$.
    If, for any two distinct vertices $u,w \in N^{d-1}(X)$, there is a $u$--$w$ path in $B_{r/2}(X)$ through $N^d(X)$, then $uw$ is an edge of $B_{r/2}(X)$.
\end{lemma}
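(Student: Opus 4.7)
The plan is to show that $uw \in E(G)$; since $u,w \in N^{d-1}(X)$, the depth-sum is $2(d-1) \le r-2 < r$, so $uw \in E(G)$ automatically gives $uw \in E(B_{r/2}(X))$. I proceed by induction on the length $k \geq 2$ of the given path $P = u\,v_1\cdots v_{k-1}\,w$ through $N^d(X)$. When $r$ is even, $d = r/2$, and an edge between two depth-$d$ vertices has depth-sum exactly $r$, so such an edge is absent from $B_{r/2}(X)$; thus $P$ has a single internal vertex and only the base case $k=2$ occurs. The essential mechanism throughout is that adjacency changes distance to $X$ by at most $1$, so any neighbour of a depth-$d$ vertex lies in $N^{\geq d-1}(X)$.

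For the base case ($k=2$), let $v = v_1$. I choose shortest paths $P_u = (u = u_{d-1},\ldots,u_0 \in X)$ and $P_w = (w = w_{d-1},\ldots,w_0 \in X)$, and a shortest path $Q \subseteq G[X]$ from $u_0$ to $w_0$, jointly minimising $|Q|$. Together with the edges $uv$ and $vw$ these form a closed walk in $G$; after truncating $P_u, P_w$ at their first common vertex (if any), I extract a simple cycle $C$ containing $u,v,w$ consecutively. If $|C| = 3$ then $uw \in E(G)$; otherwise chordality produces a chord. The depth mechanism rules out any chord incident to $v$ other than to $u$ or $w$ (which are already edges), so the chord lies among the remaining vertices, falling into three scenarios: (a) the chord is $uw$ itself, and we are done; (b) a ``near-diagonal'' such as $uw_{d-2}$, $u_{d-2}w$, or $u_{d-2}w_{d-2}$, each of which produces a $4$- or $5$-cycle through $u,v,w$ whose own chord must again be $uw$ because the alternative chords are incident to $v$ and blocked by the depth mechanism; or (c) a ``deeper'' chord $u_{d-i}w_{d-j}$ or a chord into the interior of $Q, P_u, P_w$, which either contradicts the minimality of $|Q|$ or the shortest-path property of $P_u, P_w$, or produces a strictly shorter sub-cycle still containing $u, v, w$ consecutively, to which I re-apply chord analysis; termination is guaranteed because cycle length strictly decreases.

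For the inductive step ($k \geq 3$, possible only when $r$ is odd), I form the cycle $C' = u\,v_1\cdots v_{k-1}\,w \cdot R$, where $R = P_w^{-1} \cdot Q \cdot P_u$ is the $w$--$u$ route chosen as in the base case. A chord of $C'$ is either the desired $uw$; or of the form $uv_i$, $wv_j$, or $v_iv_j$ with $|i - j| \geq 2$, which shortens $P$ to a $u$--$w$ path of length $< k$ still running through $N^d(X)$, and to which the inductive hypothesis applies to yield $uw \in E(G)$; or a chord meeting $R$, which is blocked exactly as in the base case, since the depth mechanism forces any chord from some $v_i$ to $R$ to end at a depth-$(d-1)$ vertex (only $u$ and $w$ on $R$), while chords inside $R$ or from $u, w$ into the interior of $R$ are excluded by the shortest-path choice of $P_u, P_w$ and the minimality of $|Q|$.

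The main obstacle is the careful chord enumeration in the base case, particularly the iterated reduction through progressively smaller sub-cycles generated by the deeper-diagonal chords until one reaches a $4$- or $5$-cycle whose chord is directly pinned down by the depth bound at $v$. The low-depth case $d = 2$ is also delicate: there the depth mechanism alone does not preclude chords from $u$ or $w$ into $X$, and one must fall back on the minimality of $|Q|$ to exclude them. A secondary technicality is ensuring $C$ and $C'$ can be made simple cycles when $P_u$ and $P_w$ share vertices, handled by truncating at the first common vertex and adjusting $Q$ accordingly.
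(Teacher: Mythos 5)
This lemma is imported verbatim from \cite{LocallyChordal} (Lemma~2.4 there); the present paper states it with a citation and supplies no proof, so there is no in-paper argument to compare yours against. Judged on its own terms, your proposal is sound. The opening reduction is correct: since $u,w \in N^{d-1}(X)$, the depth-sum of a putative edge $uw$ is $2(d-1) \le r-2 < r$, so $uw \in E(G)$ already implies $uw \in E(B_{r/2}(X))$; and for even $r$ an edge between two vertices of $N^{d}(X)$ has depth-sum exactly $r$, so the path through $N^d(X)$ indeed has a unique internal vertex. The core mechanism also works: closing $P$ into a cycle via shortest descent paths $P_u, P_w$ and a path $Q$ inside $X$ guarantees that every cycle vertex other than $u$, $w$ and the interior of $P$ has depth at most $d-2$, so no chord can leave a depth-$d$ vertex except towards $u$, $w$, or another $v_j$; chordality then either produces $uw$, shortens $P$ (handled by your induction on $k$), or shortens the cycle while preserving these depth invariants. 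My only criticism is stylistic: your three-way chord taxonomy in the base case, and the separate worry about $d=2$, can be absorbed into a single extremal argument --- among all cycles in which $u,v_1,\dots,v_{k-1},w$ appear consecutively and whose remaining vertices lie in $N^{d-2}[X]$, take a shortest one; it admits no chord at any $v_i$ other than into $\{u,w,v_{j}\}$, every other chord contradicts minimality or the induction hypothesis on $k$, and hence the cycle is a triangle $uv w$, giving $uw \in E(G)$. That phrasing also disposes cleanly of the simplicity issues when $P_u$ and $P_w$ intersect (they can only meet at depth at most $d-2$, since a common vertex has a single well-defined distance to $X$ and $u \neq w$).
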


\vspace{9em}
\tableofcontents
\newpage

\section{Equivalence: Graph-decompositions \texorpdfstring{\&}{and} region representations}\label{sec:gd-and-rig}
In this section we introduce our central objects to describe the structure of locally chordal graphs, graph-decompositions and region intersection graphs, and explore the relationship between them.

\subsection{Graph-decompositions \texorpdfstring{\&}{and} region intersection graphs} \label{subsec:gd-and-rigs}

Graph-decompositions, introduced in ~\cite{canonicalGD}, generalize the notion of tree-decompositions by allowing the bags of the decompositions to be arranged along general graphs instead of just trees.

Let $G$ be a graph.
A \defn{\gd} of $G$ is a pair $\cH = (H, \cV)$ consisting of a graph $H$ and a family~$\cV$ of vertex-subsets $V_h$ of $G$ indexed by the nodes $h$ of $H$ which satisfies
\begin{enumerate}[label=(H\arabic*)]
    \item \label{axiomH1} $G = \bigcup_{h \in H} G[V_h]$, and
    \item \label{axiomH2} for every vertex $v$ of $G$, the subgraph $H[W_v]$ of $H$ induced on $W_v := \{h \in V(H) \mid v \in V_h \}$ is connected.
\end{enumerate}
We refer to $H$ as the \defn{model graph}, and also say that $(H, \cV)$ is a \defn{graph-decomposition of $G$ over}, or \defn{modeled on, $H$}.
The vertex-subsets $V_h$ are called the \defn{bags} of $\cH$, and the $W_v$ are called the \defn{co-bags} of $\cH$.
We can flesh out the bags and co-bags by choosing spanning subgraphs $G_h$ of $G[V_h]$ with $G = \bigcup_{h \in H} G_h$ and connected, spanning subgraphs $H_v$ of $H[W_v]$.
Then the $G_h$ are the \defn{parts} and the $H_v$ are the \defn{co-parts} of the decomposition $\cH$.
In this paper, we will always consider the induced subgraphs $G_h := G[V_h]$ of $G$ from \cref{axiomH1} as the \defn{parts}.
Whenever we introduce a \gd\ $(H,\cV)$ without a specific choice of co-parts in this paper, we tacitly assume that we have chosen any valid family of co-parts $H_v$.
A \gd\ is \defn{honest} if every bag is nonempty and the \defn{adhesion sets} $V_{h_1} \cap V_{h_2}$ corresponding to the edges $f = h_1h_2$ of $H$ are nonempty.
Note that a \gd\ is honest if and only if $H = \bigcup_{v \in G} H_{v}$. In this paper, we call a pair $\mathcal{H} = (H, \mathcal{V})$ a \defn{decomposition} if it consists of a graph $H$ and a family $\mathcal{V}$ of vertex-subsets $V_h$ of $G$ indexed by the nodes $h$ of $H$ which satisfies \cref{axiomH1}. 

A \defn{\td} $(T, \cV)$ is precisely a \gd\ $(H,\cV)$ whose model graph $H$ is the tree $T$.
In this case, the only valid choice of co-parts $T_v$ of a \td\ $(T,\cV)$ are the induced subgraphs $T[W_v]$, since the $T[W_v]$ is a tree and co-parts must be connected. Indeed, most of our \gd\ only admit one valid choice of co-parts (cf. \cref{lemma:HvAreInduced}).

The bags $V_h$ and co-bags $W_v$ of any \gd\ $(H,\cV)$ of $G$ satisfy the duality
\begin{equation*}
    v \in V_h \Longleftrightarrow h \in W_v.
\end{equation*}

Given this, we can cryptomorphically axiomatize \emph{\gd s} $(H,\cV)$ (whose parts $G_h$ are the induced subgraphs $G[V_h]$ and whose co-parts $H_v$ are any valid choice), as follows.
A \defn{\gd} of $G$ over a graph $H$ is a map $v \mapsto H_v$ that assigns to each vertex $v$ of $G$ a subgraph $H_v$ of $H$ such that

\begin{enumerate}
    \item[(H1V)]\label{H1V} $H_v$ is nonempty for every vertex $v$ of $G$,
    \item[(H1E)]\phantomsection\label{H1E} $H_u \cap H_v$ is nonempty for every edge $uv$ of $G$, and
    \item[(H2)]\label{H2equiv} $H_v$ is connected for every vertex $v$ of $G$. 
\end{enumerate}

Given a graph $G$, a graph-decomposition $(H, \cV)$ of $G$, and a vertex-subset $X$ of $G$, we set $\mathdefn{H_X} \coloneqq \bigcup_{x \in X} H_x$. Observe that the graph-decomposition axioms and the definition of co-part imply the following: 

\begin{lemma}\label{lem:strongerH2}
    Let $(H,\cV)$ be a graph-decomposition of a graph $G$.
    If a vertex-subset $U$ of $G$ is connected, then~$H_U$ is connected. \qed
\end{lemma}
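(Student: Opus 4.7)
The plan is to prove connectedness of $H_U$ by constructing, for any two vertices of $H_U$, an explicit walk between them inside $H_U$. The two ingredients will be: axiom (H1E), which lets us hop between consecutive co-parts $H_{u_i}, H_{u_{i+1}}$ along a path $u_0,\dots,u_k$ in $G[U]$, and axiom (H2), which lets us move inside a single co-part $H_{u_i}$. So essentially, the connectedness of $U$ in $G$ is transported to connectedness of $H_U$ in $H$, via the cryptomorphic formulation of graph-decompositions just recorded.

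In detail, I would fix two vertices $h, h' \in V(H_U)$ and choose $u, u' \in U$ with $h \in V(H_u)$ and $h' \in V(H_{u'})$. Since $U$ is connected in $G$, there is a $u$--$u'$ path $u = u_0, u_1, \dots, u_k = u'$ with all $u_i \in U$. For each $i \in \{0,\dots,k-1\}$, axiom (H1E) applied to the edge $u_i u_{i+1}$ of $G$ yields some vertex $h_i \in V(H_{u_i}) \cap V(H_{u_{i+1}})$. Putting $h_{-1} := h$ and $h_k := h'$, both $h_{i-1}$ and $h_i$ lie in $V(H_{u_i})$ for every $i \in \{0,\dots,k\}$, and $H_{u_i}$ is connected by axiom (H2). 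Hence there is an $h_{i-1}$--$h_i$ walk inside $H_{u_i} \subseteq H_U$. Concatenating these walks for $i = 0, 1, \dots, k$ produces the desired $h$--$h'$ walk in $H_U$.

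There is no real obstacle here: the argument is a direct axiom chase, and the only point requiring a moment's care is to verify that each intermediate walk actually lives inside $H_U$, which is immediate from $H_{u_i} \subseteq H_U$ by definition of $H_U = \bigcup_{u\in U} H_u$. (A small alternative, if one prefers, is a proof by induction on the length $k$ of a shortest $u$--$u'$ path in $G[U]$, but the direct concatenation above is cleaner.) In particular, the lemma holds for arbitrary, possibly infinite, $G$ and $H$, since at no point are we using local finiteness.
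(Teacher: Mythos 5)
Your proof is correct and is exactly the axiom chase the paper has in mind: the lemma is stated there with an immediate \verb|\qed| as a direct consequence of \hyperref[H1E]{(H1E)} (co-parts of adjacent vertices meet) and (H2) (each co-part is connected), which is precisely what your concatenation-of-walks argument spells out. No gaps; the observation that no local finiteness is needed is also consistent with the paper's statement for general graphs.
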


The \defn{regions} of a graph are its (nonempty) connected subgraphs. 
In this language, a \gd\ over $H$ assigns to each vertex $v$ a region $H_v$ of $H$ such that $H_u \cap H_v \neq \emptyset$ if $uv \in E(G)$. If the map also satisfies the converse of this condition, i.e. $H_u \cap H_v \neq \emptyset$ if and only if $uv \in E(G)$, then we obtain a concept called \emph{region representations}:

A \defn{region representation $v \mapsto H_v$ of $G$ over a graph $H$} is a map that assigns to each vertex $v$ of $G$ a region $H_v$ of $H$ such that $uv$ is an edge of $G$ if and only if $H_u$ and $H_v$ have nonempty intersection. If such a region representation of $G$ over $H$ exists, then $G$ is a \defn{region intersection graph over $H$}.
For a vertex-subset $X$ of $G$, we write \defn{$H_X$} for the union $\bigcup_{x \in X} H_x$ of the regions corresponding to the elements of $X$.
Given a class $\cH$ of graphs, a graph $G$ is a \defn{region intersection graph over $\cH$} if $G$ is a region intersection graph over some $H \in \cH$.

We remark that, by simply restricting the map, a region representation of $G$ induces a region representation of each induced subgraph of $G$.
Then the following is a simple observation similiar to \cref{lem:strongerH2}:

\begin{lemma}\label{lem:strongerH2forRegRep}
    Let $v \mapsto H_v$ be a region representation of a graph $G$.
    Then an induced subgraph $G'$ of $G$ is connected if and only if the union $H_{V(G')}$ of its corresponding regions is connected. \qed
\end{lemma}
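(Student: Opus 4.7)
The plan is to prove the two directions separately, mirroring the strategy for \cref{lem:strongerH2} in one direction and exploiting the converse half of the region-representation axiom (which is absent for general \gd s) in the other.

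For the forward direction, suppose $G'$ is connected. Given any two vertices $a, b \in H_{V(G')}$, I would pick $u, w \in V(G')$ with $a \in H_u$ and $b \in H_w$, and then fix a path $u = v_0, v_1, \ldots, v_n = w$ in $G'$. For each $i$, the region $H_{v_i}$ is connected by definition of a region, and since $v_i v_{i+1}$ is an edge of $G'$ (hence of $G$), the region-representation property gives $H_{v_i} \cap H_{v_{i+1}} \neq \emptyset$. Concatenating paths inside the $H_{v_i}$, using these nonempty intersections as transition points, yields an $a$--$b$ walk in $H_{V(G')}$.

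For the reverse direction, I would argue by contrapositive: if $G'$ is disconnected, write $V(G') = A \sqcup B$ with $A, B$ nonempty and no $G'$-edges between them. Since $G'$ is induced, there are also no $G$-edges between $A$ and $B$. The \emph{only if} half of the region-representation axiom (the new ingredient relative to graph-decompositions) then forces $H_a \cap H_b = \emptyset$ for every $a \in A$ and $b \in B$. Consequently $H_A$ and $H_B$ are nonempty and vertex-disjoint subgraphs of $H$ whose union is $H_{V(G')}$, which is therefore disconnected.

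I do not anticipate a real obstacle here; the only small point to be careful about is that regions are nonempty by definition, so both $H_A$ and $H_B$ are nonempty in the reverse direction, and that the converse implication in the region-representation definition (edges \emph{only} arise from intersecting regions) is the crucial feature distinguishing this from the weaker one-way statement \cref{lem:strongerH2}.
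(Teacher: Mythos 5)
Your proof is correct and is exactly the argument the paper has in mind: the paper states this lemma without proof as an immediate observation, and your two directions (concatenating paths through the pairwise-intersecting connected regions for the forward implication, and using the converse half of the region-representation axiom plus the fact that $G'$ is induced to split $H_{V(G')}$ into two nonempty vertex-disjoint pieces for the reverse) are precisely the intended justification. Your care about regions being nonempty and about the role of the ``only if'' direction of the axiom is well placed.
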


A region representation $v \mapsto T_v$ of $G$ over a tree $T$ is better known as a \defn{subtree representation of $G$ over the tree $T$}.
Similarly, if $G$ has a subtree representation over a tree $T$, then $G$ is a \defn{subtree intersection graph over $T$}.

A \defn{clique} $X$ in $G$ is a vertex-subset which induces a complete subgraph of $G$.
It is well-known that, for a \td\ $(T,\cV)$ of a graph $G$ into cliques, the map $v \mapsto T_v$ assigning a vertex to its co-part is a subtree representation and vice versa.
More generally, it is now immediate from the definitions (since the converse of \hyperref[H1E]{(H1E)} holds) that region representation correspond precisely to the \gd s $(H, \cV)$ of $G$ that are decompositions \defn{into cliques}, i.e.\ all its bag $V_h$ are cliques of $G$:

\begin{mainresult}
\label{thm:rig-is-gd-into-cliques}
    Let $G$ and $H$ be (possibly infinite) graphs.
    \begin{enumerate}
        \item  If $v \mapsto H_v$ is a region representation of $G$ over $H$, then $(H, \cV)$ with $V_h \coloneqq \{v \in V(G) \mid h \in V(H_v)\}$ is a graph-decomposition of $G$ into cliques, for which the family $(H_v)_{v \in G}$ is a valid choice of co-parts. 
        \item If $(H, \cV)$ is a graph-decomposition of $G$ into cliques and $(H_v)_{v \in G}$ is a valid choice of co-parts, then $v \mapsto H_v$ is a region representation of $G$ over $H$. \qed
    \end{enumerate}
\end{mainresult}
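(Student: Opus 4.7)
The statement is a cryptomorphism whose proof amounts to unpacking definitions; the plan is to verify all three requirements of a graph-decomposition into cliques in Part (1), and both directions of the region representation biconditional in Part (2). The pivot throughout is the duality $v \in V_h \Leftrightarrow h \in W_v$, which combines with the co-part requirement that each $H_v$ is a connected spanning subgraph of $H[W_v]$ to yield $V(H_v) = W_v$, so that ``$h \in V(H_v)$'' and ``$v \in V_h$'' translate freely between the two viewpoints.

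For Part (1), starting from a region representation $v \mapsto H_v$, I would set $V_h := \{v \in V(G) \mid h \in V(H_v)\}$, which makes $W_v = V(H_v)$ by construction. Then each $V_h$ is a clique: for distinct $u, v \in V_h$, the shared node $h$ lies in $V(H_u) \cap V(H_v)$, so $H_u \cap H_v \neq \emptyset$, and the ``only if'' direction of the region representation axiom forces $uv \in E(G)$. Axiom \cref{axiomH1} on vertices is immediate since $H_v$ is nonempty for every $v$ (regions are nonempty), and on edges it follows from the ``if'' direction: any edge $uv$ yields a node $h \in V(H_u) \cap V(H_v)$ with $\{u,v\} \subseteq V_h$. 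Axiom \cref{axiomH2} holds because $H[W_v] = H[V(H_v)]$ contains the connected spanning subgraph $H_v$, which simultaneously witnesses that $(H_v)_{v \in G}$ is a valid choice of co-parts.

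For Part (2), I would first observe that each co-part $H_v$ is by definition a nonempty connected subgraph of $H$, i.e.\ a region, so only the biconditional requires verification. If $uv \in E(G)$, \cref{axiomH1} provides a node $h$ with $u, v \in V_h$, hence $h \in W_u \cap W_v = V(H_u) \cap V(H_v)$ (using that each $H_v$ spans $H[W_v]$), giving $H_u \cap H_v \neq \emptyset$. Conversely, any node $h \in V(H_u) \cap V(H_v)$ lies in $W_u \cap W_v$, so $u$ and $v$ both lie in the clique $V_h$, and since $u \neq v$ this forces $uv \in E(G)$.

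There is no genuine obstacle here: the proof is pure bookkeeping. The only subtle point is that Part (2) must work for \emph{any} valid choice of co-parts rather than only the canonical one $H_v = H[W_v]$; this is handled automatically because ``$H_u$ and $H_v$ share a node'' is equivalent to ``$W_u \cap W_v \neq \emptyset$'' as soon as $V(H_u) = W_u$ and $V(H_v) = W_v$, and the latter is built into the definition of spanning subgraph.
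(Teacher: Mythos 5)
Your proof is correct and is exactly the definition-unpacking the paper intends: the paper states \cref{thm:rig-is-gd-into-cliques} with no written proof, declaring it ``immediate from the definitions (since the converse of \hyperref[H1E]{(H1E)} holds),'' and your verification via the duality $v \in V_h \Leftrightarrow h \in W_v$ together with $V(H_v) = W_v$ is precisely that argument made explicit. No discrepancies.
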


\noindent We remark that \cref{thm:rig-is-gd-into-cliques}
yields a one-to-one correspondence between region representations of a graph and its \gd s into cliques, which allows us to discuss the \gd\ (into cliques) \defn{corresponding} to a given region representation and vice versa.

\subsection{Canonical graph-decompositions \& region representations}

Let $G$ be a graph and let $\Gamma$ be a subgroup of the group \defn{$\Aut(G)$} of automorphisms of $G$.
A \gd\ $(H, \cV)$ of $G$ is \defn{$\Gamma$-canonical} if there exists an action of $\Gamma$ on $H$ which commutes with the assignment $h \mapsto V_h$ of the bags, i.e.\ $\gamma (V_h) = V_{\gamma \cdot h}$ for every $\gamma \in \Gamma$.
Equivalently, the action of $\Gamma$ on $H$ commutes with the assignment $v \mapsto W_v$ of the co-bags, i.e.\ $W_{\gamma(v)} = \gamma \cdot W_v$ for every $\gamma \in \Gamma$.
If parts $G_h$ or co-parts $H_v$ are specified, then we require the respective assignments $h \mapsto G_h$ or $v \mapsto H_v$ to commute with the action of $\Gamma$ on $H$.
Moreover, $(H, \cV)$ is \defn{canonical} if it is $\Aut(G)$-canonical.

As discussed in \cref{subsec:gd-and-rigs}, we can consider a \gd\ $(H,\cV)$ with a family $(H_v)_{v \in G}$ of specified co-parts to be presented by the assignment $v \mapsto H_v$ of co-parts. 
Then the \gd\ $(H,\cV)$ is $\Gamma$-canonical if and only if $v \mapsto H_v$ commutes with the action of $\Gamma$ on $H$, i.e.\ $\gamma \cdot H_v = H_{\gamma(v)}$.
Thus, we say that a region representation $v \mapsto H_v$ of $G$ over a graph $H$ is \defn{$\Gamma$-canonical} or \defn{canonical} if its corresponding \gd\ with specified co-parts $H_v$ is $\Gamma$-canonical or canonical, respectively.

\subsection{Clique graphs \texorpdfstring{\&}{and} graph-decompositions into maximal cliques}\label{subsec:cliquegraphs}

Let $G$ be a graph.
We denote the set of all (inclusion-wise) maximal cliques of $G$ by \defn{$K(G)$}.
By \defn{$\bK(G)$} we denote the intersection graph of the maximal cliques of $G$, i.e.\ the graph $\bK(G)$ whose vertex set is $K(G)$ and where two maximal cliques $K_1$ and $K_2$ of $G$ are adjacent in $\bK(G)$ when $K_1 \cap K_2 \neq \emptyset$.
The set of maximal cliques of $G$ containing a fixed vertex $v$ of $G$ we denote by \defn{$K_G(v)$}.

The map $v \mapsto \bK(G)[K_G(v)]$ always forms a \gd\ of $G$ over $\bK(G)$.
A naive attempt to flesh out the $\bK(G)[K_G(v)]$ by a choice of co-parts $H_v$, i.e.\ connected, spanning subgraphs, is to choose a spanning subgraph $H$ of $\bK(G)$ and consider the induced subgraph $H[K_G(v)]$ as co-parts $H_v$.
This then yields a valid choice of co-parts for $v \mapsto \bK(G)[K_G(v)]$ if and only if the $H[K_G(v)]$ are all connected.

More formally, given any spanning subgraph $H$ of $\bK(G)$, the assignment $v \mapsto H_v \coloneqq H[K_G(v)]$ will always satisfy $H_u \cap H_v \neq \emptyset$ if and only if $uv \in E(G)$ (and also \hyperref[H1V]{(H1V)} and \hyperref[H1E]{(H1E)} are satisfied).
But, $v \mapsto H_v$ might not be a region representation (equivalently: \gd) of $G$, as these $H_v$ might not be connected.
Those that do induce a region representation we call \emph{clique graphs} of $G$:
a \defn{clique graph}\footnote{In the literature the graph $\bK(G)$ is sometimes also called \emph{clique intersection graph} or \emph{clique graph}. As our \emph{clique graphs} are the object analogous to \emph{clique trees}, we divert from this standard and refer to $\bK(G)$ instead as the \emph{intersection graph of the maximal cliques of $G$}.} of $G$ is a spanning subgraph $H$ of $\bK(G)$ such that $v \mapsto H_v \coloneqq H[K_G(v)]$ is a region representation of $G$ over $H$ (equivalently: $H[K_G(v)]$ is connected for every vertex $v$ of $G$).

Conversely, let us consider a \gd\ $(H,\cV)$ of $G$ \defn{into the maximal cliques of $G$}, i.e.\ the map $h \mapsto V_h$ is a bijection from the nodes of $H$ to the set $K(G)$ of all maximal cliques of $G$.
If the $H_v$ are induced subgraphs of $H$ and the decomposition is honest, then the map $h \mapsto V_h$ is an isomorphism from $H$ to a spanning subgraph $H'$ of $\bK(G)$ such that the $H_v$ map precisely on $H'[K_G(v)]$.
In such cases, we sometimes just refer to the spanning subgraph $H'$ of $\bK(G)$ to describe the \gd\ $(H,\cV)$ into maximal cliques.

The above is a one-to-one correspondence between clique graphs of a graph and its (honest) \gd s into maximal cliques (up to the images of the model graphs in $\bK(G)$), which allows us to discuss the \gd\ (into its maximal cliques) \defn{corresponding} to a given clique graph and vice versa:

\begin{lemma}\label{lem:cg-is-gd-into-max-cliques}
    Let $G$ be a (possibly infinite) graph.
    \begin{enumerate}
        \item If $H$ is a clique graph of $G$, then assigning to each node $K$ of $H$ (where $K \in K(G)$) the bag $V_K \coloneqq K$ yields a graph-decomposition $(H,\cV)$ of $G$ into the maximal cliques of $G$.
        \item If $(H,\cV)$ is a \gd\ of $G$ into the maximal cliques of $G$, then the map $h \mapsto V_h$ embeds $H$ onto a clique graph of $G$. \qed
    \end{enumerate}
\end{lemma}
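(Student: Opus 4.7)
The plan is to unfold definitions on both sides and verify that the natural assignments line up; this is essentially a bookkeeping statement, with only one mild subtlety.

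For (1), I would start with a clique graph $H \subseteq \bK(G)$ and set $V_K := K$ for every node $K \in V(H) = K(G)$, so each bag is a maximal clique of $G$ by construction. To verify \cref{axiomH1}, I would show that every vertex and every edge of $G$ lies in some maximal clique: since the union of any chain of cliques is again a clique, Zorn's Lemma guarantees that every clique (in particular a singleton vertex or an edge) extends to a maximal one, even in the infinite case; hence $G = \bigcup_{K \in K(G)} G[K]$. For \cref{axiomH2}, an immediate unfolding gives $W_v = \{K \in K(G) \mid v \in K\} = K_G(v)$, so $H[W_v] = H[K_G(v)] = H_v$ is connected by the very definition of a clique graph.

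For (2), let $(H, \cV)$ be a \gd\ into the maximal cliques of $G$, so that $\phi : V(H) \to K(G)$, $h \mapsto V_h$, is a bijection by hypothesis. To promote $\phi$ to an embedding onto a spanning subgraph $H'$ of $\bK(G)$, I would show that each edge $h_1h_2 \in E(H)$ maps to an edge of $\bK(G)$, i.e.\ $V_{h_1} \cap V_{h_2} \neq \emptyset$; this is exactly the honesty of the decomposition, which is implicit in the setup together with the convention that the co-parts are the induced subgraphs $H[W_v]$. Setting $H' := \phi(H)$ then yields a spanning subgraph of $\bK(G)$ via the vertex bijection. To check that $H'$ is a clique graph of $G$, I would note that $\phi$ bijects $W_v$ onto $K_G(v)$ and is an isomorphism $H \to H'$, so $H'[K_G(v)] = \phi(H[W_v])$ is connected by \cref{axiomH2}.

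The only nontrivial point will be the honesty assumption in part (2): without it, an edge of $H$ between two nodes whose bags are disjoint would fail to produce an edge of $\bK(G)$, breaking the embedding into $\bK(G)$. I expect this to be resolved by the conventions announced in the paragraph preceding the lemma, and everything else reduces to chasing definitions.
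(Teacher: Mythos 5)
Your proof is correct and takes essentially the same route as the paper, which states this lemma as immediate from the definitions and supplies exactly the definition-unfolding you perform (including the role of honesty) in the two paragraphs preceding it. You are right that honesty is the one genuine hypothesis needed in part (2) — without it an edge of $H$ between nodes with disjoint bags would not map into $\bK(G)$ — and the paper's statement of the correspondence just before the lemma explicitly restricts to honest graph-decompositions, so your reading of the conventions is the intended one.
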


We recall that a \defn{clique tree}\footnote{Clique trees are also known as \emph{junction trees}, see e.g.\ \cite{MAL}.} of $G$ is a clique graph which is a tree. 
Equivalently said, a clique tree of $G$ is a spanning tree $T$ of $\bK(G)$ such that $v \mapsto T_v \coloneqq T[K_G(v)]$ is a subtree representation of $G$ over $T$ (equivalently: $T[K_G(v)]$ is connected for every vertex $v \in G$).

The following properties of spanning subgraphs $H$ of $\bK(G)$ and the maps $v \mapsto H[K_G(v)]$ they induce immediately follow from the definitions:

\begin{lemma}\label{propertiesofspanningsubgraph-of-CIG}
    Let $G$ be a (possibly infinite) graph. Suppose $H$ is a spanning subgraph of $\bK(G)$, and set $H_v \coloneqq H[K_G(v)]$ for every vertex $v$ of $G$.
    Then $K_G(v) = K(B_{3/2}(v))$ for every vertex $v$ of $G$.
    Moreover, for every two vertices $u,v$ of $G$, we have
    \begin{enumerate}
        \item $K_{B_{3/2}(v)}(u) = K_G(u) \cap K_G(v)$, and
        \item $H_u \cap H_v = H[K_G(u) \cap K_G(v)] = H_v[K_{B_{3/2}(v)}(u)]$. \qed
    \end{enumerate}
\end{lemma}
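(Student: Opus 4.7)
The plan is to first establish the identification $B_{3/2}(v) = G[N_G[v]]$, which underlies the whole statement. Unpacking the definition of the ball for $r=3$, its vertex set is exactly $N_G[v]$ (distance $0$ or $1$ from $v$, since distances are integers), and any edge of $G$ inside $N_G[v]$ has endpoint distances lying in $\{0,1\}$ whose sum is at most $2 < 3$, so it survives in the ball. Conversely every edge of $B_{3/2}(v)$ is an edge of $G$ with both endpoints in $N_G[v]$. Thus $B_{3/2}(v)$ coincides with the induced subgraph on the closed neighborhood.

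Once this is in hand, the equality $K_G(v) = K(B_{3/2}(v))$ reduces to the standard fact that the maximal cliques of $G$ through $v$ are precisely the maximal cliques of $G[N_G[v]]$. In one direction, a maximal clique $K$ of $G$ with $v\in K$ lies inside $N_G[v]$ and any $B_{3/2}(v)$-extension would also extend it in $G$. In the other, every maximal clique $K'$ of $G[N_G[v]]$ must contain $v$, since every other vertex of $N_G[v]$ is adjacent to $v$, so adding $v$ would otherwise extend $K'$; the same extension argument then upgrades maximality in $B_{3/2}(v)$ to maximality in $G$. For (i), I will observe that $u \in V(B_{3/2}(v))$ iff $u \in N_G[v]$, and in that case every maximal clique of $B_{3/2}(v)$ through $u$ automatically contains $v$ by what was just shown. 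Applying the first claim inside $B_{3/2}(v)$ then identifies $K_{B_{3/2}(v)}(u)$ with the maximal cliques of $G$ through both $u$ and $v$, which is precisely $K_G(u)\cap K_G(v)$; if $u\notin N_G[v]$ (and $u\neq v$) both sides are empty since no maximal clique can contain the non-edge $uv$.

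For (ii), the intersection of the induced subgraphs $H[K_G(u)]$ and $H[K_G(v)]$ is routinely the induced subgraph $H[K_G(u)\cap K_G(v)]$, because an edge of $H$ lies in both iff its endpoints lie in both vertex sets. Substituting (i) and using the inclusion $K_{B_{3/2}(v)}(u)\subseteq K_G(v)$ rewrites the same graph as $H_v[K_{B_{3/2}(v)}(u)]$. The whole lemma is essentially bookkeeping, so I do not expect a substantial obstacle; the only subtlety is the initial identification $B_{3/2}(v)=G[N_G[v]]$, which requires carefully parsing the slightly unusual ball definition, together with the degenerate case $u\notin N_G[v]$ in (i).
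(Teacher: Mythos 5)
Your proof is correct and is exactly the routine unpacking of definitions that the paper has in mind: the paper states this lemma with a \qed and no proof, declaring it to ``immediately follow from the definitions.'' Your identification $B_{3/2}(v)=G[N_G[v]]$, the two-way maximality argument for $K_G(v)=K(B_{3/2}(v))$, and the bookkeeping for (i) and (ii) (including the degenerate case $u\notin N_G[v]$) are all sound and fill in precisely the omitted details.
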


\noindent 

For the curious reader, we remark that one may generalize $K_G(v) = K(B_{3/2}(v))$ to $\bigcup_{v \in N^{r/2-1}[v]} K_G(v) = K(B_{r/2}(v))$ for $r$-locally chordal graphs with odd $r \geq 3$.

\subsection{Canonical clique graphs}
A clique graph $H$ of $G$ is \defn{$\Gamma$-canonical} for a subgroup $\Gamma$ of $\Aut(G)$ or \defn{canonical} if its corresponding region representation $v \mapsto H[K_G(v)]$ of $G$ is $\Gamma$-canonical or canonical, respectively.
The automorphisms $\gamma$ of $G$ induce an action on the set of all maximal cliques of $G$, and thus on $\bK(G)$, by $K \mapsto \gamma(K)$.
Note that a clique graph $H$ is $\Gamma$-canonical if and only if $H$ is $\Gamma$-invariant, i.e.\ $\gamma(H) = H$ for every $\gamma \in \Gamma$.

\section{Overview: Region representations of (locally) chordal graphs}

In this section, we discuss why region representations  (equivalently: graph-decompositions into cliques) are the right tools to describe the structure of locally chordal graphs.
First, we review the classic characterizations of chordal graphs as those graphs which have subtree representations (equivalently: admit tree-decompositions into cliques). 
Then, we explain how these characterizations generalize to the locally chordal case.

It is a simple observation\footnote{This statement is immediate from the first paragraph of the proof presented in \cite[Theorem~3]{GavrilChordalGraphsSTIG}.} that subtree intersection graphs are chordal:

\begin{proposition}[Folklore]\label{prop:subtreeintersec-is-chordal}
    Every (possibly infinite) subtree intersection graph is chordal
    (equivalently: every graph that admits a \td\ into cliques is chordal).
\end{proposition}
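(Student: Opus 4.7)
The plan is to argue by contradiction: assume that $G$ admits a subtree representation $v \mapsto T_v$ over a tree $T$, yet contains an induced cycle $C = v_1 v_2 \cdots v_n v_1$ with $n \geq 4$. Because $C$ is induced, $T_{v_p} \cap T_{v_q} = \emptyset$ whenever $v_p$ and $v_q$ are non-consecutive on $C$. My goal is to locate a vertex of $T$ lying in $T_{v_j} \cap T_{v_k}$ for some non-consecutive pair $v_j, v_k$, producing the contradicting chord $v_j v_k$.

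To do so, I would fix an index $i \in \{3, \ldots, n-1\}$, which is possible since $n \geq 4$. Then $T_{v_1}$ and $T_{v_i}$ are disjoint subtrees of the tree $T$, so the unique shortest path $\rho$ in $T$ between them is non-empty, and any path in $T$ from a vertex of $T_{v_1}$ to a vertex of $T_{v_i}$ must contain $\rho$ (a short tree-topology lemma; see below). I would then trap $\rho$ inside two connected unions, one coming from each side of $C$.

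Concretely, the vertex set $\{v_2, \ldots, v_{i-1}\}$ is connected in $G$ (a sub-path of $C$, reducing to a single vertex when $i = 3$), so by \cref{lem:strongerH2forRegRep} the union $S_1 := \bigcup_{j=2}^{i-1} T_{v_j}$ is a connected subgraph of $T$. Choosing $t_1 \in T_{v_1} \cap T_{v_2}$ and $t_{i-1} \in T_{v_{i-1}} \cap T_{v_i}$, both vertices lie in $S_1$, so the unique $t_1$--$t_{i-1}$ path in $T$ is contained in $S_1$ and therefore contains $\rho$. Symmetrically, $S_2 := \bigcup_{k=i+1}^{n} T_{v_k}$ is connected and contains $\rho$. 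Any vertex $x$ of $\rho$ then lies in some $T_{v_j}$ with $j \in \{2, \ldots, i-1\}$ and in some $T_{v_k}$ with $k \in \{i+1, \ldots, n\}$; since $k - j \in \{2, \ldots, n-2\}$, the vertices $v_j$ and $v_k$ are non-consecutive on $C$, so $v_j v_k$ is the desired chord.

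The main obstacle I anticipate is the tree-topology fact isolated above: that in a tree, any path joining two disjoint subtrees must pass through their bridge. This is intuitively clear, but I would record it as a short preliminary observation before running the main argument to keep the proof clean. Everything else reduces to an application of \cref{lem:strongerH2forRegRep} together with the basic fact that a connected subgraph of a tree contains the unique $T$-path between any two of its vertices.
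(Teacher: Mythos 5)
Your argument is correct. Note first that the paper does not actually prove \cref{prop:subtreeintersec-is-chordal}: it is stated as folklore, with a footnote deferring to the first paragraph of Gavril's proof of \cite[Theorem~3]{GavrilChordalGraphsSTIG}, so there is no in-paper proof to match against. Your route is a clean, self-contained version of the standard argument. I checked the details: for an induced cycle $C=v_1\cdots v_nv_1$ with $n\geq 4$ and $3\leq i\leq n-1$, the subtrees $T_{v_1}$ and $T_{v_i}$ are indeed disjoint, the bridge path $\rho$ between two disjoint subtrees of a tree exists and is contained in every $T_{v_1}$--$T_{v_i}$ path (your isolated tree-topology lemma, which is true and worth stating explicitly as you propose), and \cref{lem:strongerH2forRegRep} applied to the two arcs $v_2\cdots v_{i-1}$ and $v_{i+1}\cdots v_n$ of $C$ gives connected unions $S_1,S_2$ each containing $\rho$; the index bookkeeping $k-j\in\{2,\dots,n-2\}$ correctly certifies that the resulting edge $v_jv_k$ is a chord. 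The argument also works verbatim for infinite $G$ and infinite $T$, since cycles are finite and uniqueness of paths holds in arbitrary trees. The main stylistic difference from the cited source is that Gavril's argument runs via a closed walk in $T$ obtained by concatenating paths $T_{v_j}\ni t_{j-1,j}\cdots t_{j,j+1}$ and exploiting that closed walks in trees must backtrack, whereas you localize the contradiction at the bridge between two antipodal subtrees; both are equally elementary, and yours has the minor advantage of producing the chord explicitly.
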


Conversely, Gavril in the finite case \cite{GavrilChordalGraphsSTIG} and Halin \cite[10.2']{halin1964simpliziale} based on his \emph{zweiter Zerlegungssatz} \cite[Satz 5]{halin1989graphentheorie} showed that in particular every locally finite, connected, chordal graph is a subtree intersection graph (equivalently: admits a \td\ into cliques).
There are two stronger variants of subtree representations (equivalently: \td\ into cliques): clique trees and canonical ones.
Indeed, Halin showed in \cite{halin1964simpliziale} that every locally finite, connected, chordal graph admits a clique tree.\footnote{Halin actually showed this for connected, chordal graphs that have no infinite maximal clique. Later, Hofer-Temmel and Lehner~\cite[Theorem~3]{chordalinfinite} extended the techniques from finite graphs to locally finite graphs to obtain another proof of this result in the case of locally finite graphs.}
Jacobs and Knappe \cite[Theorem 2]{canTDofChordalGraphs} showed that every such graph also admits a canonical \td\ into cliques.

The following characterization of chordal graphs is immediate by applying the above-mentioned results to the components of the considered graph.
A region representation $v \mapsto H_v$ of a graph $G$ over a graph $H$ is \defn{acyclic} if $H_{V(G)}$ is acyclic.
A clique graph $H$ of $G$ is \defn{acyclic} if its corresponding region representation is acyclic (equivalently: $H$ is acyclic).
A \defn{forest-decomposition} of a graph $G$ is a graph-decomposition $(H, \cV)$ where $H$ is a forest. 

\begin{theorem}\label{thm:chordal}
    Let $G$ be a locally finite (not necessarily connected) graph.
    Then the following are equivalent:
    \begin{enumerate}
        \item $G$ is chordal.
        \item $G$ has an acyclic clique graph (equivalently: $G$ admits a forest-decomposition into maximal cliques).
        \item\label{item:chordal:canonical} $G$ has a canonical acyclic region representation (equivalently: $G$ admits a canonical forest-decomposition into cliques).
        \item $G$ has an acyclic region representation (equivalently: $G$ admits a forest-decomposition into cliques).
    \end{enumerate}
\end{theorem}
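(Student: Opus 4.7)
The plan is to establish the equivalences via the scheme $(4)\Rightarrow(1)\Rightarrow(2)\Rightarrow(4)$ together with $(1)\Rightarrow(3)\Rightarrow(4)$, reducing everything to the known connected case by treating the components of $G$ separately. The two trivial directions are $(3)\Rightarrow(4)$, which merely forgets canonicity, and $(2)\Rightarrow(4)$, where the region representation associated to an acyclic clique graph (via \cref{lem:cg-is-gd-into-max-cliques}) is acyclic by definition. For $(4)\Rightarrow(1)$, given an acyclic region representation $v\mapsto H_v$, I would first replace $H$ by the forest $H_{V(G)}$; then for each connected component $C$ of $G$ the union $H_{V(C)}$ is a connected subgraph of this forest by \cref{lem:strongerH2forRegRep}, hence a tree, so the restriction of the representation to $C$ is a subtree representation. \cref{prop:subtreeintersec-is-chordal} then gives that each component of $G$, and hence $G$ itself, is chordal.

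For $(1)\Rightarrow(2)$, I would apply the result of Halin \cite{halin1964simpliziale} (extended to the locally finite setting by Hofer-Temmel and Lehner \cite{chordalinfinite}) to each connected component $C$ of $G$ to obtain a clique tree $T_C$. The disjoint union $H := \bigsqcup_C T_C$ is a spanning subgraph of $\bK(G)$, because every maximal clique of $G$ is contained in a single component and hence is a maximal clique of that component, and the required connectedness of each $H[K_G(v)]$ reduces to the corresponding condition inside the one component containing $v$. Thus $H$ is an acyclic clique graph of $G$.

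For $(1)\Rightarrow(3)$ I would take the same route, but using instead the canonical tree-decompositions of Jacobs--Knappe \cite{canTDofChordalGraphs}, which furnish an $\Aut(C)$-canonical tree-decomposition $(T_C,\cV_C)$ of each connected component $C$ into cliques. To promote these to a single $\Aut(G)$-canonical forest-decomposition, I would use an orbit-representative argument: the group $\Aut(G)$ permutes the components of $G$, so in each orbit I would fix a representative $C_0$, and for every other component $C$ in the orbit choose $\gamma_C\in\Aut(G)$ with $\gamma_C(C_0)=C$ and define $(T_C,\cV_C)$ as the pushforward of $(T_{C_0},\cV_{C_0})$ by $\gamma_C$. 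The main obstacle, and essentially the only nontrivial point of the proof, is to verify that the resulting action of $\Aut(G)$ on $\bigsqcup_C T_C$ is well defined; but for any two $\gamma_1,\gamma_2\in\Aut(G)$ sending $C_0$ to the same component, $\gamma_2^{-1}\gamma_1$ restricts to an element of $\Aut(C_0)$ and so fixes $(T_{C_0},\cV_{C_0})$ by $\Aut(C_0)$-canonicity, making the pushforward independent of the choice of $\gamma_C$.
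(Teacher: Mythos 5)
Your proposal is correct and follows essentially the same route as the paper, which obtains this theorem by ``applying the above-mentioned results to the components'' --- i.e.\ exactly your componentwise reduction to \cref{prop:subtreeintersec-is-chordal}, Halin's clique trees, and the Jacobs--Knappe canonical tree-decompositions, with the disjoint union over components. The one phrase worth tightening is in $(1)\Rightarrow(3)$: $\gamma_2^{-1}\gamma_1$ does not literally \emph{fix} $(T_{C_0},\cV_{C_0})$ --- canonicity only provides an action of $\Aut(C_0)$ on $T_{C_0}$ commuting with the bag assignment, so the two pushforwards agree only up to that canonical relabelling of $T_{C_0}$; but this is precisely what is needed to assemble a well-defined $\Aut(G)$-action on $\bigsqcup_C T_C$, so your argument goes through.
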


In contrast, every graph $G$ is a region intersection graph, since the intersection graph $\bK(G)$ of maximal cliques itself is already a clique graph of $G$.
Thus, we need to require additional properties to use region representations to characterize locally chordal graphs.

As cycles are finite, a region representation $v \mapsto H_v$ of $G$ over a graph $H$ is acyclic if and only if only if for every finite vertex-subset $X$ of $G$ the union $H_X$ is acyclic.
More generally, let $H$ be a graph and let $\bH$ be a family of subgraphs of $H$. 
Then, $\bH$ is \defn{$r$-acyclic} for an integer $r \geq 0$ if every union of at most $r$ elements in $\bH$ is acyclic. 
A region representation $v \mapsto H_v$ of a graph $G$ over $H$ is \defn{$r$-acyclic} if the family $(H_v \mid v \in V(G))$ is $r$-acyclic.
A clique graph of $G$ is \defn{$r$-acyclic} 
if its 
corresponding region representation is $r$-acyclic.
If a graph $G$ has an $r$-acyclic region representation, then $G$ is an \defn{$r$-acyclic region intersection graph}.
A \gd\ $(H,\cV)$ with a fixed family $(H_v)_{v \in G}$ of co-parts is \defn{$r$-acyclic} if the family $(H_v)_{v \in G}$ is $r$-acyclic.

Let us observe that honest $r$-acyclic graph-decompositions always have high-girth models, i.e.\ the models are locally trees.

\begin{lemma}\label{lem:H-high-girth}
Let $r \geq 3$ be an integer, let $G$ be a (possibly infinite) graph, and let $(H, \cV)$ be an honest, $r$-acyclic graph-decomposition of $G$. Then $H$ has girth greater than $r$. 
\end{lemma}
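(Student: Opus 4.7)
The plan is to argue by contradiction. Suppose $H$ contains a cycle $C = h_1 h_2 \ldots h_k h_1$ with $k \leq r$. I want to contradict the $r$-acyclicity of the family $(H_v)_{v \in V(G)}$ by exhibiting a set $X \subseteq V(G)$ of size at most $r$ such that the cycle $C$ is entirely contained in $H_X = \bigcup_{x \in X} H_x$.

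For each edge $h_i h_{i+1}$ of $C$ (indices modulo $k$), honesty guarantees a witness $w_i \in V(G)$ such that $h_i h_{i+1}$ lies in $H_{w_i}$. Concretely, I would invoke the characterization recorded just after the definition of honesty, namely that honesty is equivalent to $H = \bigcup_{v \in V(G)} H_v$, so every edge of $H$ belongs to some co-part. Setting $X \coloneqq \{w_1, \ldots, w_k\}$, we have $|X| \leq k \leq r$, and by construction every edge of $C$, and hence $C$ itself, is contained in $H_X$. But $|X| \leq r$, so $r$-acyclicity forces $H_X$ to be acyclic, yielding the desired contradiction.

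The main subtle point is that one really needs the edges of $C$, not merely their endpoints, to lie inside $H_X$. If one instead tried to invoke only the nonempty-adhesion form of honesty by choosing $w_i \in V_{h_i} \cap V_{h_{i+1}}$, one would get only $h_i, h_{i+1} \in V(H_{w_i})$ together with some path between them in the connected graph $H_{w_i}$; a priori these paths could overlap and collapse the concatenated closed walk into a walk inside a tree, for instance a star whose leaves are the $h_i$, which contains no cycle at all. Passing through the equivalent edge-level formulation $H = \bigcup H_v$ elegantly sidesteps this and delivers the cycle $C$ inside $H_X$ directly.
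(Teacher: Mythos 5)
Your proof is correct and is essentially the paper's own argument: the paper likewise covers the edges of a short cycle $C$ by co-parts $H_{v_f}$, choosing via honesty a vertex $v_f$ in the adhesion set $V_f$ of each edge $f$ of $C$, so that $C \subseteq \bigcup_{f \in E(C)} H_{v_f}$ contradicts $r$-acyclicity. Your extra care about needing the edges of $C$ (not merely their endpoints) to lie in the union is well placed, and is precisely what the paper's remark that honesty is equivalent to $H = \bigcup_{v} H_v$ encodes in its one-line step $C \subseteq \bigcup_{f \in E(C)} H_{v_f}$.
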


\begin{proof}
Let $C$ be a cycle of $H$ of length $\ell \geq 3$. Since $(H,\cV)$ is honest, we may choose for each edge $f$ of $C$ a vertex $v_f$ of $G$ in its adhesion set $V_f$. Now, $C \subseteq \bigcup_{f \in E(C)} H_{v_f}$. Since $(H, \cV)$ is $r$-acyclic, it follows that $\ell > r$. Therefore, $H$ has no cycle of length at most $r$, i.e.\ $H$ has girth greater than $r$. 
\end{proof}

In light of the notion of \emph{$r$-acyclic}, we may refer to acyclic region-representations, families of subgraphs, or clique graphs as being \defn{$\infty$-acyclic}.
Then \cref{thm:chordal} says that chordal graphs are exactly the graphs which have $\infty$-acyclic region representations. On the other end of the spectrum, every graph has not only a region representation but indeed even a $(g(G)-1)$-acyclic region representation, where $\mathdefn{g(G)} \geq 3$ is the \defn{girth} of $G$, i.e.\ the length of a shortest cycle in $G$.

\begin{example}\label{ex:every-graph-2acyclic}
    Every graph $G$ has a $(g(G)-1)$-acyclic region representation $v \mapsto H_v$ over some graph $H$ such that the bag $V_h = \{v \in V(G) \mid h \in V(H_v)\}$ is a clique of size at most $2$ for every node $h$ of $H$.
\end{example}

\begin{proof}
    Let $G$ be a graph and let $H$ be the graph formed from $G$ by subdividing each edge $e$ once by a vertex $v_e$.
    Then assign each vertex $v$ of $G$ the star $S_v$ in $H$ formed by $v$ together with $v_e$ for all edges $e$ incident to $v$ in $G$. 
    Note that the stars $S_v$ are edge-disjoint, and every cycle in the subdivision $H$ of $G$ meets every $S_v$ in precisely $0$ or $2$ edges.
    Thus, $v \mapsto H_v$ is a $(g(G)-1)$-acyclic region representation of $G$, the bag $V_{v_e}$ is $\{u,v\}$ for every edge $e = uv$ of $G$, and the bag $V_v$ is $\{ v\}$ for every vertex $v$ of $G$.
\end{proof}

Given \cref{thm:chordal} and \cref{ex:every-graph-2acyclic}, it is natural to ask whether there is a nice characterization $r$-acyclic region intersection graphs for general values of $r$. The class of $r$-acyclic region intersection graphs is (inclusion-wise) monotone decreasing as $r$ goes to infinity, eventually approaching the class of chordal graphs. 
It turns out that $r$-locally chordal graphs are exactly those graphs. 

\begin{mainresult}\label{mainresult:racycliccliquegraphs-iff-r-locally-chrodal-etc}
    Let $G$ be a locally finite graph and $r \geq 3$ an integer.
    Then the following are equivalent:
    \begin{enumerate}
        \item $G$ is $r$-locally chordal.
        \item $G$ has an $r$-acyclic clique graph (equivalently: $G$ admits an $r$-acyclic \gd\ into maximal cliques).
        \item $G$ has an $r$-acyclic canonical region representation (equivalently: $G$ admits a canonical $r$-acyclic \gd\ into cliques).
        \item $G$ has an $r$-acyclic region representation (equivalently: $G$ admits an $r$-acyclic \gd\ into cliques).
    \end{enumerate}
\end{mainresult}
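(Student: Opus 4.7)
The plan is to close the cycle of implications \((4) \to (1) \to (3) \to (4)\) and \((4) \to (1) \to (2) \to (4)\), noting that \((3) \to (4)\) is immediate (a canonical region representation is a region representation) and that \((2) \to (4)\) follows by combining \cref{lem:cg-is-gd-into-max-cliques} with \cref{thm:rig-is-gd-into-cliques}: an \(r\)-acyclic clique graph of \(G\) corresponds to an \(r\)-acyclic \gd\ of \(G\) into maximal cliques, hence to an \(r\)-acyclic region representation. The substance therefore lies in \((4) \to (1)\), \((1) \to (2)\), and \((1) \to (3)\).

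For \((4) \to (1)\), I would invoke \cref{BasicCharacterization} and establish \(r\)-chordality and wheel-freeness separately from a given \(r\)-acyclic region representation \(v \mapsto H_v\). Since \(r \geq 3\), the representation is in particular \(3\)-acyclic, so wheel-freeness follows from Gavril's characterization (cf.\ \cref{subsec:ComparisonGavril}). For \(r\)-chordality, suppose \(C\) is an induced cycle in \(G\) of length \(k\) with \(4 \leq k \leq r\) and vertex set \(X\). Then \(H_X\) is acyclic by \(r\)-acyclicity and connected by \cref{lem:strongerH2forRegRep}, hence a tree, and the restriction of \(v \mapsto H_v\) to \(X\) is a subtree representation of \(C\). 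By \cref{prop:subtreeintersec-is-chordal}, \(C\) would be chordal — contradicting that it is a chordless cycle of length at least~\(4\).

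For \((1) \to (3)\) and \((1) \to (2)\), the plan is to lift to the \(r\)-local cover \(G_r\) of \(G\), which is chordal by \cref{BasicCharacterization}, and then push a canonical clique tree back down. By \cref{thm:chordal} together with \cite{canTDofChordalGraphs}, \(G_r\) admits an \(\Aut(G_r)\)-canonical acyclic clique tree \(T\); in particular \(T\) is invariant under the deck transformation group \(\Gamma\) of the covering \(\pi : G_r \to G\), so the quotient \(H := T/\Gamma\) is well-defined. Because \(\pi\) is a local isomorphism on balls of small enough radius, it induces a bijection \(K(G_r)/\Gamma \to K(G)\) on maximal cliques, which identifies \(H\) with a spanning subgraph of \(\bK(G)\). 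To verify that \(H\) is a clique graph, one checks that \(H[K_G(v)]\) is connected for every \(v \in V(G)\) by lifting \(v\) to a preimage \(\tilde v \in G_r\) and using the connectedness of \(T[K_{G_r}(\tilde v)]\). The \(r\)-acyclicity of \(H\) is the crux: any cycle in \(H\) traversing at most \(r\) regions lifts, via the cover, to a cycle in \(T\), contradicting that \(T\) is a tree. This yields \((1) \to (2)\); since the construction is \(\Aut(G)\)-equivariant in a natural sense, it also yields \((1) \to (3)\).

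The main obstacle will be executing the lift-and-project argument rigorously. The key step is matching the depth of the \(r\)-local cover to the parameter \(r\), so that short walks and cycles in the quotient \(H\) admit unique enough lifts to \(T\) to preserve \(r\)-acyclicity, and so that maximal cliques of \(G_r\) biject with those of \(G\) modulo \(\Gamma\). A secondary point is that \(\Aut(G)\) does not act on \(G_r\) directly: one must lift automorphisms of \(G\) through \(\Aut(G_r)/\Gamma\) and verify that the quotient construction descends consistently, in order to obtain the canonicity required in~(3).
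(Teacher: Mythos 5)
Your overall architecture matches the paper's: the easy implications are identified correctly, $(4)\to(1)$ goes through \cref{BasicCharacterization} by ruling out wheels and short chordless cycles, and $(1)\to(2),(3)$ lifts to the chordal local cover, takes a canonical clique tree, and folds it back down via the deck transformation group. Your $(4)\to(1)$ argument is correct and is essentially the one direction of \cref{lem:inducedcycles-vs-cyclesinHX} that is needed here: for an induced cycle $G[X]$ with $4\le |X|\le r$, the set $H_X$ is connected (\cref{lem:strongerH2forRegRep}) and acyclic, hence a tree, so $G[X]$ would be a subtree intersection graph and therefore chordal by \cref{prop:subtreeintersec-is-chordal} --- a contradiction.

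The genuine gap is in the step you yourself flag as the crux: proving that the folded decomposition is $r$-acyclic. Your stated argument --- ``any cycle in $H$ traversing at most $r$ regions lifts, via the cover, to a cycle in $T$'' --- does not work. Since $H=T/\Gamma$ is the quotient of a tree by a free action, cycles of $H$ lift to \emph{paths} of $T$ whose endpoints differ by a nontrivial deck transformation; no contradiction with acyclicity of $T$ arises from the lift alone. What one actually needs is that $p_r$ restricts to an \emph{injection} on $T_{\hat X}$ for a consistently chosen lift $\hat X$ of $X$, i.e.\ that $T_{\hat X}$ is disjoint from all its nontrivial $\Gamma$-translates. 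The paper can prove this only when $X$ is a \emph{clique}, using \cref{lem:CliqueAndLocalCover}~\cref{item:CliquesMove} (distinct lifts of a clique have disjoint closed neighborhoods) to get \cref{cor:pr-isomorphism-between-cliques} and hence only $3$-acyclicity of the folded decomposition (\cref{clm:3-acyclic}). For a general connected $X$ with $|X|\le r$, the set $N_{G_r}[\hat X]$ has diameter on the order of $r$, and the $r$-local cover does not guarantee that it is disjoint from its translates, so the direct lifting argument breaks down. The paper closes this gap with a second, independent ingredient that your proposal lacks: \cref{lem:char-locallychordal-via-3-acyclic-region-rep}, which shows that for $r$-locally chordal graphs every $3$-acyclic region representation is automatically $r$-acyclic. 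Its proof is purely combinatorial (no covers), going through the chordal case \cref{lem:char-chordal-via-3-acyclic-region-rep} by induction on the number of maximal cliques using Dirac's minimal-separator characterization (\cref{thm:CharacterisationChordalViaMinimalVertexSeparator}) and the tree-gluing lemma \cref{lem:glueingtrees-to-trees}. You would need to either supply this upgrade theorem or find a genuinely new argument for $r$-acyclicity of the folded decomposition; as written, the plan does not yield $(1)\to(2)$ or $(1)\to(3)$ for $r>3$.
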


\subsection{Proof of \texorpdfstring{\cref{mainresult:racycliccliquegraphs-iff-r-locally-chrodal-etc}}{Theorem XX}}

The proof of \cref{mainresult:racycliccliquegraphs-iff-r-locally-chrodal-etc} involves two steps. First, we prove that \cref{mainresult:racycliccliquegraphs-iff-r-locally-chrodal-etc} holds for $r = 3$: 
\begin{restatable}{mainresult}{acyclicRegionRepOfWheelFree}\label{mainthm:wheelfree}
    Let $G$ be a locally finite graph. The following are equivalent: 
    \begin{enumerate}
        \item \label{wheelfree:i} $G$ is $3$-locally chordal (equivalently:  wheel-free). 
        \item \label{wheelfree:ii} $G$ has a 3-acyclic clique graph. 
        \item \label{wheelfree:iii} $G$ has a 3-acyclic canonical region representation. 
        \item \label{wheelfree:iv} $G$ has a 3-acyclic region representation.  
    \end{enumerate}
\end{restatable}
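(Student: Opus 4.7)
The implications $(\ref{wheelfree:ii}) \Rightarrow (\ref{wheelfree:iv})$ and $(\ref{wheelfree:iii}) \Rightarrow (\ref{wheelfree:iv})$ are immediate, since 3-acyclic clique graphs and 3-acyclic canonical region representations are, by definition, special cases of 3-acyclic region representations. For $(\ref{wheelfree:iv}) \Rightarrow (\ref{wheelfree:i})$, I would combine \cref{BasicCharacterization} (3-locally chordal $=$ wheel-free) with the fact, alluded to in the introduction as \cref{lem:wheels-not-3acyclic-RIG}, that no wheel $W_n$ with $n \geq 4$ admits a 3-acyclic region representation. The latter follows by applying the Helly property for pairwise intersecting subtrees of a forest to each triple $\{H_c, H_{u_i}, H_{u_{i+1}}\}$ of regions of the hub and two consecutive rim vertices: one obtains common points $p_i \in H_c \cap H_{u_i} \cap H_{u_{i+1}}$, and then the cyclic sequence $p_1, \ldots, p_n$, traced around the rim, forces a cycle sitting inside the union of at most three regions, contradicting 3-acyclicity.

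The heart of the proof is $(\ref{wheelfree:i}) \Rightarrow (\ref{wheelfree:ii})$ together with $(\ref{wheelfree:i}) \Rightarrow (\ref{wheelfree:iii})$, which I would establish simultaneously by constructing a canonical 3-acyclic clique graph of $G$ from a canonical clique forest of the 3-local cover $G_3$. By \cref{BasicCharacterization}, $G_3$ is chordal, and it is locally finite because $G$ is; hence \cref{thm:chordal}~\ref{item:chordal:canonical} yields an $\Aut(G_3)$-canonical clique forest $T$ of $G_3$. The covering projection $p \colon G_3 \to G$ restricts to an isomorphism on every ball of radius $3/2$, so it induces, for each vertex $v \in V(G)$ and each lift $\tilde{v} \in p^{-1}(v)$, a bijection $K_{G_3}(\tilde{v}) \to K_G(v)$. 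Exploiting that $\Aut(G_3)$-invariance of $T$ entails deck-invariance, I would define a spanning subgraph $H$ of $\bK(G)$ by declaring an edge between maximal cliques $K_1, K_2 \in K(G)$ precisely when some (equivalently, any) pair of lifts $\tilde{K}_1, \tilde{K}_2$ meeting above a common vertex $v \in K_1 \cap K_2$ is an edge of $T$.

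What remains are three verifications. For \emph{connectedness}, the graph $H[K_G(v)]$ is isomorphic to $T[K_{G_3}(\tilde{v})]$ via the bijection induced by $p$, and the latter is connected because $T$ is a clique forest of $G_3$. For \emph{canonicity}, every $\gamma \in \Aut(G)$ lifts to an element of $\Aut(G_3)$, and the $\Aut(G_3)$-canonicity of $T$ then descends to $\Aut(G)$-canonicity of $H$ by construction. The main obstacle is the \emph{3-acyclicity} check: a hypothetical cycle in the union $H[K_G(u)] \cup H[K_G(v)] \cup H[K_G(w)]$ should lift to a closed walk in $T_{\tilde{u}} \cup T_{\tilde{v}} \cup T_{\tilde{w}}$ under a coherent choice of lifts -- coherence being available because every triangle and every two-edge path in $G$ already lies within a single ball of radius $3/2$ and hence lifts uniquely -- and such a closed walk in the forest $T$ is contractible and cannot encode a genuine cycle, yielding the desired contradiction. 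The connectedness and canonicity steps should be formal once the definitions are in place; I expect the genuine work of the proof to go into setting up the coherent-lift construction and reducing 3-acyclicity to the forest structure of $T$.
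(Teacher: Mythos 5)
Your overall architecture matches the paper's: the easy implications, $(\ref{wheelfree:iv})\Rightarrow(\ref{wheelfree:i})$ via the non-representability of wheels, and $(\ref{wheelfree:i})\Rightarrow(\ref{wheelfree:ii}),(\ref{wheelfree:iii})$ by passing to the chordal $3$-local cover $G_3$ and folding a tree-structure back down via $p_3$. However, there is one genuine gap in how you set up the second part. You propose to prove (\ref{wheelfree:ii}) and (\ref{wheelfree:iii}) \emph{simultaneously} from a single $\Aut(G_3)$-canonical clique forest of $G_3$, citing \cref{thm:chordal}~\ref{item:chordal:canonical}. But that item only provides a canonical acyclic region representation, i.e.\ a canonical forest-decomposition into (not necessarily maximal) cliques --- it does \emph{not} provide a canonical clique forest, and such an object need not exist: already $K_{1,3}$ is chordal and equal to its own $3$-local cover, its clique trees are the two-edge paths in the triangle $\bK(K_{1,3})$, and none of these is invariant under the rotation of the three leaves. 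This is precisely why the theorem keeps ``canonical'' and ``clique graph'' in separate items, and why the paper runs two parallel arguments: for (\ref{wheelfree:ii}) it uses \cref{thm:chordalTDmaximal}, which yields a clique tree of $G_3$ canonical only under the deck transformation group $\Gamma(p_3)$ (enough for the projection $H = p_3(T)$ to be well defined, but yielding no canonicity of $H$), and for (\ref{wheelfree:iii}) it folds a fully canonical subtree representation of $G_3$ into cliques, which is not into maximal cliques. Your combined construction would prove a strictly stronger statement that is false.

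Two further steps are under-specified, though the intended conclusions are correct. First, in $(\ref{wheelfree:iv})\Rightarrow(\ref{wheelfree:i})$, after producing the common points $p_i \in H_c \cap H_{u_i} \cap H_{u_{i+1}}$ you claim the cyclic sequence ``forces a cycle sitting inside the union of at most three regions''; but all the connecting paths live inside the tree $H_c$, where a closed walk contains no cycle, so no such cycle appears. The paper instead observes that $x\mapsto H_x$, $x_i \mapsto H_{x_i}\cap H_x$ is a subtree representation of $W_n$ over the tree $H_x$, contradicting \cref{prop:subtreeintersec-is-chordal} since $W_n$ is not chordal; you would need this (or an equivalent) finish. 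Second, your $3$-acyclicity check via ``coherent lifts of closed walks'' glosses over the two points that do the work in \cref{clm:3-acyclic}: one must first reduce to the case that the offending set $X$ of at most three vertices is a clique (\cref{lem:2acyclic-yields-Xcontainscycle-size3-is-clique}), and then use that $p_3$ restricts to an \emph{isomorphism} from $T_{\hat X}$ to $H_X$ (\cref{cor:pr-isomorphism-between-cliques}, which rests on the disjointness of closed neighborhoods of distinct clique lifts, \cref{lem:CliqueAndLocalCover}); mere contractibility of closed walks in a forest does not rule out a cycle of $H$ lifting to a path between two distinct lifts.
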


Next, we prove that for $r > 3$, the $3$-acyclic region representations of $r$-locally chordal graphs are already the $r$-acyclic region representations: 

\begin{restatable}{mainresult}{CharViaAcyclicRegionRep}\label{lem:char-locallychordal-via-3-acyclic-region-rep}
    Let $v \mapsto H_v$ be a $3$-acyclic region representation of a (possibly infinite) graph $G$ over a graph $H$, and let $r > 3$ be an integer.
    Then $G$ is $r$-locally chordal if and only if $v \mapsto H_v$ is $r$-acyclic.
\end{restatable}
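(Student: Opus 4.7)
I would prove both directions by contrapositive. Since the given representation is 3-acyclic, \cref{mainthm:wheelfree} implies $G$ is wheel-free, so by the basic characterization (\cref{BasicCharacterization}) it suffices to show the representation is $r$-acyclic if and only if $G$ is $r$-chordal (i.e., has no induced cycle of length between $4$ and $r$).

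For the direction \emph{$r$-acyclic $\Rightarrow$ $r$-chordal}, suppose $C = v_1 v_2 \cdots v_k v_1$ is an induced cycle of length $k$ with $4 \leq k \leq r$. I would show that $H_{V(C)}$ must contain a cycle, contradicting $r$-acyclicity (as $|V(C)| = k \leq r$). The regions $H_{v_i}$ are connected subgraphs of $H$ that meet pairwise if and only if the corresponding vertices are adjacent in $G$, and $H_{V(C)}$ is connected by \cref{lem:strongerH2forRegRep}. If $H_{V(C)}$ were acyclic, it would be a tree containing each $H_{v_i}$ as a subtree, and by \cref{prop:subtreeintersec-is-chordal} the intersection graph of $\{H_{v_1}, \ldots, H_{v_k}\}$ would be chordal. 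But since $C$ is induced, this intersection graph is exactly the $k$-cycle $C_k$, which is not chordal for $k \geq 4$, a contradiction.

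For the direction \emph{$r$-chordal $\Rightarrow$ $r$-acyclic}, suppose for contradiction that $H_X$ contains a cycle for some $X \subseteq V(G)$ with $|X| \leq r$; choose such an $X$ of minimum size together with a shortest cycle $\gamma$ in $H_X$. The goal is to exhibit an induced cycle in $G[X]$ of length between $4$ and $|X| \leq r$, contradicting $r$-chordality. Each region $H_x$ is a tree (acyclic by 3-acyclicity applied to the singleton $\{x\}$, connected by the region-representation axiom), so $\gamma \cap H_x$ is a disjoint union of arcs of $\gamma$. Choosing a containing region for each maximal arc of $\gamma$ produces a cyclic label sequence $x_1, \ldots, x_m$ with consecutive labels distinct, each arc boundary lying in $H_{x_i} \cap H_{x_{i+1}}$. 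This is a closed walk in $G[X]$; minimality of $X$ forces every $x \in X$ to appear as a label, and 3-acyclicity forces at least four \emph{distinct} labels to appear, since otherwise $\gamma$ would lie in a 3-union $H_{\{y_1,y_2,y_3\}}$, which is acyclic.

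The technical crux is to upgrade the closed walk $x_1, \ldots, x_m$ to an induced cycle of length $\geq 4$ in $G[X]$. The plan is to exploit the minimality of $\gamma$: any shortcut in the walk should translate to a strict shortcut in $\gamma$. Whenever three labels $x_a, x_b, x_c$ form a triangle in $G$, the 3-union $H_{\{x_a, x_b, x_c\}}$ is a tree (acyclic by 3-acyclicity, connected by \cref{lem:strongerH2}), and so Helly's theorem for subtrees yields a common vertex of the three regions; rerouting the relevant portion of $\gamma$ through this vertex is intended to produce a strictly shorter cycle in $H_X$, contradicting minimality. Iterating this and applying the analogous argument to longer chords would force $(x_1, \ldots, x_m)$ to be a simple cycle with no chords in $G$, hence an induced cycle of length $\geq 4$. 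The principal obstacle is making this reduction fully rigorous: one has to compare arc lengths in $\gamma$ to the Helly-produced paths in the relevant 3-unions, and carefully bookkeep the successive rerouting steps so that each produces a genuinely shorter cycle.
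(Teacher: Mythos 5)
Your overall reduction is sound and matches the paper's: wheel-freeness follows from the existence of a $3$-acyclic region representation (though for possibly infinite $G$ you should cite \cref{lem:wheels-not-3acyclic-RIG} rather than \cref{mainthm:wheelfree}, which is stated for locally finite graphs), and then \cref{BasicCharacterization} reduces everything to comparing $r$-chordality with $r$-acyclicity. Your first direction ($r$-acyclic $\Rightarrow$ no induced cycle of length $4,\dots,r$) is correct and is essentially the paper's argument: restrict the representation to $V(C)$, use \cref{lem:strongerH2forRegRep} to see $H_{V(C)}$ is connected, and invoke \cref{prop:subtreeintersec-is-chordal} to rule out acyclicity of $H_{V(C)}$.

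The converse direction, however, has a genuine gap, and it is exactly the one you flag as the ``technical crux.'' The rerouting scheme does not obviously work: each maximal arc of $\gamma$ inside a tree of the form $H_{\{x_a,x_b,x_c\}}$ is already the \emph{unique} path in that tree between its endpoints, so routing through the Helly node cannot shorten $\gamma$; and if instead you aim to reduce the number of labels (to contradict minimality of $X$), you must show both that the rerouted closed walk genuinely avoids some $x\in X$ globally (a label can recur on several arcs of $\gamma$) and that it still contains a cycle of $H$ — neither of which your sketch establishes. The paper avoids this entirely by first proving \cref{lem:char-chordal-via-3-acyclic-region-rep}: for a $3$-acyclic region representation, $G$ is chordal if and only if the representation is acyclic. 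The hard half is proved by induction on the number of maximal cliques of a finite connected induced subgraph $G[Y]$: pick a minimal separator $S$ of $G[Y]$, which is a clique by Dirac's theorem (\cref{thm:CharacterisationChordalViaMinimalVertexSeparator}), split into a separation $\{A,B\}$ with $A\cap B=S$, apply induction to $G[A]$ and $G[B]$, and glue the trees $H_A$ and $H_B$ along the tree $H_S=H_A\cap H_B$ via \cref{lem:glueingtrees-to-trees}. With this in hand, the induced-cycle characterization (\cref{lem:inducedcycles-vs-cyclesinHX}) follows cleanly: if $X$ is inclusion-minimal with $H_X$ cyclic, then every proper subset $Y\subsetneq X$ has $H_Y$ acyclic, hence $G[Y]$ chordal, while $G[X]$ is not chordal, so $G[X]$ is an induced cycle. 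I recommend replacing your rerouting argument with this separator-based induction; as written, your proof of the direction ``$r$-locally chordal $\Rightarrow$ $r$-acyclic'' is incomplete.
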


The proof of \cref{mainthm:wheelfree} appears in \cref{sec:wheel-free} and the proof of \cref{lem:char-locallychordal-via-3-acyclic-region-rep} appears in \cref{sec:3acyclic-racyclic}. Before proceeding to the proofs, we remark that 3-locally-chordal graphs have been analyzed through a similar view in the past. Specifically, in the nineties, Gavril \cite{nbrhood-chordal} studied (finite) 3-locally-chordal graphs, which he called {\em neighborhood chordal}, and obtained a characterization similar to that in \cref{mainthm:wheelfree}. Next, we discuss his result and how it relates to ours. 

\subsection{Comparison to Gavril's work}\label{subsec:ComparisonGavril}

Gavril \cite[Theorem~5]{nbrhood-chordal} characterized the (finite) graphs that are $3$-locally chordal (equivalently: wheel-free) as precisely the graphs which have Helly, $2$-acyclic region representations.
To state his result, let us first introduce the property \emph{Helly}.
Let $H$ be a graph and let $\bH$ be a family of subgraphs of $H$. Then, $\bH$ is \defn{(finitely) Helly}\footnote{If we restrict this notion to finite graphs $H$, this is the same as Gavril's definition of \emph{Helly}.} if every finite set of pairwise intersecting elements of $\bH$ has nonempty intersection.
A region representation $v \mapsto H_v$ of a graph $G$ over $H$ is \defn{Helly}
if $(H_v \mid v \in V(G))$ is Helly.
A clique graph of $G$ is \defn{Helly}
if its corresponding region representation is.
A graph-decomposition $(H,\cV)$ is \defn{Helly} if the family $(H_v \mid v \in V(G))$ of its co-parts is Helly.

\begin{theorem}[Gavril \cite{nbrhood-chordal}, Theorems 4 \& 5]\label{thm:GavrilnbhdchordalChar}
    Let $G$ be a finite graph. The following are equivalent:
    \begin{enumerate}
        \item $G$ is $3$-locally chordal (equivalently: wheel-free).
        \item $G$ has a Helly, $2$-acyclic clique graph.
        \item $G$ has a Helly, $2$-acylic region representation.
    \end{enumerate}
\end{theorem}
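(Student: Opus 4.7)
The plan is to close the cycle (2)~$\Rightarrow$~(3)~$\Rightarrow$~(1)~$\Rightarrow$~(2). The implication (2)~$\Rightarrow$~(3) is immediate from \cref{thm:rig-is-gd-into-cliques}, since a clique graph induces a region representation whose co-parts inherit both the $2$-acyclic and Helly properties.

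For (3)~$\Rightarrow$~(1), I would argue by contradiction. Suppose $v \mapsto H_v$ is a Helly, $2$-acyclic region representation of $G$, and that $G$ contains an induced wheel $W_n$ with $n\geq 4$, hub $h$, and rim $r_1,\dots,r_n$ (cyclically indexed). By $2$-acyclicity, the region $H_h$ is connected and acyclic, hence a tree, and each $T_i := H_h\cap H_{r_i}$ is a subtree of $H_h$: it is nonempty because $hr_i\in E(G)$, and it is connected because $H_h\cup H_{r_i}$ is a tree (connected since the two subgraphs meet, acyclic by $2$-acyclicity), so the intersection of its subtrees $H_h$ and $H_{r_i}$ is again a subtree. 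Applying Helly to $\{H_h,H_{r_i},H_{r_{i+1}}\}$, which pairwise intersect because $hr_i, hr_{i+1}, r_ir_{i+1}\in E(G)$, yields $T_i\cap T_{i+1}\neq\emptyset$. On the other hand, for non-consecutive $i,j$ we have $r_ir_j\notin E(G)$, so $H_{r_i}\cap H_{r_j}=\emptyset$ by the region representation property, and hence $T_i\cap T_j=\emptyset$. Thus the subtrees $T_1,\dots,T_n$ of the tree $H_h$ have intersection graph exactly $C_n$; but \cref{prop:subtreeintersec-is-chordal} ensures that any intersection graph of subtrees of a tree is chordal, contradicting $n\geq 4$.

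For (1)~$\Rightarrow$~(2), the key observation is that any clique graph of $G$ is automatically Helly. Indeed, if finitely many co-parts $H_{v_1},\dots,H_{v_k}$ pairwise intersect, then $v_1,\dots,v_k$ are pairwise adjacent in $G$ (by the region representation property), so they form a clique and are contained in some maximal clique $K$ of $G$; this $K$ is a node of $H_{v_i}=H[K_G(v_i)]$ for every $i$, giving $K\in\bigcap_i V(H_{v_i})$. Hence it suffices to produce a $2$-acyclic clique graph of $G$. For this I would invoke \cref{THEalgorithm} from the introduction, which for wheel-free $G$ returns a $3$-acyclic clique graph; this is in particular a $2$-acyclic and, by the previous observation, Helly clique graph of $G$.

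The main obstacle is localized in the appeal to \cref{THEalgorithm}: producing even a $2$- or $3$-acyclic clique graph is the substantive construction. An alternative route is a maximum-weight construction inside $\bK(G)$, in the spirit of \cref{thm:maximum-weight-acyclic-spanning}: weight each edge $K_1K_2$ of $\bK(G)$ by $|K_1\cap K_2|$ and extract a maximum-weight $2$-acyclic spanning subgraph. The delicate point is to ensure $2$-acyclicity without disconnecting any $H[K_G(v)]$, which must be enforced using wheel-freeness exactly as in the obstruction argument of (3)~$\Rightarrow$~(1).
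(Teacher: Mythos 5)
Your proposal is correct, but it assembles the proof differently from the paper. The paper does not argue this theorem directly: it observes (via \cref{lem:Helly2-acyclic-equiv-3acyclic}) that ``Helly and $2$-acyclic'' is the same as ``$3$-acyclic'' for families of regions, so the statement becomes the $r=3$ case of \cref{mainresult:racycliccliquegraphs-iff-r-locally-chrodal-etc}, i.e.\ \cref{mainthm:wheelfree}, whose substantive direction is proved by folding a canonical clique tree of the chordal $3$-local cover $G_3$ down to $G$. You bypass \cref{lem:Helly2-acyclic-equiv-3acyclic} entirely: your (3)$\Rightarrow$(1) is a direct wheel obstruction in the spirit of \cref{lem:wheels-not-3acyclic-RIG}, except that you use the Helly hypothesis outright (applied to the triples $\{H_h,H_{r_i},H_{r_{i+1}}\}$) where the paper's lemma has to extract the common node from $3$-acyclicity; and for (1)$\Rightarrow$(2) you note that every clique graph is automatically Helly via a common maximal clique --- a correct and in fact slightly cleaner observation than the Helly part of \cref{lem:cliquegraph-2acyclic-iff-3acyclic}, since it needs no acyclicity at all --- and then delegate the real work to \cref{THEalgorithm}/\cref{thm:correctalgo}. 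This is legitimate and non-circular for finite $G$ (the hypothesis here): the correctness proof of the algorithm rests on chordality of the balls $B_{3/2}(v)$, Kruskal's theorem, and \cref{lem:clique-tree-sufficient}, none of which presuppose this theorem, and the paper itself remarks that the algorithm yields an independent finite-graph proof of the decomposition theorem. What the paper's local-cover route buys that yours does not is the extension to infinite locally finite graphs and the existence of canonical decompositions; what your route buys is an elementary, effective construction that stays entirely inside $\bK(G)$. Only cosmetic quibbles: \cref{THEalgorithm} lives in \cref{sec:ComputingCliqueGraphs} rather than the introduction, and your closing paragraph about a maximum-weight alternative is really \cref{thm:maximum-weight-acyclic-spanning}, whose proof in the paper presupposes the existence of an $r$-acyclic clique graph, so it cannot replace the algorithm as the existence argument.
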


We show in this section that \cref{thm:GavrilnbhdchordalChar} is the same as \cref{mainresult:racycliccliquegraphs-iff-r-locally-chrodal-etc}~(without (iii)) for $r = 3$ (and finite graphs $G$). 
For this, it suffices to prove the following \cref{lem:Helly2-acyclic-equiv-3acyclic}: a region representation is $3$-acyclic if and only if it is Helly and $2$-acyclic.
We remark that this then also yields that the region representations in \cref{mainresult:racycliccliquegraphs-iff-r-locally-chrodal-etc} are Helly.

\begin{lemma}\label{lem:Helly2-acyclic-equiv-3acyclic}
    Let $\bH$ be a family of regions of a (possibly infinite) graph $H$. Then $\bH$ is $3$-acylic if and only if $\bH$ is $2$-acyclic and Helly.
\end{lemma}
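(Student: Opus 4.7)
The plan is to prove both directions separately, using one recurring gluing lemma: if two subtrees $T_1, T_2$ of a graph have connected nonempty intersection, then $T_1 \cup T_2$ is again a subtree. This is a one-line count via $|E| = |V| - 1$ in the finite case and a unique-paths argument in general. I would also observe up front that each individual $H_i \in \bH$ is a subtree, since it is connected and, by $2$-acyclicity applied to $H_i \cup H_i$, acyclic.

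For the forward direction ($3$-acyclic $\Rightarrow$ $2$-acyclic $+$ Helly), $2$-acyclicity is immediate. For Helly, I would induct on the cardinality $n$ of a pairwise intersecting finite subfamily. The base $n=3$ is the classical tree-Helly property: by $3$-acyclicity, $H_1 \cup H_2 \cup H_3$ is a forest, the three subtrees jointly lie in one component, and tree-Helly yields a common point. In the inductive step, use the inductive hypothesis to pick $p_i \in \bigcap_{j \neq i} H_j$ for each $i$, and assume toward contradiction $p_i \notin H_i$. Fix any $x \in H_1 \cap H_2$ and any $j \geq 3$; inside the forest $H_1 \cup H_2 \cup H_j$ (available by $3$-acyclicity) take the median $m$ of the triple $p_1, p_2, x$. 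Uniqueness of paths in the forest forces the $p_1$--$p_2$ path to coincide with the $p_1$--$p_2$ path inside every $H_k$ for $k \geq 3$, so $m \in H_k$ for all such $k$; and the other two paths lie inside $H_2$ and $H_1$, so also $m \in H_1 \cap H_2$, yielding the contradiction $m \in \bigcap_i H_i$.

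For the converse, assume $2$-acyclic and Helly and suppose for contradiction that $H_1 \cup H_2 \cup H_3$ contains a cycle. The first step is to show the three regions pairwise intersect: if, say, $H_1 \cap H_2 = \emptyset$, then a short case analysis on whether $H_1 \cap H_3$ and $H_2 \cap H_3$ are empty shows, via the gluing lemma, that $H_1 \cup H_2 \cup H_3$ decomposes either as a disjoint union of forests or as a single tree, contradicting the existence of the cycle. Once pairwise intersection holds, Helly produces $v \in H_1 \cap H_2 \cap H_3$; then $H_2 \cup H_3$ is a tree by the gluing lemma at $v$, and $H_1 \cap (H_2 \cup H_3) = (H_1 \cap H_2) \cup (H_1 \cap H_3)$ is a connected subtree of $H_1$ as the union of two subtrees of $H_1$ both containing $v$. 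A final application of the gluing lemma yields that $H_1 \cup H_2 \cup H_3$ is itself a tree, once more contradicting the cycle.

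The main obstacle is the pairwise-intersection step in the converse direction: the Helly hypothesis is silent when a pair happens to be disjoint, so one has to squeeze acyclicity of the full union directly out of the $2$-acyclic hypothesis and the gluing lemma, via the disjoint-pair case analysis, before Helly can even be brought to bear.
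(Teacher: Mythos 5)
Your proof is correct and self-contained, but it routes around the two external inputs the paper leans on. For the direction ``$2$-acyclic and Helly $\Rightarrow$ $3$-acyclic'' the paper first proves a separate lemma (that a cycle in the union of three regions of a $2$-acyclic family forces pairwise intersection, via a path-chasing argument on the cycle) and then invokes Gavril's statement that a $2$-acyclic family with a common node has a tree as its union; you instead obtain pairwise intersection by contraposition (the disjoint-pair case analysis) and re-derive the three-region tree statement by two applications of the gluing lemma --- same skeleton, different implementation. The genuinely different part is ``$3$-acyclic $\Rightarrow$ Helly'': the paper reduces the Helly property to triples by citing Gavril's Theorem~2 (which it separately extends to infinite $H$ by a finitization argument) and then handles a pairwise-intersecting triple with a two-path crossing argument, whereas you prove Helly for arbitrary finite subfamilies directly, by induction on their size, using the median of $p_1,p_2,x$ in the tree $H_1\cup H_2\cup H_j$. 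This buys independence from Gavril's reduction lemma at the cost of a slightly longer induction; note that your base case silently uses the folklore Helly property of subtrees of a tree, which the paper states anyway. One step in your inductive argument deserves an explicit line: to conclude $m\in H_k$ for \emph{every} $k\ge 3$ (not just the fixed $j$), you need that the $p_1$--$p_2$ paths $P_k\subseteq H_k$ all coincide; this follows because for $k\neq k'$ the union $P_k\cup P_{k'}\subseteq H_k\cup H_{k'}$ is acyclic by $2$-acyclicity, so two distinct $p_1$--$p_2$ paths would create a cycle. With that line added, the argument is complete.
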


To prove \cref{lem:Helly2-acyclic-equiv-3acyclic} not only for finite but also infinite graphs $H$, we first extend two of Gavril's statements on finite graphs $H$ to infinite graphs $H$.

\begin{lemma}[Gavril \cite{nbrhood-chordal}, Lemma~1 and Theorem~2]
\label{lem:gavril-lemmas}
    Let $\bH$ be a 2-acyclic family of regions of a (possibly infinite) graph $H$.
    \begin{enumerate}
        \item \label{gavril:1} If the elements in $\bH$ have a node in common, then the union $\bigcup \bH$ of all elements in $\bH$ is a tree. 
        \item \label{gavril:2} $\bH$ is Helly if and only if the intersection of every three pairwise intersecting elements of $\bH$ is nonempty.
    \end{enumerate}
\end{lemma}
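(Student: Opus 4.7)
For \cref{gavril:1}, the plan is to fix a common vertex $v$ of all regions in $\bH$ and show that $\bigcup \bH$ is both connected (immediate from the shared $v$) and acyclic. First, each $H \in \bH$ is a tree: it is connected as a region, and acyclic because $H \cup H = H$ is acyclic by 2-acyclicity. Each pairwise union $H \cup H'$ is likewise a tree, being connected (both contain $v$) and acyclic by 2-acyclicity; and in the tree $H \cup H'$, the two connected subtrees $H$ and $H'$ intersect in a connected subtree. I would then induct on $n$ to show that every finite union $T_n := H_1 \cup \cdots \cup H_n$ of members of $\bH$ is a tree: in the inductive step, $T_n \cap H_{n+1} = \bigcup_{i \leq n}(H_i \cap H_{n+1})$ is connected (each piece is connected and contains $v$), so the auxiliary fact that the union of two trees with connected, nonempty intersection is a tree applies. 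Finally, any cycle in $\bigcup \bH$ is finite, hence contained in some finite union $T_n$, which is a tree --- contradiction.

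For \cref{gavril:2}, the forward implication is immediate from the definition of Helly. For the reverse, I would induct on $n \geq 3$ to show that any $n$ pairwise intersecting members of $\bH$ share a common vertex, with the base case $n = 3$ given by hypothesis. In the inductive step for $n \geq 4$, applying the inductive hypothesis to $H_1, \ldots, H_{n-1}$ yields a common vertex, so by \cref{gavril:1} the union $T := H_1 \cup \cdots \cup H_{n-1}$ is a tree. For each $i < n$, the intersection $H_n \cap H_i$ is a connected subtree of the tree $H_n \cup H_i$ and lies in $H_i \subseteq T$, so it is a subtree of $T$. The pairwise intersections $(H_n \cap H_i) \cap (H_n \cap H_j) = H_n \cap H_i \cap H_j$ are nonempty by the three-element hypothesis. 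The Helly property for finitely many pairwise intersecting subtrees of a tree (which holds even when $T$ is infinite, since only finitely many subtrees are involved) then yields a common vertex, which is the desired common vertex of $H_1, \ldots, H_n$.

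The main obstacle I anticipate is handling infinite model graphs $H$ cleanly, since Gavril's original proofs rely on Euler-characteristic or cycle-counting arguments that apply directly only to finite graphs. My approach is to sidestep this by exploiting that every cycle uses only finitely many edges and so already lies in a finite subfamily, reducing the infinite case to the finite one. The auxiliary lemma that the union of two trees with connected intersection is a tree can be verified in the infinite setting by the same finite-cycle reduction, applied to the finite subgraph spanned by a putative cycle. Once this finitization step is in place, both inductions above proceed structurally without further difficulty.
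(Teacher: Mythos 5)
Your proof is correct, but it takes a genuinely different route from the paper's. The paper treats Gavril's finite-case results as black boxes and reduces the infinite case to them: for a putative cycle $C$ (respectively, a finite pairwise-intersecting subfamily), it truncates each relevant region $H_i$ to a finite connected subgraph containing the common node and $C \cap H_i$ (respectively, containing chosen witnesses $h_{i,j}$ of the pairwise intersections), observes that the truncated family is still $2$-acyclic, and invokes Gavril's finite lemma/theorem on the resulting finite graph. You instead reprove both statements from scratch: part (i) by an induction on finite subfamilies using the tree-gluing lemma (\cref{lem:glueingtrees-to-trees}) together with the observation that all the pairwise intersections contain the common vertex, followed by the finite-cycle reduction; part (ii) by an induction on $n$ that combines part (i) with the Helly property of subtrees of a (possibly infinite) tree (\cref{thm:subtree-reps-are-helly}) applied to the subtrees $H_n \cap H_i$ of $T = H_1 \cup \cdots \cup H_{n-1}$. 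Your argument is self-contained and only needs to finitize the subfamily, not the regions themselves, since your auxiliary lemmas already hold for infinite trees; the paper's argument is shorter given the citation but leans on Gavril's finite-graph proofs. Both are valid, and your identification of the finitization issue as the main obstacle matches exactly what the paper's proof is designed to handle.
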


\begin{proof}
    Gavril \cite[Lemma~1 and Theorem~2]{nbrhood-chordal} proved \cref{gavril:1} and \cref{gavril:2} for finite graphs $H$, respectively. 
    In the following, we reduce the case that $H$ is infinite to the case that $H$ is finite.

    \begin{claim}
        \cref{gavril:1} holds for infinite $H$, as well.
    \end{claim}
    
    \begin{claimproof}
    The union $\bigcup \bH$ is connected, as the elements of $\bH$ are connected and all share a node, say $t$.
    Suppose for a contradiction that $\bigcup \bH$ contains a cycle $C$. 
    Then there is a finite subfamily $\bH^* = (H_i \mid i \in [n])$ of $\bH$ such that $C \subseteq \bigcup \bH^*$.
    For each $H_i$, we may choose a finite connected subgraph $H'_i$ of the connected graph $H_i$ that contains $t$ and the finite subgraph $C \cap H_i$ of $H_i$.
    As $H_i' \subseteq H_i$ and $\bH$ is $2$-acyclic, the family $\bH'$ consisting of the regions $H_i'$ of $H'$ is $2$-acyclic.
    Their union $H' \coloneqq \bigcup \bH'$ is a finite graph.
    The choice of the $H_i'$ ensures that $t$ is a common node of the $H_i'$.
    Applying Gavril's \cref{gavril:1} for finite graphs to the finite family $\bH'$ of finite regions in the finite graph $H'$ yields that $\bigcup \bH'$ is a tree.
    This contradicts that, by the choice of the $H_i'$, their union $\bigcup \bH'$ contains the cycle $C$.
    \end{claimproof}

    \begin{claim}
        \cref{gavril:2} holds for infinite $H$, as well.
    \end{claim}
    
    \begin{claimproof}
    Let $\bH^* = (H_i \mid i \in [n])$ be a finite subfamily of pairwise intersecting elements of $\bH$. 
    For each two $H_i,H_j$ in $\bH^*$, fix a node $h_{i,j} = h_{j,i}$ in their intersection $H_i \cap H_j$. 
    Now, for each $H_i$ in $\bH^*$, we may choose a finite connected subgraph $H_i'$ of the connected graph $H$ that contains $h_{i,j}$ for every $H_j$ in $\bH^*$, since $\bH^*$ is finite and $H_i$ is connected.
    As $H'_i \subseteq H_i$ and $\bH$ is $2$-acyclic, the family $\bH' \coloneqq (H_i' \mid i \in [n])$ is $2$-acyclic.
    Their union $H'\coloneqq \bigcup \bH'$ is a finite graph.
    The choice of the $H_i'$ ensures that each two elements $H_i', H_j'$ of $\bH'$ still intersect in $h_{i,j} = h_{j,i}$.
    Now, applying Gavril's \cref{gavril:2} for finite graphs to the finite family of finite regions in the finite graph $H'$ yields that the intersection $\bigcap \bH'$ of all elements of $\bH'$ is nonempty.
    Since $\bigcap \bH' \subseteq \bigcap \bH^*$, also $\bigcap \bH^*$ is nonempty.
    \end{claimproof}
    This completes the proof.
\end{proof}

Additionally, we need the following lemma to deduce that a Helly and $2$-acyclic family is $3$-acyclic.

\begin{lemma}\label{lem:2acyclic-yields-pw-intersecting}
    Let $\bH$ be a $2$-acyclic family of regions of a (possibly infinite) graph $H$.
    If the union of three elements of $\bH$ contains a cycle, then each two of these regions intersect.
\end{lemma}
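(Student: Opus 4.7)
The plan is to argue by contradiction. Suppose $H_1, H_2, H_3 \in \bH$ are three regions with $H_1 \cup H_2 \cup H_3$ containing a cycle $Z$. By symmetry, it suffices to show that $H_1 \cap H_2$ is nonempty; so I will assume $H_1 \cap H_2 = \emptyset$ and produce a cycle in $H_1 \cup H_3$, contradicting the $2$-acyclicity of $\bH$.

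First, I will partition the edges of $Z$ into $E_1 \coloneqq E(Z) \cap E(H_1)$, $E_2 \coloneqq E(Z) \cap E(H_2)$, and $E_3 \coloneqq E(Z) \setminus (E_1 \cup E_2)$, where $E_3 \subseteq E(H_3)$ since $E(Z) \subseteq E(H_1) \cup E(H_2) \cup E(H_3)$. The assumption $V(H_1) \cap V(H_2) = \emptyset$ forces no edge to lie in both $E(H_1)$ and $E(H_2)$ and, more crucially, no $E_1$-edge and $E_2$-edge of $Z$ to share an endpoint. Hence every maximal run of consecutive $E_2$-edges on $Z$ is either the entire cycle---which would give $Z \subseteq H_1 \cup H_2$, already contradicting $2$-acyclicity---or is flanked on both sides by an $E_3$-edge, so that its two endpoints lie in $V(H_3)$.

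Next, using that $H_2 \cup H_3$ is acyclic, I will pick an edge $e_1 \in E(Z) \setminus (E(H_2) \cup E(H_3))$; such an $e_1$ necessarily lies in $E_1$ and avoids $E(H_3)$. I will then build a closed walk $W$ in $H_1 \cup H_3$ by traversing $Z$ and replacing each maximal $E_2$-run, say from $u$ to $v$ with $u, v \in V(H_3)$, by a $u$--$v$ path in $H_3$, which exists because $H_3$ is connected.

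Since the replacement paths all live in $H_3$ and $e_1 \notin E(H_3)$, the walk $W$ traverses $e_1$ exactly once. Deleting this single occurrence leaves a walk between the endpoints of $e_1$ inside $(H_1 \cup H_3) - e_1$, which contains a path; concatenating this path with $e_1$ yields a cycle in $H_1 \cup H_3$, giving the desired contradiction. The main subtlety is precisely this last point: it is essential to choose $e_1$ outside $E(H_3)$, so that the $H_3$-paths used in the substitution cannot reintroduce $e_1$ and spoil the extraction of a cycle from $W$.
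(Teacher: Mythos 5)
Your proof is correct, but it takes a genuinely different route from the paper's. The paper argues directly: for each $i$ it uses $2$-acyclicity to find an edge $f_i$ of the cycle lying in $H_i$ but in no other $H_j$, extends $f_i$ to the maximal subpath $P_i \subseteq H_i$ of the cycle whose interior avoids $H_j \cup H_k$, and observes that the two ends of $P_i$ cannot lie in the same other region (else $P_i$ together with a path in that region's tree would close a cycle in a $2$-union); hence one end witnesses $H_i \cap H_j \neq \emptyset$ and the other witnesses $H_i \cap H_k \neq \emptyset$. You instead argue by contradiction from $V(H_1) \cap V(H_2) = \emptyset$, exploiting that vertex-disjointness forces every maximal $E_2$-segment of the cycle to be anchored at both ends in $V(H_3)$, so that it can be rerouted through the connected region $H_3$; the resulting closed walk in $H_1 \cup H_3$ traverses your private edge $e_1 \in E(H_1) \setminus (E(H_2) \cup E(H_3))$ exactly once and therefore yields a cycle in a $2$-union. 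Both arguments hinge on the same use of $2$-acyclicity to produce a ``private'' edge of the cycle, but the paper's version is shorter and produces explicit intersection points for all three pairs at once, while yours isolates exactly where the disjointness hypothesis enters and is a clean instance of closed-walk surgery; the only delicate points in your write-up --- that $e_1$ must avoid $E(H_3)$ so the rerouting cannot reintroduce it, and that a single-occurrence edge of a closed walk can be completed to a cycle --- are both handled correctly.
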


\begin{proof}
    Let $H_1,H_2,H_3$ be three regions in $\bH$ whose union contains a cycle $C$.
    Note that the $H_i$ are trees, as $\bH$ is $1$-acyclic.
    Moreover, since $\bH$ is $2$-acyclic, the cycle $C$ contains an edge $f_i$ of $H_i$ which is not contained in any other $H_j$ with $j \neq i$.
    Consider the path $P_i$ in $C$ which contains $f_i$ and such that the intersection of $P_i$ with $\bigcup_{j \neq i} H_j$ is exactly the ends of $P_i$; in particular, $P_i \subseteq H_i$.
    Since $\bH$ is $2$-acyclic and $P_i$ contains the edge $f_i$ that is not in $H_j$ with $j \neq i$, the ends of $P_i$ are not both in the same $H_j$ for a $j \neq i$.
    The ends of the $P_i$ thus witness that the desired intersections are nonempty.
\end{proof}

\begin{corollary}\label{lem:2acyclic-yields-Xcontainscycle-size3-is-clique}
    Let $v \mapsto H_v$ be a region representation of a graph $G$ over a (possibly infinite) graph $H$.
    Assume that $X$ is a vertex-subset of $G$ of minimum size such that $H_X$ contains a cycle.
    If $X$ has size at most $3$, then $X$ is a clique of $G$.
    \qed
\end{corollary}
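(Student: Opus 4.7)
The plan is to reduce the statement to an application of \cref{lem:2acyclic-yields-pw-intersecting}. The key observation is that the minimality of $X$ forces the family $(H_x \mid x \in X)$ to be $(|X|-1)$-acyclic: for every proper subset $X' \subsetneq X$ we have $|X'| < |X|$, hence $H_{X'}$ is acyclic by the minimality assumption. In particular, each individual $H_x$ is a tree, and every union of at most $|X|-1$ of the $H_x$'s is acyclic.

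If $|X|=1$, then $X$ is a single vertex and hence trivially a clique of $G$. If $|X|=2$, write $X = \{u,v\}$; then $H_u$ and $H_v$ are trees, and their union $H_u \cup H_v$ contains a cycle. If $H_u$ and $H_v$ were disjoint, their union would be a forest and contain no cycle, a contradiction. Hence $H_u \cap H_v \neq \emptyset$, and since $v \mapsto H_v$ is a region representation this gives $uv \in E(G)$, so $X$ is a clique.

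The main case is $|X|=3$. Here the family $(H_x \mid x \in X)$ is $2$-acyclic by the minimality observation above, and by hypothesis its union contains a cycle. Applying \cref{lem:2acyclic-yields-pw-intersecting} to these three regions yields that each two of them intersect. Again using that $v \mapsto H_v$ is a region representation, this translates into $xy \in E(G)$ for every pair $x,y \in X$, so $X$ is a clique of $G$.

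There is no real obstacle: the work has already been done in \cref{lem:2acyclic-yields-pw-intersecting}, and the only point that requires any care is the case $|X|=2$, which is not covered by that lemma and has to be handled by the direct observation that two disjoint trees cannot have a cycle in their union.
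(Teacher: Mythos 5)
Your proof is correct and follows essentially the same route as the paper's: the paper also dispatches sizes $1$ and $2$ as immediate and, for size $3$, notes that minimality makes the family $2$-acyclic and invokes \cref{lem:2acyclic-yields-pw-intersecting}. You have merely spelled out the "immediate" cases (in particular the disjoint-trees argument for $|X|=2$) in more detail than the paper does.
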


\begin{proof}
    If $X$ has size $1$ or $2$, then the statement is immediate.
    If $X$ has size $3$, then the family $(H_v \mid v \in V(G))$ is $2$-acyclic, and thus the statement follows from \cref{lem:2acyclic-yields-pw-intersecting}.
\end{proof}

The following well-known fact about trees will be frequently used in the rest of the paper. We include a proof for completeness. 

\begin{lemma}\label{lem:glueingtrees-to-trees}
    Let $T$ and $T'$ be two trees.
    Then the following three statements are equivalent:
    \begin{enumerate}
        \item\label{item:glueingtrees:i} $T \cup T'$ is a tree.
        \item\label{item:glueingtrees:ii} $T \cap T'$ is connected.
        \item\label{item:glueingtrees:iii} $T \cap T'$ is a tree.
    \end{enumerate}
\end{lemma}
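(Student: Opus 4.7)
The plan is to handle (ii)$\Leftrightarrow$(iii) by a direct observation, and then close the loop (i)$\Rightarrow$(ii)$\Rightarrow$(i). Throughout I rely only on the defining property of trees, namely that between any two vertices there is a unique path.

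The equivalence (ii)$\Leftrightarrow$(iii) is essentially by definition: since $T \cap T' \subseteq T$, and any subgraph of a tree is a forest, $T \cap T'$ is a forest. Under the usual convention that connected graphs are nonempty, a forest is a tree precisely when it is connected, so (ii) and (iii) coincide.

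For (i)$\Rightarrow$(ii), I would first observe that $V(T\cap T')\neq\emptyset$, because otherwise $T$ and $T'$ would lie in distinct components of $T\cup T'$, contradicting that the latter is a tree. Now, given any $u,v\in V(T\cap T')$, the unique $u$–$v$ path $P$ in the tree $T\cup T'$ must coincide with the unique $u$–$v$ path in $T$ and with the unique $u$–$v$ path in $T'$ (each of these exists inside $T\cup T'$ and connects the same endpoints). Hence $P\subseteq T\cap T'$, proving connectedness.

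The nontrivial direction is (ii)$\Rightarrow$(i). Connectedness of $T\cup T'$ is immediate: since $T\cap T'$ is nonempty and each of $T,T'$ is connected, every vertex of $T\cup T'$ reaches $V(T\cap T')$. For acyclicity, I would assume for contradiction that $T\cup T'$ contains a cycle $C$ and color each edge of $C$ red if it lies in $E(T)$, blue if it lies in $E(T')\setminus E(T)$. Since $T$ and $T'$ are acyclic, both colors appear, and $C$ decomposes into an even number of maximal monochromatic arcs $P_1,\dots,P_{2k}$ alternating in color. A vertex shared by two consecutive arcs of different colors is incident to both a red edge (in $T$) and a blue edge (in $T'$), hence lies in $V(T)\cap V(T')=V(T\cap T')$. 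For a red arc $P_i$ with endpoints $u,v\in V(T\cap T')$, $P_i$ is a $u$–$v$ path in $T$; by (ii) there is also a $u$–$v$ path inside $T\cap T'\subseteq T$, and uniqueness of paths in the tree $T$ forces these paths to agree, so $P_i\subseteq T\cap T'$. The symmetric argument, replacing $T$ by $T'$, handles the blue arcs. Hence $C\subseteq T\cap T'$, contradicting acyclicity of the forest $T\cap T'$.

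The main obstacle is the cycle-alternation step in (ii)$\Rightarrow$(i): one has to verify that each switch vertex lies in $V(T\cap T')$ and then invoke uniqueness of paths in $T$ and in $T'$ to collapse each monochromatic arc into $T\cap T'$. Everything else reduces to uniqueness of paths in trees and a short connectedness check, and the argument works without finiteness assumptions since a cycle has only finitely many edges.
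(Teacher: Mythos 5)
Your proof is correct. The directions (i)$\Rightarrow$(ii) and (ii)$\Leftrightarrow$(iii) match the paper's argument (uniqueness of paths in $T\cup T'$, resp.\ ``a connected subgraph of a tree is a tree''), with the welcome extra care of checking $V(T\cap T')\neq\emptyset$ in (i)$\Rightarrow$(ii). Where you genuinely diverge is the acyclicity step of the hard direction. The paper proves (iii)$\Rightarrow$(i) by picking a cycle $C$ for which $T\cap C$ and $T'\cap C$ are single paths $P,Q$, joining their common endvertices by a path $R$ in the tree $T\cap T'$, and observing that $R\cup P$ or $R\cup Q$ yields a cycle inside $T$ or $T'$; this is shorter but leans on the unproved (though standard) claim that such a special cycle can be chosen. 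You instead take an arbitrary cycle, decompose it into alternating maximal monochromatic arcs, show each switch vertex lies in $V(T\cap T')$, and collapse every arc into $T\cap T'$ via path-uniqueness in $T$ or $T'$, contradicting that $T\cap T'$ is a forest. Your version is more self-contained and avoids the special-cycle selection, at the cost of a slightly longer bookkeeping argument; both are valid and neither needs finiteness.
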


\begin{proof}
    We prove \cref{item:glueingtrees:i}$\to$\cref{item:glueingtrees:ii}$\to$\cref{item:glueingtrees:iii}$\to$\cref{item:glueingtrees:i}. 

    \begin{claim}
        \cref{item:glueingtrees:i} implies \cref{item:glueingtrees:ii}.
    \end{claim}

    \begin{claimproof}
        Assume \cref{item:glueingtrees:i}, i.e.\ $T \cup T'$ is a tree.
        For every two given vertices $s,t$ of $T \cap T'$, their unique $s$--$t$ path $P$ in $T \cup T'$ is the unique $s$--$t$ path in $T$ and also in $T'$.
        Hence, $P$ is an $s$--$t$ path in $T \cap T'$. 
        Thus, $T \cap T'$ is connected, i.e.\ \cref{item:glueingtrees:ii} holds.
    \end{claimproof}

    \begin{claim}
        \cref{item:glueingtrees:ii} implies \cref{item:glueingtrees:iii}.
    \end{claim}

    \begin{claimproof}
        Any connected subgraph of a tree is a tree.
        Thus, the, by \cref{item:glueingtrees:ii}, connected subgraph $T \cap T'$ of the tree $T$ is a tree, as desired.
    \end{claimproof}

    \begin{claim}
        \cref{item:glueingtrees:iii} implies \cref{item:glueingtrees:i}.
    \end{claim}

    \begin{claimproof}
        Assume \cref{item:glueingtrees:iii}, i.e.\ $T \cap T'$ is a tree. 
        In particular, $T \cap T'$ is nonempty.
        Thus, $T \cup T'$ is connected, as $T$ and $T'$ are trees.
        To show \cref{item:glueingtrees:i}, it remains to show that $T \cup T'$ is acyclic.
        Suppose for a contradiction that $T \cup T'$ is not acyclic.
        As $T, T'$ are trees, we may pick cycle $C$ in $T \cup T'$ such that $P\coloneqq T \cap C$ and $Q \coloneqq T' \cap C$ are paths.
        But as the endvertices of $P$ (equivalently: $Q$) are in $T \cap T'$, and thus there is a path $R$ in the tree $T \cap T'$ joining them.
        Now $R$ is either distinct from $P$ or from $Q$.
        Hence $R \cup P \subseteq T$ or $R \cup Q \subseteq T'$ contains a cycle, which is a contradiction to $T,T'$ being trees.
    \end{claimproof}

    This completes the proof.    
\end{proof}

We are now ready to prove \cref{lem:Helly2-acyclic-equiv-3acyclic}. 

\begin{proof}[Proof of \cref{lem:Helly2-acyclic-equiv-3acyclic}]
    Let $\bH$ be a family of regions of a graph $H$.

    \begin{claim}
        If $\bH$ is Helly and $2$-acyclic, then $\bH$ is $3$-acyclic.
    \end{claim}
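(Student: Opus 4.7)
My plan is to argue by contradiction, combining the three previously established lemmas in a short chain. Suppose $\bH$ is Helly and $2$-acyclic but not $3$-acyclic. Then there exist three elements $H_1,H_2,H_3 \in \bH$ whose union $H_1 \cup H_2 \cup H_3$ contains a cycle.

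First I would apply \cref{lem:2acyclic-yields-pw-intersecting} to the triple $H_1,H_2,H_3$: since $\bH$ is $2$-acyclic and the union of these three regions contains a cycle, the lemma gives that each pair $H_i \cap H_j$ is nonempty. Thus the subfamily $\{H_1,H_2,H_3\}$ is a \emph{finite} pairwise intersecting subfamily of $\bH$.

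Next I would invoke the Helly hypothesis on this finite pairwise intersecting subfamily to obtain a node $t \in H_1 \cap H_2 \cap H_3$. At this point I have a $2$-acyclic subfamily (inherited from $\bH$) of regions with a common node, which is exactly the hypothesis of \cref{lem:gavril-lemmas}\cref{gavril:1}. Applying that lemma yields that $H_1 \cup H_2 \cup H_3$ is a tree, contradicting the assumption that this union contains a cycle.

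There is no real obstacle here; the main content sits in the preparatory lemmas, and the claim is essentially a three-line assembly. The one subtlety worth flagging is that we only need the Helly property for the three-element subfamily $\{H_1,H_2,H_3\}$, so it is unnecessary to require the full (finite) Helly property in this direction of the equivalence — only the three-wise version is used, which harmonizes nicely with \cref{lem:gavril-lemmas}\cref{gavril:2}.
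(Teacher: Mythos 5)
Your proof is correct and follows exactly the paper's argument: \cref{lem:2acyclic-yields-pw-intersecting} gives pairwise intersection, Helly gives a common node, and \cref{lem:gavril-lemmas}~\cref{gavril:1} gives that the union is a tree, a contradiction. Your side remark that only the three-wise Helly property is used is also consistent with how the paper applies \cref{lem:gavril-lemmas}~\cref{gavril:2} in the converse direction.
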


    \begin{claimproof}
        Consider a three-element subset $\bH'$ of $\bH$. 
        Suppose for a contradiction that their union contains a cycle.
        Then \cref{lem:2acyclic-yields-pw-intersecting} yields that they are pairwise intersecting.
        Thus, since $\bH$ is Helly, the elements of $\bH'$ have a node in common. By \cref{lem:gavril-lemmas}~\labelcref{gavril:1}, it follows that the union of $\bH'$ is a tree, and thus acyclic. 
    \end{claimproof}

    \begin{claim}
        If $\bH$ is $3$-acyclic, then $\bH$ is Helly and $2$-acyclic.
    \end{claim}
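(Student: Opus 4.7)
The plan is to prove both conclusions separately. The $2$-acyclic conclusion is immediate: any union of at most $2$ elements of $\bH$ is also a union of at most $3$ elements, hence acyclic by hypothesis. So the substance is to show the Helly property.

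For Helly, I would invoke \cref{lem:gavril-lemmas}~\labelcref{gavril:2}: since $\bH$ is now known to be $2$-acyclic, it suffices to prove that any three pairwise intersecting elements $H_1, H_2, H_3 \in \bH$ have nonempty common intersection. So fix such a triple. Being $3$-acyclic, $H_1 \cup H_2 \cup H_3$ is acyclic; being pairwise intersecting with each $H_i$ connected, it is also connected; so it is a tree. By the same token each pair $H_i \cup H_j$ is a connected, acyclic subgraph, hence a tree.

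The key step is now an application of \cref{lem:glueingtrees-to-trees}. Applied to $H_1 \cup H_2$ and $H_3$ (both trees, with tree union), it gives that $(H_1 \cup H_2) \cap H_3 = (H_1 \cap H_3) \cup (H_2 \cap H_3)$ is connected. Since $H_1, H_2, H_3$ pairwise intersect, both $H_1 \cap H_3$ and $H_2 \cap H_3$ are nonempty subgraphs of the tree $H_3$. Two disjoint nonempty subgraphs of a tree can never have a connected union (the union would have at least two components, since no edge connects them), so $H_1 \cap H_2 \cap H_3 = (H_1 \cap H_3) \cap (H_2 \cap H_3)$ must be nonempty. This gives Helly via \cref{lem:gavril-lemmas}~\labelcref{gavril:2}.

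The main (minor) obstacle is just being careful with what "intersection of subgraphs" means when transferring between the $H_i$'s and their pairwise/triple-wise intersections, and making sure the connectedness argument in the tree $H_3$ really forces a common vertex; but once one has \cref{lem:glueingtrees-to-trees} and \cref{lem:gavril-lemmas}~\labelcref{gavril:2} in hand, the argument is essentially a one-shot application of each.
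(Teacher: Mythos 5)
Your proposal is correct. Both you and the paper reduce the Helly property to the three-element case via \cref{lem:gavril-lemmas}~\labelcref{gavril:2} and both note that $2$-acyclicity is immediate, but the core step differs. The paper argues by contradiction: assuming $H_3$ misses $S = H_1 \cap H_2$, it picks $x_i \in H_i \cap H_3$, takes the $x_1$--$x_2$ path $P$ in $H_3$ and the $x_1$--$x_2$ path $Q$ in the tree $H_1 \cup H_2$, observes that $P$ avoids $S$ while $Q$ must pass through $S$, and so obtains two distinct paths whose union contains a cycle in $H_1 \cup H_2 \cup H_3$, contradicting $3$-acyclicity. You instead apply \cref{lem:glueingtrees-to-trees} to the trees $H_1 \cup H_2$ and $H_3$ (whose union is a tree by $3$-acyclicity) to conclude that $(H_1 \cup H_2) \cap H_3 = (H_1 \cap H_3) \cup (H_2 \cap H_3)$ is connected, and then note that a connected union of two nonempty, vertex-disjoint subgraphs is impossible, forcing a common node. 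Your route is arguably cleaner in that it is direct rather than by contradiction and delegates the cycle-hunting to \cref{lem:glueingtrees-to-trees}, whose proof contains essentially the same two-paths argument the paper performs by hand here; the paper's version is more self-contained at this spot but repeats that work. One small point worth making explicit in a write-up is the distributivity identity $(H_1 \cup H_2) \cap H_3 = (H_1 \cap H_3) \cup (H_2 \cap H_3)$ for subgraphs (it holds vertex-wise and edge-wise), and that ``nonempty intersection'' in the Helly property refers to a common node, which is exactly what the shared vertex of $H_1 \cap H_3$ and $H_2 \cap H_3$ provides.
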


    \begin{claimproof}
        Since $\bH$ is $3$-acyclic, $\bH$ is also $2$-acyclic. 
        It remains to show that $\bH$ is Helly.
        By \cref{lem:gavril-lemmas}~(ii), it suffices to consider three pairwise intersecting elements $H_1,H_2,H_3$ of $\bH$ and show that their intersection is nonempty.
        Since $\bH$ is $2$-acyclic, $H_1 \cup H_2$ is acyclic. 
        So, since their intersection $S \coloneqq H_1 \cap H_2$ is nonempty but $H_1$ and $H_2$ are connected, $H_1 \cup H_2$ is connected, and thus a tree.
        If $H_3$ meets the intersection $S = H_1 \cap H_2$, then we are done. 
        Thus, we assume that $H_3$ does not intersect $S$.
        Fix $x_i \in H_i \cap H_3$ for $i = 1,2$ witnessing that $H_i$ and $H_3$ intersect.
        In particular, $x_i \notin S$ by assumption, so $x_i \not \in H_{3-i}$. 
        Let $P$ be the path from $x_1$ to $x_2$ in $H_3$, and let $Q$ be the path from $x_1$ to $x_2$ in $H_1 \cup H_2$. 
        In particular, $P$ and $Q$ are distinct because $P \subseteq H_3$ avoids $S$ and, since $x_i  \in H_i \setminus H_{3-i}$ for $i = 1,2$, the path $Q \subseteq H_1 \cup H_2$ meets $H_1 \cap H_2 = S$. 
        Hence, $P \cup Q \subseteq \bigcup_{i=1}^3 H_i$ contains a cycle, contradicting that $\bH$ is 3-acyclic. Therefore, $\bH$ is Helly.
    \end{claimproof}
    Therefore, 3-acyclic is equivalent to 2-acyclic and Helly for region representations. 
\end{proof}

\section{Region representations of wheel-free graphs}\label{sec:wheel-free}

In this section, we prove \cref{mainthm:wheelfree}, which we restate here.

\acyclicRegionRepOfWheelFree*

In view of \cref{lem:Helly2-acyclic-equiv-3acyclic}, Gavril~\cite[Theorems~4 and~5]{nbrhood-chordal} showed the equivalences of all the above but \cref{wheelfree:iii} for finite graphs $G$.
We revisit Gavril's construction in \cref{sec:ComputingCliqueGraphs} in an algorithmic context. 
In this section, our proof of \cref{mainthm:wheelfree} follows a new approach. First, we give a basic proof that \cref{wheelfree:iv} implies \cref{wheelfree:i} in \cref{subsec:properties-of-3acyclic}. But the main effort of this section, beginning in \cref{subsec:background-local-coverings}, is in proving that \cref{wheelfree:i} implies \cref{wheelfree:ii} and that \cref{wheelfree:i} implies \cref{wheelfree:iii}. To do this, we draw on and expand the recent theory of local structure from \cites{canonicalGD, computelocalSeps}. Essentially, we start from the fact, proven in \cite{LocallyChordal}, that $G$ is $r$-locally chordal if and only if its $r$-local cover $G_r$ is chordal (\cref{BasicCharacterization}). Chordal graphs admit clique trees and tree-decompositions into cliques, so the local cover $G_r$ admits these decompositions. Finally, the machinery of local theory can be used to turn decompositions of $G_r$ into the decompositions of $G$ that we seek.

\subsection{Properties of 3-acyclic region representations}\label{subsec:properties-of-3acyclic}
We prove two lemmas about 3-acyclic region representations. 
\begin{lemma}\label{lem:bag-cont-clique-in-3-acyclic}
    Let $(H,\cV)$ be a $3$-acyclic graph-decomposition of a connected (possibly infinite) graph $G$.
    If $K$ is a finite clique of $G$, then some bag $V_h$ of $(H,\cV)$ contains $K$.

    In particular,
    let $v \mapsto H_v$ be a $3$-acyclic region representation of a connected (possibly infinite) graph $G$ over some graph $H$.
    If $K$ is a finite clique of $G$, then $\bigcap_{v \in K} H_v$ is nonempty.
\end{lemma}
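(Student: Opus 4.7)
The plan is to reduce the claim to the Helly property of the family of co-parts, using the equivalence ``$3$-acyclic $\iff$ $2$-acyclic and Helly'' established in \cref{lem:Helly2-acyclic-equiv-3acyclic}. Given a $3$-acyclic graph-decomposition $(H,\cV)$ of $G$ with co-parts $(H_v)_{v\in G}$, the family $(H_v \mid v \in K)$ inherits $3$-acyclicity as a subfamily, hence it is Helly by \cref{lem:Helly2-acyclic-equiv-3acyclic}. Since $K$ is a clique, every two vertices $u,v \in K$ span an edge of $G$, so axiom \hyperref[H1E]{(H1E)} yields $H_u \cap H_v \neq \emptyset$. Thus the finite family $(H_v \mid v \in K)$ consists of pairwise intersecting regions, and the Helly property gives a node $h \in \bigcap_{v \in K} H_v$.

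From such an $h$ the conclusion is immediate: since $H_v \subseteq H[W_v]$ for each $v$, we get $h \in W_v$ for every $v \in K$, and by the duality $v \in V_h \iff h \in W_v$ this means $K \subseteq V_h$, so the bag $V_h$ contains $K$. This proves the first part.

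For the ``in particular'' part, the region representation $v \mapsto H_v$ corresponds via \cref{thm:rig-is-gd-into-cliques} to a graph-decomposition of $G$ into cliques whose co-parts are exactly the given $H_v$; so the conclusion $\bigcap_{v\in K} H_v \neq \emptyset$ is precisely the nonemptiness of the intersection obtained in the previous paragraph when applied to this decomposition. I do not anticipate any serious obstacle, since all the work sits inside \cref{lem:Helly2-acyclic-equiv-3acyclic}; the only point to verify carefully is that a subfamily of a $3$-acyclic family remains $3$-acyclic, which is immediate from the definition. Note also that the hypothesis that $G$ is connected plays no role in the argument, so this proof in fact gives a slightly stronger statement.
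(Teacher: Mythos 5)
Your proposal is correct and follows essentially the same route as the paper: both reduce to the Helly property via \cref{lem:Helly2-acyclic-equiv-3acyclic}, observe that the co-parts of the vertices of $K$ pairwise intersect because $K$ is a clique, and conclude via \cref{thm:rig-is-gd-into-cliques} for the in-particular part. Your side remark that connectedness of $G$ is not used is also consistent with the paper's argument.
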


\begin{proof}
    By \cref{lem:Helly2-acyclic-equiv-3acyclic}, $(H,\cV)$ is Helly.
    Since $K$ is a clique, \cref{axiomH1} yields that the co-parts $H_v$ with $v \in K$ are pairwise intersecting.
    Therefore, by Helly, there is some node $h$ in the intersection of all the co-parts $H_v$ with $v \in K$, i.e.\ $V_h \supseteq K$.

    The in-particular part follows immediately from the correspondence of region representations and graph-decompositions into cliques (\cref{thm:rig-is-gd-into-cliques}).
\end{proof}

Even though one could derive from \cref{thm:GavrilnbhdchordalChar} and \cref{lem:Helly2-acyclic-equiv-3acyclic} that the wheels $W_n$ with $n \geq 4$, and thus also the graphs that contain $W_n$ as an induced subgraph, do not have a $3$-acyclic region representation, we include a basic proof. 

\begin{lemma}\label{lem:wheels-not-3acyclic-RIG}
    A wheel $W_n$ with $n \geq 4$ does not have a $3$-acyclic region representation.
    Moreover, no (possibly infinite) graph $G$ that contains $W_n$ with $n \geq 4$ as an induced subgraph has a $3$-acyclic region representation.
\end{lemma}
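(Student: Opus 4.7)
The plan is to argue by contradiction: suppose $W_n$ with $n \geq 4$ admits a $3$-acyclic region representation $v \mapsto H_v$ over some graph $H$, denote the hub of $W_n$ by $u$ and the rim by $v_1 v_2 \cdots v_n v_1$, and derive a contradiction by exhibiting $n$ subtrees of the tree $H_u$ whose intersection graph is the cycle $C_n$, contradicting the chordality of subtree intersection graphs (\cref{prop:subtreeintersec-is-chordal}).

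First I would collect the consequences of $3$-acyclicity: by \cref{lem:Helly2-acyclic-equiv-3acyclic}, the family $(H_v)_{v \in W_n}$ is $2$-acyclic and Helly. Consequently each $H_v$ is a tree, and for every consecutive pair of rim vertices \cref{lem:bag-cont-clique-in-3-acyclic} applied to the triangle $\{u, v_i, v_{i+1}\}$ provides a node in $H_u \cap H_{v_i} \cap H_{v_{i+1}}$. Moreover, \cref{lem:gavril-lemmas}~\labelcref{gavril:1} makes $H_u \cup H_{v_i}$ a tree, whence \cref{lem:glueingtrees-to-trees} shows that each $T_i \coloneqq H_u \cap H_{v_i}$ is a subtree of $H_u$.

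Next I would identify the intersection graph of $\{T_1, \ldots, T_n\}$, viewed as subtrees of $H_u$, with the $n$-cycle $C_n$. Consecutive $T_i, T_{i+1}$ meet by the Helly step above. For indices $i, j$ with $j \not\equiv i, i \pm 1 \pmod n$, the rim vertices $v_i, v_j$ are non-adjacent in $W_n$ (here I crucially use $n \geq 4$), so $T_i \cap T_j \subseteq H_{v_i} \cap H_{v_j} = \emptyset$; in particular the $T_i$ are pairwise distinct. Thus the intersection graph of these subtrees of $H_u$ is exactly $C_n$, which fails to be chordal for $n \geq 4$, contradicting \cref{prop:subtreeintersec-is-chordal}.

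For the ``moreover'' statement, I would observe that restricting a region representation of $G$ to the vertex set of an induced $W_n$ produces a region representation of $W_n$ (the induced edge set is preserved) and that $3$-acyclicity is plainly inherited by subfamilies, so the first part of the lemma applies. I do not anticipate any serious obstacle; the only subtle check is that the $T_i$ are pairwise distinct so that the intersection graph $C_n$ is well-defined, but this is immediate from the non-adjacency pattern on the rim for $n \geq 4$.
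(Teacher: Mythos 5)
Your proposal is correct and follows essentially the same route as the paper's proof: both intersect the co-regions of the rim vertices with the hub's tree $H_u$, use the triangle argument via \cref{lem:bag-cont-clique-in-3-acyclic} to show consecutive intersections meet, and derive a contradiction with \cref{prop:subtreeintersec-is-chordal}. The only cosmetic difference is that the paper phrases the contradiction as a subtree representation of all of $W_n$ over $H_u$ (hub included), whereas you extract just the induced rim cycle $C_n$; both work.
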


\begin{proof}
    Assume $W_n$ has a $3$-acyclic region representation $v \mapsto H_v$ over $H$. 
    Let $W_n$ consist of the cycle $O= x_1x_2 \hdots x_nx_1$ and hub $x$ complete to $O$. 

    Set $H_i \coloneqq H_{x_i} \cap H_x$ for $i = 1, \hdots, n$. Since $v \mapsto H_v$ is 3-acyclic, in particular $2$-acyclic, and $H_{x_i} \cap H_x \neq \emptyset$, it follows that $H_i$ is a tree for $i = 1, \hdots, n$ by \cref{lem:glueingtrees-to-trees}. Next, we claim that $H_i \cap H_j \neq \emptyset$ if and only if $x_ix_j \in E(G)$. If $H_i \cap H_j \neq \emptyset$, then $H_{x_i} \cap H_{x_j} \neq \emptyset$, so $x_ix_j \in E(G)$. Conversely, suppose $x_ix_j \in E(G)$. Then, $x_ix_jx$ is a triangle in $W_n$, so by \cref{lem:bag-cont-clique-in-3-acyclic}, $H_{x_i} \cap H_{x_j} \cap H_x \neq \emptyset$. In particular, $(H_{x_i} \cap H_x) \cap (H_{x_j} \cap H_x) \neq \emptyset$, and thus $H_i \cap H_j \neq \emptyset$. 

    Finally, consider the mapping $x \mapsto H_x$ and $x_i \mapsto H_i$ for $i = 1, \hdots, n$. Now, this is a subtree representation of $W_n$ over the tree $H_x$, contradicting \cref{prop:subtreeintersec-is-chordal}.
    The moreover-part follows immediately from the fact that every ($3$-acyclic) region representation of a graph $G$ restricts to a ($3$-acyclic) region representation of the induced subgraph of $G$. 
\end{proof}

In the remainder of this section, we recall the construction of \gd s\ via coverings from \cite{canonicalGD}, investigate the interplay of cliques and the local covering, and then finally give the construction of the $3$-acyclic \gd s\ that we look for in \cref{mainthm:wheelfree}.

\subsection{Background: Local and normal coverings}\label{subsec:background-local-coverings}

We remind the reader that the graphs we consider in this paper are always simple, i.e.\ they neither have loops nor parallel edges.
A \defn{covering} of a (loopless) graph $G$ is a surjective homomorphism $p \colon \hat G \to G$ such that $p$ restricts to an isomorphism from the edges incident to any given vertex $\hat v$ of $\hat G$ to the edges incident to its projection $v \coloneqq p_r(\hat v)$. 
In this paper, we restrict our view to those coverings $p$ of graphs $G$ whose preimage $p^{-1}(C)$ of any component $C$ of $G$ is connected. Typically, we consider only coverings $p: \hat G \to G$ where both $G$ and $\hat G$ are connected. 

Let $r \in \N$.
A (graph) homomorphism $p \colon \hat G \to G$ is \defn{$r/2$-ball-preserving} if $p$ restricts to an isomorphism from $B_{\hat G}(\hat v, r/2)$ to $B_{G}(v, r/2)$ for every vertex $\hat v$ of $\hat G$ and $v \coloneqq p_r(\hat v)$.
Thus, a surjective homomorphism $p \colon \hat G \to G$ is a covering of a (simple) graph $G$ if and only if $p$ is $2/2$-ball preserving.

In \cite{canonicalGD} the \defn{$r$-local covering $p_r \colon G_r \to G$} of a connected graph $G$ is introduced as the covering of $G$ whose characteristic subgroup is the $r$-local subgroup of the fundamental group of $G$.
Since the formal definition is not relevant to this paper, we refer the reader to \cite[\S 4]{canonicalGD} for details.
In this paper, we will often use that the $r$-local covering preserves the $r/2$-balls.
Indeed, the $r$-local covering is the \defn{universal $r/2$-ball-preserving covering of $G$} \cite[Lemma 4.2, 4.3 \& 4.4]{canonicalGD}, i.e.\ for every $r/2$-ball-preserving covering $p \colon \hat G \to G$ there exists a covering $q \colon G_r \to \hat G$ such that $p_r = p \circ q$.
Following this equivalent description of the $r$-local covering, it is also defined for non-connected graphs $G$.

We refer to the graph $G_r$ as the \defn{$r$-local cover} of $G$.
Note that the $0$-, $1$- and $2$-local covers of a (simple) graph $G$ are forests. For the reader who is familiar with coverings, we remark that the $0$-, $1$- and $2$-local coverings are all the universal covering of $G$.
A \defn{deck transformation} of a covering $p \colon \hat G \to G$ is an automorphism $\gamma$ of $\hat G$ that commutes with $p$, i.e.\ $p = p \circ \gamma$.
We denote the group of deck transformations of a covering $p$ by \defn{$\Gamma(p)$}. A covering $p: \hat G \to G$ is \defn{normal} if for every vertex $v$ of $G$ and every two vertices $\hat v, \hat v' \in p^{-1}(v)$, there is a deck transformation $f \in \Gamma(p)$ such that $f(\hat v) = \hat v'$.

\subsection{Background: Canonical graph-decompositions via coverings}
The main result here is a construction from \cite{canonicalGD} that folds each canonical tree-decomposition of a normal cover $\hat G$ of a graph $G$, such as the $r$-local cover $G_r$, to a graph-decomposition of the graph $G$.

\begin{construction}[\cite{canonicalGD}, Construction~3.8 and Lemma~3.9]
\label{const:TDFoldingToGD}
    Let $p \colon \hat G \to G$ be any normal covering of a (possibly infinite) connected graph $G$.
    Let $(T, \hat \cV)$ be a $\Gamma$-canonical tree-decomposition of $\hat G$, where $\Gamma \coloneqq \Gamma(p)$ is the group of deck transformations of $p$.
    We define the \gd\ $(H,\cV)$ \defn{obtained from $(T,\hat \cV)$ by folding via $p$} as follows.
    The model graph $H$ is the orbit-graph of $T$ under the action of $\Gamma$, i.e.\ $H = T/
    \Gamma$. 
    Denote the quotient map from $T$ to the orbit-graph $H = T / \Gamma$ also by $p$.
    The bag $V_h$ corresponding to a node $h$ of $H$ is $p(\hat V_t)$ for a vertex (equivalently: every vertex) $t$ in the $\Gamma$-orbit $h$.\footnote{In \cite{canonicalGD}, the parts $G_h$ of $(H,\cV)$ are not the induced subgraphs $G[V_h]$ but only the projections $p(\hat G[\hat V_t])$. 
    We will omit this detail, as it will not be relevant in this paper, as these projections will nonetheless be induced whenever one considers decompositions into cliques, due to \cref{lem:CliqueAndLocalCover}.}
    As co-parts $H_v$ of $(H,\cV)$ we choose the projections $p(T_{\hat v})$ of the co-part $T_{\hat v}$ of $\cT$ corresponding to a (equivalently: every) lift $\hat v$ of $v$ to $\hat G$, where we denote also the quotient map from $T$ to the orbit-graph $H = T / \Gamma$ by $p$.

    Consider the equivalent description of the \td\ $(T, \hat\cV)$ as $\hat v \mapsto T_{\hat v}$.
    Then the equivalent description of the \gd\ $(H,\cV)$ is given by $v \mapsto H_v = p(T_{\hat v})$ for a (equivalently: every) lift $\hat v \in p^{-1}(v)$.
\end{construction}

It turns out that certain nice properties of tree-decompositions of $\hat G$ transfer to the graph-decompositions of $G$ obtained by folding via the normal covering $p \colon \hat G \to G$. 
Honest \td s, such as \td s with no empty bags of a connected graph, yield honest \gd s. 
Canonical tree-decompositions yield canonical graph-decompositions: 

\begin{lemma}[\cite{canonicalGD}, Lemma 3.12] \label{lem:gdobtainedviafoldings-canonical}
    Let $\cH$ be a \gd\ of a (possibly infinite) graph $G$ obtained from a $\Gamma$-canonical \td\ $\cT$ of $\hat G$ by folding via any normal covering $p\colon \hat G \to G$. Then $\cH$ is canonical if $\cT$ is canonical.
\end{lemma}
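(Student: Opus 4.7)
The plan is to push the $\Aut(\hat G)$-action on $T$ provided by canonicality of $\cT$ down to an $\Aut(G)$-action on the quotient model graph $H = T/\Gamma$, and then verify compatibility with the co-part assignment $v \mapsto H_v$.

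First, I would observe that since $p\colon \hat G \to G$ is a normal covering, the deck transformation group $\Gamma = \Gamma(p)$ acts transitively on every fibre, so $G \cong \hat G/\Gamma$ as graphs. Consequently, an automorphism $\hat f \in \Aut(\hat G)$ descends to an automorphism of $G$ exactly when $\hat f$ normalizes $\Gamma$, giving a group homomorphism $\pi\colon N \to \Aut(G)$ with kernel $\Gamma$, where $N = N_{\Aut(\hat G)}(\Gamma)$. I would next show that $\pi$ is surjective: every $\alpha \in \Aut(G)$ lifts to some $\hat\alpha \in \Aut(\hat G)$, and such a lift automatically normalizes $\Gamma$ since $p \circ (\hat\alpha\gamma\hat\alpha^{-1}) = p$ for every $\gamma \in \Gamma$. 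For the $r$-local covering, this lifting is immediate from its universal property, since automorphisms of $G$ are $r/2$-ball-preserving; for the more general normal coverings considered in \cite{canonicalGD}, the same argument applies via the corresponding universal property. This yields an isomorphism $N/\Gamma \cong \Aut(G)$.

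Next, canonicality of $\cT$ says that the full action of $\Aut(\hat G)$ on $\hat G$ commutes with $\hat v \mapsto T_{\hat v}$, i.e.\ $\hat f \cdot T_{\hat v} = T_{\hat f(\hat v)}$. Restricted to $N$, this action is compatible with the $\Gamma$-action on $T$ whose orbits define $H = T/\Gamma$, so it descends to an action of $N/\Gamma \cong \Aut(G)$ on $H$. To finish, I would verify the compatibility $\alpha \cdot H_v = H_{\alpha(v)}$: choosing any lift $\hat v \in p^{-1}(v)$ and any lift $\hat\alpha \in N$ of $\alpha$, we compute
\[
    \alpha \cdot H_v \;=\; \pi(\hat\alpha) \cdot p(T_{\hat v}) \;=\; p\bigl(\hat\alpha(T_{\hat v})\bigr) \;=\; p\bigl(T_{\hat\alpha(\hat v)}\bigr) \;=\; H_{\alpha(v)},
\]
using that $\hat\alpha(\hat v)$ is a lift of $\alpha(v)$ and that $H_{\alpha(v)}$ is independent of the chosen lift.

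The main obstacle is the surjectivity of $\pi$, i.e.\ the lifting of automorphisms from $G$ to $\hat G$; once this is available from the universal property of the normal covering $p$, everything else is formal bookkeeping with quotients. Notably, nothing in the argument uses that $T$ is a tree, so the same blueprint should fold any $\Aut(\hat G)$-canonical graph-decomposition of $\hat G$ to an $\Aut(G)$-canonical graph-decomposition of $G$.
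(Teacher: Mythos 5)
The paper does not prove this lemma; it is imported verbatim from \cite{canonicalGD}, Lemma~3.12, so there is no in-paper proof to compare against. Your descent argument --- push the $\Aut(\hat G)$-action on $T$ down through the quotient $H = T/\Gamma$ via the normalizer $N = N_{\Aut(\hat G)}(\Gamma)$ and the isomorphism $N/\Gamma \cong \Aut(G)$ --- is the standard and, given \cref{const:TDFoldingToGD}, the expected route. The final computation is correct: well-definedness of the induced action on $H$ follows from $\ker\pi = \Gamma$, the compatibility $\alpha \cdot H_v = H_{\alpha(v)}$ follows from canonicality of $\cT$ applied to a lift $\hat\alpha$, and the independence of $H_{\alpha(v)}$ from the chosen lift of $\alpha(v)$ is already part of the folding construction. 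Your closing remark that nothing uses that $T$ is a tree is also accurate.

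The genuine gap is exactly where you locate it, but your patch does not work at the stated level of generality. Surjectivity of $\pi$ --- that every $\alpha \in \Aut(G)$ lifts to an automorphism of $\hat G$ --- is a property of the particular covering, not of normal coverings as such: it requires the characteristic subgroup of $p$ to be invariant under the action that $\alpha$ induces on the fundamental group, and normality of that subgroup does not give this. A general normal covering has no universal property, so the phrase ``the same argument applies via the corresponding universal property'' has no content there. For instance, for the normal covering of a wedge of two cycles corresponding to the normal closure of one generator, the automorphism swapping the two cycles does not lift; in such a situation $N/\Gamma$ is a proper subgroup of $\Aut(G)$ and your construction only yields $N/\Gamma$-canonicity rather than canonicity. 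For the coverings this paper actually folds through --- the $r$-local coverings $p_r$ --- your universal-property argument is correct: $\alpha^{-1} \circ p_r$ is again an $r/2$-ball-preserving covering, so universality produces a lift, and a symmetric application to $\alpha^{-1}$ shows the lift is an automorphism. So your proof establishes the lemma in every case in which it is used in this paper, but as written it does not prove the statement for ``any normal covering'': either the hypothesis must be strengthened to coverings whose characteristic subgroup is invariant under all automorphisms of $G$ (as holds for $p_r$), or the conclusion weakened to $N/\Gamma$-canonicity.
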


\subsection{Interplay: Local covering \texorpdfstring{\&}{and} cliques}

Let us collect some lemmas regarding the interplay of cliques and the local coverings. 
Local covers of graphs interact well with cliques, made formal in the following lemma:

\begin{lemma}[\cite{computelocalSeps}, Lemma~5.15]\label{lem:CliqueAndLocalCover}
    Let $G$ be a (possibly infinite) graph and let $r \geq 3$ an integer.
    \begin{enumerate}
        \item For every clique $\hat X$ of $G_r$, its projection $p_r(\hat X)$ is a clique of $G$, and $p_r$ restricts to a bijection from $\hat X$ to $p_r(\hat X)$.
        \item For every clique $X$ of $G$, there exists a clique $\hat X$ of $G_r$ such that $p_r$ restricts to a bijection from $\hat X$ to $X$.
        \item\label{item:CliquesMove} $N_{G_r}[\hat X] \cap N_{G_r}[\gamma(\hat X)] = \emptyset$ for every clique $\hat X$ of $G_r$ and every $\gamma \in \Gamma(p_r) \setminus \{\id_{G_r}\}$.
    \end{enumerate}
\end{lemma}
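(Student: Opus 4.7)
The plan is to exploit two facts: that $p_r$ is $r/2$-ball-preserving, and that non-identity deck transformations of normal coverings of connected graphs act freely on vertices. For $r \geq 3$ we have $r/2 \geq 3/2$, so the ball $B_{G_r}(\hat v, r/2)$ already contains the entire induced subgraph on the closed neighborhood $N_{G_r}[\hat v]$. Consequently, $p_r$ restricts to an isomorphism between $G_r[N_{G_r}[\hat v]]$ and $G[N_G[p_r(\hat v)]]$ for every vertex $\hat v$.

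Parts (i) and (ii) follow quickly from this. For (i), any clique $\hat X$ of $G_r$ lies inside $N_{G_r}[\hat v]$ for any fixed $\hat v \in \hat X$, so $p_r$ maps $\hat X$ bijectively onto a clique of $G$. For (ii), any clique $X$ of $G$ lies inside $N_G[v]$ for any fixed $v \in X$; choosing a lift $\hat v$ of $v$ and inverting the isomorphism $p_r$ on $N_G[v]$ produces the required clique $\hat X \subseteq N_{G_r}[\hat v]$ with $p_r$ a bijection from $\hat X$ to $X$.

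For (iii), suppose for contradiction that some $\hat y \in N_{G_r}[\hat X] \cap N_{G_r}[\gamma(\hat X)]$ exists for a non-identity deck transformation $\gamma$. First, since $p_r$ is injective on $\hat X$ by (i), we get $\hat X \cap \gamma(\hat X) = \emptyset$: any common vertex would be a fixed point of $\gamma^{-1}$, contradicting freeness. Next, pick $\hat x, \hat x' \in \hat X$ with $\hat x, \gamma(\hat x') \in N_{G_r}[\hat y]$; then $d_{G_r}(\hat x, \gamma(\hat x')) \leq 2$. For $r \geq 4$, both $\hat x'$ and $\gamma(\hat x')$ lie in $B_{G_r}(\hat x, r/2)$, and injectivity of $p_r$ on that ball forces $\hat x' = \gamma(\hat x')$, contradicting freeness.

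The main obstacle is the case $r = 3$, because the $3/2$-ball $B_{G_r}(\hat x, 3/2)$ equals just $N_{G_r}[\hat x]$ and may not contain $\gamma(\hat x')$. To handle this, note that the projections $p_r(\hat x), p_r(\hat x'), p_r(\hat y)$ form a triangle in $G$: the first two lie in the clique $X = p_r(\hat X)$ and are distinct by injectivity of $p_r$ on $N_{G_r}[\hat y]$, and both are adjacent to $p_r(\hat y)$ because $\hat x, \gamma(\hat x') \in N_{G_r}[\hat y]$. This triangle lies inside $B_G(p_r(\hat y), 3/2)$, so it lifts via the inverse of $p_r$ to a triangle in $B_{G_r}(\hat y, 3/2)$; the unique lift of $p_r(\hat x')$ adjacent to $\hat y$ inside that ball is $\gamma(\hat x')$ itself, yielding an edge $\hat x \gamma(\hat x') \in E(G_r)$. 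But also $\hat x \hat x' \in E(G_r)$ since $\hat X$ is a clique, and both of these edges at $\hat x$ project to the single edge $p_r(\hat x) p_r(\hat x')$ of $G$, contradicting the covering property at $\hat x$.
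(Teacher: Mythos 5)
The paper does not prove this lemma at all: it is imported verbatim from \cite{computelocalSeps} (Lemma~5.15), so there is no in-paper argument to compare yours against. Judged on its own, your proof is sound and self-contained. The two ingredients you isolate are exactly the right ones: since $r/2\geq 3/2$, the ball $B_{G_r}(\hat v,r/2)$ contains the full induced subgraph on $N_{G_r}[\hat v]$, so cliques (with all their edges) live inside balls on which $p_r$ is an isomorphism — this gives (i) and (ii) immediately — and non-identity deck transformations act freely on the (connected) preimage of a component, which is what (iii) ultimately contradicts. The split in (iii) into $r\geq 4$ (where $\gamma(\hat x')$ is already inside $B_{G_r}(\hat x,r/2)$) and $r=3$ (where you lift a triangle through $B_{G}(p_r(\hat y),3/2)$ to manufacture two distinct edges at $\hat x$ with the same projection, violating the local bijectivity of the covering) is the right move and the argument goes through.

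Two degenerate sub-cases in (iii) are silently skipped and deserve a line each. First, $\hat y$ may itself lie in $\hat X$ or in $\gamma(\hat X)$, in which case the ``triangle'' collapses; but then (taking, say, $\hat x=\hat y$) both $\hat x'$ and $\gamma(\hat x')$ already lie in $N_{G_r}[\hat y]=B_{G_r}(\hat y,3/2)$, and injectivity of $p_r$ on that ball forces $\hat x'=\gamma(\hat x')$, contradicting freeness directly. Second, you may have $\hat x=\hat x'$, in which case $\hat x\hat x'$ is not an edge and the final ``two edges at $\hat x$'' contradiction is unavailable; but then the lifted edge $\hat x\,\gamma(\hat x)$ would project to a loop of the simple graph $G$ (equivalently, $\gamma(\hat x)\in N_{G_r}[\hat x]$ together with injectivity of $p_r$ on that neighbourhood gives $\gamma(\hat x)=\hat x$), again a contradiction. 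One further caveat: freeness of the deck action is a per-component statement, so the argument as written really covers the case that the component of $G_r$ containing $\hat X$ is a connected cover of the corresponding component of $G$ — which is the paper's standing convention — rather than arbitrary disconnected $G$. With these small patches the proof is complete.
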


For a given clique $X$ of $G$, we refer to all cliques $\hat X$ of $G_r$ such that $p_r$ restricts to a bijection from $\hat X$ to $X$ as \defn{lifts of the clique $X$ to $G_r$}.
\cref{lem:CliqueAndLocalCover} ensures that every clique $\hat X$ of $G_r$ projects to a clique $X := p_r(\hat X)$ of $G$ and $\hat X$ is indeed a lift of the clique $X$ in $G$, every clique of $G$ has a lift to $G_r$, and not only are two distinct lifts of any clique disjoint, but their closed neighborhoods are also disjoint.

The intersection graph $\bK(G_r)$ of the maximal cliques of $G_r$ covers the intersection graph $\bK(G)$ of the maximal cliques of $G$ via $p_r$:

\begin{lemma}\label{lem:clique-graph-covers-clique-graph}
    Let $G$ be a (possibly infinite) graph and $r \geq 3$ an integer.
    Then $p_r$ induces a covering map from $\bK(G_r)$ to $\bK(G)$ via $\hat X \mapsto p_r(\hat X)$ which restricts to an isomorphism from $\bK(G_r)[K_{G_r}(\hat X)]$ to $\bK(G)[K_G(X)]$ for every clique $X$ of $G$ and each of its lifts $\hat X$ to $G_r$.
    In particular, the maximal cliques of $G_r$ are precisely the lifts of the maximal cliques of $G$.
\end{lemma}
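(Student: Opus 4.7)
The plan is to argue three things in turn: first, the \textbf{in-particular} statement that maximal cliques of $G_r$ are exactly lifts of maximal cliques of $G$; second, that $\hat X \mapsto p_r(\hat X)$ really is a surjective covering of $\bK(G_r)$ over $\bK(G)$; third, that it restricts to an isomorphism on $K_{G_r}(\hat X)$ for an arbitrary lift $\hat X$ of a clique $X$. All three phases rely on the fact that $p_r$ is $3/2$-ball-preserving (since $r \geq 3$), hence restricts to a graph isomorphism $G_r[N[\hat v]] \to G[N[v]]$ for every vertex $\hat v$ of $G_r$ and $v = p_r(\hat v)$, together with the three items of \cref{lem:CliqueAndLocalCover}.

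For the in-particular statement, the strategy is to transfer extensions of cliques along this $N[\hat v]$-isomorphism. If $\hat X$ is a maximal clique of $G_r$ and $X := p_r(\hat X)$ could be properly extended by some $y \notin X$, then picking $\hat v \in \hat X$ one has $X \cup \{y\} \subseteq N[v]$, and the unique preimage $\hat y$ of $y$ in $N[\hat v]$ is adjacent to all of $\hat X$ in $G_r$ and is distinct from every element of $\hat X$ by injectivity on $N[\hat v]$, contradicting the maximality of $\hat X$. The reverse direction is symmetric, starting from a lift of a maximal clique of $G$ given by \cref{lem:CliqueAndLocalCover}(ii). Together with \cref{lem:CliqueAndLocalCover}(i), this also shows $\hat X \mapsto p_r(\hat X)$ is a well-defined surjection $V(\bK(G_r)) \to V(\bK(G))$, and it is a graph homomorphism because two distinct adjacent maximal cliques of $G_r$ share some $\hat v$, hence lie in $N[\hat v]$, where $p_r$ is injective, so their images share $p_r(\hat v)$ and remain distinct.

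For the covering condition, I fix $\hat X \in V(\bK(G_r))$ with image $X$. Surjectivity of edges: given a maximal clique $K$ of $G$ with $K \cap X \ni v$, pick the unique $\hat v \in \hat X$ over $v$; lift $K \subseteq N[v]$ via the $N[\hat v]$-isomorphism to a clique $\hat K \ni \hat v$ with $p_r(\hat K) = K$, which is maximal by the in-particular part and intersects $\hat X$ in $\hat v$. Injectivity is the main obstacle: if two distinct neighbors $\hat K_1, \hat K_2$ of $\hat X$ in $\bK(G_r)$ satisfied $p_r(\hat K_1) = p_r(\hat K_2) = K$, normality of $p_r$ would produce a nontrivial deck transformation $\gamma$ with $\gamma(\hat K_1) = \hat K_2$, and \cref{lem:CliqueAndLocalCover}(iii) would give $N[\hat K_1] \cap N[\hat K_2] = \emptyset$; but any $\hat u_i \in \hat K_i \cap \hat X$ yields $\hat X \subseteq N[\hat u_i] \subseteq N[\hat K_i]$, so the nonempty $\hat X$ lies in both closed neighborhoods, a contradiction. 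This is where normality of the local covering and the non-interaction of different lifts of a clique, the distinctive features of the local cover beyond being a mere graph covering, are essential.

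For the refined statement, interpret $K_{G_r}(\hat X) = \{\hat K \in K(G_r) : \hat X \subseteq \hat K\}$ and $K_G(X) = \{K \in K(G) : X \subseteq K\}$. The previous surjectivity-of-edges construction, applied to each $K \supseteq X$ using any $\hat v \in \hat X$, produces a lift $\hat K$ in $N[\hat v]$ containing $\hat v$, and $\hat X$ is the unique lift of $X$ contained in $N[\hat v]$ (by injectivity of $p_r$ on $N[\hat v]$), so automatically $\hat X \subseteq \hat K$; together with the injectivity already proved, this gives a bijection $K_{G_r}(\hat X) \to K_G(X)$. Finally, both $\bK(G_r)[K_{G_r}(\hat X)]$ and $\bK(G)[K_G(X)]$ are complete graphs whenever $X$ is nonempty, since any two maximal cliques containing the same nonempty clique share a vertex and are hence adjacent in $\bK$, so the bijection is automatically a graph isomorphism.
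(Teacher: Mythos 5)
Your proof is correct and follows essentially the same route as the paper's: both arguments reduce everything to \cref{lem:CliqueAndLocalCover} together with normality of $p_r$ (a deck transformation carrying one lift of a clique to another, combined with the disjointness of closed neighbourhoods of distinct lifts, is exactly how the paper also gets injectivity and the bijection $K_{G_r}(\hat X)\to K_G(X)$). The only cosmetic differences are that you transfer clique maximality via the $3/2$-ball-preserving local isomorphisms on closed neighbourhoods where the paper goes up to a maximal extension and back down, and that you finish the isomorphism claim by observing that $\bK(G_r)[K_{G_r}(\hat X)]$ and $\bK(G)[K_G(X)]$ are both complete, where the paper instead checks that $p_r$ restricts to bijections $\hat Y\cap\hat Z\to Y\cap Z$.
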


\noindent In the context of \cref{lem:clique-graph-covers-clique-graph}, we denote the covering map from $\bK(G_r)$ to $\bK(G)$ induced by $p_r$ again by $p_r$.

\begin{proof}
    First, we show that $V(\bK(G_r))$ is the set $K(G)$ of lifts of the maximal cliques of $G$.
    Let $\hat X$ be a maximal clique of $G_r$.
    Then \cref{lem:CliqueAndLocalCover}~(i) ensures that $X \coloneqq p_r(\hat X)$ is a clique of $G$.
    Let $Y$ be a maximal clique of $G$ containing $X$.
    By \cref{lem:CliqueAndLocalCover}~(ii), there is a lift $\hat Y$ of $Y$ to $G_r$.
    By potentially shifting $\hat Y$ with a deck transformation of $p_r$, we assume that $\hat Y$ meets $\hat X$.
    Since distinct lifts of the clique $X$ are disjoint by \cref{lem:CliqueAndLocalCover}~(iii), $\hat Y$ must contain the whole lift $\hat X$ of $X$.
    In particular, since $\hat X$ is a maximal clique, $\hat Y = \hat X$, and thus $X = Y$ is a maximal clique of $G$.
    We can similarly prove that every maximal clique of $G_r$ projects to a maximal clique of $G$.
    All in all, this shows that the maximal cliques of $G_r$ are precisely the lifts of the maximal cliques of $G$ and that $p_r$ induces a surjection from $V(\bK(G_r))$ to $V(\bK(G))$.

    \cref{lem:CliqueAndLocalCover}~(iii) ensures that, for every clique $X$ and each of its lifts $\hat X$ to $G_r$, the map induced by $p_r$ restricts to a bijection from $K_{G_r}(\hat X)$ to $K_G(X)$. Moreover, for every two cliques $Y,Z \in K_G(X)$ and their respective lifts $\hat Y, \hat Z \in K_{G_r}(\hat X)$, the map $p_r$ restricts to a bijection from $\hat Y \cap \hat Z$ to $Y \cap Z$.
    Thus, $p_r$ induces an isomorphism from $\bK(G_r)[K_{G_r}(\hat X)]$ to $\bK(G)[K_G(X)]$.
    In particular, $p_r$ induces a covering map from $\bK(G_r)$ to $\bK(G)$.
\end{proof}

\subsection{Graph-decomposition into cliques \& region representation via coverings}

Now we investigate the \gd\ of a graph $G$ obtained by folding a \td\ of its $r$-local cover $G_r$ into cliques via $p_r$.

First, recall that by \cref{thm:chordal}, if $G_r$ is chordal, then $G_r$ admits a canonical tree-decomposition into cliques. If we require only a $\Gamma(p_r)$-canonical tree-decomposition, we can even obtain a tree-decomposition into \emph{maximal} cliques:

\begin{theorem}[\cite{canTDofChordalGraphs}, Theorem 2 ]\label{thm:chordalTDmaximal}
    Let $G$ be a connected, locally finite graph and $p \colon \hat G \to G$ a normal covering.
    If $\hat G$ is chordal, then $\hat G$ admits a $\Gamma(p)$-canonical \td\ into its maximal cliques (equivalently: $\hat G$ has a $\Gamma(p)$-canonical clique tree).
\end{theorem}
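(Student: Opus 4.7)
My plan is to build a $\Gamma(p)$-invariant clique tree of $\hat G$ by exploiting the fact that clique trees of chordal graphs have a matroid characterization, and then carrying out a symmetrized greedy construction. Since $p$ is a normal covering of a connected, locally finite graph $G$, the cover $\hat G$ is also connected and locally finite. Combined with the hypothesis that $\hat G$ is chordal, Halin's theorem (cited just before \cref{thm:chordal}) gives that $\hat G$ has some clique tree, i.e.\ a spanning tree $\hat T$ of $\bK(\hat G)$ such that $\hat T[K_{\hat G}(\hat v)]$ is connected for every vertex $\hat v$ of $\hat G$. The deck group $\Gamma = \Gamma(p)$ acts on the set $K(\hat G)$ of maximal cliques of $\hat G$ and hence on $\bK(\hat G)$; the task is to produce such a $\hat T$ which is $\Gamma$-invariant as a subgraph of $\bK(\hat G)$.

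Weight each edge $K_1K_2$ of $\bK(\hat G)$ by the adhesion size $w(K_1K_2) = |K_1 \cap K_2|$. Since the $\Gamma$-action preserves intersections, $w$ is $\Gamma$-invariant. Extending the classical characterization (used in this paper for \cref{thm:maximum-weight-acyclic-spanning}) to the locally finite setting, the clique trees of $\hat G$ are exactly the maximum-weight spanning trees of $(\bK(\hat G), w)$. I would then construct such a tree equivariantly: partition $E(\bK(\hat G))$ into $\Gamma$-orbits $O_1, O_2, \ldots$, processed in some fixed decreasing order of the common weight of each orbit, and for each $O_i$ add to a running $\Gamma$-invariant forest $F$ a canonical maximal acyclic $\Gamma$-equivariant subset of $O_i$. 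Because $w$ is $\Gamma$-invariant, this produces a $\Gamma$-invariant spanning subgraph of $\bK(\hat G)$; a standard matroid argument in the cycle matroid of $\bK(\hat G)$ then shows it is a maximum-weight spanning tree, hence a clique tree.

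The main obstacle is the equivariant step within one orbit: showing that, when processing $O_i$, one can canonically choose a maximal acyclic subset of $O_i$ whose addition to $F$ remains acyclic, simultaneously respecting the stabilizer action. This is delicate because a generic $\Gamma$-action on a matroid need not admit an invariant basis, so general matroid greediness is insufficient. Here chordality must enter decisively: the cycle structure of $\bK(\hat G)$ is controlled by the minimal separators of $\hat G$ (each separator gives rise to edges of the same weight and to cycles in $\bK(\hat G)$ in a structured way), and the set of minimal separators is itself $\Gamma$-equivariant. I would therefore recast the orbit-wise choice in terms of minimal separators, showing that for each $\Gamma$-orbit of minimal separators $\hat S$ one can canonically select how many and which $\hat S$-edges of $\bK(\hat G)$ to include, yielding the equivariant choice.

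An alternative route I would also pursue in parallel is to start from the $\Aut(\hat G)$-canonical (hence $\Gamma$-canonical) forest-decomposition into cliques provided by \cref{thm:chordal}~\ref{item:chordal:canonical}, which becomes a tree since $\hat G$ is connected, and then canonically refine its bags to maximal cliques. The refinement step reduces the global question to a canonical local choice at each $\Gamma$-orbit of bags, and should be tractable using the fact that in a chordal graph every clique is contained in a maximal clique which is determined, up to a finite canonical ambiguity, by the local chordal structure around it.
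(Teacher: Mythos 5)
First, a point of reference: the paper does not prove this statement at all --- it is imported verbatim as Theorem~2 of \cite{canTDofChordalGraphs} --- so there is no in-paper argument to compare yours against. Judged on its own terms, your proposal is a programme rather than a proof, and the gap sits exactly where the real content of the theorem lies. Your first route sensibly reduces the problem to finding a $\Gamma(p)$-invariant maximum-weight spanning tree of $\bK(\hat G)$, but you then concede that a group acting on a graphic matroid need not admit an invariant maximum-weight basis, and the proposed repair --- ``recast the orbit-wise choice in terms of minimal separators'' --- is neither carried out nor obviously workable. The obstruction is genuine for general group actions: already $K_{1,3}$ is connected, locally finite and chordal, $\bK(K_{1,3})$ is a triangle with all adhesions equal to the centre, and no spanning path of that triangle is invariant under $\Aut(K_{1,3})$, so no $\Aut$-canonical clique tree exists. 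Any correct argument must therefore use what is special about deck transformation groups, namely that $\Gamma(p)$ acts freely on $V(\hat G)$ and any two distinct vertices in a common orbit have distance at least $3$, whence no nontrivial $\gamma$ maps a maximal clique to an equal or intersecting one (cf.\ \cref{lem:CliqueAndLocalCover}~\cref{item:CliquesMove} for the special case of local covers). Your outline never invokes any such property --- it uses only that $\Gamma(p)$ preserves the weights, which $\Aut(\hat G)$ does too --- so if it were completable as written it would also ``prove'' the false $\Aut$-canonical statement. A secondary issue: for infinite locally finite $\hat G$ the total weight of a spanning tree is infinite, so the characterization of clique trees as maximum-weight spanning trees must first be reformulated (say, via a local exchange property) before it can be ``extended to the locally finite setting'' as you propose.

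Your alternative route does not close the gap either. \cref{thm:chordal}~\ref{item:chordal:canonical} is attributed to the very same Theorem~2 of \cite{canTDofChordalGraphs} that you are trying to prove, so leaning on it skirts circularity; and even granting it, passing from a canonical tree-decomposition into cliques to a $\Gamma(p)$-canonical one into \emph{maximal} cliques is not a routine refinement. Non-maximal bags must be absorbed into maximal ones, which changes the tree, and the ``finite canonical ambiguity'' you invoke is precisely the kind of symmetric tie (as in the $K_{1,3}$ example) that admits no canonical resolution for a general automorphism group. Both routes therefore stall at the same missing step: a canonical, equivariant tie-breaking argument that exploits the freeness and displacement of deck transformations. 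That argument is the substance of the cited theorem and is absent from your proposal.
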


Given a graph $G$, tree-decompositions of its local cover $G_r$ into cliques yield graph-decompositions of $G$ into cliques. 

\begin{theorem}[\cite{canTDofChordalGraphs}, Lemma 4.3]\label{lem:region-rep-obtained-via-folding}
    Let $G$ be a (possibly infinite) graph and $r \geq 3$ an integer.
    Suppose that $\cT$ is
    a $\Gamma(p_r)$-canonical \td\ of $G_r$ and let $\cH$ be the \gd\ $\cH$ obtained from $\cT$ by folding via $p_r$.
    Then:
    \begin{enumerate}
        \item $\cT$ is into cliques if and only if $\cH$ is into cliques.
        \item $\cT$ is into maximal cliques if and only if $\cH$ is into maximal cliques.
    \end{enumerate}
\end{theorem}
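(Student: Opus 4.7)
The forward direction of (i) is immediate from \cref{lem:CliqueAndLocalCover}\,(i): if each bag $\hat V_t$ is a clique of $G_r$, then its image $V_{p(t)} = p_r(\hat V_t)$ is a clique of $G$, so $\cH$ is into cliques.

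For the converse of (i), fix $t \in V(T)$ and set $h = p(t)$. By hypothesis $V_h = p_r(\hat V_t)$ is a clique of $G$. \cref{lem:CliqueAndLocalCover}\,(ii) provides a clique-lift $\hat X$ of $V_h$ in $G_r$, and normality together with \cref{lem:CliqueAndLocalCover}\,(iii) decomposes the preimage $p_r^{-1}(V_h) = \bigcup_{\gamma \in \Gamma(p_r)} \gamma \hat X$ into disjoint clique-lifts whose closed $G_r$-neighborhoods are pairwise disjoint. The goal is to show $\hat V_t = \gamma \hat X$ for some $\gamma \in \Gamma(p_r)$, which exhibits $\hat V_t$ as a clique of $G_r$. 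Pick $\hat v_1 \in \hat V_t$ and let $\gamma \hat X$ be the unique translate containing $\hat v_1$. Since $r \geq 3$, $p_r$ is $3/2$-ball-preserving, so every edge $v_1 v_2$ of $G[V_h]$ lifts uniquely at $\hat v_1$ to an edge $\hat v_1 \hat v_2^{\star}$ of $G_r$ with $\hat v_2^{\star} \in \gamma \hat X$. Thus it suffices to show that $p_r$ restricts to an injection $\hat V_t \to V_h$: otherwise two distinct $\hat v, \hat v' \in \hat V_t$ projecting to the same $v \in V_h$ would satisfy $\hat v' = \gamma'\hat v$ for some nontrivial $\gamma' \in \Gamma(p_r)$, and $\Gamma$-canonicity $\hat V_{\gamma' t} = \gamma'\hat V_t$ would place $\hat v'$ in both $\hat V_t$ and $\hat V_{\gamma' t}$. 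Using \cref{axiomH2} for $\cT$ and the correspondence $W^{\cH}_v = p(W^{\cT}_{\hat v'})$ between co-bags, the resulting $t$--$\gamma' t$ path inside $W^{\cT}_{\hat v'}$ projects to a closed walk in $H[W^{\cH}_v]$ which is incompatible with the covering-theoretic structure of $p \colon T \to H$, yielding the desired contradiction. Together with the surjection $p_r\colon \hat V_t \to V_h$, injectivity then forces $\hat V_t = \gamma \hat X$.

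For part~(ii), combine (i) with \cref{lem:clique-graph-covers-clique-graph}: the maximal cliques of $G_r$ are precisely the lifts of the maximal cliques of $G$, so a clique $\hat V_t$ of $G_r$ is maximal iff $V_{p(t)} = p_r(\hat V_t)$ is a maximal clique of $G$, iff the bag $V_{p(t)}$ of $\cH$ is maximal.

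\textbf{Main obstacle.} The technical heart is the injectivity of $p_r$ on $\hat V_t$ in the converse of~(i) — ruling out that a single bag contains two distinct lifts of the same vertex. The cleanest extraction uses $\Gamma$-canonicity together with the tree structure of $\cT$ and the freeness of the $\Gamma$-action on the relevant co-parts, inherited from the free action of $\Gamma(p_r)$ on $G_r$. Once this injectivity is established, the containment $\hat V_t \subseteq \gamma \hat X$ follows painlessly from $3/2$-ball-preservingness and the disjoint-neighborhood property of \cref{lem:CliqueAndLocalCover}\,(iii).
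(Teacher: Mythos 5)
The paper does not prove this statement at all: it is imported verbatim as \cite{canTDofChordalGraphs}, Lemma~4.3, so there is no in-paper argument to compare yours against. Judged on its own, your forward direction of (i) is correct and is exactly the one-line consequence of \cref{lem:CliqueAndLocalCover}\,(i) one would expect. The converse, however, has a genuine gap at the step ``thus it suffices to show that $p_r$ restricts to an injection $\hat V_t \to V_h$.'' Injectivity does not imply $\hat V_t \subseteq \gamma\hat X$. Your edge-lifting argument produces, for each $v_2 \in V_h$, the unique lift $\hat v_2^{\star}$ of $v_2$ that is \emph{adjacent to} $\hat v_1$, and correctly places it in $\gamma\hat X$; but you never argue that $\hat v_2^{\star}$ is the element of $\hat V_t$ lying over $v_2$. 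A priori the bag could contain $\hat v_1 \in \gamma_1\hat X$ and some lift of $v_2$ in a different translate $\gamma_2\hat X$ (so at distance $\geq 3$ from $\hat v_1$ by \cref{lem:CliqueAndLocalCover}\,\cref{item:CliquesMove}) while still being injective over $V_h$ --- bags of tree-decompositions need not induce connected, let alone complete, subgraphs, and that is precisely what is being proved. Your argument only excludes two lifts of the \emph{same} vertex sharing a bag; it does not exclude lifts of \emph{distinct} vertices of $V_h$ sitting in distinct translates.

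Two further points. First, even the injectivity step is not closed: the final appeal to ``the covering-theoretic structure of $p\colon T \to H$'' is not available, since the quotient map $T \to T/\Gamma(p_r)$ is a covering only when $\Gamma(p_r)$ acts freely on $T$, which the definition of a $\Gamma$-canonical tree-decomposition does not guarantee (a $\Gamma$-invariant bag gives a fixed node of $T$, in which case $t = \gamma' t$ and your path is trivial); and a closed walk inside $H[W_v]$ is not by itself contradictory unless one already knows $H_v$ is a tree, which is a consequence of the decomposition being into cliques rather than a standing hypothesis. Second, for (ii), recall that ``into maximal cliques'' is defined in \cref{subsec:cliquegraphs} as $h \mapsto V_h$ being a \emph{bijection} onto $K(G)$; your argument only checks that individual bags are maximal cliques, so the injectivity/surjectivity of the bag assignments on both sides (which does follow from \cref{lem:clique-graph-covers-clique-graph} and \cref{lem:CliqueAndLocalCover}\,\cref{item:CliquesMove}, but needs saying) is an additional item to verify.
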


\noindent In the context of \cref{lem:region-rep-obtained-via-folding}, we now refer to the map $v \mapsto H_v$ corresponding to a \gd\ $\cH$ into cliques as the region representation of $G$ \defn{obtained from the subtree representation $\hat v \mapsto T_{\hat v}$ of $G_r$} corresponding to the \td\ $\cT$ into cliques
\defn{by folding via $p_r$}.

\begin{lemma}
\label{cor:pr-isomorphism-between-cliques}
    Let $G$ be a (possibly infinite) connected graph and $r \geq 3$ an integer. 
    Let $\hat v \mapsto T_{\hat v}$ be a $\Gamma(p_r)$-canonical subtree representation of $G_r$ over a tree $T$ and let $v \mapsto H_v$ be the region representation of $G$ over $H = T/ \Gamma(p_r)$ obtained from $\hat v \mapsto T_{\hat v}$ by folding via $p_r$.
    Then, for every clique $X$ of $G$ and lift $\hat X$ of $X$ to $G_r$, the map $p_r$ restricts to an isomorphism from $T_{\hat X}$ to $H_X$.
\end{lemma}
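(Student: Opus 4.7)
The plan is to verify that the restriction $p_r\rest T_{\hat X}$ is bijective on both vertices and edges onto $H_X$, and conclude that it is a graph isomorphism. Surjectivity is immediate from the folding construction: since $H_x = p_r(T_{\hat x'})$ for any lift $\hat x'$ of $x$, and $\hat X$ is a lift of $X$, one has
\[
p_r(T_{\hat X}) \;=\; \bigcup_{\hat x \in \hat X} p_r(T_{\hat x}) \;=\; \bigcup_{x \in X} H_x \;=\; H_X.
\]

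The main step is injectivity on vertices. Suppose $t_1, t_2 \in V(T_{\hat X})$ with $p_r(t_1) = p_r(t_2)$. Since $p_r \colon T \to H$ is the orbit quotient by $\Gamma := \Gamma(p_r)$, there is some $\gamma \in \Gamma$ with $\gamma(t_1) = t_2$; I claim $\gamma = \id$. Pick $\hat x_i \in \hat X$ with $t_i \in V(T_{\hat x_i})$. By $\Gamma$-canonicity of $\hat v \mapsto T_{\hat v}$, we have $T_{\gamma(\hat x_1)} = \gamma(T_{\hat x_1})$, so $t_2 = \gamma(t_1) \in T_{\gamma(\hat x_1)} \cap T_{\hat x_2}$. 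Since $\hat v \mapsto T_{\hat v}$ is a subtree representation of $G_r$, this intersection being nonempty forces either $\gamma(\hat x_1) = \hat x_2$ or $\gamma(\hat x_1)\hat x_2 \in E(G_r)$. In either case, $\hat x_2 \in N_{G_r}[\hat X] \cap N_{G_r}[\gamma(\hat X)]$, which by \cref{lem:CliqueAndLocalCover}\ref{item:CliquesMove} forces $\gamma = \id$, as desired.

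For the edge correspondence: injectivity on $V(T_{\hat X})$ immediately yields injectivity on $E(T_{\hat X})$ because both $T$ and $H$ are simple graphs. Surjectivity on edges is handled one vertex-pair at a time: any $e \in E(H_X)$ lies in some $E(H_x)$ by the union definition of $H_X$, and since $H_x = p_r(T_{\hat x})$ as a subgraph for the unique lift $\hat x \in \hat X$ of $x$, there is an edge of $T_{\hat x} \subseteq T_{\hat X}$ projecting to $e$. The main obstacle is the vertex-injectivity step: it hinges on translating the covering-theoretic collision $p_r(t_1) = p_r(t_2)$ into a clique-theoretic contradiction by combining the $\Gamma(p_r)$-canonicity (to identify the adjacency between $\gamma(\hat X)$ and $\hat X$) with the ``spread-out'' property of clique lifts from \cref{lem:CliqueAndLocalCover}\ref{item:CliquesMove}.
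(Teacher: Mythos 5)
Your proof is correct and follows essentially the same route as the paper: both reduce a collision $p_r(t_1)=p_r(t_2)$ to a deck transformation $\gamma$ and then use $\Gamma(p_r)$-canonicity together with Lemma~\ref{lem:CliqueAndLocalCover}~\ref{item:CliquesMove} to force $\gamma=\id$ (the paper phrases the intermediate step as ``bags of a subtree representation are cliques, hence $\bigcup_{t\in T_{\hat X}}V_t\subseteq N_{G_r}[\hat X]$,'' which is the same fact you invoke via the region-representation property that intersecting subtrees correspond to equal or adjacent vertices). Your write-up is, if anything, slightly more careful about the edge-level bookkeeping that the paper leaves implicit.
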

\begin{proof}
Fix a clique $X$ of $G$ and a lift $\hat X$ of $X$ to $G_r$ (which exists by \cref{lem:CliqueAndLocalCover}). Since $H_X = \bigcup_{\hat x \in \hat X} p_r(T_{\hat x})$, the map $p_r$ restricts to a surjection from $T_{\hat X}$ to $H_X$. Next, we show that $p_r$ restricts to an injection from $T_{\hat X}$ to $H_X$. First, we need the following observation. Let $t$ be a node of $T$ and let $V_t \coloneqq \{\hat v \in V(G_r) \mid t \in V(T_{\hat v})\}$ be the bag assigned to $t$. Since $\hat v \mapsto T_{\hat v}$ is a subtree representation, it follows that $V_t$ is a clique of $G_r$. Therefore, for every $\hat v \in V_t$, it holds that $V_t \subseteq N_{G_r}[\hat v]$.

By the above observation, $\bigcup_{t \in V(T_{\hat x})} V_t \subseteq N_{G_r}[\hat x]$ for every $\hat x \in \hat X$, and thus $\bigcup_{t \in V(T_{\hat X})} V_t \subseteq N_{G_r}[\hat X]$. 
Similarly, for all $\gamma \in \Gamma(p_r)$, it holds that $\bigcup_{t \in V(T_{\gamma(\hat X)})} V_t \subseteq N_{G_r}[\gamma(\hat X)]$. By \cref{lem:CliqueAndLocalCover}~\cref{item:CliquesMove}, $N_{G_r}[\hat X]$ is disjoint from $N_{G_r}[\gamma(\hat X)]$ for all $\gamma \in \Gamma(p_r)$ distinct from the identity. Therefore, $T_{\hat X}$ is disjoint from $T_{\gamma(\hat X)}$ for all $\gamma \in \Gamma(p_r)$ distinct from the identity. 
We conclude that $p_r(T_{\hat X})$ is isomorphic to $T_{\hat X}$, and so $p_r$ restricts to an isomorphism from $T_{\hat X}$ to $H_X$.
\end{proof}

We can now show that decompositions obtained from $G_3$ via folding by $p_3$ are 3-acyclic: 
\begin{lemma}\label{clm:3-acyclic}
    Let $G$ be a (possibly infinite) connected graph and $r \geq 3$ an integer.
    Suppose that $\hat v \mapsto T_{\hat v}$ is a $\Gamma(p_r)$-canonical subtree representation of $G_r$ over $T$.
    Let $v \mapsto H_v$ be the region representation of $G$ obtained from $\hat v \mapsto T_{\hat v}$ by folding via $p_r$. Then, $v \mapsto H_v$ is 3-acyclic. 
\end{lemma}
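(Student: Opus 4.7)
The plan is to show, for any three vertices $u, v, w$ of $G$, that $H_u \cup H_v \cup H_w$ is acyclic (the cases $|X| \leq 2$ then follow by monotonicity). The central idea is to choose appropriate lifts $\hat u, \hat v, \hat w \in V(G_r)$ and argue that $p_r$ restricts to a graph isomorphism from $T_{\hat u} \cup T_{\hat v} \cup T_{\hat w}$ onto $H_u \cup H_v \cup H_w$. Since every subgraph of the tree $T$ is a forest, acyclicity of the target will follow.

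I would split into two cases. If $\{u, v, w\}$ is a clique of $G$, then \cref{lem:CliqueAndLocalCover}(ii) produces a lift $\{\hat u, \hat v, \hat w\}$ that is itself a clique of $G_r$, and \cref{cor:pr-isomorphism-between-cliques} applied to this clique immediately yields the desired isomorphism. Otherwise, $\{u, v, w\}$ contains a non-edge; without loss of generality $uw \notin E(G)$, whence $H_u \cap H_w = \emptyset$ because $v \mapsto H_v$ is a region representation. In this case I would fix an arbitrary lift $\hat v$ of $v$, then choose $\hat u$ to be a neighbor of $\hat v$ in $G_r$ when $uv \in E(G)$ (possible because $p_r$ is a covering) and an arbitrary lift of $u$ otherwise; $\hat w$ is chosen analogously with respect to $\hat v$.

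The key step is to verify that $p_r$ is injective on the vertex set $V(T_{\hat u}) \cup V(T_{\hat v}) \cup V(T_{\hat w})$, which I would check pairwise. For the pair $(\hat u, \hat v)$: when $uv \in E(G)$, the set $\{\hat u, \hat v\}$ is a lift of the clique $\{u, v\}$, so \cref{cor:pr-isomorphism-between-cliques} gives injectivity of $p_r$ on $T_{\hat u} \cup T_{\hat v}$; when $uv \notin E(G)$, the disjointness $H_u \cap H_v = \emptyset$ combined with injectivity of $p_r$ on each of $T_{\hat u}$ and $T_{\hat v}$ individually (\cref{cor:pr-isomorphism-between-cliques} applied to singleton cliques) yields the same conclusion. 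The pair $(\hat v, \hat w)$ is treated analogously, and the pair $(\hat u, \hat w)$ exploits the non-edge $uw$ in the same disjointness manner. Finally, every edge of $H_u \cup H_v \cup H_w$ lies in some $H_x = p_r(T_{\hat x})$ and hence pulls back to an edge of $T_{\hat x} \subseteq T_{\hat u} \cup T_{\hat v} \cup T_{\hat w}$; combined with vertex-injectivity, $p_r$ indeed restricts to a graph isomorphism, completing the proof.

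I expect the main obstacle to be precisely the ``compatible lifts'' step in the non-clique case: there is no way to simultaneously lift all of $u, v, w$ into $G_r$ so that every edge of $G[\{u,v,w\}]$ is preserved as an edge of $G_r$, so one cannot simply apply \cref{cor:pr-isomorphism-between-cliques} to a single global lift. Getting around this requires fixing $\hat v$ as a pivot, choosing $\hat u$ and $\hat w$ only with respect to $\hat v$, and then using the region-representation identity $H_x \cap H_y = \emptyset$ for the remaining non-edge $xy = uw$ to stitch the pairwise injectivities together.
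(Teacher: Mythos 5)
Your proof is correct, and it shares the decisive ingredient with the paper's argument --- namely \cref{cor:pr-isomorphism-between-cliques}, which transports the forest $T_{\hat X}$ isomorphically down to $H_X$ for a lifted clique $\hat X$ --- but it routes around the non-clique configuration differently. The paper argues by contradiction: it takes a minimum-size $X$ with $H_X$ containing a cycle, invokes \cref{lem:2acyclic-yields-Xcontainscycle-size3-is-clique} (which rests on \cref{lem:2acyclic-yields-pw-intersecting}, the path-in-the-cycle argument showing that three regions of a $2$-acyclic family whose union contains a cycle must pairwise intersect) to conclude that $X$ is a clique, and only then lifts $X$ and applies the isomorphism. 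You instead prove acyclicity of $H_u\cup H_v\cup H_w$ directly for arbitrary triples, splitting on whether $\{u,v,w\}$ is a clique; in the non-clique case you build lifts pivoting at $\hat v$, obtain injectivity of $p_r$ on $V(T_{\hat u})\cup V(T_{\hat v})\cup V(T_{\hat w})$ pairwise (clique-lift isomorphism for edges, disjointness $H_x\cap H_y=\emptyset$ for non-edges $xy$ together with injectivity on each subtree), and conclude that the union downstairs is the isomorphic image of a subforest of $T$. This is sound: pairwise injectivity does yield global injectivity here, and edge-surjectivity onto $H_u\cup H_v\cup H_w$ is clear since each $H_x=p_r(T_{\hat x})$ as a subgraph. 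What each approach buys: the paper's proof is three lines long because the combinatorial lemmas about $2$-acyclic families are already in place (and are reused elsewhere, e.g.\ in \cref{lem:converseofinducedcycle-yields-cycleHX}); your proof is self-contained modulo \cref{cor:pr-isomorphism-between-cliques} and the covering/lifting facts of \cref{lem:CliqueAndLocalCover}, avoids the minimal-counterexample device entirely, and correctly isolates the real obstruction --- that a non-clique triple admits no single simultaneous lift --- which is exactly why the paper needs to first establish that a minimal offending set is a clique.
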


\begin{proof}
    Suppose for a contradiction that there is a set $X \subseteq V(G)$ of size at most three such that $H_X$ contains a cycle. By \cref{lem:2acyclic-yields-Xcontainscycle-size3-is-clique}, $X$ is a clique. Fix some lift $\hat X$ of $X$ to $G_r$. Since $p_r$ restricts to an isomorphism from $T_{\hat X}$ to $H_X$ by \cref{cor:pr-isomorphism-between-cliques} and $T_{\hat X}$ is a tree, it follows that $H_X$ is a tree, a contradiction. 
\end{proof}

\noindent In the context of \cref{clm:3-acyclic}, the $H_v$ are the only valid choice for co-parts of the corresponding \gd\ $(H,\cV)$ from \cref{const:TDFoldingToGD}, as the $H_v$ are induced subgraphs of $H$. 
More generally, there is no need to specify the family of co-parts whenever a \gd\ is $2$-acyclic and honest, as then there is only one valid choice:

\begin{lemma}\label{lemma:HvAreInduced}
    If $(H,\cV)$ is a $2$-acyclic  \gd\ of a graph $G$ with a fixed family $(H_v)_{v \in G}$ of co-parts, then the subgraphs $H_{V(G)}[W_v]$ induced on the co-bags are trees.
    Specifically, $H_v = H_{V(G)}[W_v]$.

    In particular, if $v \mapsto H_v$ is a $2$-acyclic region representation of $G$ over $H$ with $H = \bigcup_{v \in G} H_v$, then the $H_v$ are induced subgraphs of $H$.
\end{lemma}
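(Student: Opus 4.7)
The plan is to show the two containments $H_v \subseteq H_{V(G)}[W_v]$ and $H_{V(G)}[W_v] \subseteq H_v$ separately. The first containment is immediate from the definitions: by the definition of co-part, $H_v$ is a connected spanning subgraph of $H[W_v]$, so $V(H_v) = W_v$, and every edge of $H_v$ is an edge of $H_{V(G)} = \bigcup_{u \in V(G)} H_u$ with both endpoints in $W_v$.

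For the reverse containment, I would pick an arbitrary edge $e = h_1h_2$ of $H_{V(G)}[W_v]$ and argue that $e$ must lie in $H_v$. Since $e$ is an edge of $H_{V(G)}$, there is some vertex $u$ of $G$ with $e \in E(H_u)$; in particular, $h_1,h_2 \in W_u \cap W_v$. The co-part $H_v$ is connected with vertex set $W_v$, so there is an $h_1$--$h_2$ path $P$ in $H_v$. If $e$ were not an edge of $P$, then $P + e$ would be a cycle contained in $H_u \cup H_v$, contradicting the assumption that $(H_v)_{v \in V(G)}$ is $2$-acyclic. Hence $e \in E(P) \subseteq E(H_v)$, proving $H_{V(G)}[W_v] \subseteq H_v$.

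Combining the two containments yields $H_v = H_{V(G)}[W_v]$. Since $(H_v)_{v \in V(G)}$ is in particular $1$-acyclic, each $H_v$ is acyclic, and by definition each co-part is connected; hence $H_v$ is a tree, and so is $H_{V(G)}[W_v]$. For the in-particular statement, note that a region representation $v \mapsto H_v$ of $G$ corresponds via \cref{thm:rig-is-gd-into-cliques} to a graph-decomposition $(H,\cV)$ of $G$ into cliques with the $H_v$ as co-parts. Under the hypothesis $H = \bigcup_{v \in V(G)} H_v = H_{V(G)}$, the equation $H_v = H_{V(G)}[W_v]$ specializes to $H_v = H[W_v]$, so $H_v$ is an induced subgraph of $H$.

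I expect no serious obstacle: the only subtle point is recognizing that we must exhibit the edge $e$ inside a single co-part $H_u$ to invoke $2$-acyclicity as a two-element statement (applied to $\{H_u,H_v\}$), rather than trying to use $2$-acyclicity on $H_{V(G)}$ as a whole, which would generally fail.
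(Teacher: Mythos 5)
Your proposal is correct and takes essentially the same approach as the paper: both observe that $H_v$ is a spanning tree of $H_{V(G)}[W_v]$ and then rule out any extra edge $e$ of $H_{V(G)}[W_v]$ by locating it in some co-part $H_u$ and producing a cycle in $H_u \cup H_v$, contradicting $2$-acyclicity. The only difference is presentational (two containments plus an explicit path versus a direct contradiction), so there is nothing to add.
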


\begin{proof}
    The $H_v$ are spanning trees of $H_{V(G)}[W_v]$.
    So suppose for a contradiction that there is some edge $f \notin H_v$ of $H_{V(G)}$ with both endvertices in $H_v$.
    Since $H_{V(G)} = \bigcup_{v \in G} H_v$, there is some vertex $u \in G$ such that $f \in H_u$.
    As $H_v$ is connected, $H_v + f \subseteq H_v \cup H_u$ contains a cycle, which contradicts that $v \mapsto H_v$ is $2$-acyclic.
\end{proof}

\subsection{Proof of \texorpdfstring{\cref{mainthm:wheelfree}}{Theorem XX}}
We are now ready to prove \cref{mainthm:wheelfree}. 
\begin{proof}[Proof of \cref{mainthm:wheelfree}]
Clearly, \cref{wheelfree:ii} implies \cref{wheelfree:iv} and \cref{wheelfree:iii} implies \cref{wheelfree:iv}. By \cref{lem:wheels-not-3acyclic-RIG},
        \cref{wheelfree:iv} implies \cref{wheelfree:i}.  
   
    \begin{claim}
        \cref{wheelfree:i} implies \cref{wheelfree:ii}. 
    \end{claim}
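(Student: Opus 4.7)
The plan is to obtain the desired $3$-acyclic clique graph of $G$ by folding a canonical clique tree of the $3$-local cover $G_3$ via the covering $p_3$. Since $G$ is locally finite, we may first pass to components and assume without loss of generality that $G$ is connected (a disjoint union of $3$-acyclic clique graphs is again a $3$-acyclic clique graph).

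First, apply \cref{BasicCharacterization}: since $G$ is $3$-locally chordal, the $3$-local cover $G_3$ is chordal. The covering $p_3 \colon G_3 \to G$ is normal, and $G_3$ is locally finite because $p_3$ is $3/2$-ball-preserving and in particular preserves vertex-degrees. Hence \cref{thm:chordalTDmaximal} applies to $p_3$, yielding a $\Gamma(p_3)$-canonical tree-decomposition $\cT = (T,\hat\cV)$ of $G_3$ into its maximal cliques, equivalently a $\Gamma(p_3)$-canonical clique tree $T$ of $G_3$ with subtree representation $\hat v \mapsto T_{\hat v}$.

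Next, I would fold $\cT$ via $p_3$ using \cref{const:TDFoldingToGD} to obtain a graph-decomposition $\cH = (H,\cV)$ of $G$ with $H = T/\Gamma(p_3)$ and co-parts $H_v = p_3(T_{\hat v})$ for any lift $\hat v \in p_3^{-1}(v)$. By \cref{lem:region-rep-obtained-via-folding}, $\cH$ is a graph-decomposition of $G$ into the maximal cliques of $G$, and by \cref{clm:3-acyclic}, the corresponding region representation $v \mapsto H_v$ is $3$-acyclic. Honesty of $\cH$ follows from the fact that $\cT$ is honest (a clique tree has nonempty bags and nonempty adhesion sets on every edge) together with the fact that folding preserves honesty, since every bag and adhesion set of $\cH$ is a nonempty projection of a nonempty bag or adhesion set of $\cT$.

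Finally, apply \cref{lem:cg-is-gd-into-max-cliques}(ii) to the honest graph-decomposition $\cH$ into the maximal cliques of $G$: the map $h \mapsto V_h$ embeds $H$ onto a spanning subgraph $H' \subseteq \bK(G)$, and since the co-parts $H_v$ then identify with $H'[K_G(v)]$, the subgraph $H'$ is a clique graph of $G$ whose corresponding region representation is (isomorphic to) $v \mapsto H_v$, hence $3$-acyclic. Thus $H'$ is a $3$-acyclic clique graph of $G$, establishing \cref{wheelfree:ii}. The only real obstacle here is bookkeeping: making sure that folding preserves honesty so that the resulting decomposition into maximal cliques truly corresponds to a spanning subgraph of $\bK(G)$ under \cref{lem:cg-is-gd-into-max-cliques}; everything else is a direct chaining of the previously established results.
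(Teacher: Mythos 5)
Your proof is correct and takes essentially the same route as the paper: both obtain a $\Gamma(p_3)$-canonical clique tree of the chordal cover $G_3$ via \cref{BasicCharacterization} and \cref{thm:chordalTDmaximal}, push it down along $p_3$, and invoke \cref{clm:3-acyclic} for $3$-acyclicity. The only difference is packaging — the paper projects $T$ directly as a spanning subgraph of $\bK(G_3)$ onto $p_3(T) \subseteq \bK(G)$ using \cref{lem:clique-graph-covers-clique-graph}, whereas you fold the corresponding tree-decomposition via \cref{const:TDFoldingToGD} and then recover the clique graph through \cref{lem:cg-is-gd-into-max-cliques}; these yield the same object.
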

    \begin{claimproof}
     By \cref{BasicCharacterization}, the $3$-local cover $G_3$ of $G$ is chordal, so $G_3$ has a $\Gamma(p_3)$-canonical clique tree $T$ by \cref{thm:chordalTDmaximal}.
     By \cref{lem:clique-graph-covers-clique-graph}, $p_3$ induces a covering map from $\bK(G_3)$ to $\bK(G)$ via $\hat X \mapsto p_3(\hat X)$ for each node $\hat X$ of $\bK(G_3)$.
        Since $T$ is a spanning subgraph of $\bK(G_3)$, its projection $H \coloneqq p_3(T)$ is a spanning subgraph of $\bK(G)$. We claim that this graph $H$ is a clique graph of $G$. Since $H$ is a spanning subgraph of $\bK(G)$, it remains to show that $H[K_G(v)]$ is connected for each $v \in V(G)$. Fix a vertex $v$ of $G$ and consider $H_v = H[K_G(v)]$. By \cref{lem:clique-graph-covers-clique-graph}, the covering map from $\bK(G_3)$ to $\bK(G)$ induced by $p_3$ restricts to an isomorphism from $T[K_{G_3}(\hat v)]$ to $H_v$ for each lift $\hat v$ of $v$ to $G_3$. Since $T$ is a clique tree of $G_3$, it follows that $T[K_{G_3}(\hat v)]$ is connected, so $H_v$ is also connected. This proves that $H$ is a clique graph of $G$. By \cref{clm:3-acyclic}, $H$ is 3-acyclic. \end{claimproof}

\begin{claim}
    \cref{wheelfree:i} implies \cref{wheelfree:iii}. 
\end{claim}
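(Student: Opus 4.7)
My plan is to mirror the proof that \cref{wheelfree:i} implies \cref{wheelfree:ii}, but to feed the folding construction a \emph{canonical} tree-decomposition of $G_3$ in place of the $\Gamma(p_3)$-canonical clique tree used there. The existing lemmas from \cref{subsec:background-local-coverings,subsec:background-local-coverings} and the preceding subsection will do essentially all of the work; the argument is mostly a matter of checking that canonicity is preserved by folding along $p_3$.

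Concretely, first I would apply \cref{BasicCharacterization} to pass from ``$G$ is $3$-locally chordal'' to ``$G_3$ is chordal.'' Then I would invoke \cref{thm:chordal}~\labelcref{item:chordal:canonical} to obtain a canonical (that is, $\Aut(G_3)$-canonical) forest-decomposition $\cT = (T, \hat\cV)$ of $G_3$ into cliques. Since $\Gamma(p_3)$ is a subgroup of $\Aut(G_3)$, the decomposition $\cT$ is automatically $\Gamma(p_3)$-canonical, so \cref{const:TDFoldingToGD} applies and produces a graph-decomposition $\cH = (H, \cV)$ of $G$ obtained from $\cT$ by folding via $p_3$.

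Next I would verify that $\cH$ has all the required properties. By \cref{lem:region-rep-obtained-via-folding}~(i), $\cH$ is into cliques, so by \cref{thm:rig-is-gd-into-cliques} it corresponds to a region representation $v \mapsto H_v$ of $G$. By \cref{clm:3-acyclic}, $v \mapsto H_v$ is $3$-acyclic. Finally, since $\cT$ is $\Aut(G_3)$-canonical, \cref{lem:gdobtainedviafoldings-canonical} promotes $\cH$ to a canonical (i.e.\ $\Aut(G)$-canonical) graph-decomposition of $G$, and therefore $v \mapsto H_v$ is a canonical region representation of $G$. This yields \cref{wheelfree:iii}.

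The only real subtlety I anticipate is purely bookkeeping about the group actions: confirming that the ``canonical'' notion delivered by \cref{thm:chordal} (which is $\Aut(G_3)$-canonical, a priori stronger than what the folding setup requires) is both sufficient to legally invoke \cref{const:TDFoldingToGD} and sufficient to upgrade the resulting $\cH$ to full $\Aut(G)$-canonicity via \cref{lem:gdobtainedviafoldings-canonical}. Both points are immediate from the cited lemmas, but they are the only places where something could go wrong, so that is where I would focus the verification.
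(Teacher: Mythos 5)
Your proposal is correct and follows essentially the same route as the paper's own proof: both pass to the chordal $3$-local cover via \cref{BasicCharacterization}, take the canonical subtree representation (equivalently, canonical forest-decomposition into cliques) of $G_3$ from \cref{thm:chordal}, fold via $p_3$ using \cref{const:TDFoldingToGD} and \cref{lem:region-rep-obtained-via-folding}, and then cite \cref{lem:gdobtainedviafoldings-canonical} for canonicity and \cref{clm:3-acyclic} for $3$-acyclicity. The group-action bookkeeping you flag is exactly the point the paper also relies on, and it goes through as you describe.
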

 \begin{claimproof}

    By \cref{BasicCharacterization}, the $3$-local cover $G_3$ of $G$ is chordal. By \cref{thm:chordal}, there is a canonical subtree representation $\hat v \mapsto T_{\hat v}$ of $G_3$ over a tree $T$. By \cref{lem:region-rep-obtained-via-folding}, we now consider the region representation $v \mapsto H_v$ of $G$ over the orbit-graph $H = T/ \Gamma(p_3)$ obtained from $\hat v \mapsto T_{\hat v}$ by folding via $p_3$. By \cref{lem:gdobtainedviafoldings-canonical} and the equivalence between \gd s and region representations, since $\hat v \mapsto T_{\hat v}$ is canonical, it follows that $v \mapsto H_v$ is canonical. Finally, $v \mapsto H_v$ is 3-acyclic by \cref{clm:3-acyclic}.
    \end{claimproof}
This completes the proof.
\end{proof}

\section{Characterizations via 3-acyclic region representations}\label{sec:3acyclic-racyclic}

In this section, we prove \cref{lem:char-locallychordal-via-3-acyclic-region-rep}, which we restate here. 

\CharViaAcyclicRegionRep*

First, we prove some preparatory lemmas, which enable us to prove \cref{lem:char-chordal-via-3-acyclic-region-rep}, the statement analogous to \cref{lem:char-locallychordal-via-3-acyclic-region-rep} for chordal graphs.

\begin{theorem}\label{lem:char-chordal-via-3-acyclic-region-rep}
    Let $v \mapsto H_v$ be a $3$-acyclic region representation of a (possibly infinite) graph $G$ over a graph $H$.
    Then $G$ is chordal if and only if the region representation $v \mapsto H_v$ is acyclic.
\end{theorem}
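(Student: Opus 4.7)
The forward implication (acyclic representation $\Rightarrow$ $G$ chordal) I would handle immediately via \cref{prop:subtreeintersec-is-chordal}: if $H_{V(G)}$ is a forest, then each $H_v$ is a connected subgraph of a forest, hence a subtree, so $v \mapsto H_v$ presents $G$ as a subtree intersection graph (component-wise).

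For the reverse implication, my plan is to argue by contradiction. I will assume $G$ is chordal but some cycle $C$ lies in $H_{V(G)}$. Since $C$ is finite, I can pick a finite $X \subseteq V(G)$ with $C \subseteq H_X$ and restrict the representation to the finite chordal induced subgraph $G[X]$, which still violates acyclicity; thus it suffices to handle finite $G$. I then proceed by induction on $|V(G)|$, with $|V(G)| \leq 1$ trivial. In the inductive step, if $G$ is disconnected I use that non-edges force empty region intersections, so the components have pairwise vertex-disjoint region-unions and $H_{V(G)}$ is the disjoint union of the forests obtained by applying the inductive hypothesis to each component. If $G$ is connected, I will pick a simplicial vertex $v$ via Dirac's theorem; induction on $G - v$ gives that $F \coloneqq H_{V(G-v)}$ is a forest, and the task reduces to showing that $H_v \cup F$ is a forest.

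The crux is to prove that $H_v \cap F$ is connected. Since non-neighbors of $v$ have regions disjoint from $H_v$, one has $H_v \cap F = \bigcup_{u \in N(v)} (H_v \cap H_u)$. By 2-acyclicity each $H_v \cup H_u$ is a tree, so by \cref{lem:glueingtrees-to-trees} each $H_v \cap H_u$ is a connected subtree of $H_v$. Simpliciality of $v$ makes $\{v, u_1, u_2\}$ a triangle for every $u_1, u_2 \in N(v)$, so \cref{lem:bag-cont-clique-in-3-acyclic} applied to this triangle yields $H_v \cap H_{u_1} \cap H_{u_2} \neq \emptyset$, making the finite family $\{H_v \cap H_u : u \in N(v)\}$ pairwise intersecting. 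Its union, which equals $H_v \cap F$, is therefore a connected subtree of $H_v$. Consequently $H_v$ meets at most one component $T_i$ of $F$, and \cref{lem:glueingtrees-to-trees} applied to the trees $H_v, T_i$ with connected intersection shows that $H_v \cup T_i$ is a tree; thus $H_v \cup F$ is a forest, contradicting the assumed cycle. The main difficulty I anticipate is exactly this connectedness of $H_v \cap F$; a more direct attempt -- showing that a minimum $X$ with $H_X$ cyclic forms an induced cycle of length $\geq 4$ in $G$ and contradicting chordality -- runs into the subtlety that $H_x \cap C$ may consist of several disjoint arcs, which the simplicial-vertex induction sidesteps entirely.
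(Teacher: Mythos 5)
Your proposal is correct, but in the reverse direction it takes a genuinely different route from the paper's. The paper also reduces to finite data, but it inducts on the number of maximal cliques of $G[Y]$ for a finite connected $Y\subseteq V(G)$: it picks an inclusion-minimal $u$--$v$ separator $S$ of $G[Y]$, which is a clique by \cref{thm:CharacterisationChordalViaMinimalVertexSeparator}, splits $G[Y]$ along a separation $\{A,B\}$ with $A\cap B=S$, applies the induction hypothesis to $A$, $B$ and $S$, notes that $H_A\cap H_B=H_S$ because regions of vertices in different proper sides are disjoint, and concludes with \cref{lem:glueingtrees-to-trees}; the base case (complete $G[Y]$) is \cref{lem:bag-cont-clique-in-3-acyclic} together with \cref{lem:gavril-lemmas}. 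You instead induct on $|V(G)|$ and peel off a simplicial vertex, and your crux --- that $H_v\cap H_{V(G-v)}=\bigcup_{u\in N(v)}(H_v\cap H_u)$ is a connected subtree of the tree $H_v$ because the $H_v\cap H_u$ are pairwise intersecting connected subgraphs of $H_v$ (pairwise intersection coming from \cref{lem:bag-cont-clique-in-3-acyclic} applied to the triangles $\{v,u_1,u_2\}$) --- is sound, since a union of pairwise intersecting connected subgraphs is connected; \cref{lem:glueingtrees-to-trees} then finishes. Two remarks. First, the existence of a simplicial vertex in a finite chordal graph is classical but is not among the facts quoted in the paper (its \cref{thm:CharacterisationChordalViaMinimalVertexSeparator} is the separator characterization, not the simplicial-vertex theorem), so you would need an external citation. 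Second, your reduction to finite induced subgraphs is not merely convenient but essential for your route, since infinite chordal graphs (e.g.\ the double ray) need not contain simplicial vertices, whereas the paper's clique-separator induction applies to any finite connected $Y$ directly. As for what each buys: the paper's argument runs parallel to the separation and tree-decomposition viewpoint developed throughout the paper, while yours is the perfect-elimination-ordering analogue, somewhat more elementary and avoiding the verification that $H_A\cap H_B=H_S$; both correctly sidestep the trap you identify, namely that $H_x\cap C$ for a cycle $C\subseteq H_X$ may consist of several arcs.
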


Then we deduce from \cref{lem:char-chordal-via-3-acyclic-region-rep} the following structural version of \cref{lem:char-locallychordal-via-3-acyclic-region-rep}:

\begin{theorem}\label{lem:inducedcycles-vs-cyclesinHX}
    Let $v \mapsto H_v$ be a $3$-acyclic region representation of a (possibly infinite) graph $G$ over a graph $H$, and let $X$ be a vertex-subset of $G$.
    Then $G[X]$ is an induced cycle of length at least $4$ if and only if $X$ is inclusion-minimal such that $H_X$ contains a cycle.
\end{theorem}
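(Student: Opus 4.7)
The plan is to reduce both directions to \cref{lem:char-chordal-via-3-acyclic-region-rep} applied to induced subgraphs of $G$. The key auxiliary observation is that restricting a $3$-acyclic region representation $v \mapsto H_v$ of $G$ to a subset $Y \subseteq V(G)$ yields a $3$-acyclic region representation of $G[Y]$; consequently, $G[Y]$ is chordal if and only if $H_Y$ is acyclic.

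For the forward direction, suppose $G[X]$ is an induced cycle of length at least $4$. Then $G[X]$ is not chordal, so \cref{lem:char-chordal-via-3-acyclic-region-rep} forces $H_X$ to contain a cycle. For any $X' \subsetneq X$, the graph $G[X']$ is a proper induced subgraph of a cycle, hence a disjoint union of paths and in particular chordal, so \cref{lem:char-chordal-via-3-acyclic-region-rep} yields $H_{X'}$ acyclic. This proves inclusion-minimality.

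For the backward direction, assume $X$ is inclusion-minimal such that $H_X$ contains a cycle. First, $|X| \geq 4$ by $3$-acyclicity. Next, I would show $G[X]$ is connected: if not, partition $X = X_1 \sqcup X_2$ with no edges between the parts, and observe that the region representation axioms force $H_{X_1} \cap H_{X_2} = \emptyset$ (a common node $h$ would give vertices $x_1 \in X_1, x_2 \in X_2$ with $h \in H_{x_1} \cap H_{x_2}$, hence an edge $x_1 x_2$ across the partition). Then $H_X$ is the disjoint union of $H_{X_1}$ and $H_{X_2}$, so any cycle lies wholly in one part, contradicting inclusion-minimality of $X$. Since $H_X$ is not acyclic, \cref{lem:char-chordal-via-3-acyclic-region-rep} gives that $G[X]$ is not chordal; thus $G[X]$ contains an induced cycle $C$ of length at least $4$. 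Applying the already-proven forward direction to $C$, the subset $V(C) \subseteq X$ has $H_{V(C)}$ containing a cycle, and inclusion-minimality of $X$ forces $V(C) = X$. Hence $G[X] = C$.

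The main obstacle, and really the only step requiring any work, is the connectedness argument in the backward direction; the remaining steps are clean applications of \cref{lem:char-chordal-via-3-acyclic-region-rep} combined with inclusion-minimality, and the recovery of $G[X]$ as a full induced cycle at the end amounts to bootstrapping the forward direction once it is available.
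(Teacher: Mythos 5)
Your proof is correct and follows essentially the same route as the paper: both directions reduce to \cref{lem:char-chordal-via-3-acyclic-region-rep} applied to restrictions of the representation, with the backward direction recovering the induced cycle from the inclusion-minimality of non-chordality (which the paper packages as \cref{lem:converseofinducedcycle-yields-cycleHX}, bootstrapping in the same way you do). The one divergence is your connectedness argument, which is harmless but unnecessary, since \cref{lem:char-chordal-via-3-acyclic-region-rep} is stated for possibly disconnected graphs and its proof already reduces to components.
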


\subsection{Interplay: Helly, acyclic\texorpdfstring{ \&}{, and} clique graphs}

It is well-known\footnote{A proof is given in \cite[Lemma~1]{GavrilChordalGraphsSTIG}.} that families of subtrees of a tree are Helly. Thus, subtree representations and clique trees are Helly.

\begin{theorem}[Folklore]\label{thm:subtree-reps-are-helly}
    Every family of subtrees of a tree is Helly.
    In particular, subtree representations are Helly.
\end{theorem}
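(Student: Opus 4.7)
The in-particular statement is immediate from the main statement, since a subtree representation over $T$ is by definition a family of subtrees of $T$. I will therefore focus on proving that every finite pairwise-intersecting subfamily $T_1,\dots,T_n$ of subtrees of a tree $T$ has a common vertex.

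The plan is to induct on $n$. The cases $n \leq 2$ are immediate, and the heart of the argument is the base case $n = 3$: I would pick vertices $a \in T_2 \cap T_3$, $b \in T_1 \cap T_3$, and $c \in T_1 \cap T_2$. Each $T_i$, being a connected subgraph of a tree, is convex, so the unique path in $T$ between any two of its vertices is contained in $T_i$. Hence the $a$--$b$ path lies in $T_3$, the $b$--$c$ path in $T_1$, and the $c$--$a$ path in $T_2$. In any tree, these three pairwise paths between $a,b,c$ meet in a unique common vertex $m$, the \emph{median} of $a,b,c$, which therefore lies in $T_1 \cap T_2 \cap T_3$.

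For the inductive step $n \geq 4$, I would set $S \coloneqq T_1 \cap \dots \cap T_{n-2}$. The inductive hypothesis applied to the first $n-2 \geq 2$ subtrees yields that $S$ is nonempty, and $S$ is itself a subtree of $T$ as an intersection of subtrees. The three subtrees $S, T_{n-1}, T_n$ pairwise intersect: the sets $S \cap T_{n-1}$ and $S \cap T_n$ are each intersections of $n-1$ of the original subtrees, hence nonempty by induction, and $T_{n-1} \cap T_n \neq \emptyset$ by assumption. Applying the $n = 3$ case to $\{S, T_{n-1}, T_n\}$ then produces a vertex in $\bigcap_{i=1}^n T_i$, completing the induction.

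I do not anticipate a genuine obstacle here; the median-vertex argument is the only substantive ingredient, and it transfers verbatim to the infinite-tree setting because any three vertices of a (possibly infinite) tree still have a unique median, namely the common vertex of their three pairwise connecting paths.
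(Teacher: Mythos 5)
Your proof is correct. The paper states this result as folklore and gives no proof of its own, only a citation to Gavril's Lemma~1 in \cite{GavrilChordalGraphsSTIG}; your median-of-three-vertices argument for the base case, combined with the observation that a nonempty intersection of subtrees is again a subtree so that the general case follows by induction, is precisely the standard proof of this fact, and it does indeed carry over verbatim to infinite trees since the paper's Helly notion only quantifies over finite subfamilies.
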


\noindent
We remark that we restricted the notion of an infinite family $\bH$ of subgraphs of a graph being \emph{Helly} to just its finite subsets, as there are infinite families of subtrees of a tree that pairwise intersect but have an empty intersection:

\begin{example}
    The final segments of a fixed ray form an infinite family of subtrees of a tree whose elements pairwise intersect, but whose intersection is empty.
\end{example}

\subsection{Background: Minimal separators and separations}

Let $u,v$ be two vertices and $X$ a vertex-subset of a graph $G$.
We say that $X$ \defn{separates $u$ and $v$} in $G$, and call $X$ an \defn{$u$--$v$ separator} of $G$, if $X \subseteq V(G) \setminus \{u,v\}$ and every $u$--$v$ path in $G$ meets $X$.
If $X$ separates some two vertices in $G$, then~$X$ is a \defn{separator} of $G$.
A separator $X$ is a \defn{minimal (vertex) separator} of $G$ if there are two vertices $u, v$ of $G$ such that $X$ is inclusion-minimal among the $u$--$v$ separators in $G$. 

Dirac characterized chordal graphs by the structure of their minimal separators: 

\begin{restatable}[Dirac~\cite{dirac1961rigid}, {Theorem~1}]{theorem}{tightsepchar}
    \label{thm:CharacterisationChordalViaMinimalVertexSeparator} 
    A (possibly infinite)\footnote{It is immediate from the proof presented in \cite[Theorem~2.1]{blair1993introduction} that Dirac's theorem also holds for infinite graphs.} graph is chordal if and only if  all its minimal separators are cliques.
\end{restatable}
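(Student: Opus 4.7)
The plan is to prove both directions of Dirac's theorem by constructing explicit obstructions. The forward direction (chordal implies all minimal separators are cliques) uses the minimality of the separator to produce a long cycle between any two of its vertices, which must then have the desired chord by chordality. The backward direction (minimal separators are cliques implies chordal) proceeds by contraposition: from a long induced cycle, we construct two non-adjacent vertices whose minimal separator cannot be a clique.

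For the forward direction, I would fix a minimal $u$-$v$ separator $S$ and let $C_u$, $C_v$ be the components of $G - S$ containing $u$ and $v$, respectively. The first step is to observe that every $s \in S$ has a neighbor in $C_u$ and a neighbor in $C_v$. Indeed, minimality means $S \setminus \{s\}$ fails to separate $u$ and $v$, so there is a $u$-$v$ path $P$ meeting $S$ only in $s$; the $u$-initial subpath of $P$ up to $s$ then lies in $C_u \cup \{s\}$, placing a neighbor of $s$ in $C_u$, and symmetrically in $C_v$. Then, given distinct $s, s' \in S$, I would pick a shortest $s$-$s'$ path $P_u$ whose interior lies in $C_u$ and a shortest $s$-$s'$ path $P_v$ whose interior lies in $C_v$; both have length at least two because $s$ and $s'$ have neighbors in the respective components but are themselves in $S$. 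Their union is an induced-except-for-the-chord cycle of length at least four; chordality supplies a chord, and this chord cannot lie within $P_u$ or $P_v$ (by shortness) nor between their interiors (since $S$ separates $C_u$ from $C_v$), so it must be the edge $ss'$.

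For the backward direction, I would assume for contradiction that $G$ contains an induced cycle $C$ of length $\ell \geq 4$. Picking two vertices $u, v$ on $C$ at distance two along $C$, these are non-adjacent in $G$ since $C$ is induced. Any $u$-$v$ separator $S$ of $G$ must contain one vertex from each of the two arcs of $C$ between $u$ and $v$, since each arc yields a $u$-$v$ path in $G$. By a standard shrinking argument (replace $S$ by $N(C_u) \cap N(C_v)$ where $C_u$, $C_v$ are the components of $G - S$ through $u$, $v$), one obtains a minimal $u$-$v$ separator $S' \subseteq S$ which still contains a vertex $x$ on one arc of $C$ and a vertex $y$ on the other. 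Since $x$ and $y$ lie on different arcs and $C$ is an induced cycle of length at least four, $x$ and $y$ are non-adjacent in $G$, contradicting the assumption that $S'$ is a clique.

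The main subtlety lies in the infinite case. The forward direction carries over verbatim since cycles are finite. For the backward direction, the only issue is ensuring that a minimal $u$-$v$ separator exists and can be found inside a given $u$-$v$ separator; this is the reason for the shrinking step using neighborhoods of the components of $G - S$ rather than a naive Zorn-type argument. I expect this to be the only point requiring genuine care; once established, the contradiction with the induced cycle is immediate.
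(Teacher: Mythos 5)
The paper does not prove this statement at all: it is imported from Dirac, with a footnote pointing to the proof in Blair--Peyton, so there is no in-paper argument to compare against and your job was to reconstruct the classical proof. Your forward direction does this correctly: the observation that every $s\in S$ has a neighbour in both $C_u$ and $C_v$ (via a $u$--$v$ path meeting $S$ only in $s$), the two shortest $s$--$s'$ paths with interiors in $C_u$ and $C_v$, and the elimination of every chord other than $ss'$ are all sound, and nothing there uses finiteness of $G$, since cycles and the relevant paths are finite.

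The genuine gap is the shrinking step in the backward direction. The set $N(C_u)\cap N(C_v)$ need not be a $u$--$v$ separator: take $G$ to be the path $u\,s_1\,x\,s_2\,v$ and $S=\{s_1,s_2\}$; then $C_u=\{u\}$, $C_v=\{v\}$, $N(C_u)=\{s_1\}$ and $N(C_v)=\{s_2\}$, so $N(C_u)\cap N(C_v)=\emptyset$, which separates nothing. The standard repair is a two-stage construction: put $S_1:=N(C_u)\subseteq S$, let $D_v$ be the component of $G-S_1$ containing $v$, and put $S':=N(D_v)\subseteq S_1$. Then $S'$ separates $u$ from $v$, and every vertex of $S'$ has a neighbour in $D_v$ and (being in $N(C_u)$) a neighbour in $C_u$, which yields minimality; moreover $S'$ still meets the interior of both arcs of the induced cycle, since each arc is a $u$--$v$ path. (Starting from $S=N(u)$ one stage suffices. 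Also, your worry about Zorn is misplaced: because $u$--$v$ paths are finite, the intersection of a descending chain of $u$--$v$ separators is again a $u$--$v$ separator, so minimal separators below any given one do exist by Zorn; the explicit construction is simply cleaner and hands you the property you need.) With that step repaired, the rest of your contrapositive argument --- two internal vertices on distinct arcs of an induced cycle of length at least four are non-adjacent --- goes through.
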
 

A pair $\{A,B\}$ of vertex-subsets of a graph $G$ $G$ is a \defn{separation} of $G$ if $A \cup B = V(G)$ and there is no edge between $A \setminus B$ and $B \setminus A$.
Its \defn{separator} is $A \cap B$, its \defn{sides} are $A$ and $B$, and its \defn{proper sides} are $A \setminus B$ and $B \setminus A$.

\subsection{Proof of \texorpdfstring{\cref{lem:char-locallychordal-via-3-acyclic-region-rep}}{Theorem XX}}

Now we prove the main results of this section. 

\begin{proof}[Proof of \cref{lem:char-chordal-via-3-acyclic-region-rep}]
    For the sake of this proof, we may assume $H = H_{V(G)}$ by disregarding all edges which are not in $H_v$ for some vertex $v$ of $G$.
    Then the components $G'$ of $G$ are in a $1$-to-$1$ correspondence with the components of $H$ via $G' \mapsto H_{V(G')}$.
    Since a graph is chordal if and only if each of its components is chordal, we may assume for this proof that $G$, and thus $H$, is connected.

    If $H$ is acyclic (and thus a tree), then $v \mapsto H_v$ is a subtree representation of $G$ over a tree $H$, and thus by \cref{prop:subtreeintersec-is-chordal}, $G$ is chordal.
    To prove the converse, let us assume that $G$ is chordal.
    As $G$ is connected and $v \mapsto H_v$ represents $G$, the graph $H$ is connected, as well.
    It remains to show that $H$ is acyclic. Suppose for a contradiction that $C$ is a cycle in $H$.
    By choosing for each edge $f$ of the cycle $C$ a vertex $x_f$ of $G$ such that $f$ is an edge of $H_{x_f}$, we find a finite set $X = \{x_f \mid f \in E(C)\}$ such that $H_X$ contains a cycle.
    Moreover, as $v \mapsto H_v$ represents $G$, the subgraph $G[X]$ is connected.
    Thus, it suffices that $H_Y$ is a tree for every finite vertex-subset $Y$ of $G$ such that $G[Y]$ is connected 
    We proceed by induction on the number of maximal cliques of the finite chordal graph $G[Y]$.

    The base case follows immediately from \cref{lem:bag-cont-clique-in-3-acyclic} together with \cref{lem:gavril-lemmas}~\cref{gavril:1}.
    So we may assume that $G[Y]$ is not complete, i.e.\ there are two non-adjacent vertices $u,v \in Y$.
    In particular, $u,v$ are separated by $Y \setminus \{u,v\}$ in $G[Y]$.
    Pick an inclusion-minimal $u$--$v$ separator $S$ in $G[Y]$.
    Fix a separation $\{A,B\}$ of $G[Y]$ with separator $A \cap B = S$ such that $u \in A$ and $v \in B$.
    As every maximal clique $K$ of $G[Y]$ satisfies either $K \subseteq A$ or $K \subseteq B$, both subgraphs $G[A], G[B]$ induced on the sides of $\{A,B\}$ have fewer maximal cliques than $G[Y]$.
    Since $G[Y]$ is chordal, $S$ is a clique by \cref{thm:CharacterisationChordalViaMinimalVertexSeparator}.
    Thus, the subgraphs $G[A],G[B]$ are connected.
    As induced subgraphs of a chordal graph, both $G[A],G[B]$ are chordal.
    The induction assumption applied to $A$ and $B$ yields that both $H_A$ and $H_B$ are trees.
    Since $\{A,B\}$ is a separation and $v \mapsto H_v$ represents $G$, their intersection $H_A \cap H_B$ is precisely $H_S$, which is also a tree by the induction assumption.
    Thus, $H_Y = H_A \cup H_B$ is a tree by \cref{lem:glueingtrees-to-trees}.
\end{proof}

Now, we prove the converse of \cref{lem:inducedcycles-vs-cyclesinHX} in slightly more detail.

\begin{lemma}\label{lem:converseofinducedcycle-yields-cycleHX}
    Let $v \mapsto H_v$ be a $2$-acyclic region representation of a (possibly infinite) graph $G$ over a graph $H$.
    If $X$ is an inclusion-minimal vertex-subset of $G$ such that $H_X$ contains a cycle, then $G[X]$ is an induced cycle.
\end{lemma}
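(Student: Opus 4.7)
The plan is to reduce to the chordal analogue \cref{lem:char-chordal-via-3-acyclic-region-rep}. Since $v \mapsto H_v$ is $2$-acyclic, the union of at most two regions is acyclic, so $|X| \geq 3$. In the base case $|X| = 3$, \cref{lem:2acyclic-yields-Xcontainscycle-size3-is-clique} already yields that $X$ is a clique of $G$, so $G[X]$ is a triangle, which is an induced cycle.

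Now suppose $|X| \geq 4$. First I would observe that the restriction of $v \mapsto H_v$ to $X$ is a region representation of $G[X]$ over $H$, and that it is $3$-acyclic: any three elements of $X$ form a proper subset of $X$ (here the assumption $|X| \geq 4$ is crucial), so by the minimality of $X$ the union of the corresponding three regions is acyclic. Because $H_X$ contains a cycle by assumption, \cref{lem:char-chordal-via-3-acyclic-region-rep} applied to this restricted region representation shows that $G[X]$ is not chordal, so $G[X]$ contains an induced cycle $D$ of length at least~$4$.

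It remains to prove $V(D) = X$; once this is established, $G[X] = G[V(D)] = D$ because $D$ is an induced cycle of $G[X]$, and the proof is complete. Suppose for a contradiction that $Y \coloneqq V(D) \subsetneq X$. Restricting $v \mapsto H_v$ further to $Y$ still yields a region representation, this time of $G[Y] = D$, and it is again $3$-acyclic because every triple in $Y$ remains a proper subset of $X$. But $D$ is a cycle of length at least~$4$, so it is not chordal, and \cref{lem:char-chordal-via-3-acyclic-region-rep} then forces $H_Y$ to contain a cycle, contradicting the minimality of $X$. The main technical point is exactly this setup: one needs \cref{lem:char-chordal-via-3-acyclic-region-rep} to be applicable both to $X$ and to the proper subset $Y \subseteq X$, and the hypothesis $|X| \geq 4$ is what makes every relevant triple a proper subset of $X$ and hence acyclic by minimality.
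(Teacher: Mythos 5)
Your proposal is correct and follows essentially the same route as the paper's proof: the base case $|X|=3$ via \cref{lem:2acyclic-yields-Xcontainscycle-size3-is-clique}, then observing that minimality of $X$ (with $|X|\geq 4$) makes the restriction to $X$, and to any subset, $3$-acyclic, so that \cref{lem:char-chordal-via-3-acyclic-region-rep} transfers inclusion-minimality of ``$H_Y$ contains a cycle'' to inclusion-minimality of ``$G[Y]$ is not chordal''. The only difference is that you spell out explicitly, via the hole $D$ and the contradiction $V(D)\subsetneq X$, the final step that the paper compresses into one sentence.
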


\begin{proof}
    Assume that $X$ is inclusion-minimal so that $H_X$ contains a cycle.
    Since $v \mapsto H_v$ is $2$-acyclic, $X$ has size at least $3$.
    If $X$ has size $3$, then $G[X]$ is an induced cycle by \cref{lem:2acyclic-yields-Xcontainscycle-size3-is-clique}.
    Thus, we assume that $X$ has size at least $4$.
    As $X$ is inclusion-minimal with the property that $H_X$ contains a cycle and $X$ has size at least $4$, the restriction of $v \mapsto H_v$ to $X$ is a $3$-acyclic region representation of $G[X]$.
    Therefore, by applying \cref{lem:char-chordal-via-3-acyclic-region-rep} to $G[Y]$ with $Y \subseteq X$ and its $3$-acyclic region representation, the restriction of $v \mapsto H_v$ to $Y$, we deduce that $X$ is inclusion-minimal such that $G[X]$ is not chordal, i.e.\ $G[X]$ is an induced cycle of length at least $4$, as desired.
\end{proof}

\begin{proof}[Proof of \cref{lem:inducedcycles-vs-cyclesinHX}]
    The backwards direction follows immediately from \cref{lem:converseofinducedcycle-yields-cycleHX}.
    For the forwards direction, assume that $G[X]$ is an induced cycle of length at least $4$. 
    In particular, $G[X]$ is connected but not chordal.
    Thus, \cref{lem:char-chordal-via-3-acyclic-region-rep} applied to $v \mapsto H_v$ restricted to $X$, which is a $3$-acyclic region representation of $G[X]$, yields that $H_X$ is not acyclic, i.e.\ $H_X$ contains a cycle.
\end{proof}

\begin{proof}[Proof of \cref{lem:char-locallychordal-via-3-acyclic-region-rep}]
    Since $G$ admits a $3$-acyclic region representation $v \mapsto H_v$, no wheel $W_n$ is an induced subgraph of $G$ for $n \geq 4$ by \cref{lem:wheels-not-3acyclic-RIG}.
    \cref{lem:inducedcycles-vs-cyclesinHX} yields that $G$ has no induced cycles of length at least $4$ and at most $r$ if and only if $v \mapsto H_v$ is $r$-acyclic.
    The former is, for wheel-free graphs $G$, equivalent to being $r$-locally chordal by \cref{BasicCharacterization}, as desired.
\end{proof}

\section{Characterization via graph-decompositions that 
locally induce tree-decompositions}\label{sec:CharViaGDLocDerTD}

By the definition, each $r/2$-ball $B_{r/2}(v)$  of an $r$-locally chordal graph is chordal, and thus admits a tree-decomposition $(T^v, \cV^v)$ into cliques (\cref{thm:chordal}).
In this section, we revisit the statement from the introduction that $r$-locally chordal graphs admit decompositions which look like a single decomposition formed by ``putting together'' the tree-decompositions of the $r/2$-balls. Specifically, we show that $r$-acyclic graph-decompositions of an $r$-locally chordal graph form a common description of some choice of tree-decompositions $(T^v, \cV^v)$ of the $r/2$-balls into cliques (\cref{main:InducingTDsonBalls} and \cref{thm:locally-derived-TDs}).

First, we formalize what the locally induced decompositions are.
Let $(H,\cV)$ be a decomposition of a graph $G$.
The decompositions which $(H,\cV)$ \defn{$r$-locally induces in $G$} are the families $(H^{v,r}, \cV^{v,r})$ where $H^{v,r} := \bigcup \{H_v \mid d_G(u,v) < r/2\}$ and $V^{v,r}_h := V_h \cap N^{r/2}[v]$ for each node $h$ of $H^{v,r}$.
For a given vertex $v$ of $G$, we assign a vertex $u \in N^{r/2}[v]$ the co-part $H^{v,r}_u$ of $u$ in the decomposition $(H^{v,r}, \cV^{v,r})$, where $H^{v,r}_u := H_u$ if $d_G(u,v) < r/2$ and $H^{v,r}_u := H_u \cap H^{v,r}$ if $d_G(u,v) = r/2$.
We can now state the main result of this section. 

\begin{mainresult}\label{main:InducingTDsonBalls}
Let $G$ be a locally finite graph and $r \geq 3$ an integer. The following are equivalent: 
\begin{enumerate}
     \item \label{item:inducingTD1} $G$ is $r$-locally chordal. 
     \item \label{item:inducingTD2} $G$ admits a graph-decomposition into its maximal cliques which $r$-locally induces tree-decompositions. 
     \item $G$ admits a canonical graph-decomposition into cliques which $r$-locally induces tree-decompositions. 
     \item $G$ admits a graph-decomposition into cliques which $r$-locally induces tree-decompositions. 
\end{enumerate}
\end{mainresult}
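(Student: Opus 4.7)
The plan is as follows: the implications (2) $\Rightarrow$ (4) and (3) $\Rightarrow$ (4) are immediate. I will prove (4) $\Rightarrow$ (1) directly and reduce both (1) $\Rightarrow$ (2) and (1) $\Rightarrow$ (3) to a single key lemma, namely that \emph{every $r$-acyclic graph-decomposition $(H,\cV)$ of $G$ into cliques $r$-locally induces tree-decompositions}. Granting this lemma, (1) $\Rightarrow$ (2) and (1) $\Rightarrow$ (3) follow at once from \cref{mainresult:racycliccliquegraphs-iff-r-locally-chrodal-etc}, which supplies both an $r$-acyclic graph-decomposition of $G$ into maximal cliques and a canonical $r$-acyclic graph-decomposition of $G$ into cliques. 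For (4) $\Rightarrow$ (1), I observe that the bags $V_h^{v,r} = V_h \cap N^{r/2}[v]$ of the locally induced decomposition are subsets of cliques of $G$, hence cliques of $B_{r/2}(v)$; so $(H^{v,r}, \cV^{v,r})$ presents $B_{r/2}(v)$ as a subtree intersection graph, and \cref{prop:subtreeintersec-is-chordal} forces $B_{r/2}(v)$ to be chordal for every $v$.

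To prove the key lemma, fix $v$ and set $U \coloneqq \{u \in V(G) : d_G(u,v) < r/2\}$, so $H^{v,r} = H_U$. Connectedness of $H^{v,r}$ is immediate from \cref{lem:strongerH2} since $U$ is connected in $G$. For acyclicity I argue by contradiction: a cycle $C$ in $H^{v,r}$ is finite and hence covered by some finite $X' \subseteq U$; picking $X \subseteq X'$ inclusion-minimal so that $H_X$ still contains a cycle gives a globally inclusion-minimal such $X$ (any proper subset of $X$ already lies in $X'$), so \cref{lem:inducedcycles-vs-cyclesinHX} makes $G[X]$ an induced cycle of length at least $4$, while $r$-acyclicity forces $|X| > r$. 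But every edge of $G[X]$ joins vertices at distance less than $r/2$ from $v$, so the distance-sum is below $r$ and each such edge lies in $B_{r/2}(v)$; hence $G[X]$ is an induced cycle of length at least $4$ in $B_{r/2}(v)$, contradicting chordality of the ball (which holds because $G$ is $r$-locally chordal by \cref{mainresult:racycliccliquegraphs-iff-r-locally-chrodal-etc}). Therefore $H^{v,r}$ is a tree.

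It remains to verify connectedness of the specified co-parts and the decomposition axioms for $(H^{v,r}, \cV^{v,r})$ as a decomposition of $B_{r/2}(v)$. For $u \in U$, the co-part $H_u^{v,r} = H_u$ is already a tree by $2$-acyclicity. For $u \in N^{r/2}(v)$, the co-part $H_u \cap H^{v,r}$ is an intersection of two trees, so \cref{lem:glueingtrees-to-trees} reduces its connectedness to acyclicity of the union $H_{\{u\} \cup U}$; running the same cycle-minimization argument on $\{u\} \cup U$, any offending minimal $Y$ contains at most one boundary vertex (namely $u$), so every edge of $G[Y]$ still has distance-sum below $r$ to $v$ and $G[Y]$ sits inside $B_{r/2}(v)$, contradicting chordality again. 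The axioms (H1) and (H2) then follow routinely, using that every boundary vertex $u \in N^{r/2}(v)$ has a neighbor in $U$, which guarantees $H_u \cap H^{v,r} \neq \emptyset$ and places the relevant bags in $V(H^{v,r})$. The main obstacle is exactly this co-part step: were $Y$ permitted to contain two boundary vertices, the edge between them could escape $B_{r/2}(v)$ and chordality of the ball would no longer apply; enlarging $U$ by only a single boundary vertex at a time is the trick that pins $Y$ to meet $N^{r/2}(v)$ in at most one vertex and rescues the argument.
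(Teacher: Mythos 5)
Your reduction of (i)$\Rightarrow$(ii) and (i)$\Rightarrow$(iii) to the single key lemma is sound, and your proof of that lemma is correct but takes a genuinely different route from the paper. The paper deduces \cref{main:InducingTDsonBalls} in three lines from \cref{lem:ConAcyclicEquivToAcyclicForIntoCliques} and \cref{thm:connectedAcyclicEquivLocallyInducingTDs} -- a full equivalence, for \emph{arbitrary} graph-decompositions, between being connected-$r$-acyclic and $r$-locally inducing tree-decompositions, proved by induction on $r$ with a delicate path argument -- combined with \cref{mainresult:racycliccliquegraphs-iff-r-locally-chrodal-etc}. You prove only the one implication needed, only for decompositions into cliques, by combining \cref{lem:inducedcycles-vs-cyclesinHX} with chordality of the balls; the minimization trick, the observation that a minimal offending set meets $N^{r/2}(v)$ in at most one vertex, and the use of \cref{lem:glueingtrees-to-trees} are exactly right and in effect re-derive the criterion isolated in \cref{lemma:CharacterisingLocallyInducingTDs}. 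This is shorter for the theorem at hand, at the price of not yielding the general equivalence the paper reuses elsewhere.

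However, your argument for (iv)$\Rightarrow$(i) has a genuine gap when $r$ is even. You claim that the bags $V_h\cap N^{r/2}[v]$ are cliques of $B_{r/2}(v)$ and hence that $(H^{v,r},\cV^{v,r})$ presents the ball as a subtree intersection graph. This is false for even $r$: two vertices at distance exactly $r/2$ from $v$ may be adjacent in $G$ and lie in a common bag, yet the edge between them has distance-sum $r$ and is therefore \emph{not} an edge of $B_{r/2}(v)$. (The paper notes exactly this phenomenon before \cref{thm:locally-derived-TDs}: for even $r$ the $r$-locally induced tree-decompositions need not be into cliques of the ball.) Consequently the subtree intersection graph determined by the co-parts is in general a proper supergraph of $B_{r/2}(v)$, and chordality of a supergraph implies nothing about chordality of the ball, so \cref{prop:subtreeintersec-is-chordal} does not apply. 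The implication can be repaired -- for instance, show that $r$-locally inducing tree-decompositions forces connected-$r$-acyclicity via \cref{lem:centralvertex} and \cref{lem:glueingtrees-to-trees}, then invoke \cref{lem:ConAcyclicEquivToAcyclicForIntoCliques} and \cref{mainresult:racycliccliquegraphs-iff-r-locally-chrodal-etc} -- but that is precisely the paper's route, and your proposal does not supply it.
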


In general, the $r$-locally induced families $(H^{v, r},\cV^{v, r})$ satisfy \cref{axiomH1}.
For odd $r$, they even satisfy \cref{axiomH2}, and thus are graph-decompositions.
But, for even $r$, the subgraphs of $H^{v,r}$ induced on the co-bag of a vertex $u \in N^{r/2}(v)$ are not necessarily connected.
Thus, the decomposition $(H^{v,r},\cV^{v,r})$ does not in general satisfy \cref{axiomH2}, and hence need not be graph-decompositions.

We show that every $r$-acyclic decomposition $(H,\cV)$ $r$-locally induces tree-decompositions. In fact, something even stronger is true. A decomposition $(H, \cV)$ is \defn{connected-$r$-acyclic} if, for every connected vertex-subset $X$ of $G$ of at most $r$ vertices, the union $H_X$ is acyclic. 
From the definition, every $r$-acyclic \gd\ $(H,\cV)$ is connected-$r$-acyclic. If $(H,\cV)$ is into cliques, then the converse holds as well:

\begin{lemma}\label{lem:ConAcyclicEquivToAcyclicForIntoCliques}
    Let $(H,\cV)$ be a \gd\ of a (possibly infinite) graph $G$ into cliques, and let $r \geq 1$ be an integer.
    Then $(H,\cV)$ is connected-$r$-acyclic if and only if $(H,\cV)$ is $r$-acyclic.
\end{lemma}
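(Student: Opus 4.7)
The plan is to establish the nontrivial direction (connected-$r$-acyclic implies $r$-acyclic) by a minimality argument that reduces to \cref{lem:converseofinducedcycle-yields-cycleHX}. The forward direction is immediate from the definitions, since any connected vertex-subset of size at most $r$ is in particular a vertex-subset of size at most $r$.

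For the converse, suppose $(H,\cV)$ is a connected-$r$-acyclic \gd\ of $G$ into cliques. The first substep is to show $(H,\cV)$ is $2$-acyclic (so that the corresponding region representation $v \mapsto H_v$ from \cref{thm:rig-is-gd-into-cliques} is $2$-acyclic). The case $r = 1$ is trivial, so assume $r \geq 2$ and consider two vertices $u,v$ of $G$. If $uv \in E(G)$, then $\{u,v\}$ is connected of size $2 \leq r$, so $H_u \cup H_v$ is acyclic by assumption. If $uv \notin E(G)$, then any node $h$ of $H$ with $u,v \in V_h$ would force $uv$ to be an edge, since $V_h$ is a clique; hence $H_u \cap H_v = \emptyset$, and $H_u \cup H_v$ is the disjoint union of the trees $H_u$ and $H_v$, which is acyclic.

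Now suppose for contradiction that $(H,\cV)$ is not $r$-acyclic. Then some $Y \subseteq V(G)$ with $|Y| \leq r$ satisfies that $H_Y$ contains a cycle, so we may pick $X \subseteq V(G)$ inclusion-minimal (with no size restriction) among all vertex-subsets whose associated union contains a cycle; then $|X| \leq |Y| \leq r$. Applying \cref{lem:converseofinducedcycle-yields-cycleHX} to the $2$-acyclic region representation $v \mapsto H_v$ corresponding to $(H,\cV)$ via \cref{thm:rig-is-gd-into-cliques}, we conclude that $G[X]$ is an induced cycle. In particular, $X$ is connected with $|X| \leq r$ and $H_X$ contains a cycle, contradicting the connected-$r$-acyclicity of $(H,\cV)$.

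The only step that genuinely uses the clique hypothesis (rather than being formal) is the reduction to $2$-acyclicity, where disjointness of $H_u$ and $H_v$ for non-adjacent $u,v$ prevents unwanted cycles on pairs; after that, the earlier structural lemma \cref{lem:converseofinducedcycle-yields-cycleHX} does all the heavy lifting, so I do not expect any real obstacle beyond verifying this single reduction.
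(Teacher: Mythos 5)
Your proof is correct, but it takes a noticeably heavier route than the paper's. The paper's argument for the nontrivial direction is a two-line observation: if $H_X$ contains a cycle $O$ for some $X$ with $|X|\le r$, then the connected subgraph $O$ lies inside $H_{X'}$ for a single component $G[X']$ of $G[X]$ (by \cref{lem:strongerH2forRegRep}, components of $G[X]$ correspond to components of $H_X$ under a region representation), and $X'$ is a connected set of at most $r$ vertices, contradicting connected-$r$-acyclicity. You instead first establish $2$-acyclicity (your argument there is fine, and it is the one place the clique hypothesis enters in your route as well) and then invoke \cref{lem:converseofinducedcycle-yields-cycleHX}, which rests on \cref{lem:char-chordal-via-3-acyclic-region-rep} and hence on Dirac's characterization of chordal graphs via minimal separators --- a much stronger tool than needed, since all you ultimately extract from ``$G[X]$ is an induced cycle'' is that $X$ is connected. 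There is no circularity (that lemma is proved independently, earlier in the paper), so your proof stands, but the paper's version is both more elementary and more self-contained. Two small points of hygiene: you should choose the inclusion-minimal $X$ \emph{inside} $Y$ (a minimal element among all cycle-supporting subsets of $V(G)$ need not be comparable with $Y$, so the inequality $|X|\le|Y|$ as you state it is not justified; minimality within the finite set $Y$ gives both existence and the size bound, and such an $X$ is then also globally inclusion-minimal); and note that connectedness of $X$ is really the only consequence of the induced-cycle conclusion that your contradiction uses.
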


\begin{proof}
    It follows from the definition that every $r$-acyclic decomposition is connected-$r$-acyclic.

    Conversely, assume that $(H, \cV)$ is connected-$r$-acyclic. Suppose that $X$ is a vertex-subset of $G$ such that $H_X$ contains a cycle $O$.
    By \cref{lem:strongerH2forRegRep}, the connected subgraph $O$ of $H$ is contained in $H_{X'}$ for some component $G[X']$ of $G[X]$ with $X' \subseteq X$. Therefore, $(H, \cV)$ is $r$-acyclic if it is connected-$r$-acyclic. 
\end{proof}

It turns out that connected-$r$-acyclic decompositions are precisely the ones that $r$-locally induce tree-decompositions:

\begin{theorem}\label{thm:connectedAcyclicEquivLocallyInducingTDs}
    Let $(H,\cV)$ be a graph-decomposition of a (possibly infinite) graph $G$ and let $r \geq 1$ be an integer.
    Then $(H,\cV)$ is connected-$r$-acyclic if and only if the decompositions it $r$-locally induces are tree-decompositions.
\end{theorem}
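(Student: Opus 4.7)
The plan is to prove the two implications separately, using \cref{lem:glueingtrees-to-trees} as the main gluing tool.

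For the backward direction, assume the $r$-locally induced decompositions are tree-decompositions, and let $Y\subseteq V(G)$ be connected with $|Y|\leq r$. The graph $G[Y]$ has radius at most $\lfloor r/2\rfloor$, so one can choose a center $v\in Y$ of $G[Y]$. In the generic situation where this radius is strictly less than $r/2$ (always the case for odd $r$, or when $|Y|<r$), one has $Y\subseteq\{u:d_G(u,v)<r/2\}$, so $H_Y\subseteq H^{v,r}$; since $H^{v,r}$ is a tree by hypothesis, $H_Y$ is acyclic. In the remaining boundary case ($r$ even, $|Y|=r$, radius exactly $r/2$), set $Y_1:=\{y\in Y:d_G(v,y)<r/2\}$ and $Y_2:=Y\setminus Y_1$; I would iteratively glue the $H_y$ for $y\in Y_2$ onto the subtree $H_{Y_1}\subseteq H^{v,r}$ via \cref{lem:glueingtrees-to-trees}, using that axiom \ref{axiomH2} of the induced tree-decomposition provides the necessary connected intersections $H_y\cap H^{v,r}$ and that a BFS tree of $G[Y]$ rooted at $v$ shows every $y\in Y_2$ has a neighbor in $Y_1$.

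For the forward direction, fix $v\in V(G)$ and write $d:=\lfloor(r-1)/2\rfloor$, so that $X:=\{u:d_G(u,v)<r/2\}=N^d[v]$. Axiom \ref{axiomH1} of $(H^{v,r},\cV^{v,r})$ is inherited from \ref{axiomH1} of $(H,\cV)$ because every edge of $B_{r/2}(v)$ has an endpoint in $X$, and axiom \ref{axiomH2} is trivial for $u\in X$. The crux is to show that the model $H^{v,r}$ is a tree. I would prove by induction on $k\in\{0,1,\ldots,d\}$ that $H_{F_k}$ is a tree, where $F_k:=N^k[v]$ (so $F_d=X$ and $H_{F_d}=H^{v,r}$). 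Connectedness of $H_{F_k}$ follows from \cref{lem:strongerH2}, and the base case $H_{F_0}=H_v$ is a tree by connected-$1$-acyclicity.

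In the inductive step, I would add the vertices of $F_k\setminus F_{k-1}$ one at a time: to add $u$ to the current set $F$, \cref{lem:glueingtrees-to-trees} reduces the task to showing $H_F\cap H_u$ is connected. Fix a neighbor $u'\in F_{k-1}$ of $u$ on a shortest $v$--$u$ path and a base node $h\in H_{u'}\cap H_u$ (nonempty by \hyperref[H1E]{(H1E)}). For each $w\in F$, form a ``tripod'' $Y'(w)$ consisting of a shortest $v$--$u$ path with $u$ removed, together with a shortest $v$--$w$ path. All vertices of $Y'(w)$ lie in $F$, one has $|Y'(w)\cup\{u\}|\leq 2d+1\leq r$, and $Y'(w)\cup\{u\}$ is connected in $G$ via $v$. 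Connected-$r$-acyclicity combined with \cref{lem:strongerH2} then makes $H_{Y'(w)\cup\{u\}}$ a tree, so \cref{lem:glueingtrees-to-trees} yields that $H_{Y'(w)}\cap H_u$ is a connected subset of $H_F\cap H_u$ containing both $h$ and $H_w\cap H_u$. Running $w$ over $F$ establishes that $H_F\cap H_u$ is connected. The remaining axiom \ref{axiomH2} case at distance exactly $r/2$ (even $r$) is handled by the same tripod construction, now with the full $v$--$u'$ path giving a tripod of size exactly $r$.

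The main obstacle is calibrating the tripod to satisfy both constraints simultaneously: it must have at most $r$ vertices so that connected-$r$-acyclicity applies, and it must stay inside $F$ so that $H_{Y'(w)}\subseteq H_F$. Both constraints saturate precisely when $d=\lfloor(r-1)/2\rfloor$, which is why the equivalence is sharp in $r$. The infinite case causes no further trouble because any cycle in $H^{v,r}$ is finite and is therefore captured on a finite portion of the induction.
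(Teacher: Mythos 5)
Your forward direction is sound and is essentially the paper's argument in different clothing: the paper's set $U = V(P_1)\cup V(P)\cup\{u\}$ in the proof of \cref{thm:connectedAcyclicEquivLocallyInducingTDs} is exactly your ``tripod,'' with the same size bookkeeping $2\lfloor (r-1)/2\rfloor+1\le r$; the paper merely organizes it as an induction on $r$ with a contradiction via an $H^\ast$-path, where you induct on the BFS level and verify connectedness of $H_F\cap H_u$ directly via \cref{lem:glueingtrees-to-trees}. (Your closing remark about infinite graphs is glib --- when a level is infinite you cannot literally add its vertices one at a time --- but the standard fix, taking a maximal admissible subset as the paper does with its set $W$, or restricting to a finite downward-closed set capturing a given cycle, is routine.)

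The genuine gap is in the backward direction, in the boundary case ($r$ even, $|Y|=r$, eccentricity of the chosen center exactly $r/2$). The hypothesis that the $r$-locally induced decompositions are tree-decompositions only tells you, via \cref{axiomH2} of the induced decomposition at $v$ and \cref{lem:glueingtrees-to-trees}, that $H_{N^{r/2-1}[v]}\cup H_{y}$ is a tree for each \emph{single} $y\in N^{r/2}(v)$; it says nothing about $H_{N^{r/2-1}[v]}\cup H_{y_1}\cup H_{y_2}$ for two such vertices, because the intersection $H_{y_1}\cap H_{y_2}$ is completely uncontrolled --- it may be a nonempty piece disjoint from $H_{N^{r/2-1}[v]}$, in which case the union contains a cycle even though each individual gluing is fine. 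So your plan to ``iteratively glue the $H_y$ for $y\in Y_2$'' is unjustified whenever $|Y_2|\ge 2$, and you give no argument that $|Y_2|\le 1$. This is precisely what the paper's \cref{lem:centralvertex} supplies: for a connected set $U$ of at most $r$ vertices one can choose $v\in U$ (a central vertex of a longest induced path in $G[U]$, not merely a center of $G[U]$) so that \emph{at most one} vertex of $U$ is at distance exactly $r/2$ from $v$, which reduces the boundary case to a single application of \cref{lem:glueingtrees-to-trees}. Your argument becomes correct once you prove such a statement, but as written the key step is missing.
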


First, we characterize those graph-decomposition whose locally induced decompositions are tree-decompositions:

\begin{lemma}\label{lemma:CharacterisingLocallyInducingTDs}
    Let $(H,\cV)$ be a graph-decomposition of a (possibly infinite) graph $G$, and let $r \geq 1$ be an integer.
    Then the decompositions that are $r$-locally induced by $(H,\cV)$ are tree-decompositions if and only if
    \begin{itemize}
        \item $r$ is odd and $H_{N^{r/2}[v]}$ is a tree, or
        \item $r$ is even and $H_{N^{r/2-1}[v] \cup \{u\}}$ is a tree for every $u \in N^{r/2}(v)$.
    \end{itemize}
\end{lemma}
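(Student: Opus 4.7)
The plan is to prove the lemma by cases on the parity of $r$, reading both sides of the biconditional as universally quantified over $v$. The odd case should go through per-vertex, but I expect the even case to split: a per-vertex backward direction, and a forward direction that genuinely uses the universal hypothesis by invoking the tree-decomposition at vertices other than $v$.

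For odd $r$, since $r/2$ is a half-integer we have $N^{r/2}[v] = N^{(r-1)/2}[v]$, so $H^{v,r}$ equals $H_{N^{r/2}[v]}$ on the nose. Moreover, every vertex $x$ appearing in a bag of the induced decomposition lies in $N^{(r-1)/2}[v]$, giving $H_x \subseteq H^{v,r}$, $W^{v,r}_x = W_x$, and $H^{v,r}[W^{v,r}_x] \supseteq H_x$ connected. The co-bag axiom is therefore automatic, and the induced decomposition is a tree-decomposition precisely when $H^{v,r} = H_{N^{r/2}[v]}$ is a tree.

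For even $r$ we have $H^{v,r} = H_{N^{r/2-1}[v]}$, which no longer contains $H_u$ for $u \in N^{r/2}(v)$, so the co-bag axiom at $u$ is the nontrivial condition. The two key tools will be \cref{lem:glueingtrees-to-trees} (to trade tree-ness of a union for tree-ness of the parts plus connectedness of the intersection) and \cref{lem:strongerH2} (connectedness of unions over connected vertex-sets). In the backward direction, working per-$v$, we assume $H^{v,r} \cup H_u = H_{N^{r/2-1}[v] \cup \{u\}}$ is a tree. Then $H^{v,r}$ is acyclic (as a subgraph of a tree) and connected (by \cref{lem:strongerH2} applied to the connected vertex-set $N^{r/2-1}[v]$), hence a tree; likewise $H_u$ is a tree. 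Then \cref{lem:glueingtrees-to-trees} yields that $H^{v,r} \cap H_u$ is connected, and $H^{v,r}[W^{v,r}_u]$, being a supergraph on the same vertex set, is connected too, establishing the co-bag axiom at $u$; the remaining axioms of the induced decomposition follow routinely.

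In the forward direction for even $r$, we use the hypothesis at the vertex $u$ itself: the tree-decomposition at $u$ gives that $H^{u,r}$ is a tree, so $H_u$ (a connected subgraph of $H^{u,r}$) is a tree. With $H^{v,r}$ and $H_u$ both trees, \cref{lem:glueingtrees-to-trees} reduces the condition that $H^{v,r} \cup H_u$ is a tree to showing $H^{v,r} \cap H_u$ is connected. The co-bag axiom at $u$ in the induced decomposition at $v$ gives that $H^{v,r}[W^{v,r}_u]$ is a subtree of $H^{v,r}$ on the same vertex set as $H^{v,r} \cap H_u$, so the task reduces to showing the two coincide, i.e.\ that every edge $e \in H^{v,r}$ between two $W^{v,r}_u$-vertices actually lies in $H_u$. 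This is the main obstacle: given a witness $e \in H_y$ for some $y \in N^{r/2-1}[v]$, one must show $e \in H^{u,r}$, despite the fact that $y$ may fail to lie in $N^{r/2-1}[u]$. Using that both $u$ and $y$ belong to the adhesion set $V_{h_1} \cap V_{h_2}$ of $e$, the plan is to invoke the tree-decomposition hypothesis at intermediate vertices along a shortest $v$-$u$ path to transfer the edge's witness into a co-part indexed by a vertex close to $u$, thereby closing the argument.
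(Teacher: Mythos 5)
Your overall architecture --- the parity split, the use of \cref{lem:glueingtrees-to-trees} and \cref{lem:strongerH2}, and the observation that the forward direction for even $r$ must invoke the hypothesis at the vertex $u$ itself to see that $H_u$ is a tree --- matches the paper's proof, and your odd case and backward even case are sound. The problem is the forward direction for even $r$, where your argument ends in an unexecuted ``plan'', and the step that plan is meant to supply is in fact unprovable under the reading of \cref{axiomH2} you adopt. You extract from the hypothesis at $v$ only that the induced subgraph $H^{v,r}[W^{v,r}_u]$ is connected, and then try to upgrade this to connectedness of $H^{v,r}\cap H_u$ by showing that every edge of $H^{v,r}$ joining two nodes of $W^{v,r}_u$ already lies in $H_u$. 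No transfer argument along a shortest $v$--$u$ path can do this, because which edges of $H[W_u]$ are included in the chosen co-part $H_u$ is a choice that your weakened hypothesis does not constrain. Concretely, take $r=2$, let $G$ and $H$ both be triangles, let every bag equal $V(G)=\{u,v,w\}$, and choose the three co-parts $H_u,H_v,H_w$ to be the three distinct spanning paths of the triangle $H$. Then each $H^{x,2}=H_x$ is a tree and each induced subgraph $H^{x,2}[W^{x,2}_y]=H_x$ is connected, so under your reading all the $2$-locally induced decompositions are tree-decompositions; yet $H_u\cup H_v$ is the whole triangle, so the right-hand condition of the lemma fails (and $H_u\cap H_v$, a one-edge spanning subgraph of the triangle, is disconnected).

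The resolution, and the reading the paper uses, is that the $r$-locally induced decomposition carries \emph{specified} co-parts $H^{v,r}_u=H_u\cap H^{v,r}$ for $u\in N^{r/2}(v)$, and for the induced decomposition to be a (tree-)decomposition these specified co-parts must themselves be connected; that is what \cref{axiomH2} means for $(H^{v,r},\cV^{v,r})$. With this reading the hypothesis hands you the connectedness of $H^{v,r}\cap H_u$ directly, \cref{lem:glueingtrees-to-trees} closes the forward direction in one line, and the ``main obstacle'' you identify never arises.
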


\begin{proof}
    Let $v$ be a vertex of $G$, set $B := B_{r/2}(v)$, and let $(T,\cX) := (H^{v,r}, \cV^{v,r})$ be the decomposition $(H,\cV)$ $r$-locally induces.
    We warn the reader that we use terms such as \emph{bag}, \emph{part} and \emph{co-part} in the context of the pair $(T,\cX)$ before knowing that $(T,\cX)$ is a \gd\ of $B$.
    Let us make the following observations about the definition of $(T,\cX)$, without assuming anything about $(H,\cV)$ or $(T,\cX)$.

    Since \cref{axiomH1} holds for $(H,\cV)$, \cref{axiomH1} also holds for the pair $(T,\cX)$ with respect to $B$.
    As $(H,\cV)$ $r$-locally induces $(T,\cX)$,
    \begin{itemize}
        \item if $r$ is odd, then the co-part $T_u$ of a vertex $u \in B_{r/2}(v)$ in $(T,\cX)$ is the co-part $H_u$ of $u$ in $(H,\cV)$, and
        \item if $r$ is even, then the co-part $T_u$ of a vertex $u \in B_{r/2}(v)$ in $(T,\cX)$ is either $H_u$ if $u \in N^{r/2-1}[v]$ or $H_u \cap T$ if $u \in N^{r/2}(v)$.
    \end{itemize}
    Hence, since \cref{axiomH2} holds for $(H,\cV)$, to show \cref{axiomH2} for $(T,\cX)$ it suffices to prove that $H_u \cap T$ is connected if $r$ is even and $u \in N^{r/2}(v)$.
    So let us assume that $r$ is even; in particular, $T = H_{N^{r/2 -1}[v]}$.

    No matter which of the two assumptions about the decomposition $(H,\cV)$ we make, the graph $T = H_{N^{r/2 -1}[v]}$ is a tree and every $H_u$ is a tree for every vertex $u$ of $G$.
    Thus, consider $u \in N^{r/2}(v)$. By \cref{lem:glueingtrees-to-trees}, $H_u \cup T$ is a tree if and only if $H_u \cap T$ is connected.
    The latter is precisely \cref{axiomH2} of $(T,\cX)$.
    The former is the property described in the statement of \cref{lemma:CharacterisingLocallyInducingTDs}.
    Hence, the desired statement holds.
\end{proof}

Before we prove our next lemma, we give some notation around paths. For a path $P$, we denote the number of vertices of $P$ by \defn{$|P|$} and the number of edges of $P$ by \defn{$||P||$}. The \defn{length of $P$} is the number of its edges, so if $P$ has length $\ell$ then $|P| = \ell + 1$. For a fixed vertex $v$ of $P$, we denote by \defn{$Pv$} the subpath of $P$ starting at the first vertex of $P$ and ending at $v$, and by \defn{$vP$} the subpath of $P$ starting at $v$ and ending at the last vertex of $P$. 

\begin{lemma}\label{lem:centralvertex}
    Let $G$ be a (possibly infinite) graph, let $r \geq 1$ be an integer, and let $U$ be a connected set of $r$ vertices of $G$.
    Then some vertex $v \in U$ has distance at most $r/2$ from every $u \in U$ and at most one $u \in U$ has distance precisely $r/2$ from $v$.
    In particular, if $r$ is odd, then some vertex $v \in U$ has distance less than $r/2$ from every $u \in U$.
\end{lemma}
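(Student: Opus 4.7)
The plan is to reduce the assertion to a statement about trees by working with a spanning tree $T$ of $G[U]$. Since $U$ is connected in $G$, such a $T$ exists and has exactly $r$ vertices. Because $d_G(v,u) \leq d_T(v,u)$ for all $v, u \in U$, it suffices to exhibit $v \in U$ with $d_T(v,u) \leq r/2$ for every $u \in U$ and $d_T(v,u) < r/2$ for all but at most one $u \in U$: the first inequality then gives the bound in $G$, and any $u$ with $d_G(v,u) = r/2$ forces $d_T(v,u) = r/2$, so there is at most one such $u$.

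The right choice for $v$ is the near-midpoint of a longest path in $T$. Let $P = x_0 x_1 \cdots x_\ell$ be a longest path in $T$, so $\ell \leq r - 1$, and set $v := x_{\lfloor \ell/2 \rfloor}$. I claim that $d_T(v, w) \leq \lceil \ell/2 \rceil$ for every vertex $w$ of $T$. Indeed, let $Q$ be the unique $w$--$v$ path in $T$, and, walking along $Q$ from $w$ toward $v$, let $z = x_i$ be the first vertex of $Q$ that lies on $P$; write $m$ for the length of the initial segment $wQz$. Since $wQz$ meets $P$ only at $z$, concatenating $x_0 P z$ with the reverse of $wQz$ yields an $x_0$--$w$ path of length $i + m$, and analogously $x_\ell P z$ with the reverse of $wQz$ yields an $x_\ell$--$w$ path of length $(\ell - i) + m$. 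Maximality of $P$ forces $m \leq \min(i, \ell - i)$. A case split on whether $i \leq \lfloor \ell/2 \rfloor$ or $i > \lfloor \ell/2 \rfloor$ then yields $d_T(v, w) = m + |i - \lfloor \ell/2 \rfloor| \leq \lceil \ell/2 \rceil$, as claimed.

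Since $\ell \leq r - 1$, this gives $d_T(v, u) \leq \lceil (r-1)/2 \rceil \leq r/2$ for every $u \in U$, establishing the distance bound. The ``at most one vertex at distance exactly $r/2$'' statement is then handled by a parity case split. If $r$ is odd, then $\lceil (r-1)/2 \rceil = (r-1)/2 < r/2$, so as distances in $T$ are integers no vertex can lie at distance exactly $r/2$; this also yields the ``in particular'' clause. If $r$ is even and $\ell \leq r - 2$, then $\lceil \ell/2 \rceil \leq (r-2)/2 < r/2$, and again no vertex lies at distance $r/2$. The only remaining case is $r$ even and $\ell = r - 1$, in which $P$ already spans $T$ so $T = P$; a direct inspection then shows that $v = x_{r/2 - 1}$ has exactly one vertex at distance $r/2$, namely $x_{r-1}$.

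I do not foresee any serious obstacle. The only mildly delicate point is the eccentricity claim for the near-midpoint of a longest path, which is a folklore tree lemma and falls out cleanly from the maximality argument sketched above.
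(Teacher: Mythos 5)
Your proof is correct and follows essentially the same strategy as the paper: take the (near-)midpoint $v$ of a longest path and use its maximality to bound the eccentricity of $v$ by $\lceil \ell/2\rceil \leq \lceil (r-1)/2\rceil$, then split on the parity of $r$, with the extremal even case forcing $T$ to be a Hamiltonian path with exactly one vertex at distance $r/2$. The only cosmetic difference is that you fix a spanning tree of $G[U]$ first and take a longest path inside it (which makes the maximality argument for the eccentricity bound a little cleaner), whereas the paper starts from a longest induced path of $G[U]$ and extends it to a spanning tree.
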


\begin{proof}
    Let $P$ be a longest induced path in the induced subgraph $G[U]$.
    Let $v$ be one of the `central' vertices of $P$:
    If $||P||$ is even, then let $v$ be the vertex such that $||vP|| = ||Pv||$.
    Otherwise, $||P||$ is odd, and we let $v$ be the vertex such that $||vP|| -1 = ||Pv||$.
    In both cases, $\ell := ||vP|| \geq ||Pv||$.
    
    Let $T$ be a spanning tree of $G[U]$ containing $P$.
    If some inclusion-maximal path $Q$ in $T$ that ends in $v$ is longer than $\ell$, then at least one of $QvP$ or $PvQ$ is a path which is longer than $P$.
    Since this contradicts the choice of $P$, we may assume that all leaves of $T$ have distance at most $\ell$ from $v$ in $T$.
    Hence, if $\ell < r/2$, then the tree $T$ witnesses that every $u \in U$ has distance at most $\ell < r/2$ from $v$, as desired.
    Thus, we assume $||vP|| = \ell = r/2$; in particular, $r$ is even, since $\ell$ is an integer.
    Since $P \subset G[U]$, it follows that $P$ has at most $r$ vertices, so $||P|| \leq r-1 = r/2 + (r/2-1)$.
    Hence, the choice of $v$ yields $||Pv|| = (r/2 - 1)$, so $V(P) = U$.
    Thus, the path $P = T$ shows that the statement holds.

    The in-particular part follows from the fact that distances in graphs are always integers.
\end{proof}

\begin{proof}[Proof of \cref{thm:connectedAcyclicEquivLocallyInducingTDs}]
    Assume that the decompositions $r$-locally induced by $(H,\cV)$ are tree-decompositions.
    We claim that $(H,\cV)$ is connected-$r$-acyclic.
    Indeed, let $U$ be a connected set of at most $r$ vertices of $G$.
    Since $U$ is connected in $G$, \cref{lem:centralvertex} yields some vertex $v \in U$ that has distance at most $r/2$ from every $u \in U$ but at most one of them has distance precisely $r/2$.
    Hence, $H_U$ is acyclic by \cref{lemma:CharacterisingLocallyInducingTDs}.

    To prove the converse, assume that $(H,\cV)$ is connected-$r$-acyclic.
    By induction on $r \geq 1$, we show that the decompositions $r$-locally induced by $(H,\cV)$ are tree-decompositions by checking the equivalent properties described in \cref{lemma:CharacterisingLocallyInducingTDs}.
    Let $v$ be a  vertex of $G$.
    The base case $r = 1$ follows immediately from $(H,\cV)$ being connected-$1$-acyclic, as $N^{1/2}[v] = \{v\}$ is connected.
    For the induction step, we assume that $r \geq 2$ and the decompositions $(r-1)$-locally induced by $(H,\cG)$ are tree-decompositions.

    Set $d := \lfloor r/2 \rfloor$, and $d' := d-1$.
    Set $N' := N^{d'}[v]$.
    By the induction assumption, $H_{N'}$ is a tree.
    If $r$ is even, then we set $W := \emptyset$, $N^\ast := N'$, and $H^\ast := H_{N^\ast}$.
    If $r$ is odd, then let $W$ be any subset of $N^d(v)$ such that, for $N^\ast := N' \cup W$, the union $H^\ast := H_{N^\ast}$ is a tree.
    Let $u \in N^d(v) \setminus W$ be arbitrary.
    
    To conclude the induction step, it suffices to show that $H^{\ast} \cup H_u$ is acyclic, and thus a tree.
    Suppose for a contradiction that $H^{\ast} \cup H_u$ contains a cycle, i.e.\ $H_u$ contains an $H^*$-path $Q$ with ends~$h_1,h_2$.
    Since $u \in N^d(v)$, there is $u' \in N'$ so that $uu'$ is an edge of $G$, so $H_u \cup H_{u'}$ is a tree because $(H,\cV)$ is connected-$r$-acyclic for $r \geq 2$.
    Thus, the $H^*$-path $Q$ in $H_u$ yields that $H_{u'} \subseteq H^\ast$ avoids $h_i$ for some $i=1,2$.
    As $H_u \cup H_{u'}$ is a tree, there is a $Q$--$H_{u'}$ path $Q'$ in $H_u$.
    If $H_{u'}$ contains the other $h_j$ with $j=3-i$, then we choose $Q'$ as the trivial path $h_j$.
    The choice of $Q'$ guarantees that some $h_k$ with $k =1,2$, say $h_1$, is neither in $H_{u'}$ nor in $Q'$.
    
    Let $v_1 \in N^*$ with $h_1 \in H_{v_1}$ and fix a shortest $v$--$v_1$ path $P_1$ in $G$. 
    Fix a shortest $v$--$u'$ path $P$ in $G$.
    Since $u' \in N'$, it follows that $|P| \leq d'+1 = d$.
    Similarly, since $v_1 \in N^*$, it follows that $|P_1| \leq d+1$ if $r$ is odd and $|P_1| \leq d$ if r is even.
     Since $uu'$ is an edge of $G$ and $V(P_1) \cup V(P)$ is connected in $G$, the set $U := V(P_1) \cup V(P) \cup \{u\}$ is also connected in $G$ and has size at most 
    $$|V(P_1) \cup V(P) \cup \{u \}| \leq |V(P_1)| + |V(P)| - |\{v\}| + |\{u\}| \leq r.$$ 
    As $U \setminus \{u\} = V(P_1) \cup V(P)$ is connected in $G$, \cref{lem:strongerH2} yields that $H_{U\setminus \{u\}}$ is connected.
    Let $Q^*$ be the path in $Q \cup Q'$ that starts in $h_1$ follows $Q$ until it meets $Q'$ and then follows $Q'$ until it meets $H_{U \setminus \{u\}}$.
    Note that this will happen at latest at the end of $Q'$ in $H_{u'}$, which is in $H_{U \setminus \{u\}}$.
    The choice of $Q'$ guarantees that $Q^*$ is non-trivial, as $Q'$ does not contain $h_1$. 
    Hence, $Q^*$ is an $H_{U \setminus \{u\}}$-path in $H_u$, so $Q^*$ closes to a cycle in $H_U$, which contradicts that $(H,\cV)$ is connected-$r$-acyclic.
\end{proof}

\begin{proof}[Proof of \cref{main:InducingTDsonBalls}]
Let $G$ be a graph and let $(H, \mathcal{V})$ be a graph-decomposition of $G$ into cliques. By \cref{lem:ConAcyclicEquivToAcyclicForIntoCliques}, $(H, \mathcal{V})$ is connected-$r$-acyclic if and only if it is $r$-acyclic. Furthermore, by \cref{thm:connectedAcyclicEquivLocallyInducingTDs}, $(H, \mathcal{V})$ $r$-locally induces tree-decompositions if and only if it is $r$-acyclic. Now, \cref{main:InducingTDsonBalls} follows from \cref{mainresult:racycliccliquegraphs-iff-r-locally-chrodal-etc}.
\end{proof}

\subsection{Locally deriving \td s into cliques}
Since every $r/2$-ball is chordal in an $r$-locally chordal graph, it is natural to hope that the tree-decompositions $r$-locally induced by $r$-acyclic graph-decompositions into cliques are themselves tree-decompositions into cliques, especially given the characterization in \cref{lemma:CharacterisingLocallyInducingTDs}. If $r$ is odd, then the $r$-locally induced decompositions are indeed into cliques if and only if $(H,\cV)$ is into cliques, since the $(r/2)$-balls are then induced subgraphs.
Unfortunately, if $r$ is even, even though the decompositions $r$-locally induced by an $r$-acyclic graph-decomposition $(H,\cV)$ into cliques are tree-decompositions, they might not be into cliques.
But we can derive from $(H,\cV)$ tree-decompositions of its $r/2$-balls into cliques, as follows.

First, let us observe the following corollary of \cref{lem:edge-on-pre-last-level}:

\begin{corollary}\label{cor:OuterVerticesAreInUniqueMaximalCliques}
    Let $v$ be a vertex in an (possibly infinite) $r$-locally chordal graph $G$, and let $r \geq 2$ be an even integer.
    Then, for every vertex $u \in N^{r/2}(v)$, its neighborhood $N(u) \cap N^{r/2-1}(v)$ in $B_{r/2}(v)$ forms a clique. \qed
\end{corollary}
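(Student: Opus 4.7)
The plan is to apply \cref{lem:edge-on-pre-last-level} inside the ball $B_{r/2}(v)$ itself, which is chordal by the hypothesis that $G$ is $r$-locally chordal. The case $r=2$ is handled trivially: then $N^{r/2-1}(v) = \{v\}$, so $N(u) \cap N^{r/2-1}(v)$ has at most one element and is automatically a clique. So assume $r \geq 4$ and set $d := r/2$.

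Fix $u \in N^{r/2}(v)$ and two distinct vertices $x_1, x_2 \in N(u) \cap N^{r/2-1}(v)$; I must show that $x_1 x_2$ is an edge of $B_{r/2}(v)$. Before invoking the lemma, I would first observe that distances to $v$ measured inside the induced subgraph $B_{r/2}(v)$ agree with $G$-distances for every vertex at $G$-distance at most $r/2$ from $v$: a shortest $v$--$w$ path in $G$ of such length has all its intermediate vertices at $G$-distance less than $r/2$ from $v$, so the sum of the $G$-distances of the endpoints of each of its edges is at most $r-1 < r$, and hence the whole path lies in $B_{r/2}(v)$. Consequently, $N^{d-1}(\{v\})$ and $N^d(\{v\})$ computed inside $B_{r/2}(v)$ coincide with $N^{d-1}(v)$ and $N^d(v)$ in $G$, and $B_{r/2}(\{v\})$ computed inside the chordal graph $B_{r/2}(v)$ is the graph $B_{r/2}(v)$ itself.

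I then apply \cref{lem:edge-on-pre-last-level} in the chordal graph $B_{r/2}(v)$ to the connected vertex-set $X := \{v\}$: the length-two path $x_1 u x_2$ has its unique internal vertex $u$ in $N^d(v)$, and its edges $x_1 u$ and $u x_2$ both lie in $B_{r/2}(v)$ because their endpoint distances to $v$ sum to $(d-1) + d = r - 1 < r$. Thus $x_1 u x_2$ is an $x_1$--$x_2$ path in $B_{r/2}(v)$ through $N^d(v)$, and the lemma delivers the edge $x_1 x_2 \in E(B_{r/2}(v))$, as required. The only subtlety is the bookkeeping around ambient graphs---ensuring that the ball, neighborhoods, and distances appearing in \cref{lem:edge-on-pre-last-level} are computed consistently once one passes from $G$ to its chordal subgraph $B_{r/2}(v)$; everything else is a direct verification of hypotheses on the length-two witness path.
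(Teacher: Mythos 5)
Your proposal is correct and matches the paper's intended derivation: the paper states the corollary with no written proof, presenting it as an immediate consequence of \cref{lem:edge-on-pre-last-level} applied to the chordal ball $B_{r/2}(v)$ with $X = \{v\}$ and the length-two witness path $x_1 u x_2$ through $N^{r/2}(v)$, which is exactly what you do. Your extra bookkeeping — checking that distances, neighborhoods, and the ball computed inside $B_{r/2}(v)$ agree with those in $G$ — is the right detail to verify and is the only nontrivial content of the observation.
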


Let $G$ be an $r$-locally chordal graph and let $(H,\cV)$ be an $r$-acyclic decomposition of $G$ for $r \geq 3$.
Assume now that $r$ is even.
Fix a vertex $v$ of $G$.
Obtain $H^{v,r,*}$ from the disjoint union of $H^{v,r-1}$ and one single node $h_u$ for each $u \in N^{r/2}(v)$ by adding a single edge $f_u = h_u h_u'$ where $h_u'$ is a node of $H^{v,r-1}$ whose bag $V^{v,r-1}_{h_u'}$ contains the neighborhood $N(u) \cap N^{r/2-1}(v)$, which exists by \cref{lem:bag-cont-clique-in-3-acyclic}, as $N(u) \cap N^{r/2-1}(v)$ is a clique by \cref{cor:OuterVerticesAreInUniqueMaximalCliques}.
Consider a node $h$ of $H^{v,r,*}$.
If $h$ is a node of $H^{v,r-1}$, then $V^{v,r,*}_h := V^{v,r-1}_h$.
Otherwise, there exists a unique $u \in N^{r/2}(v)$ such that $h = h_u$.
Then $V^{v,r,*}_h := \{u\} \cup (N(u) \cap N^{r/2-1}(v))$.
We refer to the decomposition $(H^{v,r,*}, \cV^{v,r,*})$ as a decomposition \defn{$r$-locally derived by $(H,\cV)$}.
If $r$ is odd, then we refer to the decomposition $r$-locally induced by $(H,\cV)$ to a decomposition \defn{$r$-locally derived by $(H,\cV)$}.

\begin{theorem}
\label{thm:locally-derived-TDs}
    Let $(H,\cV)$ be an $r$-acyclic graph-decomposition of a (possibly infinite) graph $G$ into cliques with $r \geq 3$.
    Then the decompositions $r$-locally derived by $(H,\cV)$ are tree-decompositions into cliques.
\end{theorem}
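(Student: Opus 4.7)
The plan is to split the argument by the parity of $r$. In both parities, the tree-ness of the model graph will come essentially for free from \cref{main:InducingTDsonBalls} via \cref{thm:connectedAcyclicEquivLocallyInducingTDs}: since $(H,\cV)$ is $r$-acyclic and into cliques, it is also connected-$r$-acyclic by \cref{lem:ConAcyclicEquivToAcyclicForIntoCliques}, so the decompositions it $r$-locally induces are tree-decompositions. The remaining task will be to verify the clique property of each bag in $B_{r/2}(v)$ and, when $r$ is even, to patch in the outer layer $N^{r/2}(v)$ via the pendant nodes $h_u$.

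For odd $r$, the $r$-locally derived decomposition is by definition the $r$-locally induced decomposition $(H^{v,r},\cV^{v,r})$, so the tree structure is immediate. A bag $V^{v,r}_h = V_h \cap N^{r/2}[v]$ is a subset of a clique of $G$; to upgrade it to a clique of $B_{r/2}(v)$, I will use that any two vertices of $N^{r/2}[v] = N^{\lfloor r/2\rfloor}[v]$ have pairwise distance-sum at most $2\lfloor r/2\rfloor = r-1 < r$ from $v$, so every edge of $G$ between such vertices already lies in $B_{r/2}(v)$.

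For even $r$, I will bootstrap off the odd case applied to $r-1$: since $(H,\cV)$ is in particular $(r-1)$-acyclic, $(H^{v,r-1},\cV^{v,r-1})$ is a tree-decomposition of $B_{(r-1)/2}(v)$ into cliques, and in particular $H^{v,r-1}$ is a tree. Attaching, for each $u \in N^{r/2}(v)$, a pendant $h_u$ to a node $h_u'$ whose bag contains the clique $N(u)\cap N^{r/2-1}(v)$ (guaranteed by \cref{cor:OuterVerticesAreInUniqueMaximalCliques} and \cref{lem:bag-cont-clique-in-3-acyclic}) preserves the tree property, since each $h_u$ is a new leaf.

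It then remains to check \cref{axiomH1}, \cref{axiomH2}, and the clique property of the new bags in $B_{r/2}(v)$. For \cref{axiomH1} the key observation is that $B_{r/2}(v)$ contains no edges inside $N^{r/2}(v)$ (such a distance-sum from $v$ would be exactly $r$), and any edge of $B_{r/2}(v)$ from $u \in N^{r/2}(v)$ into $N^{r/2-1}[v]$ must land in $N(u)\cap N^{r/2-1}(v)$, hence sits in $V^{v,r,*}_{h_u}$; all remaining edges of $B_{r/2}(v)$ lie inside $N^{r/2-1}[v]$ and are already covered by $(H^{v,r-1},\cV^{v,r-1})$. For \cref{axiomH2}, each $u \in N^{r/2}(v)$ lies only in $V^{v,r,*}_{h_u}$ so its co-part is the singleton $\{h_u\}$, while each $w \in N^{r/2-1}[v]$ keeps its connected co-part inside $H^{v,r-1}$ together with any pendant $h_u$ containing $w$, which by construction is attached to a node $h_u'$ whose bag already contains $w$. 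Finally, $V^{v,r,*}_{h_u} = \{u\}\cup (N(u)\cap N^{r/2-1}(v))$ is a clique of $B_{r/2}(v)$ by the same distance-sum argument as in the odd case. The main obstacle is really just this bookkeeping in the even case, which hinges on the slightly subtle fact that $B_{r/2}(v)$ has no edges inside the outermost layer $N^{r/2}(v)$.
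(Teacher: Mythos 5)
Your proof is correct and follows essentially the same route as the paper's: odd $r$ is handled by observing that $B_{r/2}(v)$ is an induced subgraph so the $r$-locally induced decomposition (which is a tree-decomposition by \cref{lem:ConAcyclicEquivToAcyclicForIntoCliques} and \cref{thm:connectedAcyclicEquivLocallyInducingTDs}) is already into cliques, and even $r$ is reduced to the odd case $r-1$ with pendant leaves $h_u$ attached via \cref{cor:OuterVerticesAreInUniqueMaximalCliques} and \cref{lem:bag-cont-clique-in-3-acyclic}. The only difference is that you spell out the bookkeeping for \cref{axiomH1}, \cref{axiomH2}, and the clique property of the new bags that the paper dismisses as immediate, which is a welcome addition rather than a divergence.
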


\begin{proof}
    The decomposition $(H,\cV)$ witnesses that $G$ is $r$-locally chordal by \cref{mainresult:racycliccliquegraphs-iff-r-locally-chrodal-etc}.
    Fix a vertex $v$ of $G$.
    If $r$ is odd, then this follows from \cref{main:InducingTDsonBalls} and the definition of the decompositions $r$-locally induced by $(H,\cV)$, as $B_{r/2}(v)$ is an induced subgraph of $G$.
    Thus, assume that $r$ is even.
    Since $r-1$ is odd, the decomposition $(H^{v,r-1}, \cV^{v,r-1})$ $(r-1)$-locally induced by $(H,\cV)$ is a tree-decomposition of $B_{(r-1)/2}(v)$ into cliques.
    It is immediate from the above construction of the decomposition $(H^{v,r,*}, \cV^{v,r,*})$ $r$-locally derived at $v$ that it is a tree-decomposition of $B_{r/2}(v)$ into cliques.
\end{proof}

\section{Characterization via graph-decompositions that 
locally induce local separations}\label{sec:GDofLocChord}

A well-known key property of a tree-decomposition $(T,\cV)$ of a graph $G$ is that every edge $f$ of the decomposition tree $T$ \defn{induces a separation $s^f$ of $G$} whose separator is the adhesion set $V_f$ corresponding to $f$, see e.g.\ \cite[Lemma~12.3.1]{bibel}. Formally:

\begin{enumerate}[label=(T\arabic*$^\prime$)]
    \setcounter{enumi}{1}
    \item\label{T2'} For every edge $f = t_1t_2$ of $T$ and for $i = 1,2$, set $A^f_i \coloneqq \bigcup_{t \in V(T_i)} V_t$, where $T_i$ is the component of $T-f$ which contains~$t_i$. Then $\mathdefn{s^f} \coloneqq\{A^f_1,A^f_2\}$ is a separation of $G$ and its separator $A^f_1 \cap A^f_2$ is the adhesion set $V_f = V_{t_1} \cap V_{t_2}$ corresponding to $f$.
\end{enumerate}
Indeed, it is well-known that a pair $(T,\cV)$ is a tree-decomposition if and only if it satisfies \cref{axiomH1} and \cref{T2'}.

A \defn{bond} of a graph is an inclusion-minimal cut of the graph.
Note that the bonds of a tree $T$ are precisely the edge-subsets of $T$ consisting of single edges.
Analogous to \cref{T2'}, one may prove along the lines of the proof of \cite[Lemma~3.4]{canonicalGD} that every bond $F$ of the model graph $H$ of a \gd\ $(H,\cV)$ of $G$ induces a separation $s^F$ of $G$:

\begin{enumerate}[label=(H\arabic*$^\prime$)]
    \setcounter{enumi}{1}
    \item\label{H2'} For every bond $F = E_G(A_1,A_2)$ of $H$ and for $i = 1,2$, set $A^F_i \coloneqq \bigcup_{h \in A_i} V_h$. 
    Then $\mathdefn{s^F} \coloneqq \{A^F_1,A^F_2\}$ is a separation of $G$ and its separator $A^F_1 \cap A^F_2$ is precisely the union $\bigcup_{f \in F} V_f$ of the adhesion sets $V_f = V_{h_1} \cap V_{h_2}$ corresponding to edges $f = h_1h_2$ in the bond $F$.
\end{enumerate}

In view of \cref{H2'}, a graph-decomposition represents the structure of the underlying graph.
Indeed, a pair $(H,\cV)$ is a \gd\ if and only if it satisfies \cref{axiomH1} and \cref{H2'} (for more details see \cite{GDAxiomsAbrishami}).

Given a \gd\ $(H,\cV)$ of a graph $G$, the edges of $H$ do not satisfy a property similar to \cref{T2'} in general; if the model graph is not a tree, there is no reason to hope that single edges of $H$ correspond to separations of $G$. 
However, they do sometimes correspond to a local analog of separations. A recent paper \cite{computelocalSeps} investigates the relationship between graph-decompositions and local separations. 
It turns out that, since the graph-decompositions in our context are very well-structured, we can also prove that they interact nicely with local separations. The goal of this section is to make this interaction precise. We will at the end of the section be able to prove in \cref{mainthm:gd} that $r$-locally chordal graphs are precisely those graphs which admit graph-decompositions whose edges induce $r$-local separations in an obvious way.

\subsection{Additional properties of graph-decompositions.} 
Let $(H, \cV)$ be a graph-decomposition of $G$.
Note that, for a vertex-subset $X$ of $G$, the graph $H_X = \bigcup_{x \in X} H_x$ is the subgraph of $H$ formed by those nodes $h$ and those edges $f=h_0h_1$ whose corresponding bags $V_h$ and adhesion sets $V_{f} = V_{h_0} \cap V_{h_1}$ meet $X$.
In the following, we consider a \gd\ $(H,\cV)$ which additionally satisfy

\begin{enumerate}[label=(H\arabic*T)]
    \setcounter{enumi}{1}
    \item \label{H2T} $H_v$ is a tree for every vertex $v$ of $G$, and
\end{enumerate}
\begin{enumerate}[label=(H\arabic*)]
    \setcounter{enumi}{2}
    \item \label{H3} $H_u \cap H_v = H[\{h \in V(H) \mid u,v \in V_h\}]$ is connected for every edge $uv$ of $G$.
\end{enumerate}

We remark that both \cref{H2T} and \cref{H3} hold for every \td . \cref{H3} also holds for a \gd\ obtained from a \td\ of $G_r$ by folding via $p_r$ \cite[Paragraph after Lemma~3.9]{canonicalGD}.
It turns out that the properties \cref{H2T} and \cref{H3} are equivalent to being connected-2-acyclic. 

\begin{lemma}\label{lem:2acyclic-equiv-H2T-H3}
    A \gd\ $(H,\cV)$ of a (possibly infinite) graph $G$ satisfies \cref{H2T} and \cref{H3} if and only if it is connected-$2$-acyclic.
\end{lemma}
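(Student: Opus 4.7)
The plan is to observe that the connected vertex-subsets of $G$ of size at most~$2$ are exactly the singletons $\{v\}$ and the edges $\{u,v\}$ of $G$, so the connected-$2$-acyclicity condition splits into two independent sub-conditions that should match \cref{H2T} (the singleton case) and the connectedness part of \cref{H3} (the edge case), with the link provided by \cref{lem:glueingtrees-to-trees}.

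For the direction \cref{H2T}\,$\wedge$\,\cref{H3}\,$\Rightarrow$\,connected-$2$-acyclic, I take a connected $X\subseteq V(G)$ with $|X|\le2$. If $X=\{v\}$, then $H_X=H_v$ is a tree by \cref{H2T}. If $X=\{u,v\}$ with $uv\in E(G)$, then $H_u$ and $H_v$ are trees by \cref{H2T}, and $H_u\cap H_v$ is connected by \cref{H3}, so \cref{lem:glueingtrees-to-trees} gives that $H_X=H_u\cup H_v$ is a tree. In either case $H_X$ is acyclic.

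For the converse, assume $(H,\cV)$ is connected-$2$-acyclic. Since $\{v\}$ is connected of size~$1$, $H_v$ is acyclic, and combined with the connectedness of $H_v$ guaranteed by \cref{axiomH2} this yields \cref{H2T}. For an edge $uv\in E(G)$, the set $\{u,v\}$ is connected of size~$2$, so $H_u\cup H_v$ is acyclic; by \cref{axiomH1} some bag contains both $u$ and $v$, so $V(H_u)\cap V(H_v)\ne\emptyset$, which together with the connectedness of $H_u$ and $H_v$ makes $H_u\cup H_v$ a tree. Then \cref{lem:glueingtrees-to-trees} shows that $H_u\cap H_v$ is a tree, in particular connected. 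For the equality $H_u\cap H_v=H[\{h\in V(H)\mid u,v\in V_h\}]$, I would adapt the edge-chasing argument of \cref{lemma:HvAreInduced}: if an edge $f$ of $H$ with both endpoints in $W_u\cap W_v$ were missing from, say, $H_u$, then $H_u+f$ would contain a cycle; choosing a vertex $w\in\{u,v\}$ whose co-part contains $f$ (which is what one needs to ensure, by working with the honest trimming $H=H_{V(G)}$) produces a cycle inside $H_u\cup H_w$ with $uw\in E(G)$, contradicting connected-$2$-acyclicity.

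The main obstacle is not the connectivity half of \cref{H3}, which is a direct application of \cref{lem:glueingtrees-to-trees}, but the equality in \cref{H3}: one must exclude edges of $H$ that sit between two nodes of $W_u\cap W_v$ yet fail to appear in either co-part. After passing to $H_{V(G)}$ (or arguing as in \cref{lemma:HvAreInduced}, adjusted to use connected-$2$-acyclicity only along edges of $G$), this reduces to a short cycle-chase; once it is dealt with, the rest of the proof is just the clean dichotomy between singletons and edges described above.
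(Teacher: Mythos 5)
Your core argument is exactly the paper's: the paper's entire proof is the observation that \cref{H2T} says the co-parts $H_v$ are trees, so the equivalence reduces to \cref{lem:glueingtrees-to-trees}; your singleton/edge dichotomy is that observation spelled out, and both directions of your connectivity argument are fine (including the use of \cref{axiomH1} to get $H_u\cap H_v\neq\emptyset$ for an edge $uv$ of $G$).

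The one place you go beyond the paper is the equality $H_u \cap H_v = H[\{h \in V(H)\mid u,v\in V_h\}]$ in \cref{H3}, which the paper's proof silently ignores, and there your argument has a real gap. An edge $f$ of $H[W_u\cap W_v]$ missing from $H_u\cap H_v$ need not lie in $H_w$ for any $w\in\{u,v\}$, nor even for any $w$ adjacent to $u$ or $v$: for a general graph-decomposition, $u$ and $w$ sharing a bag does not make $uw$ an edge of $G$, so connected-$2$-acyclicity (which only controls $H_u\cup H_w$ for \emph{edges} $uw$ of $G$) yields no contradiction. In fact the equality genuinely fails under connected-$2$-acyclicity alone: take $V(G)=\{u,v,w\}$ with the single edge $uv$, let $H$ be a triangle $h_1h_2h_3$ with every bag equal to $\{u,v,w\}$, and choose $H_u=H_v$ to be the path $h_1h_3h_2$ and $H_w$ the path $h_1h_2h_3$; this is an honest, connected-$2$-acyclic graph-decomposition, yet $H_u\cap H_v$ is a path while $H[W_u\cap W_v]$ is the whole triangle (and so is $H_{V(G)}$, so passing to the honest trimming does not help). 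The resolution is not to prove the equality but to read \cref{H3} as asserting only the connectivity of $H_u\cap H_v$; the displayed equality is the paper's notational gloss, justified for the decompositions it actually works with via \cref{lemma:HvAreInduced}, which requires full $2$-acyclicity rather than connected-$2$-acyclicity. With that reading, your proof is complete and coincides with the paper's.
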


\begin{proof}
    \cref{H2T} says that the regions $H_v$ are trees. Therefore, the equivalence follows from \cref{lem:glueingtrees-to-trees}.
\end{proof}

Note that, for an honest graph-decomposition $(H,\cV)$ satisfying \cref{H2T} and \cref{H3}, the only valid choice of co-parts $H_v$ is the subgraphs $H[W_v]$ induced by the co-bags $W_v$ by \cref{lemma:HvAreInduced}.

\subsection{Interplay: Graph-decompositions \texorpdfstring{\&}{and} local separations}

We now formally introduce what a local separation from \cite{computelocalSeps} is. 
For a vertex-subset $X$ of $G$, we denote the set of edges of $G$ with exactly one end in $X$ by \defn{$\partial_G X$}.
A \defn{pre-separation} of $G$ is a set $\{E_0,X,E_1\}$ with $X \subseteq V(G)$ and disjoint $E_0, E_1$ whose union is $\partial_G X$.
We remark that every separation $\{A,B\}$ of $G$ \defn{induces the pre-separation} $\mathdefn{\pre(\{A,B\})} \coloneqq \{\partial_{G} (A\setminus B), A \cap B, \partial_{G} (B \setminus A)\}$, see \cref{fig:sepIndPreSep}.

\begin{figure}[ht]
    \centering
    \begin{subfigure}[b]{0.49\textwidth}
        \centering
        \includegraphics[width=0.4\textwidth]{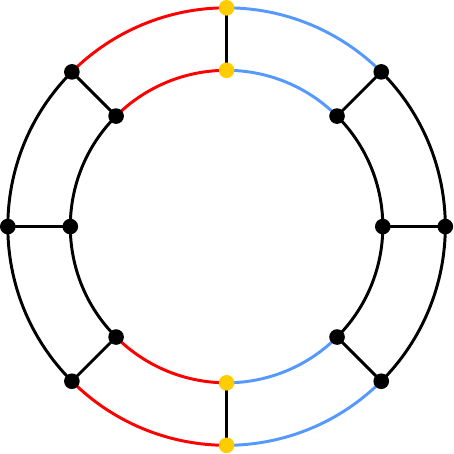}
        \caption{A pre-separation induced by a separation.}
        \label{fig:sepIndPreSep}
    \end{subfigure}
    \hfill
    \begin{subfigure}[b]{0.49\textwidth}
        \centering
        \includegraphics[width=0.4\textwidth]{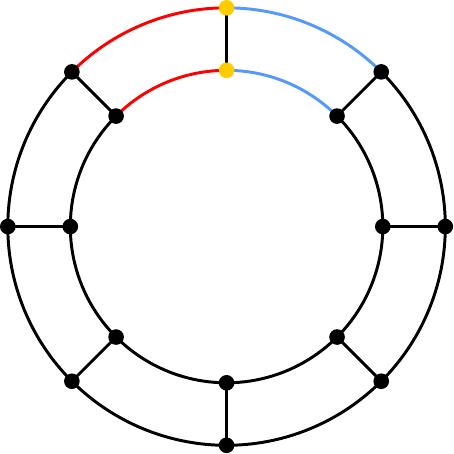}
        \caption{A local separation not induced by a separation.}
        \label{fig:LocSep}
    \end{subfigure}
    \caption{Examples of local separations $\{E_0,X,E_1\}$. The partition classes of $\{E_0,E_1\}$ are depicted in red and blue, respectively. The set $X$ is formed by the yellow vertices.}
    \label{fig:SepLocSep}
\end{figure}

Let $X$ be a vertex-subset of $G$.
A walk which starts and ends in $X$ but internally avoids $X$ is an \defn{$X$-walk}. 
An \defn{$r$-local $X$-walk} in $G$ is an $X$-walk that is contained in a cycle of length at most $r$ in $G$ or consists of a single edge with both ends in $X$.
We remark that every reduced non-trivial $r$-local $X$-walk is either an $X$-path or a walk once around a cycle of length at most $r$ that meets $X$ precisely once.
A pre-separation $\{E_0,X,E_1\}$ of $G$ is an \defn{$r$-local separation} of $G$ if there is no $r$-local $X$-walk which traverses an edge of $E_0$ and an edge of $E_1$.\footnote{It is easy to see that this definition is equivalent to the one given in \cite{computelocalSeps} but avoids the need of introducing \emph{$r$-local components}.} See \cref{fig:SepLocSep} for examples.
The pre-separations $\pre(s)$ induced by separations $s$ of $G$ are $r$-local separations. We also recall the following useful fact about $r$-local separations. 

\begin{lemma}
    Let $G$ be a (possibly infinite) graph, $r$ a positive integer, and $\{E_0, X, E_1\}$ an $r$-local separation of $G$. Then, a cycle of length at most $r$ meets each of $E_0$ and $E_1$ an even number of times. 
\end{lemma}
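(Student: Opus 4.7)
The plan is to fix a cycle $C$ of length at most $r$ and pair up the crossings $E(C) \cap \partial_G X$ via the arcs of $C$ that lie outside $X$. Since $E_0 \sqcup E_1 = \partial_G X$, an edge of $C$ lies in $E_0 \cup E_1$ if and only if exactly one of its endpoints is in $X$; walking around $C$ and tracking whether each vertex is in $X$, the crossings are precisely the transitions, so $|E(C) \cap (E_0 \cup E_1)|$ is even. It therefore suffices to show that $|E(C) \cap E_0|$ is even (the case of $E_1$ then follows automatically).

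If $C$ avoids $\partial_G X$, there is nothing to prove. Otherwise I would list the crossings $f_1, \ldots, f_{2k}$ in cyclic order around $C$. The arcs of $C$ between consecutive crossings alternate between arcs lying in $X$ and arcs lying outside $X$, since crossings are the only transitions. For each outside arc $A_j$ between $f_j$ and $f_{j+1}$, the walk $f_j A_j f_{j+1}$ has both ends in $X$ and all internal vertices outside $X$, so it is an $X$-walk; being contained in $C$, which has length at most $r$, it is an $r$-local $X$-walk.

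The defining property of the $r$-local separation $\{E_0, X, E_1\}$ then forces $f_j$ and $f_{j+1}$ to lie in the same $E_i$. Since the outside arcs partition the $2k$ crossings into $k$ disjoint pairs, each contributing either $0$ or $2$ to $|E(C) \cap E_0|$, this count is even. The argument is short and I do not anticipate any real obstacle; the only mild care needed is that an outside arc may have length $0$ (when $f_j$ and $f_{j+1}$ share a common vertex outside $X$), in which case $f_j A_j f_{j+1}$ is simply a length-$2$ walk, but this is still a valid $r$-local $X$-walk and the pairing argument is unchanged.
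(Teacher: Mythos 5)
Your proof is correct and follows essentially the same route as the paper: decompose the cycle into the $X$-walk segments between consecutive visits to $X$, observe each is an $r$-local $X$-walk contained in the cycle, and conclude from the defining property of $r$-local separations that each segment contributes $0$ or $2$ to the count in each $E_i$. The paper states this more tersely (``the reduced non-trivial $r$-local $X$-walks in $O$ partition the edge set of $O$''), but your pairing bookkeeping, including the degenerate cases of a length-$0$ outside arc and of a cycle meeting $X$ only once, is the same argument made explicit.
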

\begin{proof}
    Let $O$ be a cycle in $G$ of length at most $r$.
    By symmetry, it suffices to show that $O$ meets $E_1$ an even number of times.
    If $O$ does not intersect $X$, then $O$ does not meet $E_1$, as desired.
    Otherwise, $O$ meets $X$.
    Then, the reduced non-trivial $r$-local $X$-walks in $O$ partition the edge set of $O$.
    As each of them meets $E_1$ exactly zero or two times, $O$ meets $E_1$ an even number of times.
\end{proof}

Graph-decompositions are related to pre-separations in the following way. 

\begin{lemma}\label{lem:edge-induc-presep}
    Let $(H,\cV)$ be \gd\ of a (possibly infinite) graph $G$ which satisfies \cref{H2T} and \cref{H3}.
    Fix an edge $f = h_0h_1$ of $H$.
    For every vertex $v \in V_f$, let $H^v_0, H^v_1$ be the components of $H_v-f$ containing $h_0$ and $h_1$, respectively, and set $E^v_i \coloneqq \{vw \in \partial_G V_f \mid \exists h \in V(H^v_i) \colon w \in V_h \}$ for $i =0,1$.
    Then $E_i \coloneqq \bigcup_{v \in V_f} E_i^v$ forms a pre-separation $\{E_0,V_f,E_1\}$ of $G$.
\end{lemma}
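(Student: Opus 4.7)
The plan is to verify the two defining properties of a pre-separation: (i) $E_0 \cup E_1 = \partial_G V_f$, and (ii) $E_0 \cap E_1 = \emptyset$. A preliminary observation to set up is that, for every $v \in V_f$, the edge $f$ lies in $H_v$, so that $H_v - f$ really does split into exactly two components $H^v_0$ and $H^v_1$. This holds because $v \in V_{h_0} \cap V_{h_1}$ gives $h_0, h_1 \in W_v$, and under our hypotheses the co-parts coincide with $H[W_v]$, as noted in the paragraph preceding the lemma. I will also use that each edge of $\partial_G V_f$ has exactly one endpoint in $V_f$; hence for $vw \in \partial_G V_f$ with $v \in V_f$, the only $u \in V_f$ for which $vw$ could lie in $E^u_i$ is $u = v$, so the assignment of $vw$ to $E_i$ is determined by its unique $V_f$-endpoint.

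For (i), I would take any $vw \in \partial_G V_f$ with $v \in V_f$. Axiom (H1E) supplies a node $h \in V(H_v) \cap V(H_w)$; in particular $w \in V_h$. Since $V(H_v) = V(H^v_0) \sqcup V(H^v_1)$, the node $h$ lies in exactly one $H^v_i$, and therefore $vw \in E^v_i \subseteq E_i$.

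For (ii), I would argue by contradiction: suppose $vw \in E_0 \cap E_1$, with $v$ its unique $V_f$-endpoint. Then $vw \in E^v_0 \cap E^v_1$, so there exist $h'_0 \in V(H^v_0)$ and $h'_1 \in V(H^v_1)$ with $w \in V_{h'_0} \cap V_{h'_1}$; equivalently, $h'_0, h'_1 \in V(H_v \cap H_w)$. By (H3), $H_v \cap H_w$ is connected, so it contains a path $P$ from $h'_0$ to $h'_1$. Since $H_v$ is a tree by (H2T), this $P$ is the unique $h'_0$--$h'_1$ path in $H_v$; as its endpoints lie in distinct components of $H_v - f$, the path $P$ must traverse $f$. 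Consequently $f \in H_v \cap H_w \subseteq H_w$, which forces $h_0, h_1 \in V(H_w)$, i.e., $w \in V_{h_0} \cap V_{h_1} = V_f$, contradicting $w \notin V_f$.

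The hard part will be the careful use of (H3): merely knowing that $H_v \cap H_w$ is nonempty (which is all (H1E) provides) does not suffice; the connectedness of $H_v \cap H_w$ is essential, as it is what forces the $h'_0$--$h'_1$ path through $f$, and hence the edge $f$ itself into $H_w$.
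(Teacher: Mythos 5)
Your proposal is correct and follows essentially the same route as the paper: axiom (H1)/(H1E) gives that every edge of $\partial_G V_f$ lands in $E^v_0 \cup E^v_1$, and disjointness follows from the connectedness of $H_v \cap H_w$ guaranteed by \labelcref{H3} together with the fact that $w \notin V_f$ forces $f \notin H_w$, so that $H_w \cap H_v$ lies in a single component of $H_v - f$. Your preliminary remarks (that $f$ genuinely lies in the tree $H_v$ for $v \in V_f$, and that each edge of $\partial_G V_f$ has a unique end in $V_f$) make explicit two points the paper leaves implicit, but the substance of the argument is the same.
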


\noindent 
We think that it may be possible that satisfying \cref{H2T} and \cref{H3} is not the only way to guarantee that every edge of the model graph induces a pre-separation which has the potential to be a local separation.
However, in this paper, our constructed \gd s always satisfy \cref{H2T} and \cref{H3}.
Thus, 
in the context of \cref{lem:edge-induc-presep}, the edge $f$ \defn{induces the pre-separation} $\{E_0,V_f,E_1\}$ of $G$.
If $\{E_0,V_f,E_1\}$ is an $r$-local separation of $G$, then $f$ \defn{induces an $r$-local separation}.

\begin{proof}    
    Fix $v \in V_f$. 
    By \cref{axiomH1}, every edge $vw$ in $\partial_G V_f$ is contained in the union of the two sets $E^v_0$ and $E^v_1$.
    It remains to show that $E^v_0$ and $E^v_1$ are disjoint.
    Since $H_v$ is a tree by \cref{H2T}, the two components $H_0^v$ and $H_1^v$ are distinct.
    Consider an edge $vw \in \partial_G V_f$. 
    Since $vw \in \partial_G V_f$, it follows that $w \not \in V_f$, and thus that $f$ is not in the tree $H_w$. Since $(H, \cV)$ satisfies \cref{H3}, we know that $H_w \cap H_v$ is connected, so $H_w$ meets exactly one of $H_1^v$ and $H_0^v$. As edge $vw$ was chosen arbitrarily, this proves that $E_0$ and $E_1$ are disjoint, and thus that $\{E_0, V_f, E_1\}$ is a pre-separation of $G$.
\end{proof}

We are now ready to state the main result of this section. A \gd\ $(H, \cV)$ of $G$ \defn{induces $r$-local separations} if $(H, \cV)$ satisfies  \cref{H2T} and \cref{H3} and the pre-separation induced by each edge of $H$ is an $r$-local separation of $G$.

\begin{mainresult}
\label{mainthm:gd}
    Let $G$ be a locally finite graph and $r \geq 3$ an integer.
    The following are equivalent:
    \begin{enumerate}
        \item \label{gd:r-local} $G$ is $r$-locally chordal.
        \item \label{gd:maxcliques} $G$ admits a graph-decomposition into its maximal cliques which induces $r$-local separations.
        \item \label{gd:canonical} $G$ admits a canonical graph-decomposition into cliques which induces $r$-local separations.
        \item \label{gd:cliques} $G$ admits a graph-decomposition into cliques which induces $r$-local separations.
    \end{enumerate}
\end{mainresult}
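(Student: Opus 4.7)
The plan is to reduce \cref{mainthm:gd} to \cref{mainresult:racycliccliquegraphs-iff-r-locally-chrodal-etc} by establishing one key equivalence: for a graph-decomposition $(H,\cV)$ of $G$ into cliques, $(H,\cV)$ is $r$-acyclic if and only if it induces $r$-local separations. Granting this, the four conditions of \cref{mainthm:gd} correspond one-to-one with those of \cref{mainresult:racycliccliquegraphs-iff-r-locally-chrodal-etc} and the theorem follows. In both directions of the key equivalence the decomposition is at least $2$-acyclic, so \cref{lem:2acyclic-equiv-H2T-H3} and \cref{lem:ConAcyclicEquivToAcyclicForIntoCliques} guarantee \cref{H2T} and \cref{H3} and hence that the pre-separation induced by each edge of $H$ is well-defined.

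For the forward direction, fix an $r$-acyclic $(H,\cV)$, an edge $f = h_0 h_1$ of $H$, and a reduced $r$-local $V_f$-walk $W$. If $W$ is a single edge with both ends in $V_f$, it does not lie in $\partial_G V_f$ and trivially stays on one side. Otherwise $W$ sits inside a cycle $O$ of $G$ of length at most $r$, so $V(O)$ is connected with $|V(O)| \leq r$, and $U := H_{V(O)}$ is a tree (connected by \cref{lem:strongerH2}, acyclic by $r$-acyclicity) containing $f$ as a bridge. I would then walk along $W = u\,x_1\cdots x_k\,v$ and show by induction on $i$ that each $H_{x_i}$ lies on the $h_0$-side of $f$ in $U$: each $H_{x_i}$ is connected, does not contain $f$ (since $x_i \notin V_f$), and meets the previous region on that side, starting from the side dictated by $ux_1 \in E_0$. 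This forces $H_v \cap H_{x_k}$ into the $h_0$-side, contradicting $vx_k \in E_1$.

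For the reverse direction, assume $(H,\cV)$ induces $r$-local separations. I would first establish $3$-acyclicity. Suppose some triangle $\{u,v,w\}$ of $G$ has $H_{\{u,v,w\}}$ containing a cycle through an edge $f$. Since $V_f$ is a clique and the cycle forces $V_f \cap \{u,v,w\} \neq \emptyset$, the cases $|V_f \cap \{u,v,w\}| \in \{1,2\}$ supply an $r$-local $V_f$-walk through the triangle whose two boundary edges must, by the separation property, lie on a common side of the pre-separation; a short bridge-tracking argument as in the forward direction then makes $f$ a bridge of $H_{\{u,v,w\}}$, contradicting the cycle. In the case $|V_f \cap \{u,v,w\}| = 3$, \cref{lem:gavril-lemmas}~\cref{gavril:1} forces $H_{\{u,v,w\}}$ to be a tree directly. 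Having $3$-acyclicity, I would next invoke \cref{lem:inducedcycles-vs-cyclesinHX}: pick an inclusion-minimal $X$ with $H_X$ containing a cycle and $|X| \leq r$; then $G[X]$ is an induced cycle $O$ of length at most $r$. Since $V_f$ is a clique and $O$ is induced, $|V_f \cap V(O)| \in \{1,2\}$, and the $r$-local $V_f$-walk going the ``long way'' around $O$ again pins both boundary edges to the same side, so the same bridge-tracking argument makes $f$ a bridge of $H_{V(O)}$, a contradiction.

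The main obstacle is this bridge-tracking. From a priori only a two-edge boundary constraint, one has to deduce that \emph{every} region $H_y$ with $y \in V(O) \setminus V_f$ lies on one fixed side of $f$ in the ambient tree (or forest) structure $H_{V(O)}$. Boundary regions (those meeting $V_f \cap V(O)$) are controlled directly by the $r$-local separation condition; interior regions have, since $O$ is induced and $V_f \cap V(O)$ is a clique of size at most two, no neighbor in $V_f \cap V(O)$, so $H_y$ is disjoint from the $H_{u'}$ with $u' \in V_f \cap V(O)$, which is strong enough to conclude. Arranging these two kinds of regions into a single consistent side-assignment is the delicate step.
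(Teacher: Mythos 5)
Your proposal is correct, and at the top level it is exactly the paper's proof: the paper also derives \cref{mainthm:gd} from \cref{mainresult:racycliccliquegraphs-iff-r-locally-chrodal-etc} together with the equivalence, for graph-decompositions into cliques, of being $r$-acyclic and inducing $r$-local separations (\cref{thm:rlocalGD-equiv-racyclicRIG}); your forward direction is essentially the paper's \cref{thm:connected-acyclic-IMPLIES-separations}. Where you genuinely diverge is the reverse direction, where the paper argues by the contrapositive of your plan. It first proves \cref{lem-cycle-crossing-presep} \emph{without} invoking the separation hypothesis: using the inclusion-minimality of $X$ it selects the edge $f$ of the cycle in $H_X$ so that $V_f$ meets the induced cycle $G[X]$ in a \emph{single} vertex whose two incident cycle-edges lie in \emph{distinct} classes $E_0$ and $E_1$; the closed walk once around $G[X]$ based at that vertex is then an $r$-local $V_f$-walk crossing the pre-separation, which is an immediate contradiction. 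That choice of $f$ buys two simplifications over your version: it eliminates the case $\lvert V_f\cap V(O)\rvert=2$ entirely, and, since it rests on \cref{lem:converseofinducedcycle-yields-cycleHX} (which needs only $2$-acyclicity) rather than on \cref{lem:inducedcycles-vs-cyclesinHX}, it makes your preliminary triangle analysis to secure $3$-acyclicity unnecessary --- triangles are handled uniformly with longer induced cycles. Your bridge-tracking step does go through as you sketch it: interior regions $H_y$ are disjoint from the $H_u$ with $u\in V_f\cap V(O)$ because $G[X]$ is induced, the boundary regions are pinned to one side by the disjointness of $E_0$ and $E_1$ established in \cref{lem:edge-induc-presep}, and in the two-vertex case the connected intersection $H_a\cap H_b$ containing $f$ splits under deletion of $f$ into one piece per side, so the two sides cannot be reconnected and $f$ is indeed a bridge. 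So the proposal is sound; it simply pays in case analysis for what the paper's sharper choice of $f$ gets for free.
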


We conclude this subsection with a more precise statement regarding the structure of pre-separations in graphs with \gd s into cliques which we will need later:

\begin{lemma}\label{lem-cycle-crossing-presep}
    Let $(H,\cV)$ be a \gd\ of a (possibly infinite) graph $G$ into cliques  which satisfies \cref{H2T} and \cref{H3}.
    Suppose $X \subseteq V(G)$ is inclusion-minimal such that $H_X$ contains a cycle.
    Then, $G[X]$ is an induced cycle, and there exists an edge $f$ in a cycle $C$ in $H_X$ with induced pre-separation $\{E_0,V_f,E_1\}$ such that $G[X]$ meets $V_f$ in a single vertex whose two incident edges in $G[X]$ are contained in distinct $E_i$.
\end{lemma}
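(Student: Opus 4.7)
The plan is to prove the lemma in three steps: first identify $G[X]$ as an induced cycle, then use the minimality of $X$ to locate an edge $f$ of $C$ whose label $V_f\cap X$ is a singleton, and finally verify that at this $f$ the two edges of $G[X]$ incident to the unique vertex $v$ of $V_f\cap X$ land in distinct sides of the pre-separation. Step 3 will be the main obstacle.

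For Step 1, I would invoke \cref{lem:2acyclic-equiv-H2T-H3} to upgrade \cref{H2T} and \cref{H3} to connected-$2$-acyclicity, and then \cref{lem:ConAcyclicEquivToAcyclicForIntoCliques} to conclude that $(H,\cV)$ is in fact $2$-acyclic since it is into cliques. Via \cref{thm:rig-is-gd-into-cliques} the assignment $v\mapsto H_v$ then becomes a $2$-acyclic region representation of $G$, so \cref{lem:converseofinducedcycle-yields-cycleHX} immediately gives that $G[X]$ is an induced cycle, which I will label cyclically as $x_1x_2\cdots x_n$.

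For Step 2, I would exploit inclusion-minimality: each $H_{X\setminus\{x_i\}}$ is acyclic, so $C$ is not contained in $H_{X\setminus\{x_i\}}$, and hence $C$ contains an edge $f_i$ whose label $V_{f_i}\cap X$ is exactly $\{x_i\}$. Fix such an $f\coloneqq f_i$, and write $v\coloneqq x_i$, $u_0\coloneqq x_{i-1}$, $u_1\coloneqq x_{i+1}$ (indices mod $n$). Then $v$ is the unique vertex of $V_f\cap X$, and both edges $vu_0,vu_1$ of $G[X]$ at $v$ lie in $\partial_G V_f$ because $u_0,u_1\notin V_f$.

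For Step 3, write $f=h_0h_1$ and let $H^v_0,H^v_1$ be the components of $H_v-f$ containing $h_0,h_1$ as in \cref{lem:edge-induc-presep}. The subtrees $S_j\coloneqq H_v\cap H_{u_j}$ ($j=0,1$) are non-empty since $vu_j\in E(G)$, connected by \cref{H3}, and avoid $f$ because $u_j\notin V_f$; hence each $S_j$ lies entirely in a single component $H^v_{i_j}$ of $H_v-f$, which places $vu_j\in E^v_{i_j}$. The whole task thus reduces to showing $i_0\neq i_1$, and I expect this to be the crux. My plan is to suppose for contradiction that $i_0=i_1=0$, so $S_0\cup S_1\subseteq H^v_0$, and deduce that $H^v_1$ is a union of components of $H_X-f$, contradicting that $C-f$ is an $h_0$--$h_1$ path in $H_X-f$ with $h_0\in H^v_0$ and $h_1\in H^v_1$. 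For the isolation step, I will take any $p\in V(H^v_1)$ and any edge $e\in E(H_X-f)$ incident to $p$: if $e\in H_v$, then the tree structure of $H_v$ forces $e$ into the component of $H_v-f$ containing $p$, namely $H^v_1$; otherwise $e\in H_w$ for some $w\in X\setminus\{v\}$, giving $p\in V(H_v)\cap V(H_w)=V(H_v\cap H_w)$. The region-representation property obtained in Step 1 then forces $vw\in E(G)$, so $w\in N_G(v)\cap X=\{u_0,u_1\}$ because $G[X]$ is an induced cycle, and consequently $p\in V(S_j)\subseteq V(H^v_0)$, contradicting $p\in V(H^v_1)$.
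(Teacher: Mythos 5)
Your proposal is correct, and Steps~1 and~2 coincide with the paper's proof: the paper likewise passes through connected-$2$-acyclicity (\cref{lem:2acyclic-equiv-H2T-H3}) and \cref{lem:converseofinducedcycle-vs-cyclesinHX} is not used but \cref{lem:converseofinducedcycle-yields-cycleHX} is, and it picks $f$ exactly as an edge of $C$ lying in $E(H_v)\setminus E(H_{X-v})$, so that $V_f\cap X=\{v\}$. (For that last equality both you and the paper need the co-parts to be the \emph{induced} subgraphs $H_{V(G)}[W_x]$; the paper makes this explicit via \cref{lemma:HvAreInduced}, whereas you use it silently when passing from ``$f\notin E(H_x)$ for $x\neq v$'' to ``$x\notin V_f$'' and when asserting $f\notin E(H_{u_j})$ from $u_j\notin V_f$ --- worth one sentence in a write-up, but not a gap.) Where you genuinely diverge is Step~3. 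The paper argues directly: it takes the inclusion-maximal subpath $P$ of $C$ through $f$ inside $(C\cap H_v)-E(H_{X-v})$, observes that its two ends lie in co-parts $H_{x_0},H_{x_1}$ with $x_0\neq x_1$ (using connected-$2$-acyclicity to rule out $x_0=x_1$), and concludes that since these ends are separated by $f$ in the tree $H_v$ and each $H_{x_i}\cap H_v$ is connected and avoids $f$, the edges $vx_0,vx_1$ fall into distinct $E_i$. You instead argue by contradiction: if both $S_0,S_1$ sat in $H^v_0$, then $V(H^v_1)$ would be disjoint from $V(H_{X-v})$ and hence closed under edges of $H_X-f$, contradicting that $C-f$ joins $h_0$ to $h_1$ across the two components of $H_v-f$. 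Both arguments rest on the same structural facts ($H_v$ a tree, $f$ private to $v$ among $X$, $H_v\cap H_w$ connected); the paper's version is more constructive and slightly shorter, while yours is a clean global-connectivity argument that avoids introducing the auxiliary path $P$. Either is acceptable.
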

\begin{proof}
Without loss of generality, we may assume that $H = H_{V(G)}$, i.e.\ the decomposition $(H,\cV)$ is honest, and thus the only valid choice for the co-parts are the induced subgraph $H[W_v]$ by \cref{lemma:HvAreInduced}.
By \cref{lem:2acyclic-equiv-H2T-H3}, $(H, \cV)$ is connected-2-acyclic.

Let $X \subseteq V(G)$ be inclusion-minimal such that $H_X$ contains a cycle $C$. 
By \cref{lem:converseofinducedcycle-yields-cycleHX} (via \cref{thm:rig-is-gd-into-cliques}), $G[X]$ is an induced cycle of $G$.
Fix $v \in X$. Since $X$ is inclusion-minimal with the property that $H_X$ contains $C$, there is an edge $f$ of $C$ such that $v$ is the unique vertex of $X$ whose co-part $H_v$ contains $f$, i.e.\ $f \in E(C \cap H_v) \setminus E(H_{X-v})$, where $X-v := X \setminus \{v\}$. Consider the pre-separation $\{E_0, V_f, E_1\}$ of $G$ induced by $f$. Since $f \not \in H_x$ for any $x \in X - v$, it follows that $V_f \cap X = \{v\}$, as the $H_x$ are the induced subgraphs $H[W_v]$.

Let $P$ be an inclusion-maximal connected subgraph of $(C \cap H_v) - E(H_{X-v})$ containing $f$.
Since $P \subseteq H_v$ and $(H, \cV)$ is connected-2-acyclic, it follows that $P \neq C$, so $P$ is a subpath of $C$.
Let $h_0$ and $h_1$ be the ends of $P$.
By the choice of $P$, there are $x_0,x_1 \neq v$ in $X$ such that $h_i$ is a node of $H_{x_i}$ for $i = 0,1$.
Since $(H,\cV)$ is $2$-acyclic, $x_0 \neq x_1$.
Now, up to symmetry, $vx_0 \in E_0$ and $vx_1 \in E_1$. This completes the proof. 
\end{proof}

\subsection{Equivalence: \texorpdfstring{$r$}{r}-acyclic \& inducing local separations}
Our main tool to prove \cref{mainthm:gd} is an equivalence between being $r$-acyclic and inducing $r$-local separations for \gd s into cliques. We begin by showing that every graph-decomposition which is connected-$r$-acyclic induces $r$-local separations. 

\begin{theorem}\label{thm:connected-acyclic-IMPLIES-separations}
    Let $G$ be a (possibly infinite) graph and $r \geq 3$ an integer. Suppose $(H, \mathcal{V})$ is a connected-$r$-acyclic graph-decomposition of $G$. Then $(H, \cV)$ induces $r$-local separations. 
\end{theorem}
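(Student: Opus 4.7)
The plan is to reduce to a reduced $r$-local $V_f$-walk and derive a contradiction from the tree structure of a suitable subgraph of $H$. First, since $r \geq 3$, connected-$r$-acyclicity implies connected-$2$-acyclicity, so by \cref{lem:2acyclic-equiv-H2T-H3} the decomposition satisfies \cref{H2T} and \cref{H3}. In particular, \cref{lem:edge-induc-presep} applies and every edge $f = h_0 h_1$ of $H$ induces a pre-separation $\{E_0, V_f, E_1\}$. As in the proof of \cref{lem-cycle-crossing-presep}, I will without loss of generality pass to $H = H_{V(G)}$, so that by \cref{lemma:HvAreInduced} each co-part $H_v$ equals $H[W_v]$; in particular $f \in H_v$ if and only if $v \in V_f$.

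Assume toward a contradiction that some edge $f$ of $H$ does not induce an $r$-local separation. Then there is an $r$-local $V_f$-walk $W$ contained in a cycle $O$ of $G$ of length at most $r$ that traverses an edge of $E_0$ and an edge of $E_1$; write $W = v_0 v_1 \cdots v_\ell$ with $v_0, v_\ell \in V_f$ and $v_1, \ldots, v_{\ell-1} \notin V_f$ (allowing $v_0 = v_\ell$ in the once-around-a-cycle case). The only edges of $W$ lying in $\partial_G V_f$ are $v_0 v_1$ and $v_{\ell-1} v_\ell$, so $\ell = 1$ is impossible (both ends would lie in $V_f$, making the edge not in $\partial_G V_f$), and we may assume $v_0 v_1 \in E_0$ and $v_{\ell-1} v_\ell \in E_1$.

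The heart of the argument is the following: the vertex set $V(O)$ is connected in $G$ and has at most $r$ elements, so by connected-$r$-acyclicity $H_{V(O)}$ is acyclic. Hence its connected subgraph $H_W \coloneqq \bigcup_{i=0}^{\ell} H_{v_i}$ is a tree. It contains the edge $f$ (as $f \in H_{v_0} \cap H_{v_\ell}$), and removing $f$ splits $H_W$ into two subtrees $D_0 \ni h_0$ and $D_1 \ni h_1$. By the definitions of $E_0$ and $E_1$, we obtain $\emptyset \neq H_{v_0} \cap H_{v_1} \subseteq H^{v_0}_0 \subseteq D_0$ and $\emptyset \neq H_{v_{\ell-1}} \cap H_{v_\ell} \subseteq H^{v_\ell}_1 \subseteq D_1$.

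The finishing move is an induction along the walk: since each $v_i$ with $1 \leq i \leq \ell-1$ lies outside $V_f$, the tree $H_{v_i}$ avoids $f$ and therefore lies in precisely one of $D_0, D_1$. Starting from $H_{v_1} \subseteq D_0$ and using that consecutive co-parts intersect, I inductively obtain $H_{v_i} \subseteq D_0$ for all $1 \leq i \leq \ell-1$. This forces $H_{v_{\ell-1}} \cap H_{v_\ell} \subseteq D_0$, contradicting the containment in $D_1$ established above. The subtlest point is the bookkeeping around the equivalence $f \in H_v$ if and only if $v \in V_f$, which is where the reduction to induced co-parts is used; once this is in place, the propagation along $W$ is essentially immediate.
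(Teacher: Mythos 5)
Your proof is correct and follows essentially the same route as the paper's: both apply connected-$r$-acyclicity to the (at most $r$-element, connected) vertex set of the local $V_f$-walk to obtain a tree in $H$ containing $f$, delete $f$, and argue that the co-parts of the internal walk vertices all lie in the same component, forcing the first and last edges of the walk into the same $E_i$. The paper phrases the propagation step via the connectedness of $H_{U'}$ for the set $U'$ of internal vertices (\cref{lem:strongerH2}) rather than your explicit induction along the walk, but these are the same argument.
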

\begin{proof}
    Since $r \geq 2$, not only is $H_v$ a tree for every vertex $v$ of $G$, i.e.\ $(H,\cV)$ satisfies \cref{H2T}, but also $H_u \cup H_v$ is a tree for every edge $uv$ of $G$, so $(H,\cG)$ satisfies \cref{H3} by \cref{lem:glueingtrees-to-trees}.
    Let $f = h_1h_2$ be an edge of $H$, and let $\{E_1,V_f,E_2\}$ be the pre-separation induced by $f$ from \cref{lem:edge-induc-presep}.
    
    Consider an $r$-local $V_f$-walk $W$ in $G$ which contains at least one internal vertex.
    Let $v_1,v_2$ be its ends, and let $U$ be the set of vertices visited by $W$.
    In particular, $U$ is a connected set of at most $r$ vertices.
    Thus, since $(H,\cV)$ is $r$-connected-acyclic, $H_U$ is a tree.
    Let $H_i$ be the component of $H_U-f$ containing $h_i$ for $i = 1,2$.
    
    As $U' := U \setminus \{v_1,v_2\}$ is connected in $G$, \cref{lem:strongerH2} yields that $H_{U'}$ is connected.
    Since every $u \in U'$ is not in $V_f$, the edge $f$ is not in its corresponding co-part $H_u$.
    Hence, $H_{U'}$ meets precisely one of $H_1$ and $H_2$.
    In particular, all $H_u$ with $u \in U'$ meet the same $H_i$.
    Thus, as $H_i^{v_j}$ (defined as in \cref{lem:edge-induc-presep}) is a subgraph of $H_i$ for every $j = 1,2$ by definition, the first and last edge of $W$ are in the same $E_i$.
    It follows that $\{E_1,V_f,E_2\}$ is an $r$-local separation of $G$, as desired.
\end{proof}

Next, we show that the converse also holds when $(H, \cV)$ is into cliques. 

\begin{theorem}\label{thm:rlocalGD-equiv-racyclicRIG}
    Let $G$ be a (possibly infinite) graph and $r \geq 3$ an integer.
    A graph-decomposition $(H,\cV)$ of $G$ into cliques is $r$-acyclic if and only if it induces $r$-local separations.
\end{theorem}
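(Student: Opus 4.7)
The plan is to prove the two directions separately, with the forward direction being essentially immediate from preceding results and the converse direction requiring a careful contradiction argument using \cref{lem-cycle-crossing-presep}.

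For the forward direction, suppose $(H,\cV)$ is $r$-acyclic. Since being $r$-acyclic is a strictly stronger property than being connected-$r$-acyclic (every connected set of at most $r$ vertices is in particular a set of at most $r$ vertices), \cref{thm:connected-acyclic-IMPLIES-separations} applies and yields immediately that $(H,\cV)$ induces $r$-local separations. Note that this direction does not even require the assumption that $(H,\cV)$ is into cliques.

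For the converse, suppose that $(H,\cV)$ induces $r$-local separations but is not $r$-acyclic; I aim to derive a contradiction. First I would take a vertex set $Y \subseteq V(G)$ with $|Y| \leq r$ such that $H_Y$ contains a cycle, and then choose $X \subseteq Y$ that is inclusion-minimal among subsets of $Y$ such that $H_X$ contains a cycle. A quick check shows that such an $X$ is in fact globally inclusion-minimal with this property, and of course $|X| \leq r$. Now \cref{lem-cycle-crossing-presep} applies (this is where the hypothesis that $(H,\cV)$ is into cliques enters), giving that $G[X]$ is an induced cycle and that some edge $f$ of $H$ whose induced pre-separation $\{E_0, V_f, E_1\}$ satisfies $V_f \cap X = \{v\}$ for a single vertex $v$, with the two edges of $G[X]$ at $v$ lying in different $E_i$'s.

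The main (and in fact only nontrivial) step is then to produce from this cycle an $r$-local $V_f$-walk that traverses both sides of the pre-separation, contradicting the assumption that $\{E_0, V_f, E_1\}$ is an $r$-local separation. Let $x_0, x_1$ be the two neighbors of $v$ in $G[X]$ with $vx_0 \in E_0$ and $vx_1 \in E_1$. Traversing the cycle $G[X]$ once, starting and ending at $v$, produces a closed walk $W$ whose edge set equals $E(G[X])$. Its internal vertices lie in $X \setminus \{v\}$, which is disjoint from $V_f$ since $V_f \cap X = \{v\}$, so $W$ is a $V_f$-walk. Moreover, $W$ is contained in the cycle $G[X]$, which has length $|X| \leq r$, so $W$ is an $r$-local $V_f$-walk. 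Since $W$ uses both $vx_0 \in E_0$ and $vx_1 \in E_1$, this contradicts the defining property of $r$-local separations. Hence $(H,\cV)$ must be $r$-acyclic, completing the proof. The hard step conceptually is invoking \cref{lem-cycle-crossing-presep} to locate an edge $f$ through which the induced cycle $G[X]$ ``crosses'' in the prescribed way; once this is in hand, the contradiction is essentially a direct reading of the definition of $r$-local separation.
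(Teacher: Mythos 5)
Your proposal is correct and follows essentially the same route as the paper: the forward direction via \cref{thm:connected-acyclic-IMPLIES-separations}, and the converse by choosing an inclusion-minimal $X$ with $|X|\leq r$ such that $H_X$ contains a cycle, invoking \cref{lem-cycle-crossing-presep}, and observing that the closed walk once around $G[X]$ based at the unique vertex of $V_f \cap X$ is an $r$-local $V_f$-walk crossing both $E_0$ and $E_1$. The only cosmetic difference is your explicit two-step minimization (first inside $Y$, then noting global minimality), which the paper compresses into ``we may choose $X$ to be inclusion-minimal.''
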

\begin{proof}
    Let $(H,\cV)$ be a \gd\ of $G$ into cliques.
    Assume that $(H,\cV)$ is $r$-acyclic.
    By \cref{thm:connected-acyclic-IMPLIES-separations}, $(H, \cV)$ induces $r$-local separations. 
    
    Conversely, assume that $(H,\cV)$ induces $r$-local separations. 
    Suppose for a contradiction that $(H,\cV)$ is not $r$-acyclic.
    Then there is a set $X$ of at most $r$ vertices such that $H_X = \bigcup_{x \in X} H_x$ contains a cycle.
    We may choose $X$ to be inclusion-minimal.
    By \cref{lem-cycle-crossing-presep}, there is an edge $f$ in a cycle $O$ of $H_X$ with its induced pre-separation $\{E_0,V_f,E_1\}$ such that the induced cycle $C \coloneqq G[X]$ meets $V_f$ in a single vertex $y$ and its two incident edges $e_0,e_1$ on $C$ satisfy $e_i \in E_i$ for $i =0,1$.
    Since $X$ meets $V_f$ only in $y$ and $|X| \leq r$, a closed walk $W$ once around $C$ based at $y$ is an $r$-local $V_f$-walk.
    Then the two edges $e_0,e_1$ incident to $y$ on $C$ are contained in the same $E_i$ of the $r$-local separation $\{E_0,V_f,E_1\}$, which is a contradiction. 
\end{proof}

\subsection{Proof of \texorpdfstring{\cref{mainthm:gd}}{Theorem XX}}

We can now prove the main result of this section. 

\begin{proof}[Proof of \cref{mainthm:gd}]

    By \cref{mainresult:racycliccliquegraphs-iff-r-locally-chrodal-etc}, $G$ is $r$-locally chordal if and only if $G$ admits an $r$-acyclic \gd\ $(H,\cV)$ into cliques. By \cref{thm:rlocalGD-equiv-racyclicRIG}, $(H, \cV)$ induces $r$-local separations. By the definition, $(H, \cV)$ is canonical if and only if $v \mapsto H_v$ is canonical. By \cref{mainresult:racycliccliquegraphs-iff-r-locally-chrodal-etc}, we may additionally assume that $(H,\cV)$ is either canonical or into maximal cliques. This completes the proof. 
\end{proof}

\section{Computing clique graphs}\label{sec:ComputingCliqueGraphs}

In \cref{mainresult:racycliccliquegraphs-iff-r-locally-chrodal-etc}, we prove that a locally finite graph $G$ is $r$-locally chordal if and only if $G$ has an $r$-acyclic clique graph. In this section, we provide a simple and efficient algorithm to compute $r$-acyclic clique graphs of finite $r$-locally chordal graphs. This algorithm is a generalization of a well-known algorithm to compute clique trees of chordal graphs and a parallelization of Gavril's  construction to characterize wheel-free graphs \cite{gavrilAlgorithm}. 

The algorithm in particular finds a spanning subgraph of the intersection graph $\bK(G)$ of the maximal cliques of $G$. For the rest of the paper, we equip the edges of $\bK(G)$ with weights according to the number of vertices in the intersection they represent; i.e.\ the \defn{weight} of an edge $K_1K_2$ is $|K_1 \cap K_2|$.
Let $\bK$ be an edge-weighted graph.
The \defn{total weight} \defn{$w_{\bK}(H)$} of a subgraph $H$ of $\bK$ is the sum of the weights of the edges in $H$.
A \defn{maximum weight spanning tree of $\bK$} is a spanning tree of $\bK$ which has maximum total weight among all spanning trees of $\bK$. 

Maximum weight spanning trees of clique graphs are related to chordal graphs through the following theorem.

\begin{theorem}[\cite{gavril1987maximumweightspanningtree}, Theorem 1]\label{thm:maxweighttree-is-cliquetree}
    Let $G$ be a finite connected chordal graph.
    Clique trees of $G$ are precisely the maximum weight spanning trees of $\bK(G)$. 
\end{theorem}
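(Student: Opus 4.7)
The plan is to prove both inclusions simultaneously via a double-counting identity that gives an upper bound on the total weight of any spanning tree of $\bK(G)$ and characterizes exactly when equality holds.

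First, for any spanning subgraph $H$ of $\bK(G)$, I would swap the order of summation in the definition of total weight:
\begin{equation*}
    w_{\bK(G)}(H) \;=\; \sum_{K_1K_2 \in E(H)} |K_1 \cap K_2| \;=\; \sum_{v \in V(G)} \bigl|\{K_1K_2 \in E(H) : v \in K_1 \cap K_2\}\bigr| \;=\; \sum_{v \in V(G)} \bigl\|H[K_G(v)]\bigr\|.
\end{equation*}
Now suppose $T$ is a spanning tree of $\bK(G)$. Then, for each vertex $v$ of $G$, the subgraph $T[K_G(v)]$ is a forest on the vertex set $K_G(v)$, and thus has at most $|K_G(v)| - 1$ edges, with equality if and only if $T[K_G(v)]$ is connected, i.e., is a tree. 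Summing over all $v$ gives
\begin{equation*}
    w_{\bK(G)}(T) \;\leq\; \sum_{v \in V(G)} \bigl(|K_G(v)| - 1\bigr),
\end{equation*}
and the right-hand side is a constant $c(G)$ depending only on $G$. Moreover, equality holds precisely when $T[K_G(v)]$ is connected for every $v \in V(G)$, which is exactly the condition for $T$ to be a clique tree.

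To finish, I would invoke the fact, already recalled in the paper (via Halin \cite{halin1964simpliziale} or Gavril \cite{GavrilChordalGraphsSTIG} in the finite case), that every finite connected chordal graph admits a clique tree. By the paragraph above, any such clique tree has total weight exactly $c(G)$, so $c(G)$ is attained by some spanning tree of $\bK(G)$. Therefore $c(G)$ is precisely the maximum of $w_{\bK(G)}$ over all spanning trees of $\bK(G)$, and the equality condition identifies maximum weight spanning trees with clique trees. The main obstacle—which is handled by a cited result—is the existence of at least one clique tree; everything else is combinatorial bookkeeping using the double-counting identity and the fact that a subgraph of a tree is a forest.
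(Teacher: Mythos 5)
Your proof is correct and uses exactly the argument the paper itself employs: the paper cites Gavril for this statement but proves its generalization, \cref{thm:maximum-weight-acyclic-spanning}, via the same double-counting identity $w(H)=\sum_{v}\lVert H[K_G(v)]\rVert$ and the same equality analysis (see \cref{lem:magic-doublecounting}), together with the existence of at least one clique tree. Nothing is missing; your handling of the equality case and the reliance on a cited existence result match the paper's treatment.
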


\cref{thm:maxweighttree-is-cliquetree} is especially useful since there is an efficient algorithm to compute maximum weight spanning trees:

\begin{theorem}[Kruskal \cite{kruskal}]\label{thm:KRUSKALalgorithm}
    There is an algorithm, \cref{Kruskalalgorithm}, to compute a maximum weight spanning tree of every given finite connected edge-weighted graph $G$ on $n$ vertices with $m$ edges that runs in time $O(m \log n)$.
\end{theorem}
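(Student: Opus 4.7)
The plan is to describe \emph{Kruskal's algorithm} and then verify correctness and running time. The algorithm processes edges in decreasing order of weight: initialize a spanning forest $F$ with vertex set $V(G)$ and no edges, and then iterate through the edges $e_1, e_2, \ldots, e_m$ sorted by non-increasing weight, adding $e_i$ to $F$ if and only if its endpoints lie in different components of the current $F$. Since $G$ is connected and $F$ remains acyclic, the output $F$ is a spanning tree of $G$.

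To prove that $F$ has maximum total weight, I would use a standard exchange argument. Let $T$ be any maximum weight spanning tree of $G$. If $T = F$, we are done; otherwise let $e$ be the edge of maximum weight in the symmetric difference $E(T) \triangle E(F)$, breaking ties consistently with the ordering used by the algorithm. Argue that $e \in E(F) \setminus E(T)$: if instead $e \in E(T) \setminus E(F)$, then Kruskal rejected $e$ because at the time it was considered, its endpoints were already connected in $F$ by a path $P$ using only edges of weight at least $w(e)$; since $T$ is a tree, adding $e$ to $T$ creates a cycle $C$, and $C$ contains some edge $f \in E(P) \setminus E(T)$ with $w(f) \geq w(e)$. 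Then $f \in E(F) \setminus E(T)$ has weight at least $w(e)$, contradicting the maximality of $e$. So $e \in E(F) \setminus E(T)$; the cycle created by adding $e$ to $T$ contains an edge $f \in E(T) \setminus E(F)$, and by the choice of $e$ we have $w(f) \leq w(e)$. Hence $T' \coloneqq T - f + e$ is a spanning tree with $w(T') \geq w(T)$ and $|E(T') \triangle E(F)| < |E(T) \triangle E(F)|$. Iterating this exchange finitely many times (the symmetric difference is a finite set) transforms $T$ into $F$ without decreasing the total weight, so $F$ is itself maximum weight.

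For the runtime, sorting the $m$ edges by weight takes $O(m \log m) = O(m \log n)$ time, since $m \leq \binom{n}{2}$ so $\log m \leq 2 \log n$. To perform the connectivity tests during the greedy step, maintain the components of $F$ with a union-find data structure using union by rank and path compression. Each of the $O(m)$ \emph{find} queries and $O(n)$ \emph{union} operations then takes amortized time $O(\alpha(n)) = O(\log n)$, for a total of $O(m \log n)$ in the greedy phase. Summing the two phases gives the claimed $O(m \log n)$ bound.

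The main obstacle is purely expository rather than technical: the correctness argument is a classical matroid exchange, and the running time bound is standard once the union-find data structure is invoked. Since this is a well-known result, I would keep the write-up short and cite \cite{kruskal} for the original proof, presenting only enough detail to make the algorithm (\cref{Kruskalalgorithm}) and its analysis self-contained for the reader.
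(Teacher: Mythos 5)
The paper does not prove this statement at all: it is quoted as an external result, attributed to Kruskal's original paper, and the algorithm is merely restated as \cref{Kruskalalgorithm}. Your proof is the standard and correct one (greedy exchange argument for optimality, sorting plus union--find for the $O(m\log n)$ bound), so it is perfectly adequate; the only slip is in the rejected-edge case, where the edge $f\in E(P)\setminus E(T)$ of weight at least $w(e)$ lies on the path $P\subseteq F$ (whose existence follows because $P+e$ is a cycle and $T$ is acyclic), not on the cycle $C$ created by adding $e$ to $T$ --- the conclusion you draw from $f$ is unaffected.
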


\begin{algorithm}[Kruskal \cite{kruskal}]\label{Kruskalalgorithm}
    Let $H$ be a finite connected edge-weighted input graph.
    \begin{enumerate} [label={(\arabic*)}]
        \item Order the edges of $H$ in order $e_1, \dots, e_m$ of decreasing weight.
        \item Let $T_0$ be the edgeless graph with vertex set $V(H)$.
        \item For $i$ from $1$ to $m$, let $T_i \coloneqq T_{i-1} + e_i$ if $T_i +e_i$ is acyclic. Else, let $T_i \coloneqq T_{i-1}$
        \item After step $m$, set $T \coloneqq  T_m$.
    \end{enumerate}
\end{algorithm}

In view of the previous three results, computing a clique tree of a chordal graph is equivalent to computing a maximum weight spanning tree of $\bK(G)$. In order to assess the running time of such an algorithm, we need to understand the size of $\bK(G)$.

\begin{proposition}[Folklore, e.g.\ \cite{gavrilAlgorithm,Rose}]\label{prop:maximumnumberofmaxcliques-in-chordalgraph}
    A  chordal graph $G$ on $n$ vertices has at most $n$ maximal cliques. Thus, $\bK(G)$ has at most $\binom{n}{2} \leq n^2$ edges.  
\end{proposition}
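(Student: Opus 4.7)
The plan is to use a perfect elimination ordering, a classical tool for chordal graphs. Recall that a vertex $v$ of $G$ is \emph{simplicial} if its neighborhood $N(v)$ is a clique, and Dirac's theorem (the finite case of \cref{thm:CharacterisationChordalViaMinimalVertexSeparator}, applied inductively) gives that every nonempty finite chordal graph has a simplicial vertex, since the proper sides of a separation induced by a minimal clique separator are themselves chordal. Iterating this yields an ordering $v_1, v_2, \dots, v_n$ of $V(G)$ such that, for every $i$, the vertex $v_i$ is simplicial in the induced subgraph $G[\{v_i, v_{i+1}, \dots, v_n\}]$.

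The key step will be to show that every maximal clique of $G$ can be indexed by its earliest vertex in this ordering. Concretely, for each $i \in \{1, \dots, n\}$, set
\[
    K_i \coloneqq \{v_i\} \cup \bigl(N_G(v_i) \cap \{v_{i+1}, \dots, v_n\}\bigr).
\]
By the simpliciality of $v_i$ in $G[\{v_i, \dots, v_n\}]$, the set $K_i$ is a clique of $G$. I would then argue that every maximal clique $K$ of $G$ equals $K_i$ where $i$ is the smallest index with $v_i \in K$: all vertices of $K$ lie in $\{v_i, \dots, v_n\} \cap N_G[v_i]$, so $K \subseteq K_i$, and maximality of $K$ forces equality. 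This gives an injection from $K(G)$ into $V(G)$, so $|K(G)| \leq n$.

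The second statement follows immediately: $\bK(G)$ has at most $n$ vertices, hence at most $\binom{n}{2} \leq n^2$ edges. The main (and only) obstacle is invoking the existence of a perfect elimination ordering; once that is in hand, the pigeonhole-style map $K \mapsto \min\{i : v_i \in K\}$ finishes the argument with no additional computation.
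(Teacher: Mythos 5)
The paper states this proposition without proof, attributing it to folklore, so there is no in-paper argument to compare against; your perfect-elimination-ordering proof is the standard folklore argument and is essentially correct. The counting step is sound: for a maximal clique $K$ with earliest vertex $v_i$, every vertex of $K$ lies in $\{v_i,\dots,v_n\}\cap N_G[v_i]$, so $K\subseteq K_i$, and since $K_i$ is a clique, maximality of $K$ forces $K=K_i$; hence $K\mapsto\min\{i: v_i\in K\}$ is injective and $|K(G)|\leq n$, and the bound on $|E(\bK(G))|$ is immediate. The only place you are too quick is the existence of a simplicial vertex. Deducing it from \cref{thm:CharacterisationChordalViaMinimalVertexSeparator} by induction on the sides of a minimal clique separator does not work as stated: the induction hypothesis hands you a simplicial vertex of $G[C\cup S]$ (where $C$ is a component of $G-S$), and that vertex might lie in $S$, in which case it need not be simplicial in $G$. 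The classical fix is Dirac's strengthened statement that every non-complete finite chordal graph has \emph{two non-adjacent} simplicial vertices; since $S$ is a clique, at least one of them lies in $C$, and a vertex of $C$ that is simplicial in $G[C\cup S]$ is simplicial in $G$ because all its neighbors lie in $C\cup S$. With that standard strengthening (or by citing Dirac's simplicial-vertex lemma directly), your argument is complete.
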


All in all, one derives the following from \cref{thm:maxweighttree-is-cliquetree}, \ref{thm:KRUSKALalgorithm} and \cref{prop:maximumnumberofmaxcliques-in-chordalgraph}.

\begin{theorem}\label{ALGOforCliqueTree}
    Given a finite connected chordal graph $G$ on $n$ vertices, computing $\bK(G)$ and running \cref{Kruskalalgorithm} on $\bK(G)$ yields a clique tree of $G$ in running time $O(n^2 \log n)$. 
\end{theorem}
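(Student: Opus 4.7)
The plan is to verify correctness and then analyze running time by concatenating the three ingredients stated just before the theorem: the characterization of clique trees as maximum-weight spanning trees of $\bK(G)$ (\cref{thm:maxweighttree-is-cliquetree}), Kruskal's algorithm (\cref{thm:KRUSKALalgorithm}), and the bound on the number of maximal cliques of a chordal graph (\cref{prop:maximumnumberofmaxcliques-in-chordalgraph}).

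For correctness, first I would note that since $G$ is connected and chordal, $\bK(G)$ is connected (any two maximal cliques are linked via a clique tree). \cref{Kruskalalgorithm} then returns some maximum-weight spanning tree $T$ of $\bK(G)$ by \cref{thm:KRUSKALalgorithm}. By \cref{thm:maxweighttree-is-cliquetree}, every maximum-weight spanning tree of $\bK(G)$ is a clique tree of $G$, so $T$ is a clique tree.

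For the running time, I would first use \cref{prop:maximumnumberofmaxcliques-in-chordalgraph} to conclude that $\bK(G)$ has at most $n$ vertices and at most $\binom{n}{2} \leq n^2$ edges. The running time then splits into two parts. Computing $\bK(G)$ breaks into: (a) enumerate the maximal cliques of $G$, which for a chordal graph can be done in $O(n+m) = O(n^2)$ time via a perfect elimination ordering (obtained, e.g., by maximum cardinality search), and (b) compute the weighted edges of $\bK(G)$, i.e.\ for each of the at most $\binom{n}{2}$ ordered pairs of maximal cliques, compute $|K_1 \cap K_2|$, which can be done in $O(n)$ time per pair using a bit-vector encoding and is easily absorbed into $O(n^2 \log n)$. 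Then \cref{thm:KRUSKALalgorithm} applied to $\bK(G)$, which has $O(n^2)$ edges on at most $n$ nodes, gives running time $O(n^2 \log n)$ for the Kruskal step. Summing the two parts yields the stated $O(n^2 \log n)$.

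The only genuinely delicate point is fitting the construction of $\bK(G)$ inside the $O(n^2 \log n)$ budget; everything else is immediate from the cited results. I expect this to be uncontroversial provided we invoke (or prove) that the maximal cliques of a chordal graph, together with their pairwise intersection sizes, can be computed in $O(n^2)$ or $O(n^2 \log n)$ time, as discussed above.
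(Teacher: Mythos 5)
Your proposal is correct in its essentials and takes exactly the same route as the paper, which offers no proof beyond the one-line derivation from \cref{thm:maxweighttree-is-cliquetree}, \cref{thm:KRUSKALalgorithm} and \cref{prop:maximumnumberofmaxcliques-in-chordalgraph}; your correctness argument and the $O(n^2\log n)$ bound for the Kruskal step are precisely what the paper intends. One small slip in the extra detail you volunteer: computing $|K_1\cap K_2|$ in $O(n)$ time for each of the $\binom{n}{2}$ pairs gives $O(n^3)$, which is \emph{not} absorbed into $O(n^2\log n)$, so if you want to account rigorously for the cost of building the weighted graph $\bK(G)$ you need a sharper argument (e.g.\ constructing the clique tree directly from a perfect elimination ordering, or charging intersections to vertex--clique incidences) --- though the paper itself glosses over this point entirely.
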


In this section, we provide a parallel algorithm that, when $G$ is a finite $r$-locally chordal graph, computes an $r$-acyclic clique graph of $G$.

\begin{mainresult}\label{thm:correctalgo}
    There is a parallel algorithm, \cref{THEalgorithm}, to compute an $r$-acyclic clique graph of every given finite $r$-locally chordal graph $G$ that runs on $n = |V(G)|$ processors in time $O(\Delta^2 \log \Delta)$, where $\Delta$ is the maximum degree of $G$.
\end{mainresult}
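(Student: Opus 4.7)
The plan is to exhibit a parallel algorithm that assigns one processor to each vertex $v \in V(G)$, has processor $v$ compute a clique tree $T_v$ of the ball $B_{3/2}(v) = G[N_G[v]]$, and outputs the subgraph $H$ of $\bK(G)$ whose edge set is $\bigcup_{v} E(T_v)$. At processor $v$, one constructs the edge-weighted intersection graph $\bK(B_{3/2}(v))$, with edges weighted by intersection size, and runs Kruskal's algorithm (\cref{Kruskalalgorithm}) to obtain a maximum weight spanning tree $T_v$. Since $G$ is (at least) $3$-locally chordal, $B_{3/2}(v)$ is chordal, so \cref{thm:maxweighttree-is-cliquetree} gives that $T_v$ is a clique tree of $B_{3/2}(v)$; moreover \cref{propertiesofspanningsubgraph-of-CIG} yields $K(B_{3/2}(v)) = K_G(v)$, so $T_v$ is naturally a subgraph of $\bK(G)$ spanning $K_G(v)$.

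Correctness comes in three steps. First, $H$ is a clique graph of $G$: for every vertex $v$, the induced subgraph $H[K_G(v)]$ contains the spanning tree $T_v$ of $K_G(v)$ and is therefore connected. Second, and this is the main obstacle, $H$ is $3$-acyclic. To show it, I would take an inclusion-minimal $X \subseteq V(G)$ with $H_X$ containing a cycle $C$ and apply \cref{lem:2acyclic-yields-Xcontainscycle-size3-is-clique} to conclude that $|X| \leq 3$ forces $X$ to be a clique. Fixing any $x \in X$, we have $X \subseteq N_G[x] \subseteq V(B_{3/2}(x))$, so all maximal cliques of $G$ that contain a vertex of $X$ lie in $K_G(x)$ and the cycle $C$ lives inside $\bK(B_{3/2}(x))$; a contradiction should then arise by comparing $C$ with the clique tree $T_x$ of the chordal graph $B_{3/2}(x)$, using the way edges of the other $T_v$ with $v \in X$ project into $\bK(B_{3/2}(x))$. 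This step adapts the argument in Gavril's sequential construction \cite{gavrilAlgorithm} to our parallel setting. Third, given $3$-acyclicity, \cref{lem:char-locallychordal-via-3-acyclic-region-rep} applied to the region representation $v \mapsto H[K_G(v)]$ of $G$ upgrades $3$-acyclicity to $r$-acyclicity without any further algorithmic work, which also handles the fact that the algorithm itself does not take $r$ as input.

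For the running time, \cref{prop:maximumnumberofmaxcliques-in-chordalgraph} applied to the chordal graph $B_{3/2}(v)$ (which has at most $\Delta + 1$ vertices) bounds the number of its maximal cliques by $\Delta + 1$, so $\bK(B_{3/2}(v))$ has $O(\Delta)$ nodes and $O(\Delta^2)$ edges. Building $\bK(B_{3/2}(v))$ from $B_{3/2}(v)$ costs $O(\Delta^2)$, and running Kruskal's algorithm (\cref{thm:KRUSKALalgorithm}) on it costs $O(\Delta^2 \log \Delta)$. Each of the $n$ processors thus finishes in time $O(\Delta^2 \log \Delta)$; assembling the local outputs into $H$ adds only a constant overhead per edge, already accounted for. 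This matches the claimed parallel running time.
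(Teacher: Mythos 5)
Your architecture (one processor per vertex, Kruskal on $\bK(B_{3/2}(v))$, output the union), the reduction of $r$-acyclicity to $3$-acyclicity via \cref{lem:char-locallychordal-via-3-acyclic-region-rep}, and the running-time analysis all match the paper. But the correctness argument has a genuine gap exactly where you wave your hands, and it hides the one idea that makes the parallel algorithm work. If each processor computes \emph{some} maximum weight spanning tree $T_v$ of $\bK(B_{3/2}(v))$ independently, the union $H=\bigcup_v T_v$ need not even be $1$-acyclic: two processors $u,v$ can make different (tie-breaking) choices on the shared part $\bK(G)[K_G(u)\cap K_G(v)]$, so that $H[K_G(v)]$ strictly contains $T_v$ and hence contains a cycle. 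Concretely, take three maximal cliques $K_1,K_2,K_3$ pairwise intersecting exactly in an edge $uv$ (three triangles glued along $uv$); all three edges of $\bK(G)$ have weight $2$, processor $u$ may return the path $K_1K_2K_3$ and processor $v$ the path $K_2K_1K_3$, and the union is the full triangle, which lies inside $H[K_G(u)]=H_u$. So your first step (``$H[K_G(v)]$ contains the spanning tree $T_v$, hence is connected'') establishes only that $H$ is a clique graph, not that $H[K_G(v)]$ \emph{is} a tree, and your second step cannot succeed without an additional mechanism. The paper's algorithm fixes a single global deterministic ordering of $E(\bK(G))$ (decreasing weight, ties broken lexicographically) and then proves by induction along that ordering that any two runs $H^u,H^v$ agree on $K_G(u)\cap K_G(v)$; the key point in that induction is that $H^u[K_G(u)\cap K_G(v)]$ is a sub\emph{tree} of the clique tree $H^u$ (by \cref{propertiesofspanningsubgraph-of-CIG}), so the unique path in $H^u$ between two nodes of the overlap stays inside the overlap and is therefore visible to processor $v$ as well. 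Once $H[K_G(v)]=H^v$ is known to be a clique tree of $B_{3/2}(v)$ for every $v$, \cref{lem:clique-tree-sufficient} and \cref{lem:cliquegraph-2acyclic-iff-3acyclic} give $2$- and then $3$-acyclicity; no ad hoc cycle-chasing is needed.

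There is also a local error in your proposed cycle-chasing. From $X\subseteq N_G[x]$ you conclude that ``all maximal cliques of $G$ that contain a vertex of $X$ lie in $K_G(x)$,'' but $K_G(x)=K(B_{3/2}(x))$ consists of the maximal cliques \emph{containing} $x$, and a maximal clique through a neighbour $y\in X$ need not contain $x$. So the cycle $C\subseteq H_X=\bigcup_{y\in X}H[K_G(y)]$ does not in general live inside $\bK(B_{3/2}(x))$, and comparing it with the single clique tree $T_x$ cannot by itself yield a contradiction. I recommend replacing both steps with the consistency claim described above.
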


\noindent We remark that Gavril's proof \cite[Theorem~5]{nbrhood-chordal} of \cref{mainthm:wheelfree}~(i)$\to$(ii) is an algorithm with running time $O(n^3 \cdot \log n)$.\footnote{In fact, Gavril claims that his algorithm has running time $O(n^3)$. As far as we understand his argument, it only yields $O(n^3 \cdot \log n)$.}

By \cref{lem:char-locallychordal-via-3-acyclic-region-rep}, it suffices to consider the case $r= 3$.
Our algorithm is essentially a parallel version of Gavril's.
It works as follows.

\begin{mainalgorithm}\label{THEalgorithm}
Let $G$ be a $3$-locally chordal (equivalently: wheel-free) input graph on $[n]$.
Run on processor $v \in [n]$ the following:
\begin{enumerate}[label={(\arabic*)}]
    \item Compute the intersection graph $\bK(B_{3/2}(v))$ of the maximal cliques of $B_{3/2}(v)$. 
    \item Order the edges of $\bK(B_{3/2}(v))$ in order $e^v_1, \dots, e^v_{m(v)}$ of decreasing weight; decide ties of two edges by the lexicographic order on the union of their respective ends. 
    \item Let $H^v_0$ be the edgeless graph with vertex set $V(\bK(B_{3/2}(v))$. 
    \item
    For $i$ from $1$ to $m(v)$, let $H^v_i \coloneqq H^v_{i-1} + e^v_i$ if $H^v_{i-1} + e^v_i$ is acyclic. Else, let $H^v_i \coloneqq H^v_{i-1}$.
    \item After step $m(v)$, set $H^v \coloneqq H^v_{m(v)}$.
\end{enumerate}
After all $n$ processor are finished, set $H \coloneqq \bigcup_{v \in V(G)} H^v$.
\end{mainalgorithm}

\subsection{Analysis of the algorithm}
\cref{THEalgorithm} essentially works by running Kruskal's algorithm to find a clique tree of $B_{3/2}(v)$ for every $v \in V(G)$ simultaneously. In this section we prove that this algorithm indeed returns an $r$-acyclic clique graph when the input graph is $r$-locally chordal. 

For this, by \cref{lem:char-locallychordal-via-3-acyclic-region-rep}, we only need to ensure that the spanning subgraph $H$ of $\bK(G)$ returned by \cref{THEalgorithm} is a $3$-acyclic clique graph.
Indeed, it suffices to show that $H$ is a $2$-acyclic clique graph due to the following lemma.

\begin{lemma}\label{lem:cliquegraph-2acyclic-iff-3acyclic}
    Let $G$ be a (possibly infinite) graph.
    Then every $2$-acyclic clique graph $H$ of $G$ is Helly, and thus $3$-acyclic.
\end{lemma}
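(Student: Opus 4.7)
The plan is to first apply \cref{lem:gavril-lemmas}~\cref{gavril:2} to reduce the Helly property to the triple-intersection case, exploiting that $H$ is, by assumption, 2-acyclic. So it suffices to show that any three pairwise intersecting co-parts $H_{u_1}, H_{u_2}, H_{u_3}$ (where $H_{u_i} = H[K_G(u_i)]$) have a common node.

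The key observation is that, since $v \mapsto H_v$ is a region representation of $G$, the condition $H_{u_i} \cap H_{u_j} \neq \emptyset$ for $i \neq j$ forces $u_iu_j$ to be an edge of $G$. Hence $\{u_1, u_2, u_3\}$ is a clique in $G$ (possibly with repetitions collapsing, but then the statement is trivial). Now extend this clique to a maximal clique $K$ of $G$; such an extension exists in general by Zorn's lemma. By definition of a clique graph, $K$ is a node of $\bK(G)$, and thus of its spanning subgraph $H$, and $K \in K_G(u_i)$ for $i = 1,2,3$. Therefore $K$ lies in $H[K_G(u_1)] \cap H[K_G(u_2)] \cap H[K_G(u_3)]$, which is exactly the common node we need.

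Having established that $H$ is Helly, the final clause ``thus $3$-acyclic'' is immediate from \cref{lem:Helly2-acyclic-equiv-3acyclic}, which states that a $2$-acyclic Helly family is $3$-acyclic. I do not foresee any serious obstacle: the argument is essentially a one-line unpacking of the definition of a clique graph combined with the previously established reduction lemma. The only minor subtlety is in the infinite case, where the existence of a maximal clique extending a given finite clique is ensured by Zorn's lemma, but this is standard.
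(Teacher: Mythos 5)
Your proof is correct and follows essentially the same route as the paper's: both observe that the pairwise-intersecting vertices form a clique of $G$ (since $H$ is a clique graph), extend it to a maximal clique $K$, and note that $K$ is a node of $H$ lying in every relevant co-part $H[K_G(u_i)]$, before invoking \cref{lem:Helly2-acyclic-equiv-3acyclic} for $3$-acyclicity. Your preliminary reduction to triples via \cref{lem:gavril-lemmas} is harmless but unnecessary, since the maximal-clique argument handles arbitrary finite pairwise-intersecting subfamilies directly.
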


\begin{proof}
    Let $H$ be a $2$-acyclic clique graph of $G$, and let $v \mapsto H_v \coloneqq H[K_G(v)]$ be its corresponding region representation of $G$.

    By \cref{lem:Helly2-acyclic-equiv-3acyclic}, the $2$-acyclic clique graph $H$ is $3$-acyclic if $H$ is Helly.
    To show that $H$ is Helly, let
    $X$ be a  finite vertex-subset of $G$ such that each two $H_x, H_y$ with $x,y \in X$ intersect.
    In particular, $X$ is a clique of $G$.
    Let $K$ be a maximal clique of $G$ containing $X$, which is thus a node of every $H_x$ with $x \in X$.
    Hence, \cref{lem:gavril-lemmas}~(i) yields that $H_X$ is a tree.
    Therefore, $x \mapsto H_x$ is a subtree representation of the complete graph $G[X]$ over the tree $H_X$.
    By \cref{thm:subtree-reps-are-helly}, $\bigcap_{x \in X} H_x$ is nonempty.
    Therefore, the $2$-acyclic clique graph $H$ is Helly, and thus $3$-acyclic.
\end{proof}

We note that we cannot push this further to $1$-acyclic due to the following example.

\begin{example}
    Consider the wheel $W_n$ with $n \geq 4$ whose hub is $v$ and whose rim is the cycle $v_1v_2 \dots v_nv_1$.
    Let $G$ be the graph obtained from $W_n$ by adding a vertex $x$ and the edges $xv_i$ for $i = 1,2$.
    Now, $G$ has a $1$-acyclic clique graph but not a $2$-acyclic clique graph. However, its induced subgraph $W_n$ does not even have a $1$-acyclic clique graph.
\end{example}

\begin{proof}
    \begin{claim}\label{claim:WheelNo1AcyclicCG}
        The wheel $W_n$ with $n \geq 4$ has no $1$-acyclic clique graph.
    \end{claim}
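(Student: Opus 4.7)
The plan is to enumerate the maximal cliques of $W_n$ explicitly and show that the connectivity requirement at the rim vertices forces so many edges into any clique graph that a cycle is unavoidable in the co-part at the hub.

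First, I would observe that since $v$ is complete to the rim and the rim is an induced cycle of length $n \geq 4$, the maximal cliques of $W_n$ are precisely the triangles $K_i \coloneqq \{v, v_i, v_{i+1}\}$ for $i \in [n]$ (indices taken modulo $n$). In particular, every maximal clique contains the hub, so $K_{W_n}(v) = \{K_1, \dots, K_n\}$. For each rim vertex $v_i$, on the other hand, $K_{W_n}(v_i) = \{K_{i-1}, K_i\}$ consists of exactly two maximal cliques, which are adjacent in $\bK(W_n)$ via the shared vertices $\{v, v_i\}$.

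Next, suppose for contradiction that $H$ is a $1$-acyclic clique graph of $W_n$. Being a clique graph, $H$ is a spanning subgraph of $\bK(W_n)$ such that $H_{v_i} = H[K_{W_n}(v_i)]$ is connected for every rim vertex $v_i$. Since $H_{v_i}$ has only two nodes $K_{i-1}$ and $K_i$, connectedness forces the edge $K_{i-1}K_i$ to lie in $H$. As $i$ ranges over $[n]$, these forced edges form the closed cyclic sequence $K_1 K_2 \cdots K_n K_1$ in $\bK(W_n)$, i.e.\ a cycle $C$ of length $n$.

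Finally, since every $K_i$ lies in $K_{W_n}(v)$, the cycle $C$ sits inside $H_v = H[K_{W_n}(v)]$. Thus the family $(H_u \mid u \in V(W_n))$ is not even $1$-acyclic, contradicting the assumption on $H$. The only ``obstacle'' is verifying that the forced edges close up into a genuine cycle rather than a smaller subgraph, which is immediate from the rotational structure of $W_n$ together with $n \geq 4$ (so that $K_1, \dots, K_n$ are pairwise distinct).
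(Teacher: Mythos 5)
Your proposal is correct and follows essentially the same route as the paper: identify the maximal cliques as the triangles through the hub, observe that each rim vertex lies in exactly two of them so connectivity of its co-part forces the edge between them, and conclude that these forced edges close up into a cycle inside $H_v$. If anything, your phrasing is slightly more careful than the paper's, which asserts that $\bK(W_n)$ \emph{is} the cycle $D_1D_2\cdots D_nD_1$ (in fact $\bK(W_n)$ is complete, since all maximal cliques share the hub), whereas you only claim that the forced edges contain such a cycle, which is all that is needed.
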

    \begin{claimproof}
        The maximal cliques of $W_n$ are $D_i \coloneqq \{v,v_i,v_{i+1}\}$ for $i \in [n]$, where $v_{n+1} = v_1$, so $\bK(W_n)$ is the cycle $D_1D_2 \dots D_nD_1$.
        Now consider a clique graph $H$ of $W_n$.
        Let $i \in [n]$. Consider $D_{0}$ to be $D_n$.
        Since $H_{v_i} = H[K_{W_n}(v_i)] = G[\{D_{i-1},D_i\}]$ is connected, the edge $D_{i-1}D_i$ is in $H$.
        Thus, $H$ is the whole cycle $\bK(W_n)$.
        Since $H_v = H[K_{W_n}(v)] = H$, the clique graph $H$ is not $1$-acyclic.
    \end{claimproof}

    \begin{claim}
        $G$ has a $1$-acyclic clique graph.
    \end{claim}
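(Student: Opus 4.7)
The plan is to explicitly exhibit a 1-acyclic clique graph $H$ of $G$ as a spanning subgraph of $\bK(G)$. First I would identify the maximal cliques of $G$. Since $x$ is adjacent precisely to $v_1$ and $v_2$ in $G$ (in particular, $x$ is not adjacent to $v$), the triangle $E := \{x, v_1, v_2\}$ is a new maximal clique, and all original $D_i = \{v, v_i, v_{i+1}\}$ (indices mod $n$) remain maximal. In $\bK(G)$, consecutive $D_i$, $D_{i+1}$ are adjacent, and $E$ is adjacent exactly to $D_n$, $D_1$, and $D_2$.

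Next I would define $H$ by
\[
E(H) := \{D_1D_2,\, D_2D_3,\, \ldots,\, D_{n-1}D_n\} \cup \{D_nE,\, D_1E\};
\]
that is, $H$ is obtained from the cycle $\bK(W_n) = D_1D_2\cdots D_nD_1$ by replacing the single edge $D_nD_1$ with the detour $D_n\,E\,D_1$ through the new maximal clique $E$. The key insight is that this detour does not traverse any clique containing $v$, so it avoids reintroducing a cycle into $H[K_G(v)]$, which was exactly the obstruction identified in \cref{claim:WheelNo1AcyclicCG}.

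Finally I would verify that for every vertex $u$ of $G$, the induced subgraph $H[K_G(u)]$ is a tree. The relevant sets are $K_G(v) = \{D_1,\ldots,D_n\}$, $K_G(v_1) = \{D_n, D_1, E\}$, $K_G(v_2) = \{D_1, D_2, E\}$, $K_G(v_i) = \{D_{i-1}, D_i\}$ for $3 \leq i \leq n$, and $K_G(x) = \{E\}$. Inspecting the edges of $H$, the induced subgraphs are respectively the path $D_1D_2\cdots D_n$, the path $D_n E D_1$, the path $E D_1 D_2$, the single edge $D_{i-1}D_i$, and the isolated vertex $E$; each is connected and acyclic. Hence $H$ is a clique graph of $G$ whose every co-part $H_u$ is a tree, i.e.\ a 1-acyclic clique graph.

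There is no substantive obstacle here: the entire argument is a direct construction followed by a case-by-case verification from the definitions. The only conceptual content is noticing that the new vertex $x$ provides a route between $D_n$ and $D_1$ that bypasses $v$, precisely what the wheel by itself lacked.
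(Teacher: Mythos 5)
Your construction is exactly the paper's: the paper takes $H$ to be the spanning cycle $DD_1D_2\cdots D_nD$ of $\bK(G)$, which is precisely your graph with the new maximal clique $E=\{x,v_1,v_2\}$ renamed $D$, and the verification of the co-parts $H[K_G(u)]$ is carried out case by case in the same way. The proposal is correct and takes essentially the same approach.
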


    \begin{claimproof}
        The maximal cliques of $G$ are $D_1,\dots, D_n$ and also $D \coloneqq \{x,v_1,v_2\}$.
        Hence, the clique graph $\bK(G)$ is obtained from $\bK(W_n)$ by adding $D$ and the edges $DD_{n}, DD_1, D_{2}$.
        Let $H$ be the spanning cycle $DD_1D_2 \dots D_nD$.
        We claim that $H$ is a $1$-acyclic clique graph of $G$.
        For $i \in [n] \setminus \{1,2\}$, $H_{v_i}$ consists of the single edge $D_{i-1}D_i$.
        $H_{v_1}$ is the path $D_nDD_1$, $H_{v_2}$ is the path $DD_1D_2$, and $H_x$ is the graph on the single vertex $D$.
        Moreover, $H_v$ is the path $D_1D_2 \dots D_n$.
        All in all, $G$ is a $1$-acyclic clique graph.
    \end{claimproof}

    To see that $G$ has no $2$-acyclic clique graph, one follows the argument presented in the proof of \cref{claim:WheelNo1AcyclicCG} to show that a $1$-acyclic clique graph $H$ of $G$ contains all but one edge of $\bK(W_n)$.
    Thus, the union of $H_v$ and either $H_{v_1}$ or $H_{v_2}$ forms a cycle, and thus $H$ is not $2$-acyclic. 
\end{proof}

The next lemma yields that the spanning subgraph $H$ of $\bK(G)$ returned by \cref{THEalgorithm} is $2$-acylic, if the parallel runs of the algorithm make the same decision about whether to contain an edge.

\begin{lemma}
\label{lem:clique-tree-sufficient}
    Let $G$ be a (possibly infinite) graph and 
    let $H$ be a spanning subgraph of $\bK(G)$. 
    Then $H$ is a $2$-acyclic clique graph of $G$ if and only if $H[K_G(v)]$ is a clique tree of $B_{3/2}(v)$ for all vertices $v$ of $G$. 
\end{lemma}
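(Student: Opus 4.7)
The plan is to prove both implications directly, using \cref{propertiesofspanningsubgraph-of-CIG} to translate between intersection data in $G$ and in $B_{3/2}(v)$, and \cref{lem:glueingtrees-to-trees} to pass between acyclicity of unions of trees and connectivity of their intersection. Write $H_v \coloneqq H[K_G(v)]$ throughout. Recall from \cref{propertiesofspanningsubgraph-of-CIG} that $K_G(v) = K(B_{3/2}(v))$, that $K_{B_{3/2}(v)}(u) = K_G(u) \cap K_G(v)$, and that $H_u \cap H_v = H_v[K_{B_{3/2}(v)}(u)]$; these identities will be used repeatedly.

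For the forward direction, assume $H$ is a $2$-acyclic clique graph and fix $v \in V(G)$. I verify the three defining properties of a clique tree of $B_{3/2}(v)$ for $H_v$. First, by the identity $K_G(v) = K(B_{3/2}(v))$, the graph $H_v$ is a spanning subgraph of $\bK(B_{3/2}(v))$ (note that $\bK(B_{3/2}(v))$ is complete, since every pair of cliques in $K_G(v)$ contains $v$). Second, $H_v$ is connected because $H$ is a clique graph, and it is acyclic by $2$-acyclicity (taking $u=v$), so $H_v$ is a spanning tree. Third, for every $u \in V(B_{3/2}(v)) = N_G[v]$, I must show $H_v[K_{B_{3/2}(v)}(u)]$ is connected; by \cref{propertiesofspanningsubgraph-of-CIG} this equals $H_u \cap H_v$. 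If $u=v$ this is just $H_v$, which is connected. Otherwise $uv \in E(G)$, so $H_u \cap H_v$ is nonempty (it contains any maximal clique through $u,v$), and since $H_u, H_v$ are trees with acyclic union $H_u \cup H_v$ (by $2$-acyclicity), \cref{lem:glueingtrees-to-trees} yields that $H_u \cap H_v$ is connected.

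For the reverse direction, assume each $H_v$ is a clique tree of $B_{3/2}(v)$. Then each $H_v$ is connected (being a tree), so $H$ is a clique graph. To prove $2$-acyclicity, fix $u,v \in V(G)$ and show $H_u \cup H_v$ is acyclic. If $u = v$, then $H_u \cup H_v = H_v$ is a tree. If $u \neq v$ and $uv \notin E(G)$, then no maximal clique contains both, so $K_G(u) \cap K_G(v) = \emptyset$, which means $H_u$ and $H_v$ are vertex-disjoint; their union is then a disjoint union of two trees, hence acyclic. If $uv \in E(G)$, then $u \in V(B_{3/2}(v))$, and the clique-tree hypothesis at $v$ gives that $H_v[K_{B_{3/2}(v)}(u)] = H_u \cap H_v$ is connected; since $H_u$ and $H_v$ are trees, \cref{lem:glueingtrees-to-trees} gives that $H_u \cup H_v$ is a tree, in particular acyclic.

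No substantial obstacle is anticipated: the argument is a direct case split according to whether $u=v$, $uv \in E(G)$, or $uv \notin E(G)$, combined with the two referenced lemmas. The only point that requires some care is keeping track of the translation between $H_u \cap H_v$ and $H_v[K_{B_{3/2}(v)}(u)]$ via \cref{propertiesofspanningsubgraph-of-CIG}, and noting that the $u=v$ case in the forward direction must be handled separately when verifying the clique-tree connectivity condition.
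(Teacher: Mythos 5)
Your proof is correct and follows essentially the same route as the paper's: both rest on the identities from \cref{propertiesofspanningsubgraph-of-CIG} to translate the clique-tree condition at each $v$ into connectivity of $H_u \cap H_v$, and on \cref{lem:glueingtrees-to-trees} to pass between that and acyclicity of $H_u \cup H_v$. The only cosmetic difference is that the paper phrases the argument via the conditions \cref{H2T} and \cref{H3} and \cref{lem:2acyclic-equiv-H2T-H3}, whereas you inline that lemma and handle the non-adjacent case (disjoint regions) directly.
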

\begin{proof}
    Set $H_v \coloneqq H[K_G(v)]$ for every vertex $v$ of $G$.
    By \cref{propertiesofspanningsubgraph-of-CIG}, $H_u \cap H_v = H_v[K_{B_{3/2}(v)}(u)]$ for every two vertices $u,v$ of $G$. 
    
    Fix a vertex $v$ of $G$. The vertex set of $K_G(v)$ is exactly the set of maximal cliques containing $v$, which is also exactly the set of maximal cliques of $B_{3/2}(v)$. Therefore, $H[K_G(v)]$ is a clique tree of $B_{3/2}(v)$ if and only if $H_v$ is a tree and $ H_v[K_{B_{3/2}(v)}(u)] = H_u \cap H_v$ is connected for every $u \in B_{3/2}(v)$. Since this holds for every vertex $v$ of $G$, this is equivalent to requiring that $H_v$ is a tree for every vertex $v$ of $G$, i.e. $H$ satisfies \cref{H2T}, and $H_v \cap H_u$ is connected for every edge $uv$ of $G$, i.e. $H$ satisfies \cref{H3}. By \cref{lem:2acyclic-equiv-H2T-H3}, $H$ is connected-2-acyclic if and only if $H$ satisfies \cref{H2T} and \cref{H3}. Since $H_u$ and $H_v$ intersect in a clique graph of $G$ if and only if $uv$ is an edge of $G$, it follows that a clique graph is 2-acyclic if and only if it is connected-2-acyclic. This completes the proof. 
    \end{proof}

Now, we combine the above observations to prove \cref{thm:correctalgo}.

\begin{proof}[Proof of \cref{thm:correctalgo}]
    
    We claim that the spanning subgraph $H$ outputted by \cref{THEalgorithm} is indeed an $r$-acyclic clique graph of $G$. Afterwards we discuss its running time.
    
    \vspace{0.5em}
    
    \noindent{\bf Correctness.}
    As $K_G(v) = K(B_{3/2}(v))$ by \cref{propertiesofspanningsubgraph-of-CIG} and thus $\bK(G)[K_G(v)] = \bK(B_{3/2}(v))$, the graph $H = \bigcup_{v \in V(G)} H^v$ is a spanning subgraph of $\bK(G) = \bigcup_{v \in V(G)} \bK(G)[K_G(v)] = \bigcup_{v \in V(G)} \bK(B_{3/2}(v))$.
    Note that $H^v$ is a spanning tree of $H_v \coloneqq H[K_G(v)]$, as $H^v$ is obtained by running Kruskal's algorithm to obtain a maximum weight spanning tree of $\bK(B_{3/2}(v))$.
    In particular, $H_v$ is connected.
    Therefore, $H$ is a clique graph.

    It remains to show that $H$ is $r$-acyclic.
    As $G$ is $r$-locally chordal, it suffices to show that $H$ is $3$-acyclic by \cref{lem:char-locallychordal-via-3-acyclic-region-rep}.
    As, by \cref{lem:cliquegraph-2acyclic-iff-3acyclic}, a clique graph is $2$-acyclic if and only if it is $3$-acyclic, \cref{lem:clique-tree-sufficient} ensures that it suffices that $H_v = H[K_G(v)]$ is a clique tree of $B_{3/2}(v)$.
    As $H^v$ is obtained by running Kruskal's algorithm on $\bK(B_{3/2}(v))$, $H^v$ is a clique tree of $B_{3/2}(v)$ by \cref{ALGOforCliqueTree}.
    Thus, we need to show that the subgraph $H_v$ is indeed $H^v$.
    This follows from the following.
    
    \begin{claim}
        For every two vertices $u,v$ of $G$, the graphs $H^u$ and $H^v$ agree on $K_G(u) \cap K_G(v)$.
    \end{claim}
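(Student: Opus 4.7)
The plan is to show that both $H^u[C]$ and $H^v[C]$, where $C := K_G(u) \cap K_G(v)$, coincide with the unique maximum-weight spanning tree of $\bK(G)[C]$ produced by Kruskal's algorithm under the tie-breaking rule from \cref{THEalgorithm}. That tie-breaking rule---order by decreasing weight, then lexicographically on the union of ends---is a strict total order on the edge-set of $\bK(G)$, so Kruskal's output on any edge-weighted subgraph is unique. Once we identify both $H^u[C]$ and $H^v[C]$ with this MST, they must be equal.

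The degenerate cases are trivial: if $u=v$ there is nothing to prove, and if $u,v$ are distinct and non-adjacent then no maximal clique contains both, so $C=\emptyset$. Hence I may assume that $u$ and $v$ are adjacent, in particular $u \in V(B_{3/2}(v))$. Then \cref{propertiesofspanningsubgraph-of-CIG} gives $C = K_{B_{3/2}(v)}(u) \subseteq K_G(v) = V(\bK(B_{3/2}(v)))$, and since every $K \in C$ is contained in $N_G[v] = V(B_{3/2}(v))$, the weighted subgraphs $\bK(B_{3/2}(v))[C]$ and $\bK(G)[C]$ coincide.

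The main step is to verify that $H^v[C]$ is a spanning tree of $\bK(G)[C]$ satisfying the cycle rule: for every cycle $O$, the Kruskal-last edge of $O$ lies outside the tree. For the spanning property, because $G$ is $3$-locally chordal, the ball $B_{3/2}(v)$ is chordal, so by \cref{thm:maxweighttree-is-cliquetree} the Kruskal output $H^v$ is a clique tree of $B_{3/2}(v)$; in particular $H^v[K_{B_{3/2}(v)}(u)] = H^v[C]$ is connected, and as a connected subgraph of the tree $H^v$ it is itself a spanning tree of $\bK(G)[C]$. For the cycle rule, any cycle $O$ in $\bK(G)[C]$ is also a cycle in $\bK(B_{3/2}(v))$; since $H^v$ is Kruskal's output on $\bK(B_{3/2}(v))$, the standard cycle-rule characterization places the Kruskal-last edge of $O$ outside $H^v$, hence outside $H^v[C]$.

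The identical argument with $u$ and $v$ swapped shows that $H^u[C]$ is also Kruskal's MST of $\bK(G)[C]$, and uniqueness yields $H^u[C] = H^v[C]$, which is the claim. I anticipate no serious obstacle: the only step not directly supplied by an earlier result in the paper is the cycle-rule characterization of Kruskal's MST under a strict tie-breaking order, which is a short standard exchange argument (if $e$ with $e \in T$ were Kruskal-last on a cycle $O$, the other edges of $O$ would already be in a common component when $e$ is processed, preventing its inclusion in $T$).
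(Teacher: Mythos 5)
Your proof is correct, but it takes a genuinely different route from the paper's. The paper runs the two instances of Kruskal's algorithm in lockstep: it fixes the global edge order $e_1,\dots,e_m$ of $\bK(G)$, introduces synchronization indices $I(i,u)$, $I(i,v)$, and proves by induction on $i$ that the partial forests agree on $C \coloneqq K_G(u)\cap K_G(v)$; the key step is that when an edge $e_{i+1}$ inside $C$ closes a cycle in one partial forest, the relevant fundamental path already lies inside $\bK(G)[C]$ (because $H^u$ is a clique tree of $B_{3/2}(u)$, so $H^u[C]$ is a subtree containing both ends of $e_{i+1}$), whence the other run rejects $e_{i+1}$ as well. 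You instead identify each of $H^u[C]$ and $H^v[C]$ with a canonical object: the unique spanning tree of $\bK(G)[C]$ obeying the cycle rule for the strict total order. Both arguments rest on the same two inputs --- that the local orders are restrictions of one global order, and that $H^v[C]=H^v[K_{B_{3/2}(v)}(u)]$ is connected because $H^v$ is a clique tree (\cref{ALGOforCliqueTree}, \cref{propertiesofspanningsubgraph-of-CIG}) --- but your architecture replaces the paper's index bookkeeping with the cycle-rule characterization of greedy spanning trees, which is cleaner and more conceptual. The one thing to tighten: your closing parenthetical sketches only the easy direction (the Kruskal output avoids the order-last edge of every cycle), whereas the step that actually closes your argument is uniqueness --- any two spanning trees of $\bK(G)[C]$ satisfying the cycle rule coincide. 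That is also a short exchange argument (take the order-first edge $e$ of the symmetric difference, say $e\in T_1\setminus T_2$; by the cycle rule for $T_2$ applied to the fundamental cycle of $e$ in $T_2+e$, the edge $e$ is last on that cycle, so all its other edges precede $e$, hence lie in $T_1$ by the choice of $e$, forcing a cycle in $T_1$), but it is the load-bearing standard fact and should be recorded explicitly.
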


    \begin{claimproof}
        Order the edges of $\bK(G)$ in order $e_1,\dots,e_m$ of decreasing weight where we decide ties of two edges by the lexicographic order on the union of their respective ends.
        Note that, for every vertex $v$ of $G$, the order $e_1^v, \dots, e_{m(v)}^v$ is the order of the edges of $\bK(B_{3/2}(v))= \bK(G)[K_G(v)]$ induced by $e_1, \dots, e_m$.
        For a vertex $v$ of $G$ and an index $i = 1, \dots, m$, let $I(i,v)$ be the maximum $j = 1, \dots, m(v)$ such that $e_1^v, \dots, e_j^v$ is a subsequence of $e_1,\dots, e_i$. 

        Let $u$ and $v$ be two vertices of $G$.
        If $K_G(u) \cap K_G(v)$ is empty, then the claim holds.
        Hence, we may assume that $K_G(u)$ and $K_G(v)$ meet.
        
        We proceed by induction on $i = 0,1, \dots, m$ to show that $H^u_{I(i,u)}$ and $H^v_{I(i,v)}$ agree on $K_G(u) \cap K_G(v)$.
        The base case $i = 0$ holds by definition of $H^u_0$ and $H^v_0$.
        Suppose that $H^u_{I(i,u)}$ and $H^v_{I(i,v)}$ agree on $K_G(u) \cap K_G(v)$ for some $i = 0, \dots, m-1$.
        As we are otherwise done, we assume that $e_{i+1}$ is an edge of $\bK(G)[K_G(u) \cap K_G(v)]$, i.e.\ $e_{i+1} = K_1K_2$ with $u,v \in K_j$ for $j = 1,2$.
        By symmetry, it suffices to show that if $H^u_{I(i,u)} + e_{i+1}$ contains a cycle $C$, then $H^v_{I(i,v)} + e_{i+1}$ contains the cycle $C$, as well.
        Note that the $K_1$--$K_2$ path $P \coloneqq C -e_{i+1}$ is in $H^u_{I(i,u)}$.
        If $P \subseteq \bK(G)[K_G(u) \cap K_G(v)]$, then $P$ is in $H^v_{I(i,v)}$ by induction assumption, and thus $C \subseteq H^v_{I(i,v)} + e_{i+1}$.

        In the following we show that $P \subseteq \bK(G)[K_G(u) \cap K_G(v)]$.
        Since $H^u_{I(i,u)}$ is by definition a subgraph of the tree $H^u$, the unique $K_1$--$K_2$ path in the spanning tree $H^u$ of $\bK(G)[K_G(u)]$ is $P$.
        Recall that $H^u$ is even a clique tree of $B_{3/2}(u)$ by \cref{ALGOforCliqueTree}.
        Therefore, $$H^u_v \coloneqq H^u[K_{B_{3/2}(u)}(v)] = H^u[K_G(u) \cap K_G(v)] \subseteq \bK(G)[K_G(u) \cap K_G(v)]$$ is a subtree of $H^u$, which contains the nodes $K_1$ and $K_2$.
        In particular, the unique $K_1$--$K_2$ path in $H^u_v$ agrees with the unique $K_1$--$K_2$ path $P$ in $H^u$.
        So, $P \subseteq \bK(G)[K_G(u) \cap K_G(v)]$, as desired.
    \end{claimproof}

    \noindent{\bf Running time.}
    Let $v$ be a vertex of $G$.
    As $B_{3/2}(v)$ contains $d(v) +1$ many vertices and $d(v)$ is at most the maximum degree $\Delta$, \cref{ALGOforCliqueTree} yields that the algorithm on processor $v$ has running time $O(\Delta^2 \log \Delta)$.
\end{proof}

\subsection{Clique graphs as maximum weight spanning subgraphs}\label{subsec:cliqueGraphsMaxWeight}

Let $G$ be a graph.
A spanning subgraph $H$ of $\bK(G)$ is \defn{$r$-acyclic} if $(H_v \mid v \in V(G))$ for $H_v \coloneqq H[K_G(v)]$ is an $r$-acyclic family of subgraphs of $H$.
A \defn{maximum weight $r$-acyclic spanning subgraph of $\bK(G)$} is an $r$-acyclic spanning subgraph $H$ of $\bK(G)$ that has maximum total weight among all $r$-acyclic spanning subgraphs of $\bK(G)$. It turns out that, just as clique trees are precisely maximum weight spanning trees of $\bK(G)$ (\cref{thm:maxweighttree-is-cliquetree}), $r$-acyclic clique graphs are precisely the maximum weight $r$-acyclic spanning subgraphs of $\bK(G)$. Specifically, we prove:

\begin{mainresult}\label{thm:maximum-weight-acyclic-spanning}
    Let $G$ be a finite $r$-locally chordal graph for an integer $r \geq 3$. Let $H$ be a subgraph of $\bK(G)$.
    Then the following are equivalent:
    \begin{enumerate}
        \item \label{item:maxWeightAcyclicSpanning:1} $H$ is an $r$-acyclic clique graph of $G$.
        \item \label{item:maxWeightAcyclicSpanning:2} $H$ is a maximum weight $2$-acyclic spanning subgraph of $\bK(G)$.
        \item \label{item:maxWeightAcyclicSpanning:3} $H$ is a $2$-acyclic spanning subgraph of $\bK(G)$ of weight $W(G) \coloneqq (\sum_{K \in K(G)} |K|) - |V(G)|$.
    \end{enumerate}
\end{mainresult}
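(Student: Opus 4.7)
The plan is to exploit a double-counting identity for the weight of any spanning subgraph $H$ of $\bK(G)$, namely
\begin{equation*}
    w_{\bK(G)}(H) \;=\; \sum_{K_1K_2 \in E(H)} |K_1 \cap K_2| \;=\; \sum_{v \in V(G)} |E(H_v)|,
\end{equation*}
where $H_v \coloneqq H[K_G(v)]$. This identity follows by swapping the order of summation and using, for each edge $K_1K_2$ of $H$, that its weight equals the number of vertices $v \in V(G)$ contained in both $K_1$ and $K_2$, i.e.\ the number of $v$ with $K_1, K_2 \in K_G(v)$. Rewriting the right-hand side via $\sum_v |K_G(v)| = \sum_{K \in K(G)} |K|$ will let us compare everything against the target $W(G) = \sum_v (|K_G(v)| - 1)$.

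With the identity in hand, the three conditions will be related by two opposite bounds on $|E(H_v)|$ relative to $|K_G(v)| - 1$. First, if $H$ is any clique graph then every $H_v$ is connected, so $|E(H_v)| \geq |K_G(v)| - 1$ and hence $w_{\bK(G)}(H) \geq W(G)$, with equality iff each $H_v$ is a tree, which by definition means that $H$ is additionally $2$-acyclic. Conversely, if $H$ is a $2$-acyclic spanning subgraph of $\bK(G)$, then each $H_v$ is acyclic (as a single member of a $1$-acyclic family) and so is a forest, giving $|E(H_v)| \leq |K_G(v)| - 1$ and $w_{\bK(G)}(H) \leq W(G)$, with equality iff each $H_v$ is moreover connected, i.e.\ iff $H$ is also a clique graph. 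Together these estimates show that a subgraph $H$ of $\bK(G)$ is a $2$-acyclic clique graph of $G$ if and only if it is a $2$-acyclic spanning subgraph of weight exactly $W(G)$.

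To conclude the three-way equivalence in the statement, I will use \cref{lem:cliquegraph-2acyclic-iff-3acyclic} to promote "$2$-acyclic clique graph" to "$3$-acyclic clique graph" and then \cref{lem:char-locallychordal-via-3-acyclic-region-rep} to promote it further to "$r$-acyclic clique graph", since $G$ is $r$-locally chordal; this yields \cref{item:maxWeightAcyclicSpanning:1}$\Leftrightarrow$\cref{item:maxWeightAcyclicSpanning:3}. The equivalence \cref{item:maxWeightAcyclicSpanning:2}$\Leftrightarrow$\cref{item:maxWeightAcyclicSpanning:3} then reduces to the existence of at least one $2$-acyclic spanning subgraph of weight $W(G)$; this is immediate from \cref{mainresult:racycliccliquegraphs-iff-r-locally-chrodal-etc}, which provides an $r$-acyclic clique graph of $G$, and the analysis above says that such a clique graph automatically has weight $W(G)$, so $W(G)$ is in fact the maximum value. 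No serious obstacle is expected; the only subtlety is to keep distinct the two notions "$H$ is acyclic as a graph" and "the family $(H_v)_{v \in V(G)}$ is $2$-acyclic" --- this distinction is exactly why the argument must be run on the per-vertex subgraphs $H_v$ and not on $H$ itself.
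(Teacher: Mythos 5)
Your proposal is correct and follows essentially the same route as the paper: the double-counting identity and the two opposite bounds on $|E(H_v)|$ are precisely the content of \cref{lem:magic-doublecounting}, and the concluding promotions via \cref{lem:cliquegraph-2acyclic-iff-3acyclic} and \cref{lem:char-locallychordal-via-3-acyclic-region-rep} together with the existence of an $r$-acyclic clique graph from \cref{mainresult:racycliccliquegraphs-iff-r-locally-chrodal-etc} are exactly the steps of the paper's proof. One harmless slip: ``each $H_v$ is a tree'' is $1$-acyclicity of the family, not $2$-acyclicity (which also constrains pairwise unions), but this clause is not load-bearing, since your final equivalence only ever uses the implication from $2$-acyclicity to each $H_v$ being acyclic.
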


First, we need the following lemma.

\begin{lemma}\label{lem:magic-doublecounting}
    Let $G$ be a finite graph. Suppose that $H$ is a $1$-acyclic spanning subgraph of $\bK(G)$.
    Then the total weight of $H$ is at most $W(G) (\coloneqq (\sum_{K \in K(G)} |K|) - |V(G)|)$.
    Moreover, equality holds if and only if $H$ is a clique graph of $G$.
\end{lemma}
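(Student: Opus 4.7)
The plan is to prove this by a standard double-counting argument on the edges of $H$, weighted by the vertices of $G$ they ``witness''.

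First, I would rewrite the total weight of $H$. By definition of the weight, for an edge $K_1K_2$ of $\bK(G)$ the weight is $|K_1 \cap K_2| = |\{v \in V(G) \mid v \in K_1,\, v \in K_2\}|$. Swapping the order of summation,
\[
    w_{\bK}(H) \;=\; \sum_{K_1K_2 \in E(H)} |K_1 \cap K_2|
    \;=\; \sum_{v \in V(G)} \bigl|\{K_1K_2 \in E(H) \mid v \in K_1 \cap K_2\}\bigr|
    \;=\; \sum_{v \in V(G)} |E(H_v)|,
\]
where $H_v := H[K_G(v)]$, since an edge $K_1K_2$ of $H$ has both endpoints in $K_G(v)$ precisely when $v \in K_1 \cap K_2$.

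Next, for each vertex $v$ of $G$, the graph $H_v$ is a subgraph of the $1$-acyclic graph $H$ and hence itself $1$-acyclic, that is, a forest on the vertex set $K_G(v)$. Thus $|E(H_v)| \leq |K_G(v)| - 1$, with equality if and only if $H_v$ is connected (i.e.\ a spanning tree of its vertex set). Summing over $v$ and using another double-counting step $\sum_{v \in V(G)} |K_G(v)| = \sum_{K \in K(G)} |K|$ (count incidences between vertices of $G$ and maximal cliques of $G$) yields
\[
    w_{\bK}(H) \;=\; \sum_{v \in V(G)} |E(H_v)| \;\leq\; \sum_{v \in V(G)} \bigl(|K_G(v)| - 1\bigr) \;=\; \sum_{K \in K(G)} |K| - |V(G)| \;=\; W(G),
\]
which is the desired inequality.

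Finally, for the moreover-part, equality holds throughout if and only if $|E(H_v)| = |K_G(v)| - 1$ for every $v$, which (by the forest bound above) happens if and only if $H_v$ is connected for every vertex $v$ of $G$. By definition this is precisely the condition that the spanning subgraph $H$ of $\bK(G)$ is a clique graph of $G$. There is no real obstacle here; the argument is a straightforward Euler-style inequality for forests combined with the incidence double-count, and both double-countings are routine.
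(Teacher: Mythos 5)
Your proof is correct and is essentially the same double-counting argument as the paper's: swap the order of summation to get $w_{\bK}(H) = \sum_{v \in V(G)} |E(H[K_G(v)])|$, bound each term by $|K_G(v)|-1$ via the forest edge bound, and characterize equality by connectedness of each $H[K_G(v)]$, which is exactly the clique-graph condition. One small wording point: the hypothesis that $H$ is a $1$-acyclic spanning subgraph of $\bK(G)$ means, by the paper's definition, that each $H_v = H[K_G(v)]$ is acyclic --- it does not assert that $H$ itself is acyclic --- so the fact that $H_v$ is a forest comes directly from that definition rather than from $H_v$ being ``a subgraph of the $1$-acyclic graph $H$''.
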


\begin{proof}
    Recall that an acyclic graph $T$ on $n$ vertices has at most $n-1$ edges.
    Using this, the desired inequality follows from double counting the total weight $w(H)$:
    \begin{align*}
        \sum_{K_1K_2 \in E(H)} |K_1 \cap K_2| & = \sum_{v \in V(G)} |E(H[K_G(v)])| & \\
        & \leq  \sum_{v \in V(G)} (|K_G(v)| - 1) & =  \left(\sum_{K \in K(G)} |K|\right) - |V(G)| = W(G).
    \end{align*}
    Recall that a spanning subgraph $H$ of $\bK(G)$ is a clique graph of $G$ if and only if all $H[K_G(v)]$ are connected
    Moreover, it is well-known that an acyclic graph $T$ on $n$ vertices is connected if and only if $T$ has $n-1$ edges (see e.g.\ \cite[Corollary~1.5.2]{bibel}).
    Thus, equality holds if and only if $H[K_G(v)]$ is connected for every vertex $v$ of $G$.
\end{proof}

\begin{proof}[Proof of \cref{thm:maximum-weight-acyclic-spanning}]
    First, consider an $r$-acyclic clique graph $H$ of $G$.
    By \cref{lem:clique-tree-sufficient}, the induced subgraph $H[K_G(v)]$ are in particular trees for every vertex $v$ of $G$.
    Thus, $H$ has total weight $W(G)$ by \cref{lem:magic-doublecounting}.

    Now, let $H$ be a maximum weight $2$-acyclic spanning $H$ subgraph of $\bK(G)$.
    As $G$ is $r$-locally chordal, $G$ has $r$-acyclic clique graphs by \cref{mainresult:racycliccliquegraphs-iff-r-locally-chrodal-etc}. 
    Thus, the above shows that the weight of $H$ is at least $W(G)$.
    As $H$ is in particular $1$-acyclic, \cref{lem:magic-doublecounting} yields that the weight of $H$ is at most $W(G)$, and thus precisely $W(G)$.
    Then the moreover-part of \cref{lem:magic-doublecounting} yields that $H$ is a clique graph.
    So, $H$ is a $2$-acylic clique graph, and thus $H$ is $3$-acyclic by \cref{lem:cliquegraph-2acyclic-iff-3acyclic}.
    Since $G$ is $r$-locally chordal, it follows from \cref{lem:char-locallychordal-via-3-acyclic-region-rep} that $H$ is even $r$-acyclic.

    All in all, this proves the equivalence of \cref{item:maxWeightAcyclicSpanning:1}, \cref{item:maxWeightAcyclicSpanning:2}, and \cref{item:maxWeightAcyclicSpanning:3}.
\end{proof}

\section{Future work}

\noindent In this section, we detail several directions for future work about locally chordal graphs. 

\subsection{Bounding the number of maximal cliques}\label{sec:NumberOfMaximalCliques}

One property of finite chordal graphs is that they have at most $n$ maximal cliques (\cref{prop:maximumnumberofmaxcliques-in-chordalgraph}), where $n$ is the number of their vertices. We can thus easily deduce that every $r$-locally chordal graph $G$ for $r \geq 3$ has at most $n^2$ maximal cliques by observing that $B_{3/2}(v)$ is chordal for every $v \in V(G)$ and that every maximal clique of $G$ lives in $B_{3/2}(v)$ for some $v$. However, we suspect that a better bound should hold for the number of maximal cliques of $r$-locally chordal graphs, where the bound depends on $r$ and approaches $n$ as $r$ tends to infinity. Specifically, we conjecture the following: 

\begin{conjecture}\label{conj:numberOfMaxCliques}
        For every $\eps > 0$, there exists an integer $r \geq 0$ such that the number of maximal cliques in finite $r$-locally chordal graphs $G$ is $O(|V(G)|^{1+\eps})$.
\end{conjecture}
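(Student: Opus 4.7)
The plan is to apply an extremal bound to the $r$-acyclic clique graph $H$ of $G$ provided by \cref{mainresult:racycliccliquegraphs-iff-r-locally-chrodal-etc}. Since $H$ is a spanning subgraph of $\bK(G)$ and the corresponding graph-decomposition into maximal cliques is honest, \cref{lem:H-high-girth} ensures that $H$ has girth strictly greater than $r$. Writing $N \coloneqq |K(G)|$ for the quantity we wish to bound, the Alon--Hoory--Linial refinement of the Moore bound then gives $|E(H)| \leq C(r) \cdot N^{1 + 1/\lfloor r/2 \rfloor}$, and the exponent tends to $1$ as $r \to \infty$: this is the quantitative ingredient that should drive ``$\epsilon \to 0$ as $r \to \infty$'' in the conjecture.

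To extract a bound on $N$ itself, I would combine this with the double-counting identity from \cref{lem:magic-doublecounting}: since $H$ is a clique graph one has $\sum_{K_1K_2 \in E(H)} |K_1 \cap K_2| = \sum_K |K| - n$, and in particular $|E(H)| \leq \sum_K |K| - n$. Unfortunately the most direct conclusion, namely $N \leq \sum_K |K| \leq n + C(r) \cdot N^{1+1/\lfloor r/2 \rfloor}$, is self-referential and too weak on its own: for $N \gg n$ the Moore term dominates and the inequality is satisfied vacuously. The main work must therefore go into refining this using additional structural information about clique sizes in an $r$-locally chordal graph.

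A natural first refinement is to split $K(G)$ by clique size. Isolated vertices of $G$ contribute at most $n$. The maximal cliques of size $2$ are exactly the edges of $G$ lying in no triangle, which span a triangle-free subgraph $G_\triangle \subseteq G$. A short induction on cycle length, using that any chord of a short cycle in $G$ either creates a triangle (hence removes an edge from $G_\triangle$) or further shortens the cycle, should show that $G_\triangle$ has girth greater than $r$ whenever $G$ is $r$-chordal. Applying Moore directly to $G_\triangle$ then bounds the number of size-$2$ maximal cliques by $O(n^{1+1/\lfloor r/2 \rfloor})$, well within the target.

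The hard part will be bounding the number $N_{\geq 3}$ of ``fat'' maximal cliques (size at least $3$); the weight inequality $3 N_{\geq 3} \leq \sum_{K \text{ fat}} |K| \leq n + C(r) \cdot N^{1+1/\lfloor r/2 \rfloor}$ does not break the self-reference. A promising angle is the $r$-local cover: $G_r$ is chordal by \cref{BasicCharacterization}, and by \cref{lem:clique-graph-covers-clique-graph} the map $p_r$ induces a covering $\bK(G_r) \to \bK(G)$ whose fibers are the $\Gamma(p_r)$-orbits of lifts of maximal cliques. If $G_r$ were finite, the chordal bound $|K(G_r)| \leq |V(G_r)|$ would translate into a sharp bound on $N$; but typically $G_r$ is infinite. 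A successful proof along this line would need to construct a finite $r/2$-ball-preserving quotient of $G$---equivalently a normal subgroup of $\Gamma(p_r)$ of finite index controllable in terms of $n$ and $r$---whose quotient is still chordal, and then apply the chordal count there. This step appears substantially harder than any argument in the present paper, and may in fact yield the sharper bound $N = O(n)$ for $r$ sufficiently large, rather than the weaker $O(n^{1+\epsilon})$ asked for.
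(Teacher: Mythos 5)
The statement you are attempting is \cref{conj:numberOfMaxCliques}, which the paper poses as an \emph{open conjecture} and does not prove: the only evidence given there is the extremal case of graphs of girth greater than $r$, where every maximal clique is an edge or an isolated vertex and the claim reduces to the classical bound of $n^{1+2/(r-2)}$ on the number of edges of an $n$-vertex graph of girth greater than $r$. So there is no proof in the paper to compare against, and—as you candidly say yourself—your proposal is not a proof either. Your partial ingredients are sound as far as they go: the clique graph $H$ from \cref{mainresult:racycliccliquegraphs-iff-r-locally-chrodal-etc} does correspond to an honest $r$-acyclic decomposition into maximal cliques, so \cref{lem:H-high-girth} gives girth greater than $r$; the double-counting identity is exactly \cref{lem:magic-doublecounting}; and your handling of the size-$2$ maximal cliques is essentially the paper's own girth observation in a slightly more general setting (it can be made rigorous: in an $r$-chordal graph, a chord of a short cycle chosen to minimise the shorter arc yields a triangle through two consecutive cycle edges, so the triangle-free edges span a graph of girth greater than $r$). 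You also correctly diagnose why the naive combination of the Moore bound with the weight identity is circular: both sides are controlled by $N=|K(G)|$, the very quantity to be bounded, and nothing ties $N$ back to $|V(G)|$. (A minor inaccuracy: your inequality $\sum_K |K| \leq n + C(r)\,N^{1+1/\lfloor r/2\rfloor}$ ignores that the edge weights $|K_1\cap K_2|$ need not be bounded by a constant, but since you discard this bound as too weak anyway, this does not change the assessment.)

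The genuine gap is exactly where you locate it: bounding the number of maximal cliques of size at least $3$, which is the entire content of the conjecture beyond the known girth case. None of the paper's machinery achieves this, and your proposed escape route through the $r$-local cover runs into the obstruction you name yourself—$G_r$ is in general infinite, and the existence of a finite, chordal, $r/2$-ball-preserving quotient of controllable size is not established anywhere in the paper and would be a substantial new result (indeed, \cref{lem:clique-graph-covers-clique-graph} tells you the fibers of $\bK(G_r)\to\bK(G)$ are full deck-transformation orbits, so a counting argument there requires taming $\Gamma(p_r)$, which is typically an infinite free group). In short: you have correctly reassembled the known partial progress and correctly identified the open core, but the conjecture remains unproven by your argument, just as it is unproven in the paper.
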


An extremal case of $r$-locally chordal graphs are those whose $r/2$-balls are trees, i.e.\ precisely the graphs of girth $>r$.
Then \cref{conj:numberOfMaxCliques} holds, as the number of maximal cliques coincides with the number of edges in connected graphs $G$ of girth $>r$, which is at most $n^{1+\frac{2}{r-2}}$ if the graph $G$ has $n$ vertices and at least $2n$ edges \cite[\S III]{bollobasExtremalGT}.

\subsection{Algorithmic applications}\label{subsec:maxIndSet}
One benefit of structural decompositions like tree-decompositions is that they enable the design of algorithms for graphs with ``good'' decompositions. Indeed, tree-decompositions play a large role in algorithmic graph theory. In \cref{THEalgorithm}, we give an efficient algorithm to compute $r$-acyclic \gd s into the maximal cliques (equivalently: $r$-acyclic clique graphs) of $r$-locally chordal graphs, which are structural descriptions that generalize \td\ into the maximal cliques (equivalently: clique trees). Therefore, a very interesting follow-up question is whether these representations can be used in the design of efficient algorithms for locally chordal graphs. 

One of the most successful algorithmic applications of chordal graphs is to the \textsc{maximum independent set} (MIS) problem. The MIS problem asks, given a graph $G$ (and possibly a weight function $w: V(G) \to \mathbb{N}$ of the vertices), to return a maximum-size (or maximum-weight) independent set of $G$. MIS is a classic NP-hard problem in general \cite{karp}, and is even hard to approximate in general \cite{approximateMIS}. However, there is a linear-time algorithm to solve MIS in chordal graphs \cite{gavrilAlgorithm}. One can also solve MIS in chordal graphs using standard dynamic programming on a tree-decomposition into cliques. This does not require that the chordal graphs have bounded treewidth, because independent sets intersect cliques, and thus bags of tree-decompositions, in at most one vertex. The nice interplay between chordal graphs and the MIS problem has inspired several other approaches to and perspectives on the MIS problem, for example the {\em PMC method} \cites{lokshantov, fomin} and the width-parameter {\em tree-independence number} \cite{treealpha}. 

Since chordal graphs and MIS are such a fruitful pair, the MIS problem is a promising potential algorithmic application of locally chordal graphs. Our decompositions likewise have the property that they are into cliques, so a solution to MIS intersects each bag in at most one vertex. Since $r$-locally chordal graphs include, in particular, all graphs of girth greater than $r$, the class of graphs of high girth witnesses that MIS is NP-hard even in $r$-locally chordal graphs \cite{poljak}. However, we hope for an affirmative answer to the following. 
\begin{question}
    Is there a good approximation algorithm for \textsc{Maximum Independent Set} in $r$-locally chordal graphs? 
\end{question}

There are decent approximation algorithms for MIS in high-girth graphs, e.g. \cite{murphy}. We hope that these algorithms can be modified to give approximation algorithms in locally chordal graphs given their $r$-acyclic graph-decompositions $(H, \cV)$ into cliques, by using the high-girth model graph $H$ (on which the high-girth algorithms can be leveraged) and the fact that all bags are cliques (which intersect independent sets in at most one vertex). 

\subsection{Stronger Helly properties}

Let $H$ be a graph and let $\bH$ be a family of subgraphs of $H$. The family $\bH$ has the \defn{Helly-Erd\H{o}s-P\'osa property} if, for every $k \geq 2$, every finite subfamily $\bH'$ of $\bH$ contains either $k$ pairwise disjoint sets or there is a vertex-subset of $H$ of size at most $k-1$ that meets every subgraph of $\bH'$. Observe that when $k=2$ this corresponds to the {\em Helly property} defined in this paper, so the Helly-Erd\H{o}s-P\'osa property can be viewed as a generalization of the Helly property. 

It is known that if $T$ is a tree and $\bT$ is a family of subtrees of $T$, then $\bT$ satisfies the Helly-Erd\H{o}s-P\'osa property, see \cite{ErdosPosaHellySubtreesOfTree}. Therefore, if $G$ is chordal, then the families corresponding to acyclic region representations of $G$ (i.e.\ if $G$ is connected, the subtree representations of $G$) satisfy the Helly-Erd\H{o}s-P\'osa property. 

For $r$-locally chordal graphs, we proved that the families corresponding to their $r$-acyclic region representations satisfy the Helly property. A natural extension of this result is thus to ask whether locally chordal graphs, like chordal graphs, also satisfy the Helly-Erd\H{o}s-P\'osa property: 
\begin{question}\label{q:k-Helly}
    Let $r \geq 3$ be an integer and $G$ an $r$-locally chordal graph. For an $r$-acyclic region representation $v \mapsto H_v$ of $G$, does the family $(H_v \mid v \in V(G))$ satisfy the Helly-Erd\H{o}s-P\'osa property? 
\end{question}

We thank Piotr Micek for suggesting \cref{q:k-Helly} in private communication. 

\subsection{Local-global width parameters}

Chordal graphs can be used to define treewidth: the \emph{treewidth} of a graph $G$ is (one less than) the minimum clique number of a chordal supergraph of $G$. Can we use $r$-locally chordal graphs in a similar way as inspiration to define some width parameter? 

A large part of the contribution of this paper is studying the various properties and behavior of specific graph-decompositions. We identified four key properties of graph-decompositions: $r$-acyclic, connected-$r$-acyclic, $r$-locally acyclic, and inducing $r$-local separations. From this paper, we know the following relationships between these properties:

\begin{theorem}\label{thm:ImplicationsOfDecompsExhibitingLocalGlobal}
    Let $(H,\cV)$ be a graph-decomposition of a graph $G$, and let $r \geq 1$ be an integer. Then the following statements hold:
    \begin{enumerate}
        \item \label{item:AcyclicToLocallyAcyclic} If $(H,\cV)$ is $r$-acyclic, then $(H,\cV)$ is connected-$r$-acyclic.
        \item \label{item:LocallyAcyclicToConAcyclic} $(H,\cV)$ is connected-$r$-acyclic if and only if $(H,\cV)$ is $r$-locally acyclic.
        \item \label{item:ConAcyclicToLocSep} If $(H,\cV)$ is connected-$r$-acyclic and $r \geq 2$, then $(H,\cV)$ induces $r$-local separations. 
    \end{enumerate}
\end{theorem}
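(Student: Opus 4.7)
The plan is to handle each of the three items by appealing to results already established in the paper; the theorem essentially consolidates the relationships between the four decomposition properties studied throughout the work, so nothing substantially new should be required.

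For (i), I would simply unfold definitions. The property of being $r$-acyclic demands that $H_X$ is acyclic for \emph{every} vertex-subset $X \subseteq V(G)$ with $|X| \leq r$, while connected-$r$-acyclic only imposes this requirement on the connected such $X$. The implication is therefore immediate from the definitions, and has in fact already been noted in the paragraph preceding \cref{lem:ConAcyclicEquivToAcyclicForIntoCliques}.

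For (ii), this is precisely \cref{thm:connectedAcyclicEquivLocallyInducingTDs}, where I would read ``$r$-locally acyclic'' as synonymous with the property that all decompositions $r$-locally induced by $(H,\cV)$ are tree-decompositions. No further work is needed beyond citing that theorem.

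For (iii), the case $r \geq 3$ is exactly \cref{thm:connected-acyclic-IMPLIES-separations}, so the only thing to dispose of is $r = 2$. Here I would first apply \cref{lem:2acyclic-equiv-H2T-H3} to deduce that $(H,\cV)$ satisfies \cref{H2T} and \cref{H3}, which by \cref{lem:edge-induc-presep} produces a pre-separation $\{E_0, V_f, E_1\}$ of $G$ from each edge $f$ of $H$. Then I would observe that in a simple graph the only $2$-local $V_f$-walks are single edges with both ends in $V_f$, and such walks contain no edge of $\partial_G V_f = E_0 \cup E_1$ at all. Hence every pre-separation induced by an edge of $H$ is vacuously a $2$-local separation, so $(H,\cV)$ induces $2$-local separations. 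The only potential obstacle is precisely this boundary case $r = 2$ in (iii), but it reduces to checking the definition of $r$-local walks and requires no new ideas.
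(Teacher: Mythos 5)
Your proposal is correct and follows the paper's proof, which consists of exactly the three citations you give: (i) from the definitions, (ii) from \cref{thm:connectedAcyclicEquivLocallyInducingTDs}, and (iii) from \cref{thm:connected-acyclic-IMPLIES-separations}. The one place you go beyond the paper is the boundary case $r=2$ in (iii): the paper simply cites \cref{thm:connected-acyclic-IMPLIES-separations}, which is stated only for $r \geq 3$, whereas you correctly note the mismatch and close it. Your argument for $r=2$ is sound: \cref{lem:2acyclic-equiv-H2T-H3} and \cref{lem:edge-induc-presep} yield the induced pre-separations, and since a simple graph has no cycle of length at most $2$, every $2$-local $V_f$-walk is a single edge with both ends in $V_f$, hence lies outside $\partial_G V_f = E_0 \cup E_1$, so the pre-separations are vacuously $2$-local separations. (In fact the proof of \cref{thm:connected-acyclic-IMPLIES-separations} itself only uses $r \geq 2$ and treats the walks without internal vertices implicitly in the same vacuous way, so your addition makes explicit what the paper leaves tacit.)
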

\begin{proof}
    \cref{item:AcyclicToLocallyAcyclic} is a consequence of the definitions. \cref{item:LocallyAcyclicToConAcyclic} is \cref{thm:connectedAcyclicEquivLocallyInducingTDs}. \cref{item:ConAcyclicToLocSep} is \cref{thm:connected-acyclic-IMPLIES-separations}. 
\end{proof}

We can use these properties to introduce three new graph width parameters. Let $G$ be a graph. The \defn{width} of a decomposition $(H, \cV)$ of $G$ is the maximum size of its bags. Now: 

\begin{enumerate}
    \item The \defn{$r$-acyclic width} of $G$, denoted \defn{$\aw(G)$}, is the minimum width of an $r$-acyclic graph-decomposition of $G$. 
    \item The \defn{connected-$r$-acyclic width} of $G$, denoted \defn{$\caw(G)$}, is the minimum width of a connected-$r$-acyclic graph-decomposition of $G$. 
    \item The \defn{$r$-local-separation width} of $G$, denoted \defn{$\lsw(G)$}, is the minimum width of a graph-decomposition of $G$ that induces $r$-local separations. 
\end{enumerate}
We also define the \defn{$r$-local treewidth} of $G$, denoted \defn{$\ltw(G)$}, as the maximum treewidth of an $r/2$-ball of $G$ plus one. Together, we refer to these parameters as the \defn{local-global width parameters}. 

The following relationship between the local-global width parameters is immediate from the definitions. 

\begin{theorem}
For every graph $G$ and $r \geq 3$, we have: 
$$\tw(G) \geq \aw(G) \geq \caw(G) \geq \lsw(G), \ltw(G).$$
\end{theorem}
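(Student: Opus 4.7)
The plan is to derive each of the four inequalities from \cref{thm:ImplicationsOfDecompsExhibitingLocalGlobal} by noting that none of the implications there alters the underlying decomposition (or, in the last case, only restricts bags). Consequently, the minimum achievable width can only decrease as one passes from a stronger property to a weaker one, giving the chain of inequalities.

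First, for $\tw(G) \geq \aw(G)$: any tree-decomposition $(T,\cV)$ of $G$ is a graph-decomposition whose model $T$ is a tree, and every family of its co-parts consists of subgraphs of $T$ whose union is again a subgraph of $T$, hence acyclic. So $(T,\cV)$ is $r$-acyclic for every $r$, with the same width. Second, for $\aw(G) \geq \caw(G)$, \cref{thm:ImplicationsOfDecompsExhibitingLocalGlobal}~\cref{item:AcyclicToLocallyAcyclic} turns any $r$-acyclic graph-decomposition into a connected-$r$-acyclic one (the decomposition itself is unchanged), so the width is preserved. Third, for $\caw(G) \geq \lsw(G)$: since $r \geq 3 \geq 2$, \cref{thm:ImplicationsOfDecompsExhibitingLocalGlobal}~\cref{item:ConAcyclicToLocSep} gives that every connected-$r$-acyclic graph-decomposition induces $r$-local separations, again without changing the decomposition.

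The inequality $\caw(G) \geq \ltw(G)$ is the only one requiring a small additional observation. By \cref{thm:ImplicationsOfDecompsExhibitingLocalGlobal}~\cref{item:LocallyAcyclicToConAcyclic}, every connected-$r$-acyclic graph-decomposition $(H,\cV)$ of $G$ $r$-locally induces, for each vertex $v$, a tree-decomposition $(H^{v,r},\cV^{v,r})$ of $B_{r/2}(v)$. Its bags have the form $V_h \cap N^{r/2}[v]$, so each has size at most the width of $(H,\cV)$. Therefore $\tw(B_{r/2}(v))+1 \leq \mathrm{width}(H,\cV)$ for every $v$, and taking the infimum over $(H,\cV)$ yields $\ltw(G) \leq \caw(G)$. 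There is no real obstacle in any of the four steps; as the authors indicate, the statement is immediate from \cref{thm:ImplicationsOfDecompsExhibitingLocalGlobal} together with the definitions of the width parameters.
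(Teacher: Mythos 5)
Your proof is correct and matches the paper's (unstated, ``immediate from the definitions'') argument: each inequality follows from \cref{thm:ImplicationsOfDecompsExhibitingLocalGlobal} because the relevant implication leaves the decomposition --- and hence its width --- unchanged, and the $\ltw$ bound follows from the $r$-locally induced tree-decompositions having bags $V_h \cap N^{r/2}[v] \subseteq V_h$. The only caveat is in the first inequality, where $\tw(G)$ must be read with the paper's convention that width is the maximum bag size (rather than one less), so that a tree-decomposition realizing the treewidth is an $r$-acyclic graph-decomposition of the same width.
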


Studying the behavior of these width parameters on different graph classes and for different values of $r$ presents an intriguing research direction. To our knowledge, the local-global width parameters are the first width parameter to be based on graph-decompositions more general than tree-decompositions. This is a promising development: since graph-decompositions are structurally different than tree-decompositions, we can go beyond the tree-based width parameters (which are often closely related to treewidth), but graph-decompositions still offer a decomposition framework, which is useful for studying graph structure. 

The local-global width parameters in particular have the potential to interact in interesting ways with the local complexity of a graph. For example, to what extent is the following statement true: the local-global width parameters are large for a given graph class if and only if the graphs in that class have dense local structure? We can also ask about how the local-global width parameters interact with other measures of the local complexity of graph structure. A good starting place for any of these ideas is the following: 

\begin{question}
    Which natural graph classes have bounded (or unbounded) local-global width parameters?
\end{question}
\newpage

\printbibliography

\end{document}